\documentclass[11pt,reqno]{amsart}
\usepackage{amsmath, amsfonts, amsthm, amssymb, mathrsfs, amscd, graphicx, cite, color}
\usepackage{tikz}
\usepackage{latexsym,hyperref}

\makeatletter
\@namedef{subjclassname@2020}{%
  \textup{2020} Mathematics Subject Classification}
\makeatother

\usepackage[margin=1.1in]{geometry}

\usepackage{mathtools}

\usepackage{hyperref}
\hypersetup{hidelinks}

\allowdisplaybreaks

\newtheorem{theorem}{Theorem}[section]
\newtheorem{lemma}{Lemma}[section]

\newtheorem{proposition}{Proposition}[section]

\theoremstyle{definition}

\theoremstyle{remark}
\newtheorem{remark}{Remark}[section]

\newcommand{\bbr}{\mathbb R}

\def\charf {\mbox{{\text 1}\kern-.30em {\text l}}}

\def\di{\displaystyle}

\title[Vanishing viscosity limit for same family interacting shocks]
{Vanishing viscosity limit to two interacting shocks from the same family for the compressible Navier-Stokes equations}

\author[L.-A. Li]{Lin-An Li}
\address{School of Mathematical Sciences, Laboratory of Mathematics and Complex Systems, MOE, Beijing Normal University, Beijing 100875, P. R. China.}
\email{linanli@amss.ac.cn}

\author[D. Wang]{Dehua Wang}
\address{Department of Mathematics, University of Pittsburgh, Pittsburgh, PA 15260, USA.}
\email{dhwang@pitt.edu}

\author[Y. Wang]{Yi Wang}
\address{State Key Laboratory of Mathematical Sciences and Institute of Applied Mathematics, Academy of Mathematics and Systems Science, Chinese Academy of Sciences, Beijing 100190, P. R.~China; School of Mathematical Sciences, University of Chinese Academy of Sciences, Beijing 100049, P. R.~China}
\email{wangyi@amss.ac.cn}

\begin{document}

\begin{abstract}

We investigate the vanishing viscosity limit for the one-dimensional compressible Navier-Stokes equations in the regime of two interacting shock waves from the same characteristic family of the underlying Euler equations. Unlike the interaction of two shocks from distinct families, which produces two outgoing shocks in their respective original families, the collision of two same-family shocks generates an outgoing shock in the same family together with a rarefaction wave in the other family. This configuration is more singular because the collision occurs on a larger time scale that depends inversely on the wave strength, making it challenging to justify the vanishing viscosity limit not only in the processes before and after the shock collision but, more crucially, at the collision point itself. Furthermore, the simultaneous emergence of both shock and rarefaction waves after the collision introduces an additional layer of difficulty.
To overcome these obstacles, we first employ the anti-derivative method before the collision, which allows us to fix the locations of both viscous shocks precisely up to the collision point. However, uniform estimates with respect to the viscosity cannot be closed all the way to the collision time; we therefore introduce a carefully constructed approximate collision time to obtain the required higher-order energy bounds.
After the collision, we apply a weighted relative entropy method, combined with time-dependent shifts, to handle the composite wave structure arising from the coexistence of shock and rarefaction waves.
Our main result establishes that, for suitably small wave strengths and viscosity coefficients, there exists a family of global smooth solutions to the Navier-Stokes equations that converge to the entropy solution of the Euler equations. This approach resolves the singular nature of same-family shock interactions and provides a rigorous justification of the vanishing viscosity limit with an explicit convergence rate in this physically significant regime. The techniques developed in this paper are expected to be applicable to other related problems in vanishing viscosity theory.

\end{abstract}

\keywords{Vanishing viscosity limit, compressible Navier-Stokes equations, interacting shocks, same family shocks, rarefaction waves.}
\subjclass[2020]{76N10, 35Q35, 35Q30, 35Q31, 76N06}	
\date{\today}

\maketitle 
\tableofcontents
%
%
\section{Introduction}  
\setcounter{equation}{0}

The convergence of solutions of the Navier-Stokes equations to those of the Euler equations as the viscosity coefficient tends to zero is a fundamental problem in fluid dynamics. This vanishing viscosity limit is important both mathematically and physically, as it bridges the gap between the dissipative description of fluids and the ideal inviscid model. However, the limit process is singular when the inviscid solution contains discontinuities such as shocks.
The problem is further complicated when multiple shocks interact, as the interaction introduces additional nonlinear phenomena and singularities.

In this paper, we investigate this problem for the one-dimensional (1D) compressible Navier-Stokes equations in the Lagrangian coordinates:
\begin{equation}  \label{NS}
\begin{cases}
\displaystyle v_t - u_x = 0,       \qquad \qquad \qquad x \in \bbr, ~t \in\bbr^+, \\
\displaystyle u_t + p(v)_x = \varepsilon\left( \frac{u_x}{v}\right)_x,
\end{cases}
\end{equation}
where $v=v(t,x) >0$ denotes the specific volume, $u=u(t,x)$ the velocity, and $p=p(t,x)$ the pressure which is given by the $\gamma$-law $p(v)=bv^{-\gamma}$ with $\gamma>1$ and $b$ being positive fluid constants. Without loss of generality, we take $b=1$ throughout this paper.
The constant $\varepsilon>0$ denotes the physical viscosity coefficient. Formally, as $\varepsilon \to 0+$, the system \eqref{NS} converges to the following compressible Euler equations (a.k.a. the $p$-system):
\begin{equation}  \label{ES}
	\begin{cases}
		\displaystyle v_t - u_x = 0,       \quad \qquad \qquad x \in \bbr, ~t \in\bbr^+, \\
		\displaystyle u_t + p(v)_x = 0.
	\end{cases}
\end{equation}
We consider the vanishing viscosity limit from \eqref{NS} to \eqref{ES} in the regime of  two interacting shocks from the same characteristic family. Without loss of generality, the following three-piecewise constant initial data to \eqref{ES} are imposed:
\begin{equation} \label{ESI}
	(v, u)(0, x) = \begin{cases}
		(v_-, u_-),     \quad x < 0, \\
		(v_*, u_*),     \quad 0< x < 1, \\
		(v_+, u_+),     \quad x > 1.
	\end{cases}
\end{equation}

Concerning the problem of the vanishing viscosity limit for the one-dimensional compressible Navier-Stokes equations with two-piecewise Riemann initial data for the associated compressible Euler equations, a considerable amount of related research exists. As early as 1950, Hopf \cite{Hopf1950} investigated the vanishing viscosity limit of shock/rarefaction waves for 1D scalar viscous conservation laws. When considering elementary waves to the systems, for a single shock wave, Hoff and Liu \cite{HL} first verified the vanishing viscosity limit from \eqref{NS} to \eqref{ES} with Riemann shock data and then Goodman and Xin \cite{GX} proved the vanishing viscosity limit for piecewise smooth non-interacting shocks to the system of conservation laws with artificial viscosity, and subsequently, Yu \cite{Y} established the inviscid limit including both initial and shock layers. Wang \cite{W} and Wang \cite{W-2} generalized the results in \cite{GX} to the isentropic and full compressible Navier-Stokes equations, respectively, for the vanishing physical viscosity limits.
Recently, Kang and Vasseur \cite{KV} proved the vanishing viscosity limit for shock waves in the compressible Navier-Stokes equations with density-dependent viscosity under $L^2$ weak topology by using the $a$-contraction method, and the uniqueness of shock in the sense of vanishing physical viscosity limit can also be established.
For the vanishing viscosity limit of the rarefaction wave, contact discontinuity and their superpositions even with shocks, one can refer to \cite{X-1, JNS, XZ, HLW, L-W-W, M, VW, ZPT, ZPWT, HWY-1, HJW, HWY-2, HWWY, KV2, HWWW} and the related references therein.

Note that the aforementioned results on the vanishing viscosity limit for elementary wave patterns and their composites are either Riemann data or non-interacting shocks. For general small BV data with possibly infinitely many and interacting shocks, Bianchini and Bressan \cite{BB} proved the vanishing viscosity limit for 1D system of conservation laws with artificial viscosity; and recently, Chen, Kang, and Vasseur \cite{CKV} verified the vanishing  viscosity limit of 1D isentropic compressible Navier-Stokes equations with density-dependent viscosity in $L^2$ weak topology.
In this paper, we consider the vanishing viscosity limit of 1D compressible Navier-Stokes equations \eqref{NS} with constant viscosity coefficients, which is different from  \cite{BB, CKV}. In particular, the purpose of the present paper is to investigate the vanishing physical viscosity limit of \eqref{NS} to \eqref{ES} in the regime of two interacting shocks arising from the same characteristic family.

For two interacting shocks from different characteristic families, Huang, Wang, Wang, and Yang \cite{HWWY2015} established the vanishing physical viscosity limit of the compressible Navier-Stokes equations \eqref{NS} by using anti-derivative techniques, which was extended by Shi, Yong, and Zhang \cite{SYZ} to the case of non-isentropic Euler equations with artificial viscosities.
Very recently, Bressan, Caravenna, and Shen \cite{Bressan2026} characterized the local asymptotic patterns of vanishing viscosity approximations for scalar conservation laws with strictly convex flux near the singularity of two interacting shocks.

In our paper, we assume that both shocks are of the second characteristic family,  abbreviated as two 2-shocks, without loss of generality. Since the wave speed on the left of the same-family shock is larger than that on the right, the left shock will inevitably overtake the right one, leading to a collision and interaction. After the interaction, the two shocks merge to generate a stronger 2-shock of the same family and a weaker 1-rarefaction wave from the first characteristic family.

\subsection{Shock interactions for the inviscid Euler equations}

When there are no vacuum states,  the system \eqref{ES} is strictly hyperbolic with two real distinct eigenvalues
$$ \Lambda_1 = -\sqrt{-p'(v)}, \qquad \Lambda_2 = \sqrt{-p'(v)}. $$
To illustrate the shock interactions in \eqref{ES}-\eqref{ESI}, we start from the Riemann solutions to the inviscid Euler equations \eqref{ES}, which include at most two elementary waves, each associated with one of the two distinct characteristic families: a 1-rarefaction or 1-shock from the first characteristic family, and a 2-rarefaction or 2-shock from the second characteristic family. For any right end state $(v_R, u_R)$ with $v_R>0$, in the phase space $\{(v,u)|v>0,u\in \bbr\}$, the $i$-rarefaction curve $\mathcal{R}_i(v_R, u_R) \ (i=1,2)$ is defined by
\begin{equation} \label{rareC}
	\mathcal{R}_i(v_R, u_R) :=\left\lbrace (v, u) | u<u_R, u=u_R-\int_{v_R}^{v}\Lambda_i(s)ds\right\rbrace,
\end{equation}
and the $i$-shock curve $\mathcal{S}_i(v_R, u_R) \  (i=1,2)$ is defined by
\begin{equation} \label{shockC}
	\mathcal{S}_i(v_R, u_R) :=\left\lbrace  (v, u)| \exists s\in\mathbb{R}, {\rm \ s.t.\ }\begin{cases}
		 -s(v_R - v) - (u_R -u) =0, \\ -s(u_R-u) + (p(v_R)-p(v))=0,
	\end{cases} \Lambda_i(v) > s > \Lambda_i(v_R) \right\rbrace.
\end{equation}
We consider two incoming $2$-shocks in \eqref{ES}-\eqref{ESI},  denoted by $S_l, S_r$, respectively, where $S_l: (v_-, u_-) \in \mathcal{S}_2(v_*, u_*),$ is the 2-shock which connects $(v_-, u_-)$ as the left state and $(v_*, u_*)$ as the right state with the shock speed $s_l > 0$, while $S_r: (v_*, u_*) \in \mathcal{S}_2(v_+, u_+),$ is the 2-shock connecting the left state $(v_*, u_*)$ and the right state $(v_+, u_+)$ with the shock speed $s_r > 0$; see Figure \ref{fig:shock-interaction1} below.
Since the shock $S_l$ propagates faster than $S_r$, the left shock $S_l$ will inevitably overtake the right one $S_r$ and collide at some point $Q = (t_0, x_0)$ with $t_0=\frac{1}{s_l-s_r} > 0$.
After the collision time $t_0$, there is another intermediate state $(v^*, u^*)$ with $ (v^*, u^*) \in \mathcal{S}_2(v_+, u_+) $ and $(v_-, u_-) \in \mathcal{R}_1(v^*, u^*)$, which means that the overtaking interaction of two incoming 2-shocks $S_l$ and $S_r$ forms an outgoing 2-shock $S_2: (v^*, u^*) \in \mathcal{S}_2(v_+, u_+) $ followed by a 1-rarefaction wave $R_1: (v_-, u_-) \in \mathcal{R}_1(v^*, u^*)$.
\begin{figure}[h]
	\begin{tikzpicture}[scale = 1.3]
		\draw[->] (-1,0) -- (4,0) node[right] {$x$};
		\draw[->] (0,0) -- (0,4) node[above] {$t$};
		
		\coordinate (O) at (0,0);
		\coordinate (X0T0) at (2,2);
		
		\draw (O) -- (X0T0) node[midway,left] {$S_l$};
		\draw (1.8,0) -- (X0T0) node[midway,right] {$S_r$};
		\draw[thick] (X0T0) -- +(1,1.5) node[right] {$S_2$};
		\draw[thick] (X0T0) -- +(-1,1) node[above left] {$R_1$};
		\draw[thick] (X0T0) -- +(-0.8,1.2);
		\draw[thick] (X0T0) -- +(-1.2,0.8);
		
		\node[above right] at (0,2) {$(v_-,u_-)$};
		\node[above right] at (2.5,2) {$(v_+,u_+)$};
		\node[above] at (2,2.6) {$(v^*,u^*)$};
		\node[below right] at (2,2) {$(x_0,t_0)$};
		\node[below] at (0,0) {$0$};
		\node[below] at (1.8,0) {$1$};
		\node[above] at (-1,0) {$(v_-,u_-)$};
		\node[above] at (1,0) {$(v_*,u_*)$};
		\node[above] at (3,0) {$(v_+,u_+)$};
		
		\draw[dashed] (-1,2) -- (4,2);	
	\end{tikzpicture}
	\centering
	\caption{Entropy solution $(V, U)(t,x)$}
	\label{fig:shock-interaction1}
\end{figure}

More precisely, let $\delta_l := v_* - v_-$ be the strength of the left shock $S_l$, and $\delta_r := v_+ - v_*$ be the strength of the right shock $S_r$. In the present paper, we assume that $\delta_l \sim \delta_r \sim \delta$ to be suitably small with the same order.
Then the strength of the outgoing 2-shock $S_2$ is 
$$\delta_2 := v_+ - v^* = \delta_l + \delta_r + O(1) \delta_l\delta_r(\delta_l + \delta_r),$$
and the resulting 1-rarefaction wave $R_1$ has strength of third order:
$$\delta_1 := v^* - v_- = O(1) \delta_l\delta_r(\delta_l + \delta_r);$$
see \cite{Liu, Smoller1994, Bressan2000, CEJ} for the details.

Denote by $(V, U)(t, x)$  the entropy solution to \eqref{ES}-\eqref{ESI} for two interacting 2-shocks. Before the collision time $ t_0 $, $ (V, U)(t, x) $ is given by
\begin{equation} \label{shock-shock}
	(V, U)(t, x) =
	\begin{cases} 
		(v_-, u_-), & x < s_l t, \quad t \leq t_0, \\
		(v_*, u_*), & s_l t < x < s_r t + 1, \quad t \leq t_0, \\
		(v_+, u_+), & x > s_r t + 1, \quad t \leq t_0.
	\end{cases}
\end{equation}
Alternatively, \eqref{shock-shock} can be rewritten as
\begin{equation} \label{shock-shock-1}
	(V, U)(t, x) = (v^S_l, u^S_l)(x - s_l t) + (v^S_r, u^S_r)(x - s_r t -1) - (v_*, u_*),  \qquad t\leq t_0,
\end{equation}
where
\begin{equation*}
	(v^S_l, u^S_l)\left( x - s_l t \right) = \begin{cases}
		(v_-, u_-),     \quad x - s_l t < 0, \\
		(v_*, u_*),     \quad x - s_l t > 0,
	\end{cases}
\end{equation*}
and
\begin{equation*}
	(v^S_r, u^S_r)\left( x - s_r t - 1 \right) = \begin{cases}
		(v_*, u_*),     \quad x - s_r t - 1 < 0, \\
		(v_+, u_+),     \quad x - s_r t - 1 > 0,
	\end{cases}
\end{equation*}
and the shock speeds  $ s_l, s_r $ are uniquely determined by the Rankine-Hugoniot conditions
\[
\begin{cases} 
	s_l(v_* - v_-) = -(u_* - u_-), \\
	s_l(u_* - u_-) = p(v_*) - p(v_-),
\end{cases}
\quad \text{and} \quad
\begin{cases} 
	s_r(v_+ - v_*) = -(u_+ - u_*), \\
	s_r(u_+ - u_*) = p(v_+) - p(v_*),
\end{cases}
\]
and Lax entropy conditions
$$0 < \Lambda_2(v_+) < s_r < \Lambda_2(v_*) < s_l < \Lambda_2(v_-).$$
By direct calculations,  
the two 2-shocks $S_l$ and $S_r$ collide at the point $ Q (t_0, x_0) := \left( \frac{1}{s_l - s_r}, \frac{s_l}{s_l - s_r} \right) $.
After the collision time $t_{0}$, we need to solve the Riemann problem of the $p$-system \eqref{ES} at time $t = t_0$ with the Riemann data
\begin{equation} \label{ESI-t0}
(v, u)(t=t_{0}, x)=\begin{cases}
	(v_-, u_-), & x < x_0, \\
	(v_+, u_+), & x > x_0,
\end{cases}
\end{equation}
which consists of the superposition of 
a stronger outgoing 2-shock $S_2: (v^*, u^*) \in \mathcal{S}_2(v_+, u_+) $ and a weaker 1-rarefaction wave $R_1: (v_-,u_-) \in \mathcal{R}_1(v^*, u^*)$. 

Precisely, denote by $(v^S, u^S)(t,x):= (v^S, u^S)\left( \frac{x - x_0}{t - t_0} \right)$  the outgoing 2-shock $S_2$ given by 
\begin{equation} \label{shock2}
	(v^S, u^S)\left( \frac{x - x_0}{t - t_0}\right) = (v^S, u^S)((x - x_0) - s_2 (t - t_0)) = \begin{cases}
		(v^*, u^*),     \quad (x - x_0) - s_2 (t - t_0) < 0, \\
		(v_+, u_+),     \quad (x - x_0) - s_2 (t - t_0) > 0,
	\end{cases}
\end{equation}
with the shock speed $s_2=\sqrt{-\frac{p(v_+)-p(v^*)}{v_+-v^*}}>0$.
Denote by $(v^R,u^R)\left( \frac{x - x_0}{t - t_0} \right)$   the outgoing 1-rarefaction wave $R_1$ given explicitly by 
\begin{align} \label{rare1}
	\begin{aligned}
		&\Lambda_1(v^R) \left( \frac{x - x_0}{t - t_0} \right)  = w^R\left( \frac{x - x_0}{t - t_0} \right), \\
		&Z_1(v^R,u^R)\left( \frac{x - x_0}{t - t_0} \right) = Z_1(v_-,u_-) = Z_1(v^*, u^*),
	\end{aligned}	
\end{align}
where  $Z_i(v,u) := u + \int^{v} \Lambda_i(s)ds\ (i=1,2)$ is the $i$-Riemann invariant of the Euler system \eqref{ES} and
\begin{equation*}
	w^R\left( \frac{x - x_0}{t - t_0} \right) = \begin{cases}
		\Lambda_1(v_-), \qquad x - x_0 < \Lambda_1(v_-)(t - t_0), \\
		\frac{x - x_0}{t - t_0}, \qquad\quad \Lambda_1(v_-)(t - t_0) \leq x - x_0 \leq \Lambda_1(v^*)(t - t_0), \\
		\Lambda_1(v^*) , \qquad  x - x_0 > \Lambda_1(v^*)(t - t_0),
	\end{cases}
\end{equation*}
solves the Riemann problem of the inviscid Burgers equation:
\begin{equation} \label{burgers-riemann}
	\begin{cases}
		\displaystyle w_t + ww_{x} = 0,    \qquad \qquad \qquad \qquad \qquad \qquad x \in \bbr, ~t > t_0, \\
		\displaystyle w(t_0, x) = w_0^R(x) = \begin{cases}
			\Lambda_1(v_-),  \quad x < x_0, \\
			\Lambda_1(v^*),  \quad x > x_0.
		\end{cases}
	\end{cases}
\end{equation}

Therefore, after the collision time $t_0$, 
\begin{equation} \label{rare-shock}
	(V, U)(t, x) = \left( v^R, u^R \right)\left( \frac{x - x_0}{t - t_0} \right) + \left(v^S, u^S\right) \left(\frac{x - x_0}{t - t_0}\right) - \left( v^*,  u^*\right), \qquad t>t_0.
\end{equation}

\subsection{Main result}

Now we can state the main result as follows.
\begin{theorem} \label{theorem1}
Let $(V, U)(t, x)$ in Figure 1 be the entropy solution of the Euler system \eqref{ES}-\eqref{ESI} defined in \eqref{shock-shock-1} and \eqref{rare-shock}. There exist positive constants $\varepsilon_0$ and $\delta_0$, such that for any $\varepsilon \in (0, \varepsilon_0)$ and $\delta_l \sim \delta_r \leq \delta_0$, the Cauchy problem of compressible Navier-Stokes equations \eqref{NS} 
admits a family of global smooth solutions $(v^\varepsilon, u^\varepsilon)(t, x)$ satisfying
\begin{align*}
	\begin{cases}
		(v^\varepsilon - V, u^\varepsilon - U) \in C^0([0, +\infty), L^2(\bbr)), \\
		(v^\varepsilon_x, u^\varepsilon_x) \in C^0([0, +\infty), L^2(\bbr))\cap  L^2(0, +\infty, H^1(\bbr)). \\
	\end{cases}
\end{align*}
Moreover, as $\varepsilon \rightarrow 0+$, 
$(v^\varepsilon, u^\varepsilon)(t, x)$ converges to $(V, U)(t, x)$ in $L^p(\mathbb{R})$ for any  $p\in [2, +\infty)$ with the following 
convergence rate 
\begin{align*}
	&\|(v^\varepsilon - V, u^\varepsilon - U)(t, \cdot)\|_{L^p(\bbr)} \\
	&\leq \begin{cases} C (\delta_l + \delta_r)^\frac{1}{2} \varepsilon^\frac{1}{p},  &\text{\rm if} \quad 0 \leq t \leq t_0, \\
		C (\delta_l + \delta_r)^\frac{1}{2} \varepsilon^\frac{1}{p} + C (\delta_l + \delta_r) (t - t_0)^\frac{1}{2p} \varepsilon^\frac{1}{2p},  &\text{\rm if}\quad t \geq t_0,
	\end{cases} 		
\end{align*}
where the positive constant $C$ is independent of $\varepsilon, \delta_l, \delta_r$ and $t$, but may depend on $v_\pm, v_*, u_\pm, u_*$.
\end{theorem}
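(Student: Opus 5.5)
The proof splits into three time regimes. The first is $[0,T_\varepsilon]$, with $T_\varepsilon<t_0$ an approximate collision time. The second is a short transition layer $[T_\varepsilon,t_0]$. The third is $[t_0,\infty)$.

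\emph{Ansatz and scaling.} We rescale $(t,x)\mapsto (t/\varepsilon,x/\varepsilon)$, so that \eqref{NS} has unit viscosity and the collision time becomes $t_0/\varepsilon\sim (\varepsilon\delta)^{-1}$. For $t\le t_0$ we take as ansatz the superposition of the two viscous 2-shock profiles $(v^S_l,u^S_l)$ and $(v^S_r,u^S_r)$ with centers $s_lt$ and $s_rt+1$, minus $(v_*,u_*)$. The interaction error of this superposition is $O(\delta_l\delta_r)e^{-c\delta|x-s_lt|/\varepsilon}e^{-c\delta|x-s_rt-1|/\varepsilon}$ times derivatives, so it is exponentially small in $\delta(t_0-t)/\varepsilon$ while the shocks are separated.

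For the perturbation we use the anti-derivative $(\Phi,\Psi)(t,x)=\int_{-\infty}^x(v^\varepsilon-\tilde v,u^\varepsilon-\tilde u)\,dy$. The zero-mass condition is enforced by choosing the initial shock locations; exactly as in \cite{HWWY2015}, conservation then fixes the locations for all $t\le t_0$.

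\emph{Before collision.} The standard weighted energy estimate for $(\Phi,\Psi)$ in $H^2$ uses the compressibility $\tilde v_x>0$ of both 2-shocks, which gives a good term $\int |\tilde v_x|\Psi^2$. This estimate closes as long as the accumulated interaction error, of size $\int_0^t\|\text{error}\|$, remains $\ll$ the smallness of the a priori bound. Because the error grows as the shocks approach, the estimate can only be closed up to $T_\varepsilon=t_0-K\varepsilon|\log\varepsilon|/\delta$, the approximate collision time, with $K$ large. On $[0,T_\varepsilon]$ this gives $\|(\Phi,\Psi)\|_{L^\infty}\le C\varepsilon^{1/2}\cdot$small and $\|(\phi,\psi)\|_{L^2}\lesssim \delta^{1/2}\varepsilon^{1/2}$.

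On $[T_\varepsilon,t_0]$, an interval of length $O(\varepsilon|\log\varepsilon|/\delta)$, we abandon the anti-derivative. We run a direct $H^1$ Gronwall-type estimate for $(\phi,\psi)$, in which the error grows at most by a constant factor; here $K$ is chosen so that the losses are absorbed. We then compare with $(V,U)$. Since each viscous shock differs from the inviscid one by $O(\delta)$ on a layer of width $\varepsilon/\delta$, the $L^p$ distance is $O(\delta\,(\varepsilon/\delta)^{1/p})\le C\delta^{1/2}\varepsilon^{1/p}$. The same bound holds for the perturbation by interpolation of the $L^2$ and $H^1$ bounds, which gives the $t\le t_0$ rate.

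\emph{After collision.} At $t_0$ the data is a perturbation of a Riemann problem with end states $(v_\pm,u_\pm)$. We build the ansatz as a smoothed 1-rarefaction wave, obtained from the Burgers solution $w$ of \eqref{burgers-riemann} with $\varepsilon$-mollified data, superposed with a viscous 2-shock carrying a time-dependent shift $X(t)$. We then apply the $a$-contraction weighted relative entropy method of Kang--Vasseur. The weight $a$ varies across the shock by $O(\delta_2)$, and the shift ODE is chosen to cancel the bad hyperbolic term, so that a Poincaré-type inequality on the shock layer gives dissipation. The rarefaction contributes good terms from $w_x\ge0$, and its error terms are bounded by $C\delta_1(\varepsilon/(t-t_0))^{\cdot}$, which is integrable after the appropriate scaling.

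The estimate yields $\sup_t\|(v^\varepsilon-\tilde v,u^\varepsilon-\tilde u)\|_{L^2}^2\lesssim \delta\varepsilon$, together with $|X(t)|\lesssim$ small. The smoothed rarefaction differs from $R_1$ by $C\delta_1\varepsilon^{1/2p}(t-t_0)^{1/2p}$-type amounts in $L^p$, and the shift contributes $\delta\,|X(t)|^{1/p}$. Controlling $|X(t)|\lesssim (\varepsilon (t-t_0))^{1/2}$ produces the second term of the rate.

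\emph{Main obstacle.} The hardest step is the matching at the collision. The anti-derivative bounds degenerate as $t\to t_0$, and the perturbation at $T_\varepsilon$ or $t_0$ must be small enough in $H^1$, in the $\varepsilon$-scaled norms, to serve as initial data for the relative entropy argument. I would first try to choose $T_\varepsilon$ so that the merged pair of profiles at $t_0$ differs from a single viscous 2-shock plus an $O(\delta^3)$ rarefaction layer by an amount bounded in $H^1$, uniformly in $\varepsilon$.
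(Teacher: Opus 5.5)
Outside the transition layer your plan matches the paper's: anti-derivative estimates on the two-shock ansatz, then the $a$-contraction for a shifted viscous 2-shock plus a smoothed 1-rarefaction, with $|X|\lesssim(\varepsilon(t-t_0))^{1/2}$ giving the second term of the rate. The transition-layer step, however, fails as you set it up, because the collision time scale is wrong.

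For two shocks of the same family, $s_l-s_r=O(\delta)$. So at time $t<t_0$ the profiles are $(s_l-s_r)(t_0-t)\sim\delta(t_0-t)$ apart, while each has width $\varepsilon/\delta$. The interaction terms are therefore of size $e^{-c\delta^2(t_0-t)/\varepsilon}$ (cf. Lemma \ref{lem:shock-interact}), not $e^{-c\delta(t_0-t)/\varepsilon}$.

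In the direct estimate you propose on $[T_\varepsilon,t_0]$, the compressive term $\int|\bar u_x|\,p(v|\bar v)\,dx$ (from $\bar v_t\,p(v|\bar v)$ with $\bar v_t=\bar u_x<0$) is handled by Gronwall at rate $\|\bar u_x\|_{L^\infty}\sim\delta^2/\varepsilon$. Over an interval of length $K\varepsilon|\log\varepsilon|/\delta$ this produces the factor $\exp(CK\delta|\log\varepsilon|)=\varepsilon^{-CK\delta}$, which is unbounded as $\varepsilon\to0$. The smallness you bring into $T_\varepsilon$ is only $\varepsilon^{cK\delta}$, and nothing makes the decay constant $c$ beat the Gronwall constant $C$. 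So ``the error grows at most by a constant factor'' is false for your $T_\varepsilon$, and the constant in the rate would depend on $\varepsilon$. Correcting the power of $\delta$ but keeping the logarithm, $T_\varepsilon=t_0-K\varepsilon|\log\varepsilon|/\delta^2$, gives $\varepsilon^{-CK}$ against $\varepsilon^{cK}$, which is the same problem.

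The remedy is the paper's approximate collision time $t_0-A\varepsilon/\delta^2$, with $A$ a fixed large constant and no logarithm.
\begin{itemize}
\item Before it, the interaction errors carry a factor $e^{-cA}$. That is all the anti-derivative closure needs: the a priori bound is $\|\Psi\|_{L^\infty}\le e^{-c\sqrt{A}}\varepsilon$, and no power of $\varepsilon$ is required.
\item The remaining layer has length $A\varepsilon/\delta^2$, so your Gronwall step costs only $e^{CA}$.
\item The forcing from the overlapping profiles adds $O_A(\delta\varepsilon)$ to $\|(\phi,\psi)\|^2$ and $O_A(\delta\varepsilon^{-1})$ to $\|(\phi_x,\psi_x)\|^2$. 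This is compatible with the post-collision a priori assumptions once $\delta\le\delta_0(A)$.
\end{itemize}

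With that correction your route is a legitimate variant of the paper's. The paper switches ansatz already at $\tau=-A\varepsilon/\delta^2$, issuing the rarefaction from $y=-s_2A\varepsilon/\delta^2$ at that time, and runs the $a$-contraction straight through the true collision, so it needs no Gronwall. Its price is that on $[-A\varepsilon/\delta^2,0]$ its ansatz does not match the two-shock $(V,U)$. This costs $O(A\delta\varepsilon)$ in $L^2$-squared, because the inviscid shocks are only $O(A\varepsilon/\delta)$ apart there.

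You instead keep the matching ansatz up to $t_0$ and pay $e^{CA}$. The switch at $t_0$ is then a direct computation: the merged profiles differ from the single shock plus rarefaction by $O(\delta\varepsilon)$ in $L^2$-squared and $O(\delta^3\varepsilon^{-1})$ in $\dot H^1$-squared. It is not something to be arranged through the choice of $T_\varepsilon$.

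One further correction: the weight \eqref{weight} must vary by $\lambda$ with $\delta_2\ll\lambda\lesssim\sqrt{\delta_2}$, not by $O(\delta_2)$. Otherwise $\int a_y(\tilde\psi+s_2\tilde\phi)^2\,dy$ cannot absorb the terms $\int (v^s)_y(\tilde\psi+s_2\tilde\phi)^2\,dy$, which the paper controls using $\delta_2/\lambda\ll1$.
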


\begin{remark}
	The methodology developed in this paper may be extended to two interacting shocks from the same or different characteristic families for non-isentropic compressible Navier-Stokes equations, although the wave interactions will be much more complicated and the contact discontinuity will be involved, which are left for future study.
\end{remark}

\begin{remark}
	For simplicity of presentation, we state and prove our main results in the Lagrangian coordinates; the corresponding results in the Eulerian coordinates hold as well.
\end{remark}

\begin{remark}
	We note that Chen and Perepelitsa \cite{CP} proved the convergence of solutions of the Navier-Stokes equations to a weak solution of the Euler equations by using the compensated compactness method. Our result in Theorem \ref{theorem1} shows that for a given entropy solution defined in \eqref{shock-shock-1} and \eqref{rare-shock}, we can construct a family of smooth solutions of the Navier-Stokes equations that tend to this entropy solution of the Euler equations with the convergence rate as the viscosity vanishes.
\end{remark}

\subsection{Main difficulties and the key ideas of the proof}

Compared with the case of two interacting shocks from different characteristic families in \cite{HWWY2015}, the collision time for two shocks from the same family is in a larger time scale that depends inversely on the wave strength, that is $t_0= \frac{1}{s_l - s_r}\sim\frac{1}{\delta_l}\sim\frac{1}{\delta_r}$, while the collision time $t_0=O(1)$ for the two shocks from different characteristic families. Therefore, the overtaking interaction considered in the present paper is stronger and more singular, which poses several critical difficulties for justifying the vanishing viscosity limit not only in the processes before and after shock collisions, but more crucially, at the collision point:

\begin{itemize}
\item {\bf Pre-collision phase:} The standard energy estimates cannot be closed uniformly in $\varepsilon$ up to the collision time $t_0$, as the cumulative effect of the wave interaction over the long time interval $[0, t_0 ]$ produces uncontrollable error terms.

\item {\bf Post-collision phase:} After collision, the wave pattern simultaneously contains both an outgoing shock and a rarefaction wave, requiring a delicate treatment of the composite wave structure.

\item {\bf Collision point:} The collision itself is highly singular, and the transition from two shocks to a shock-rarefaction composite must be carefully resolved.
\end{itemize}
To resolve these issues, we develop novel ingredients in our proof as follows.

\begin{itemize}
\item {\bf Approximation of collision time:} We decompose the whole time interval $\mathbb{R}^+$ into two distinct subintervals based on the collision time and introduce different techniques on each. Precisely, before the collision, we employ the anti-derivative method to ensure both viscous shock locations can be fixed up to the collision point $t_0$. However, the uniform $H^2$-estimates of the anti-derivative perturbations with respect to the viscosity can not be closed on the full pre-collision time interval $[0, t_0]$. Instead, the closure of these estimates is only guaranteed on the carefully constructed subinterval $\left[0, t_0 - A\frac{\varepsilon}{\delta^2}\right]$ with $A>0$ taken to be a suitably large constant. This motivates us to introduce an approximate collision time $t_0 - A\frac{\varepsilon}{\delta^2}$, so that we are able to complete the required energy bounds on the subinterval $\left[0, t_0 - A\frac{\varepsilon}{\delta^2}\right]$ in the space $H^2(\mathbb{R})$.

\item {\bf Treatment of composite wave:} After the approximate collision time $t_0 - A\frac{\varepsilon}{\delta^2}$, we always use the composite wave of the viscous shock and smooth rarefaction wave as the approximate wave pattern. Due to the appearance of both shock and rarefaction waves, we apply the relative entropy framework combined with the $a$-contraction method and the time-dependent shifts to obtain the uniform estimates with respect to the viscosity on the time interval $\left[t_0 - A\frac{\varepsilon}{\delta^2}, +\infty\right)$, which is partially inspired by Kang, Vasseur and Wang \cite{KVW} for the time-asymptotic stability of the superposition of viscous shock and rarefaction waves. 

\item {\bf Convergence estimates on the whole time interval $[0, +\infty)$:} In the rigorous verification of the vanishing viscosity limit, we decompose the full time interval $\mathbb{R}^+$ into three distinct subintervals. On the two outer intervals $\left[0, t_0 - A\frac{\varepsilon}{\delta^2}\right]$ and $[t_0, +\infty)$, the constructed approximate wave pattern is well matched to the exact entropy solution. By contrast, such a matching property no longer holds on the intermediate transition interval $\left[t_0 - A\frac{\varepsilon}{\delta^2}, t_0\right]$. Nevertheless, this transition interval has a width of order $O(\varepsilon)$, and through a delicate analysis, we show that the error between the approximate composite wave (consisting of viscous shock and smooth rarefaction waves) and the exact entropy solution \eqref{shock-shock-1} tends to zero with the desired convergence rate as $\varepsilon\to 0$. 
\end{itemize}
Finally, we succeed in justifying the vanishing viscosity limit on the whole time interval $\mathbb{R}^+$ and obtain a uniform convergence rate with respect to the viscosity coefficient in $L^p$ norm with $p\in [2, +\infty)$.

The rest of this paper is organized as follows.
In Section 2, we first construct the viscous shock waves and the smooth approximate rarefaction wave associated with the Euler system \eqref{ES}, and collect several known properties of these wave structures. Then we construct the approximate wave patterns before and after collision, and the shift function for the viscous shock.
Then in Section 3, we prove the global existence of a solution to \eqref{NS}  based on the uniform  {\it a priori} estimates before and after collision.
In Section 4, we prove our main Theorem \ref{theorem1}. 
In Section 5, we prove the {\it a priori} estimates before the collision time by the anti-derivative techniques.
In Section 6, we carry out the detailed {\it a priori} estimates after the collision by using  $a$-contraction method with time-dependent shifts based on the relative entropy.

{\bf Notation.} In this paper, $\|\cdot\|_l \ (l\in \mathbb{N})$ denotes the usual Sobolev norm for $H^l(\bbr)$, $\|\cdot\| := \|\cdot\|_0$ for $L^2(\bbr) := H^0(\bbr)$ and $\|\cdot\|_{L^p} \ (1\leq p\leq +\infty)$ the $L^p(\bbr)$ norm. And
$C$   denotes  a generic positive constant that does not depend on $\varepsilon, \delta_l, \delta_r$ or $t$, but may depend on $(v_\pm, u_\pm)$ and $(v_*, u_*)$.

\bigskip

\section{Approximate Wave Profile}
\setcounter{equation}{0}

In this section, we approximate the inviscid shock and rarefaction waves and their superpositions in the context of compressible Navier-Stokes equations \eqref{NS}.

First, for the sake of computational and expository convenience, we introduce a variable transformation:
\begin{equation} \label{new variable}
	\tau := t - t_0, \quad y := x - x_0.
\end{equation}
Under new variables $(\tau, y)$, the initial time is $\tau = -t_0$, and the collision time is $\tau = 0$.
Subsequently, we reformulate the problem in the new variables $(\tau, y)$.
For notational simplicity, we denote the solution $(v^\varepsilon, u^\varepsilon)(t,x)$ under the new variables $(\tau, y)$ as $(v, u)(\tau, y)$. Then the new unknown functions $(v, u)(\tau, y)$ satisfy the system
\begin{equation} \label{NS new}
	\begin{cases}
		\displaystyle v_\tau - u_y = 0,       \qquad \qquad \qquad y \in \bbr, ~\tau \geq -t_0, \\
		\displaystyle u_\tau + p(v)_y = \varepsilon \left(\frac{u_y}{v}\right)_y.
	\end{cases}
\end{equation}
And the entropy solution $(V, U)(t, x)$ of \eqref{ES}-\eqref{ESI} under the new variables $(\tau, y)$ is  denoted by $(V, U)(\tau, y)$. 
Before the collision time, i.e. $-t_0 \leq \tau \leq 0$, by  \eqref{shock-shock}  and  \eqref{shock-shock-1}, 
 one has
\begin{equation} \label{shock-shock new}
	(V, U)(\tau, y) = (v^S_l, u^S_l)(y-s_l\tau) + (v^S_r, u^S_r)(y-s_r\tau) - (v_*, u_*),
\end{equation}
where
\begin{equation*}
	(v^S_l, u^S_l)\left( y - s_l \tau \right) = \begin{cases}
		(v_-, u_-),     \quad y - s_l \tau < 0, \\
		(v_*, u_*),     \quad y - s_l \tau > 0,
	\end{cases}
\end{equation*}
and
\begin{equation*}
	(v^S_r, u^S_r)\left( y - s_r \tau \right) = \begin{cases}
		(v_*, u_*),     \quad y - s_r \tau < 0, \\
		(v_+, u_+),     \quad y - s_r \tau > 0.
	\end{cases}
\end{equation*}
After the collision time, i.e. $\tau > 0$, by \eqref{shock2}, \eqref{rare1}, and \eqref{rare-shock}, one has
\begin{align}
	\begin{aligned} \label{rare-shock new}
		(V, U)(\tau, y) &= \left( v^R, u^R \right)\left( \frac{y}{\tau} \right) + \left(v^S, u^S\right) \left(\frac{y}{\tau}\right) - \left( v^*,  u^*\right) \\
		&= \left( v^R, u^R \right)\left( \frac{y}{\tau} \right) + \left(v^S, u^S\right)(y - s_2\tau) - \left( v^*,  u^*\right),
	\end{aligned}	
\end{align}
where $\left( v^R, u^R \right)\left( \frac{y}{\tau} \right)$ is given by \eqref{rare1} and $\left(v^S, u^S\right)(y - s_2\tau)$ is given by \eqref{shock2}.

In the course of the proof, we observe that the uniform {\it a priori} estimates with respect to the viscosity can not be closed on the full pre-collision time interval $[-t_0, 0]$. Instead, the closure of these estimates is only guaranteed on the carefully constructed subinterval $\left[-t_0, - A\frac{\varepsilon}{\delta^2}\right]$ with $A>0$ taken to be a suitably large constant. This motivates us to introduce an approximate collision time $\tau = - A\frac{\varepsilon}{\delta^2}$ for the collision time $\tau =0$, so that we are able to complete the required energy bounds on the subinterval $\left[-t_0, - A\frac{\varepsilon}{\delta^2}\right]$.

In the rest of the paper, we carry out the estimates and analysis under the new variable $(\tau, y)$.
To begin with, it is necessary to provide smooth approximations for the wave pattern $(V, U)(\tau, y)$ before and after the approximate collision time. Specifically, the smooth approximations of the shock waves $(v^S_l, u^S_l)\left( y - s_l \tau \right), (v^S_r, u^S_r)\left( y - s_r \tau \right), \left(v^S, u^S\right)(y - s_2\tau)$ and the rarefaction wave $\left( v^R, u^R \right)\left( \frac{y}{\tau} \right)$ are presented as follows.

\subsection{Viscous shock wave}

We first consider the left 2-viscous shock wave $(v^s_l, u^s_l)(y-s_l\tau)$ connecting the states $(v_-, u_-)$ and $(v_*, u_*)$ with $(v_-, u_-) \in S_2(v_*, u_*)$, which satisfies the following system:
\begin{equation}  \label{shock smoothL}
	\begin{cases}
		\displaystyle -s_lv^s_{ly} - u^s_{ly} = 0, \\
		\displaystyle - s_lu^s_{ly} + p(v^s_l)_y = \varepsilon \left(\frac{u^s_{ly}}{v^s_l}\right)_y,
	\end{cases}
\end{equation}
along with the far-field conditions:
$$(v^s_l, u^s_l)(-\infty)=(v_-, u_-), \qquad (v^s_l, u^s_l)(+\infty)=(v_*, u_*).$$

Similarly, the right 2-viscous shock wave $(v^s_r, u^s_r)(y-s_r\tau)$ connecting $(v_*, u_*)$ and $(v_+, u_+)$ with $(v_*, u_*) \in S_2(v_+, u_+)$ satisfies
\begin{equation}  \label{shock smoothR}
	\begin{cases}
		\displaystyle -s_r v^s_{ry} - u^s_{ry} = 0, \\
		\displaystyle - s_r u^s_{ry} + p(v^s_r)_y = \varepsilon\left(\frac{u^s_{ry}}{v^s_r}\right)_y,
	\end{cases}
\end{equation}
along with the far-field conditions:
$$(v^s_r, u^s_r)(-\infty)=(v_*, u_*), \qquad (v^s_r, u^s_r)(+\infty)=(v_+, u_+).$$

Finally, the outgoing 2-viscous shock wave $(v^s, u^s)(y-s_2\tau)$ connecting $(v^*, u^*)$ and $(v_+, u_+)$ with $(v^*, u^*) \in S_2(v_+, u_+)$ satisfies
\begin{equation}  \label{shock smooth2}
	\begin{cases}
		\displaystyle -s_2v^s_y - u^s_y = 0, \\
		\displaystyle - s_2u^s_y + p(v^s)_y = \varepsilon \left(\frac{u^s_y}{v^s}\right)_y,
	\end{cases}
\end{equation}
along with the far-field conditions:
$$(v^s, u^s)(-\infty)=(v^*, u^*), \qquad (v^s, u^s)(+\infty)=(v_+, u_+).$$

The properties of the aforementioned 2-viscous shock waves are summarized in the following lemma. We refer the reader to \cite{KM} for the detailed proof.

\begin{lemma}\label{lemma-shock}
	
	For any given end state $(v_-, u_-) \in S_2(v_*, u_*)$, let $\delta_l := v_* - v_- \sim u_- - u_*$ denote the shock wave strength. Then, there exists a unique (up to a constant shift) solution $(v^s_l, u^s_l)(y-s_l \tau)$ to \eqref{shock smoothL}. Furthermore, this solution satisfies
	$$
	v^s_{ly} > 0, \quad u^s_{ly} = -s_l v^s_{ly} < 0,
	$$
	and
	\begin{align*}
		|(v^s_l-v_-, u^s_l-u_-)(y-s_l \tau)| &\leq C\delta_l e^{-C\frac{\delta_l}{\varepsilon}|y-s_l \tau|}, && \forall y-s_l \tau<0,\\[1mm]
		|(v^s_l-v_*, u^s_l-u_*)(y-s_l \tau)| &\leq C\delta_l e^{-C\frac{\delta_l}{\varepsilon}|y-s_l \tau|}, && \forall y-s_l \tau>0,\\[1mm]
		|(v^s_{ly}, u^s_{ly})(y-s_l \tau)| &\leq C\frac{\delta_l^2}{\varepsilon} e^{-C\frac{\delta_l}{\varepsilon}|y-s_l \tau|}, && \forall y\in\mathbb{R},\\[1mm]
		|(v^s_{lyy}, u^s_{lyy})(y-s_l \tau)| &\leq C\frac{\delta_l}{\varepsilon}|(v^s_{ly}, u^s_{ly})(y-s_l \tau)|, && \forall y\in\mathbb{R}.
	\end{align*}
	
	Similarly, for any given end state $(v_*, u_*) \in S_2(v_+, u_+)$, let $\delta_r := v_+ - v_* \sim u_* - u_+$ denote the shock wave strength. Then, there exists a unique (up to a constant shift) solution $(v^s_r, u^s_r)(y-s_r \tau)$ to \eqref{shock smoothR}. Furthermore, this solution satisfies
	$$
	v^s_{ry} > 0, \quad u^s_{ry} = -s_r v^s_{ry} < 0,
	$$
	and
	\begin{align*}
		|(v^s_r-v_*, u^s_r-u_*)(y-s_r \tau)| &\leq C\delta_r e^{-C\frac{\delta_r}{\varepsilon}|y-s_r \tau|}, && \forall y-s_r \tau<0,\\[1mm]
		|(v^s_r-v_+, u^s_r-u_+)(y-s_r \tau)| &\leq C\delta_r e^{-C\frac{\delta_r}{\varepsilon}|y-s_r \tau|}, && \forall y-s_r \tau>0,\\[1mm]
		|(v^s_{ry}, u^s_{ry})(y-s_r \tau)| &\leq C\frac{\delta_r^2}{\varepsilon} e^{-C\frac{\delta_r}{\varepsilon}|y-s_r \tau|}, && \forall y\in\mathbb{R},\\[1mm]
		|(v^s_{ryy}, u^s_{ryy})(y-s_r \tau)| &\leq C\frac{\delta_r}{\varepsilon}|(v^s_{ry}, u^s_{ry})(y-s_r \tau)|, && \forall y\in\mathbb{R}.
	\end{align*}
	
	Finally, for any given end state $(v^*, u^*) \in S_2(v_+, u_+)$, let $\delta_2 := v_+ - v^* \sim u^* - u_+$ denote the shock wave strength. Then, there exists a unique (up to a constant shift) solution $(v^s,u^s)(y-s_2 \tau)$ to \eqref{shock smooth2}. Furthermore, this solution satisfies
	$$
	v_y^s > 0, \quad u^s_y = -s_2 v_y^s < 0,
	$$
	and
	\begin{align*}
		|(v^s-v^*, u^s-u^*)(y-s_2 \tau)| &\leq C\delta_2 e^{-C\frac{\delta_2}{\varepsilon}|y-s_2 \tau|}, && \forall y-s_2 \tau<0,\\[1mm]
		|(v^s-v_+, u^s-u_+)(y-s_2 \tau)| &\leq C\delta_2 e^{-C\frac{\delta_2}{\varepsilon}|y-s_2 \tau|}, && \forall y-s_2 \tau>0,\\[1mm]
		|(v^s_y, u^s_y)(y-s_2 \tau)| &\leq C\frac{\delta_2^2}{\varepsilon} e^{-C\frac{\delta_2}{\varepsilon}|y-s_2 \tau|}, && \forall y\in\mathbb{R},\\[1mm]
		|(v^s_{yy}, u^s_{yy})(y-s_2 \tau)| &\leq C\frac{\delta_2}{\varepsilon}|(v^s_y, u^s_y)(y-s_2 \tau)|, && \forall y\in\mathbb{R}.
	\end{align*}
\end{lemma}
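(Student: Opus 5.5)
The three cases of Lemma \ref{lemma-shock} are identical up to notation, so we treat a generic 2-viscous shock with left state $(v_L,u_L)$, right state $(v_R,u_R)$, speed $s>0$ and strength $\delta=v_R-v_L>0$. The plan is to reduce \eqref{shock smoothL} (or \eqref{shock smoothR}, \eqref{shock smooth2}) to a scalar first-order ODE and read off every property from the phase line.

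\textbf{Reduction to a scalar ODE.} Write $\xi=y-s\tau$ and integrate both equations over $(-\infty,\xi)$ using the far-field conditions. The first equation gives $u=u_L-s(v-v_L)$, which is the identity $u^s_\xi=-sv^s_\xi$. The second gives
\[
\varepsilon\frac{u_\xi}{v}=-s(u-u_L)+p(v)-p(v_L).
\]
Substituting $u_\xi=-sv_\xi$ yields
\[
\varepsilon s\,\frac{v_\xi}{v}=-h(v),\qquad h(v):=s^2(v-v_L)+p(v)-p(v_L).
\]
The Rankine--Hugoniot conditions give $h(v_L)=h(v_R)=0$. Since $p$ is strictly convex, $h$ is strictly convex, so $h<0$ on $(v_L,v_R)$. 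Hence $v_\xi=-vh(v)/(\varepsilon s)>0$ there.

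\textbf{Existence, uniqueness and monotonicity.} The scalar autonomous ODE has exactly two equilibria $v_L<v_R$, and the vector field is positive between them. Every solution starting in $(v_L,v_R)$ is therefore a heteroclinic orbit from $v_L$ to $v_R$. Uniqueness up to translation follows from autonomy and phase-line uniqueness. This gives $v^s_\xi>0$ and $u^s_\xi=-sv^s_\xi<0$.

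\textbf{Quantitative bounds.} Rescale $\xi=\varepsilon\eta/\delta$ and $v=v_L+\delta w$, $w\in(0,1)$. The Lax condition makes $s^2-\Lambda_2(v_L)^2=O(\delta)$, and a Taylor expansion gives
\[
h(v)=\tfrac12 p''(v_L)\,\delta^2 w(w-1)\bigl(1+O(\delta)\bigr),
\]
where we use that $s^2+p'(v_L)\approx-\tfrac12 p''\delta$. Thus
\[
w_\eta=c\,w(1-w)\bigl(1+O(\delta)\bigr),\qquad c>0,
\]
uniformly in $\delta$ small. This is a perturbed logistic equation. Near $w=0$ it gives $w\le Ce^{c'\eta}$ as $\eta\to-\infty$, and near $w=1$ it gives $1-w\le Ce^{-c'\eta}$ as $\eta\to+\infty$. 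Returning to $\xi$, these become $|v-v_L|\le C\delta e^{-C\delta|\xi|/\varepsilon}$ for $\xi<0$ and the analogous bound at $v_R$ for $\xi>0$, once the shift is normalized, say by $w(0)=1/2$. The same bounds hold for $u$ by the linear relation between $u$ and $v$.

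Next, $v_\xi=(\delta^2/\varepsilon)\,w_\eta\approx(\delta^2/\varepsilon)\,c\,w(1-w)$, which is bounded by $C(\delta^2/\varepsilon)e^{-C\delta|\xi|/\varepsilon}$. For the second derivative, differentiate the ODE:
\[
v_{\xi\xi}=-\frac{(vh)'(v)}{\varepsilon s}\,v_\xi .
\]
Since $(vh)'=O(\delta)$ on $[v_L,v_R]$ (because $h'=O(\delta)$ there and $h=O(\delta^2)$), this gives $|v_{\xi\xi}|\le C(\delta/\varepsilon)|v_\xi|$. The bound transfers to $u$ in the same way.

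\textbf{Main obstacle.} The delicate point is making every constant uniform in small $\delta$. Concretely, one must show that the $O(\delta)$ corrections in the logistic form do not degrade the exponential rates. This follows from $C^3$ smoothness of $p$ near $v_L$ and the fact that $s$ depends smoothly on the end states. Once this is checked, the statement applies verbatim to $(v^s_l,u^s_l)$, $(v^s_r,u^s_r)$ and $(v^s,u^s)$.
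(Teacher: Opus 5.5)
Your proposal is correct and follows the standard argument the paper relies on. The paper gives no proof of this lemma and defers to \cite{KM}, where one likewise integrates the traveling-wave system once, reduces it to the scalar ODE $\varepsilon s\,v_\xi=-v\,h(v)$, uses convexity of $p$ for the sign and phase-line uniqueness, and rescales $\xi=\varepsilon\eta/\delta$ to obtain $\delta$-uniform exponential decay.

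One small justification should be tightened. The multiplicative form $h(v)=\tfrac12 p''(v_L)\delta^2 w(w-1)(1+O(\delta))$ does not follow from a Taylor expansion at $v_L$ alone. That expansion leaves an additive $O(\delta^3 w)$ error, which is not controlled by $w(1-w)$ near $w=1$, and you need that control for the decay toward $v_R$. The form does follow immediately from the exact divided-difference identity $h(v)=\tfrac12 p''(\zeta)(v-v_L)(v-v_R)$ with $\zeta\in[v_L,v_R]$, which holds because $h(v_L)=h(v_R)=0$.
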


\subsection{Smooth approximate rarefaction wave}

We now construct the smooth approximate 1-rarefaction wave $(v^r, u^r)(\tau, y)$ by
\begin{align}
	\begin{aligned} \label{rare smooth}
		&\Lambda_1(v^r)(\tau, y) = w^r\left(\tau + A\frac{\varepsilon}{\delta^2}, y + s_2 A\frac{\varepsilon}{\delta^2} \right), \\
		&Z_1(v^r, u^r)(\tau, y) = Z_1(v_-, u_-) = Z_1(v^*, u^*),
	\end{aligned}
\end{align}
where $w^r(\tau, y)$ is the smooth solution to the Burgers equation:
\begin{equation*}
	\begin{cases}
		\displaystyle w_\tau + w w_y = 0, \\
		\displaystyle w(0, y) = w^r_0(y) = \frac{\Lambda_1(v^*) + \Lambda_1(v_-)}{2} + \frac{\Lambda_1(v^*) - \Lambda_1(v_-)}{2} \tanh \frac{y}{\kappa},
	\end{cases}
\end{equation*}
with $\kappa > 0$ being a small parameter depending on $\varepsilon$. In fact, throughout this paper, we choose $\kappa = \varepsilon$. One can verify that the approximate rarefaction wave $(v^r, u^r)(\tau, y)$ constructed above satisfies the following compressible Euler system:
\begin{equation} \label{euler rare}
	\begin{cases}
		\displaystyle v^r_\tau - u^r_y = 0,      \qquad \qquad y \in \bbr, ~\tau \geq -A\frac{\varepsilon}{\delta^2}, \\
		\displaystyle u^r_\tau + p(v^r)_y = 0,
	\end{cases}
\end{equation}
along with the initial data $(v_0^r, u_0^r)(y) := (v^r, u^r)\left(-A\frac{\varepsilon}{\delta^2}, y\right)$.

The main properties of the approximate 1-rarefaction wave are summarized in the following lemma (we refer to \cite{X-1} for the detailed proof).
\begin{lemma} \label{lemma-rare}
	Let $\delta_1 := v^* - v_- \sim u^* - u_-$ denote the strength of the rarefaction wave. Then, the smooth approximate 1-rarefaction wave $(v^r, u^r)(\tau, y)$ constructed in \eqref{rare smooth} satisfies the following properties:
	\begin{enumerate}
		\item $u^r_y = \frac{2}{\gamma + 1}w_y > 0$ and $v^r_y = \frac{1}{\sqrt{-p'(v^r)}}u^r_y > 0$ for all $y \in \bbr$ and $\tau \geq -A\frac{\varepsilon}{\delta^2}$.
		
		\item For all $\tau > -A\frac{\varepsilon}{\delta^2}$ and $p \in [1, + \infty]$, the following estimates hold:
		\begin{align*}
			\|(v^r_y, u^r_y)\|_{L^p} &\leq C \min\left( \delta_1\kappa^{-1+\frac{1}{p}}, \delta_1^\frac{1}{p} \left(\tau + A\frac{\varepsilon}{\delta^2}\right)^{-1 + \frac{1}{p}} \right), \\
			\|(v^r_{yy}, u^r_{yy})\|_{L^p} &\leq C \min\left( \delta_1\kappa^{-2 + \frac{1}{p}}, \kappa^{-1+\frac{1}{p}} \left(\tau + A\frac{\varepsilon}{\delta^2}\right)^{-1} \right), \\
			|u^r_{yy}(\tau, y)| &\leq C u^r_y(\tau, y).
		\end{align*}
		
		\item For any $y + s_2 A\frac{\varepsilon}{\delta^2} \geq 0$ and $\tau \geq -A\frac{\varepsilon}{\delta^2}$, it holds that:
		\begin{align*}
			|(v^r, u^r)(\tau, y) - (v^*, u^*)| &\leq C \delta_1 e^{-\frac{2}{\kappa}\left(|y + s_2 A\frac{\varepsilon}{\delta^2}| + |\Lambda_1(v^*)| \left(\tau + A\frac{\varepsilon}{\delta^2}\right)\right)}, \\
			|(v^r_y, u^r_y)(\tau, y)| &\leq C \delta_1 \kappa^{-1} e^{-\frac{2}{\kappa}\left(|y + s_2 A\frac{\varepsilon}{\delta^2}| + |\Lambda_1(v^*)| \left(\tau + A\frac{\varepsilon}{\delta^2}\right)\right)}.
		\end{align*}
		
		\item For any $y + s_2 A\frac{\varepsilon}{\delta^2} \leq \Lambda_1(v_-) \left(\tau + A\frac{\varepsilon}{\delta^2}\right)$ and $\tau \geq -A\frac{\varepsilon}{\delta^2}$, it holds that:
		\begin{align*}
			|(v^r, u^r)(\tau, y) - (v_-, u_-)| &\leq C \delta_1 e^{-\frac{2}{\kappa}\left|y + s_2 A\frac{\varepsilon}{\delta^2} - \Lambda_1(v_-) \left(\tau + A\frac{\varepsilon}{\delta^2}\right)\right|}, \\
			|(v^r_y, u^r_y)(\tau, y)| &\leq C \delta_1 \kappa^{-1} e^{-\frac{2}{\kappa}\left|y + s_2 A\frac{\varepsilon}{\delta^2} - \Lambda_1(v_-) \left(\tau + A\frac{\varepsilon}{\delta^2}\right)\right|}.
		\end{align*}
	\end{enumerate}
\end{lemma}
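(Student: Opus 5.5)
The statement is Lemma~\ref{lemma-rare}, so I would prove it by reducing everything to the smooth solution $w^r$ of the Burgers problem and then transferring the bounds through the Riemann invariant relation \eqref{rare smooth}. Set $s=\tau+A\frac{\varepsilon}{\delta^2}\ge 0$ and $z=y+s_2A\frac{\varepsilon}{\delta^2}$, so that $(v^r,u^r)(\tau,y)$ is a fixed smooth function of $w^r(s,z)$. Since $w^r_0$ is increasing ($\Lambda_1(v^*)>\Lambda_1(v_-)$ because $v^*>v_-$), characteristics never cross. Hence $w^r$ is smooth and given implicitly by $w^r(s,z)=w^r_0(z_0)$ with $z=z_0+w^r_0(z_0)s$. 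Differentiating gives
\[
w^r_z=\frac{w^r_{0}{}'(z_0)}{1+s\,w^r_{0}{}'(z_0)}>0 .
\]
From $Z_1(v^r,u^r)=$ const we get $u^r_y=-\Lambda_1(v^r)v^r_y=\sqrt{-p'(v^r)}\,v^r_y$. Differentiating $\Lambda_1(v^r)=w$ in $y$, and using $p=v^{-\gamma}$, expresses $u^r_y$ as $\frac{2}{\gamma+1}w_y$. This proves item (1).

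For item (2) I would use the classical argument of Xin \cite{X-1}. From the formula for $w^r_z$ we get $w^r_z\le \min(\|w^r_{0}{}'\|_{L^\infty},1/s)$, with $\|w^r_{0}{}'\|_{L^\infty}\sim\delta_1/\kappa$, and $\|w^r_z\|_{L^1}=\Lambda_1(v^*)-\Lambda_1(v_-)\sim\delta_1$. Interpolating the $L^1$ and $L^\infty$ bounds gives the $L^p$ estimate. For second derivatives, differentiate once more in $z$:
\[
w_{zz}=\frac{w_0''}{(1+sw_0')^3}.
\]
Since $|w_0''|\le C\kappa^{-1}w_0'$ for the $\tanh$ profile, we obtain $|w_{zz}|\le C\kappa^{-1}w_z\min(1,(sw_0')^{-1}\cdots)$, and integrating gives the two stated bounds. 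The pointwise inequality $|u^r_{yy}|\le Cu^r_y$ is the step I expect to need care, because it is not scale invariant in $\kappa$. I would take it from the reference as stated, with the constant allowed to absorb the normalization $\kappa=\varepsilon$. The chain-rule terms from converting $w$ to $(v^r,u^r)$ are quadratic, namely $w_z^2\le w_z\|w_z\|_\infty$, and are handled the same way.

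For items (3) and (4), consider the region $z\ge0$. There the characteristic foot satisfies $z_0=z-w_0(z_0)s\ge z+|\Lambda_1(v^*)|s\cdot(1-O(\delta_1))$, because $w_0<0$ since $\Lambda_1<0$. Then
\[
|w-\Lambda_1(v^*)|=|w_0(z_0)-\Lambda_1(v^*)|\le C\delta_1 e^{-2z_0/\kappa},
\]
and $w_z\le w_0'(z_0)\le C\delta_1\kappa^{-1}e^{-2z_0/\kappa}$, which gives item (3) up to the harmless constant in the exponent. The left region, $z\le\Lambda_1(v_-)s$, is symmetric: there $z_0\le z-\Lambda_1(v_-)s\le 0$. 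Lipschitz dependence of $(v^r,u^r)$ on $w$ finishes both items.
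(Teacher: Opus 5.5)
Your route, solving the Burgers problem by characteristics and transferring through the $1$-Riemann invariant, is exactly Xin's argument. That is all the paper relies on here: it gives no proof and only cites \cite{X-1}. Your $L^p$ bounds and items (3)--(4) go through, using $w_z=w_0'(z_0)/(1+s w_0'(z_0))$ and $w_{zz}=w_0''(z_0)/(1+s w_0'(z_0))^3$. You can also drop the hedge about the exponent. Since $w_0<\Lambda_1(v^*)<0$, the foot of the characteristic satisfies $z_0=z+s|w_0(z_0)|\ge z+s|\Lambda_1(v^*)|$ with no $O(\delta_1)$ loss, and likewise $z_0\le z-\Lambda_1(v_-)s$ in item (4). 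The inequalities $1-\tanh x\le 2e^{-2x}$ and $\mathrm{sech}^2x\le 4e^{-2|x|}$ then give exactly the stated rate $2/\kappa$.

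The genuine gap is the pointwise bound $|u^r_{yy}|\le C u^r_y$. You cannot defer it to the reference with ``the constant absorbing $\kappa=\varepsilon$''. In this paper $C$ must be independent of $\varepsilon$, and with such a $C$ the inequality is false. Your own formulas show this. For the $\tanh$ profile, $|w_0''|=\frac{2}{\kappa}|\tanh(z_0/\kappa)|\,w_0'$, so
\[
\frac{u^r_{yy}}{u^r_y}=\frac{v^r_y}{v^r}+\frac{w_{zz}}{w_z},\qquad \frac{|w_{zz}|}{w_z}=\frac{2|\tanh(z_0/\kappa)|}{\kappa\,(1+s\,w_0'(z_0))^2}\longrightarrow\frac{2}{\kappa}\quad(|y|\to\infty),
\]
while $v^r_y/v^r\to 0$. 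Hence $\sup_y |u^r_{yy}|/u^r_y\ge 2/\kappa$ at every time.

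What is true follows in one line from the same formula: $|w_{zz}|\le\frac{2}{\kappa}w_z$, and therefore $|u^r_{yy}|\le C\kappa^{-1}u^r_y$, since the chain-rule term contributes only $C\delta_1\kappa^{-1}u^r_y$. You should state and prove this corrected form; a $\kappa$-independent constant is available only for the unscaled profile $\tanh y$. The corrected form is also consistent with how the bound is used later. The coefficient $\delta_1\varepsilon\kappa^{-3}$ in the estimate of $J_5$ is what $(u^r_{yy})^2\le C\kappa^{-2}(u^r_y)^2$ produces.

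There is also a smaller slip in item (1). In Lagrangian variables you have $u^r_y=\sqrt{-p'(v^r)}\,v^r_y$ and $w_y=\Lambda_1'(v^r)v^r_y$, with $\Lambda_1'(v)=\frac{\gamma+1}{2}\sqrt{\gamma}\,v^{-(\gamma+3)/2}$. These give $u^r_y=\frac{2}{\gamma+1}\,v^r w_y$, not $\frac{2}{\gamma+1}w_y$; the latter is the Eulerian-coordinate identity. Only positivity is used downstream, so this is harmless, but your computation does not yield the formula as stated.
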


\subsection{Approximate wave pattern}

Before the approximate collision time $-A\frac{\varepsilon}{\delta^2}$, the approximate wave pattern $(\bar{v}, \bar{u})(\tau, y)$ consisting of the superposition of the left and right 2-viscous shock waves can be written as
\begin{equation} \label{approxi wave before}
	(\bar{v}, \bar{u})(\tau, y) := (v^s_l, u^s_l)(y-s_l\tau) + (v^s_r, u^s_r)(y-s_r\tau) - (v_*, u_*),
\end{equation}
which satisfies the system
\begin{equation}  \label{approxi wave before equ}
	\begin{cases}
		\displaystyle \bar{v}_\tau - \bar{u}_y = 0,      \quad\qquad\qquad \qquad \qquad\qquad y \in \bbr, ~\tau \in \left[-t_0, -A\frac{\varepsilon}{\delta^2}\right], \\
		\displaystyle \bar{u}_\tau + p(\bar{v})_y = \varepsilon \left(\frac{\bar{u}_y}{\bar{v}}\right)_y + F_{1y} + F_{2y},
	\end{cases}
\end{equation}
where
\begin{equation} \label{approxi wave before error}
	F_1 = \varepsilon \left(\frac{u^s_{ly}}{v^s_l} + \frac{u^s_{ry}}{v^s_r} - \frac{\bar{u}_y}{\bar{v}}\right), \qquad F_2 = p(\bar{v}) - p(v^s_l) - p(v^s_r) + p(v_*).
\end{equation}

Having introduced the approximate wave pattern \eqref{approxi wave before} before the approximate collision time, we now present the initial data for the Navier-Stokes equations \eqref{NS new} as follows:
\begin{equation} \label{NSI}
	(v, u)(-t_0, y) = (\bar{v}, \bar{u})(-t_0, y).
\end{equation}

After the approximate collision time $-A\frac{\varepsilon}{\delta^2}$, the asymptotic analysis of the Navier-Stokes solutions \eqref{NS new} near the composite 1-rarefaction and 2-shock wave requires introducing a time-dependent shift $X(\tau)$ to the viscous shock profile, i.e., $(v^s,u^s)(y-s_2\tau-X(\tau))$. Specifically, let 
$$ ((v^s)^{-X}, (u^s)^{-X})(y-s_2\tau) := (v^s,u^s)(y-s_2\tau - X(\tau)) $$
denote the shifted 2-viscous shock. In terms of the $(\tau, y)$ variables, this profile satisfies the system:
\begin{equation}  \label{shock smooth shift}
	\begin{cases}
		\displaystyle (v^s)^{-X}_\tau - (u^s)^{-X}_y + \dot{X}(\tau)(v^s)^{-X}_y = 0,       \qquad\qquad \qquad \qquad y \in \bbr, ~\tau \geq -A\frac{\varepsilon}{\delta^2}, \\
		\displaystyle (u^s)^{-X}_\tau + p((v^s)^{-X})_y + \dot{X}(\tau)(u^s)^{-X}_y = \varepsilon \left(\frac{(u^s)^{-X}_y}{(v^s)^{-X}}\right)_y,
	\end{cases}
\end{equation}
along with the far-field conditions:
$$((v^s)^{-X}, (u^s)^{-X})(-\infty)=(v^*, u^*), \qquad ((v^s)^{-X}, (u^s)^{-X})(+\infty)=(v_+, u_+).$$

Therefore, the approximate wave pattern $(\tilde{v}, \tilde{u})(\tau, y)$ after the approximate collision time $-A\frac{\varepsilon}{\delta^2}$ consisting of the superposition of the smooth approximate 1-rarefaction wave and the 2-viscous shock wave shifted by $X(\tau)$ can be written as
\begin{equation} \label{approxi wave after}
	(\tilde{v}, \tilde{u})(\tau, y) := (v^r(\tau, y) + (v^s)^{-X}(y-s_2\tau) - v^*, u^r(\tau, y) + (u^s)^{-X}(y-s_2\tau) - u^*),
\end{equation}
which satisfies the system
\begin{equation}  \label{approxi wave after equ}
	\begin{cases}
		\displaystyle \tilde{v}_\tau - \tilde{u}_y + \dot{X}(\tau)(v^s)^{-X}_y = 0,       \qquad\qquad \qquad \qquad\qquad y \in \bbr, ~\tau \geq -A\frac{\varepsilon}{\delta^2}, \\
		\displaystyle \tilde{u}_\tau + p(\tilde{v})_y + \dot{X}(\tau)(u^s)^{-X}_y = \varepsilon \left(\frac{\tilde{u}_y}{\tilde{v}}\right)_y + F_{3y} + F_{4y},
	\end{cases}
\end{equation}
where
\begin{equation} \label{approxi wave after error}
	F_3 = \varepsilon\frac{(u^s)^{-X}_y}{(v^s)^{-X}} - \varepsilon\frac{\tilde{u}_y}{\tilde{v}}, \qquad F_4 = p(\tilde{v}) - p(v^r) - p((v^s)^{-X}) + p(v^*).
\end{equation}

\subsection{Construction of shift}
The shift function $X(\tau)$ mentioned above can be constructed as follows
\begin{equation} \label{X}
	\begin{cases}
		\di \dot{X}(\tau) = -\frac{M}{\delta_2}\left(\int_{\bbr}a^{-X}(u^s)^{-X}_y(u-\tilde{u}) dy + \int_{\bbr}\frac{a^{-X}}{s_2}p'((v^s)^{-X})(v^s)^{-X}_y(u-\tilde{u}) dy \right), \\
		X\left(-A\frac{\varepsilon}{\delta^2}\right) = 0,
	\end{cases}
\end{equation}
where $a$ is a weight function defined in \eqref{weight}, and the constants are given by $M=\frac{5\alpha^*}{4(s^*)^2}$ with $\alpha^*=\frac{p''(v^*)}{2s^*}$ and $s^* = \sqrt{-p'(v^*)}$.

\bigskip
%
%
\section{Global Existence and Uniform Estimates} 
\setcounter{equation}{0}

In this section, we mainly present the global existence and uniform estimates of the solutions before and after the approximate collision time $-A\frac{\varepsilon}{\delta^2}$, along with the {\it a priori} estimates.
The wave interaction estimates and the weight function are also introduced in this section.

Before the approximate collision time $-A\frac{\varepsilon}{\delta^2}$, from \eqref{NS new}, \eqref{approxi wave before equ} and \eqref{NSI}, we set the perturbation of $(v, u)(\tau, y)$ around the approximate wave profile $(\bar{v}, \bar{u})(\tau, y)$ by
\begin{equation*}
\phi(\tau, y) := (v - \bar{v})(\tau, y), \quad \psi(\tau, y) := (u - \bar{u})(\tau, y), 
\end{equation*}
which satisfies the following system:
\begin{equation} \label{per before}
    \begin{cases}
    \displaystyle \phi_\tau - \psi_y = 0,        \qquad\qquad\quad\qquad\qquad \qquad \qquad\qquad y \in \bbr, ~\tau \in \left[-t_0, -A\frac{\varepsilon}{\delta^2}\right], \\
    \displaystyle \psi_\tau + (p(v) -p(\bar{v}))_y = \varepsilon \left(\frac{u_y}{v} - \frac{\bar{u}_y}{\bar{v}}\right)_y - F_{1y} - F_{2y},
    \end{cases}
\end{equation}
supplemented with the initial data
\begin{equation} \label{per beforeI}
  (\phi, \psi)(-t_0, y)=0.
\end{equation}

Define the anti-derivative variables
\begin{equation*}
	\Phi(\tau, y) = \int_{-\infty}^{y} \phi(\tau, y) dy, \quad \Psi(\tau, y) := \int_{-\infty}^{y} \psi(\tau, y) dy,
\end{equation*}
which satisfy the system
\begin{equation} \label{anti-per}
	\begin{cases}
		\displaystyle \Phi_\tau - \Psi_y = 0, \\
		\displaystyle \Psi_\tau + p(v) -p(\bar{v}) = \varepsilon \left(\frac{u_y}{v} - \frac{\bar{u}_y}{\bar{v}}\right) - F_1 - F_2.
	\end{cases}
\end{equation}
Given the initial data \eqref{per beforeI}, it is straightforward to see that the initial value of the anti-derivative system is
\begin{equation} \label{anti-perI}
	(\Phi, \Psi)(-t_0, y)=0.
\end{equation}
For convenience, we linearize \eqref{anti-per} around the approximate profile $(\bar{v}, \bar{u})$ to obtain
\begin{equation} \label{anti-per line}
	\begin{cases}
		\displaystyle \Phi_\tau - \Psi_y = 0, \\
		\displaystyle \Psi_\tau + p'(\bar{v})\Phi_y = \frac{\varepsilon}{\bar{v}}\Psi_{yy} + Q_1 + Q_2 - F_1 - F_2,
	\end{cases}
\end{equation}
where
\begin{equation} \label{anti-per error}
	Q_1 = \varepsilon \left(\frac{1}{v} - \frac{1}{\bar{v}}\right)\left( \Psi_{yy} + \bar{u}_y \right), \qquad Q_2 = -(p(v) -p(\bar{v}) - p'(\bar{v})\phi).
\end{equation}
We seek the solution to \eqref{anti-per line} in the functional space defined as follows:
\begin{equation*}
	\bar{\Pi}(I) = \left\lbrace (\Phi, \Psi) \in C(I, H^2), \Psi_y \in L^2(I, H^2) \right\rbrace,
\end{equation*}
where $I \subset \bbr$ is any interval.

In order to prove our main result, Theorem \ref{theorem1}, it is sufficient to prove the following results.

\subsection{Proof of global existence and uniform estimates before collision}
Before the approximate collision time $-A\frac{\varepsilon}{\delta^2}$, we can obtain the following global existence and uniform estimates.

\begin{theorem}[Global existence and uniform estimates before collision] \label{global estimate before}
	Given a suitably large constant $A$, there exist positive constants $\bar{\delta}_0$ and $\bar{\varepsilon}_0$, both independent of $\varepsilon$, such that if the wave strengths satisfy $\delta_l \sim \delta_r \sim \delta \leq \bar{\delta}_0$ and the viscosity coefficient satisfies $\varepsilon \leq \bar{\varepsilon}_0$, the perturbation problem \eqref{anti-per}-\eqref{anti-perI} admits a unique global solution $(\Phi, \Psi)(\tau, y) \in \bar{\Pi}\left( \left[-t_0, -A\frac{\varepsilon}{\delta^2}\right] \right)$ satisfying
	\begin{equation} \label{L2a before}
		\| (\Phi, \Psi)(\tau) \|^2 + \int_{-t_0}^{\tau}\int_{\bbr} |\bar{u}_y|\Psi^2 \,dy \,d\tau + \varepsilon \int_{-t_0}^{\tau} \| \Psi_y \|^2 \,d\tau
		\leq  C e^{-cA}\delta^{-1}\varepsilon^3,
	\end{equation}
	\begin{equation} \label{L2 before}
		\| (\phi, \psi)(\tau) \|^2 + \varepsilon \int_{-t_0}^{\tau} \| \psi_y \|^2 \,d\tau
		\leq  C e^{-cA}\delta\varepsilon,
	\end{equation}	
	\begin{equation} \label{H1 before}
		\|(\phi_y, \psi_y)(\tau)\|^2 + \varepsilon \int_{-t_0}^{\tau}\|\psi_{yy}\|^2 \,d\tau
		\leq C e^{-cA} \delta\varepsilon^{-1}.
	\end{equation}
	Moreover, it holds that
	\begin{equation} \label{L infty before}
		\|(\phi, \psi)(\tau)\|_{L^\infty} \leq C \|(\phi, \psi)(\tau)\|^{\frac{1}{2}} \|(\phi_y, \psi_y)(\tau)\|^{\frac{1}{2}} \leq C e^{-\frac{cA}{2}}\delta^{\frac{1}{2}}.
	\end{equation}
	Here, the positive constants $C, c$ and $A$ are independent of $\varepsilon, \delta_l, \delta_r$, and $\tau$, but may depend on the states $v_\pm, u_\pm$ and $v_*, u_*$.
\end{theorem}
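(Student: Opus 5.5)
Theorem \ref{global estimate before} will follow from local existence plus a continuity argument, once we have the \emph{a priori} estimates. Standard parabolic–hyperbolic theory gives a local solution in $\bar\Pi$. We then assume on $[-t_0,T]$, with $T\le -A\varepsilon/\delta^2$, the a priori bound
\begin{equation*}
\sup_{[-t_0,T]}\Big(\varepsilon^{-3}\delta\|(\Phi,\Psi)\|^2+\varepsilon^{-1}\delta^{-1}\|(\phi,\psi)\|^2+\varepsilon\delta^{-1}\|(\phi_y,\psi_y)\|^2\Big)\le \nu^2 ,
\end{equation*}
with $\nu$ small. The goal is to recover \eqref{L2a before}–\eqref{H1 before} with constant $Ce^{-cA}$, which closes the bootstrap when $A$ is large. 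By Gagliardo–Nirenberg, this bound gives $\|(\phi,\psi)\|_{L^\infty}\le C\nu\delta^{1/2}$, and hence \eqref{L infty before}. In particular $v$ stays near $\bar v$, so $Q_1,Q_2$ are genuinely nonlinear.

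\textbf{Step 1 (zeroth order).} Multiply the second equation of \eqref{anti-per line} by $\Psi$ and the first by $-\Phi\, p'(\bar v)$, then integrate. The key sign is $-\int \frac12 p'(\bar v)_\tau\Phi^2\,dy$. Since $\bar v_\tau=\bar u_y=-s_l v^s_{ly}-s_r v^s_{ry}<0$ for 2-shocks, this gives a good term $\sim\int|\bar u_y|\Phi^2$. As in \cite{HWWY2015}, we use the first equation $\Phi_\tau=\Psi_y$ and the relation $\Psi\approx -s\Phi$ near each shock (the usual Matsumura–Nishihara trick) to convert this into the $\int|\bar u_y|\Psi^2$ dissipation.

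The remaining sources are $F_1,F_2$, which are interaction terms. By Lemma \ref{lemma-shock}, on $\{y<(s_l+s_r)\tau/2\}$ the right shock is exponentially close to $v_*$, and symmetrically on the other side. This gives
\begin{equation*}
|F_1|+|F_2|\le C\delta^2\, e^{-c\frac{\delta}{\varepsilon}|(s_l-s_r)\tau|}\Big(\tfrac{\delta}{\varepsilon}\varepsilon\,e^{-c\frac{\delta}{\varepsilon}|y-s_l\tau|}+\cdots\Big),
\end{equation*}
with $\|F_i(\tau)\|\le C\delta^{3/2}\varepsilon^{1/2}e^{-c\delta^2|\tau|/\varepsilon}$, using $s_l-s_r\sim\delta$. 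The term $\int F\Psi$ is bounded by $\|F\|\,\|\Psi\|$ and Gronwall in the form $\frac{d}{d\tau}\|\cdot\|\le\|F\|$. Integrating up to $\tau=-A\varepsilon/\delta^2$ gives
\begin{equation*}
\int_{-t_0}^{-A\varepsilon/\delta^2}\|F\|\,d\tau\le C\delta^{-1/2}\varepsilon^{3/2}e^{-cA}.
\end{equation*}
Squaring yields \eqref{L2a before}. This is precisely where the approximate collision time enters: the interaction error decays like $e^{-c\delta^2|\tau|/\varepsilon}$ and is not integrable down to $\tau=0$ with a gain.

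The nonlinear terms $Q_1,Q_2$ are absorbed using the a priori bound. In particular,
\begin{equation*}
\int|Q_2\Psi|\le C\|\phi\|_{L^\infty}\int|\phi||\Psi|,
\end{equation*}
which is handled together with the viscous dissipation $\varepsilon\|\Psi_y\|^2=\varepsilon\|\phi\text{-type}\|^2$ after the usual extra estimate of $\varepsilon\|\Phi_y\|^2$.

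\textbf{Steps 2–3 (first and second order).} Multiply \eqref{per before} by $(\phi\text{-energy},\psi)$, i.e.\ differentiate the anti-derivative system. We also derive the standard $\varepsilon\|\phi_y\|^2$ estimate via the effective velocity, multiplying the momentum equation by $\phi_y/\bar v$ to gain dissipation of $\phi$. The source terms now carry extra factors $\delta/\varepsilon$ per derivative, which produces the scalings $\delta\varepsilon$ and $\delta\varepsilon^{-1}$ with the same $e^{-cA}$ gain. The terms involving $\bar u_{yy},\bar v_{yy}$ are controlled by Lemma \ref{lemma-shock}, using $|\bar u_{yy}|\le C\frac{\delta}{\varepsilon}|\bar u_y|$, against the Step 1 dissipation $\int|\bar u_y|\Psi^2$. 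The smallness $\delta\le\bar\delta_0$ is what makes this absorption possible.

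\textbf{Main obstacle.} I expect the hardest step to be Step 1's handling of the interaction error over the long interval of length $t_0\sim\delta^{-1}$. Two things must be checked. First, the positivity $\int|\bar u_y|\Psi^2$ must be derived for both shocks simultaneously without losing a factor $t_0$. Second, the interaction term must be sharp enough to give exactly $e^{-cA}$ rather than a growth like $\delta^{-1}$. If the Gronwall-in-norm approach loses too much, I would instead weight the energy by $\bar v$-dependent weights, so that the cross-shock region contributes a dissipative sign.
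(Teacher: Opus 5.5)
Your overall architecture matches the paper's: a bootstrap, an anti-derivative $L^2$ estimate plus a separate $\Phi_y$ estimate, an energy estimate for $(\phi,\psi)$, first-order estimates, and truncation at $\tau=-A\varepsilon/\delta^2$ because the interaction errors decay only like $e^{-c\delta^2|\tau|/\varepsilon}$. Your Gronwall-in-norm treatment of $F_1,F_2$ is a legitimate substitute for the paper's $\|\Psi\|_{L^\infty}\|F\|_{L^1}$ plus Gagliardo--Nirenberg/Young argument, and gives the same $Ce^{-cA}\delta^{-1}\varepsilon^3$.

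However, Step 1, on which everything rests, has a sign error. With your multipliers $\Psi$ and $-p'(\bar v)\Phi$ one gets
\begin{equation*}
\frac{d}{d\tau}\int_{\bbr}\Big(\frac12\Psi^2-\frac12 p'(\bar v)\Phi^2\Big)dy+\varepsilon\int_{\bbr}\frac{\Psi_y^2}{\bar v}\,dy
=\frac12\int_{\bbr}p''(\bar v)|\bar u_y|\Phi^2\,dy+\int_{\bbr}p''(\bar v)\bar v_y\Phi\Psi\,dy+\cdots .
\end{equation*}
So the term $-\frac12\int p'(\bar v)_\tau\Phi^2$ that you call good is in fact a source. Near the left shock the two displayed terms combine to $p''(\bar v)v^s_{ly}\big(\frac{s_l}{2}\Phi^2+\Phi\Psi\big)$, which is indefinite; it is positive wherever $\Psi=0$. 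The relation $\Psi\approx -s\Phi$ is only a heuristic for a single traveling mode, and nothing in an energy method enforces it for a general perturbation.

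Nor can the term be absorbed, since there is no dissipation of $\Phi$ itself. For instance, $\int\|\bar u_y\|_{L^1}\|\Phi\|_{L^\infty}^2\,d\tau\le C\delta\,\sup\|\Phi\|\,t_0^{1/2}\big(\int\|\Phi_y\|^2d\tau\big)^{1/2}$, combined with $\int\|\Phi_y\|^2\lesssim\varepsilon^{-1}$ times the energy, loses a factor of order $(\delta/\varepsilon)^{1/2}\gg1$ over an interval of length $t_0\sim\delta^{-1}$.

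The fix is the standard multiplier, which is what the paper uses. Multiply the first equation by $\Phi$ and the second by $-\Psi/p'(\bar v)$, i.e. put the weight on $\Psi^2$ rather than on $\Phi^2$. Then $\partial_\tau\big(-\Psi^2/(2p'(\bar v))\big)$ releases $\frac{p''(\bar v)}{2p'(\bar v)^2}|\bar u_y|\Psi^2\ge 0$ on the left. The coupling terms $-\Psi\Phi_y-\Phi\Psi_y=-(\Phi\Psi)_y$ form an exact derivative, so no cross term appears at all. Because $\bar u_y=u^s_{ly}+u^s_{ry}<0$ pointwise, this dissipation holds for both shocks simultaneously with no loss in $t_0$, which disposes of the first ``main obstacle'' you list. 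The weight-derivative term $\varepsilon\big(\frac{1}{\bar v p'(\bar v)}\big)_y\Psi\Psi_y$ is absorbed using $\varepsilon\|\bar v_y\|_{L^\infty}\le C\delta^2$ and $|\bar v_y|\le C|\bar u_y|$.

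Two smaller corrections:
\begin{itemize}
\item Bound $\int|Q_2\Psi|\le C\|\Psi\|_{L^\infty}\|\Phi_y\|^2$ with $\|\Psi\|_{L^\infty}\le C\nu\varepsilon$, which your bootstrap gives. Your version $\|\phi\|_{L^\infty}\int|\phi||\Psi|$ requires time-integrated control of $\|\Psi\|$, which you do not have. Note also that $\Psi_y=\psi$, not a $\phi$-type quantity.
\item At the $(\phi,\psi)$ level the compressive term $\int\bar v_\tau p(v|\bar v)$ has the bad sign. Like the $\bar u_{yy}\phi$ terms, it must be paid for by the $\Phi_y$-dissipation, $C\delta^2\varepsilon^{-1}\int\|\Phi_y\|^2\le Ce^{-cA}\delta\varepsilon$, not by $\int|\bar u_y|\Psi^2$.
\end{itemize}
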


The local existence and uniqueness of the system \eqref{anti-per}-\eqref{anti-perI} are well established; the detailed proof can be found in \cite{GX, W}, hence we omit it here for brevity. Therefore, to prove Theorem \ref{global estimate before}, it suffices to close the following \textit{a priori} assumption:
\begin{equation} \label{priori assump before}
	\|\Psi(\tau)\|_{L^\infty(\bbr)} \leq e^{-c\sqrt{A}}\varepsilon, \qquad \|\phi(\tau)\|_{L^\infty(\bbr)} \leq e^{-c\sqrt{A}}\delta^{\frac{1}{2}},
\end{equation}
for $\tau \in [-t_0, \bar{\tau}]$ with some $\bar{\tau} \in \left[-t_0, -A\frac{\varepsilon}{\delta^2}\right]$, where $[-t_0, \bar{\tau}]$ is the time interval on which the solution is assumed to exist. Through rigorous mathematical analysis, we derive the following uniform \textit{a priori} estimates.

\begin{proposition}[Uniform \textit{a priori} estimates before collision] \label{priori estimate before}
	Suppose that the Cauchy problem \eqref{anti-per}-\eqref{anti-perI} admits a solution $(\Phi, \Psi)(\tau, y) \in \bar{\Pi}\left( [-t_0, \bar{\tau}] \right)$ for some $\bar{\tau} \in \left[-t_0, -A\frac{\varepsilon}{\delta^2}\right]$ and satisfies the \textit{a priori} assumption \eqref{priori assump before}. Then there exist positive constants $\bar{\delta}_0$ and $\bar{\varepsilon}_0$, independent of $\varepsilon$, such that if $\delta_l \sim \delta_r \sim \delta \leq \bar{\delta}_0$ and $\varepsilon \leq \bar{\varepsilon}_0$, the inequalities \eqref{L2a before}-\eqref{L infty before} hold.
\end{proposition}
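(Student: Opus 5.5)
We run the classical anti-derivative energy method of Goodman--Xin and Huang--Wang--Wang--Yang on the linearized system \eqref{anti-per line}. The argument builds a hierarchy of estimates: an $L^2$ bound for $(\Phi,\Psi)$, then for $(\phi,\psi)=(\Phi_y,\Psi_y)$, then for $(\phi_y,\psi_y)$. At each level we bound the source terms $F_1,F_2$ using the wave-interaction structure, which is where the restriction $\tau\le -A\varepsilon/\delta^2$ enters.

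\textbf{Zeroth order.} Multiply the second equation of \eqref{anti-per line} by $\Psi$ and the first by $-p'(\bar v)\Phi$, and integrate. Because $\bar v_\tau = \bar u_y$, we get $(-p'(\bar v))_\tau=-p''(\bar v)\bar u_y$. Since $\bar u_y<0$, this produces the good term $\int|\bar u_y|\Phi^2$ up to a constant. A standard weighted combination, or a direct use of $\Phi_\tau=\Psi_y$ together with the shock profile equations, converts this into the good term $\int|\bar u_y|\Psi^2$, as in \cite{GX,W}. The viscous term gives $\varepsilon\|\Psi_y\|^2$. The nonlinear terms $Q_1,Q_2$ are controlled by the a priori assumption \eqref{priori assump before}. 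For example, $|\int Q_2\Psi|\le C\|\Psi\|_{L^\infty}\|\phi\|^2$ with $\|\Psi\|_{L^\infty}\le e^{-c\sqrt A}\varepsilon$, and this is absorbed using $\varepsilon\|\Psi_y\|^2$.

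\textbf{Interaction errors.} These are the crux. $F_1$ and $F_2$ are supported essentially where both shock layers overlap, so
\[
|F_1|+|F_2|\le C\bigl(|v^s_l-v_*|\,|v^s_r-v_*| + \varepsilon|v^s_{ly}||v^s_r-v_*|+\varepsilon|v^s_{ry}||v^s_l-v_*|\bigr).
\]
By Lemma \ref{lemma-shock}, the shocks are centred at $s_l\tau$ and $s_r\tau$, at distance $(s_l-s_r)|\tau|\sim\delta|\tau|$. This yields
\[
\|F_i(\tau)\|_{L^1}+\|F_i(\tau)\|_{L^2}\sqrt{\varepsilon/\delta}\le C\delta\,\varepsilon\, e^{-c\delta^2|\tau|/\varepsilon}.
\]
We bound $\int F_i\Psi\le \|\Psi\|_{L^\infty}\|F_i\|_{L^1}$ and integrate in time over $[-t_0,-A\varepsilon/\delta^2]$. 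Since $\int_{A\varepsilon/\delta^2}^\infty e^{-c\delta^2 s/\varepsilon}ds\le C\varepsilon\delta^{-2}e^{-cA}$, the total contribution is $\lesssim \|\Psi\|_{L^\infty}\,\varepsilon^2\delta^{-1}e^{-cA}$. Using Sobolev $\|\Psi\|_{L^\infty}\le\|\Psi\|^{1/2}\|\Psi_y\|^{1/2}$ together with Young's inequality, or the a priori bound directly, gives \eqref{L2a before} of size $e^{-cA}\delta^{-1}\varepsilon^3$. This is exactly where large $A$ is essential. Near $\tau=0$ the layers overlap fully, the exponential gain disappears, and the bound would be $O(\delta^{-1}\varepsilon^3\cdot\delta^{-2}\cdots)$, which cannot be closed.

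\textbf{Higher orders and closure.} Differentiate \eqref{anti-per line} in $y$ and multiply by $\psi$ (equivalently, multiply the original equation \eqref{per before} by $\psi$ and by $-p'(\bar v)\phi$). A standard trick handles the missing dissipation of $\phi$: multiply the momentum equation by $\Phi_y$, respectively $\phi_y$, to get $\int|p'|\phi^2$, respectively $\int\phi_y^2$, dissipation with weight $\varepsilon^{-1}$. Each derivative costs a factor $\varepsilon^{-2}$ relative to the previous level, because the layer widths are $\varepsilon/\delta$ and $\|\bar u_{yy}\|\sim\delta^{5/2}\varepsilon^{-3/2}$ etc. This produces \eqref{L2 before} and \eqref{H1 before}, where the $F_{iy}$ and $F_{iyy}$ terms again carry the factor $e^{-c\delta^2|\tau|/\varepsilon}$, integrated to give $e^{-cA}$.

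Finally, Gagliardo--Nirenberg gives \eqref{L infty before}, namely $\|\phi\|_{L^\infty}\le Ce^{-cA/2}\delta^{1/2}$, and $\|\Psi\|_{L^\infty}\le \|\Psi\|^{1/2}\|\phi\|^{1/2}$ is of order $e^{-cA/2}\varepsilon$ at most (tracking the $\delta$ powers). For $A$ large these are strictly smaller than $e^{-c\sqrt A}$ times the thresholds in \eqref{priori assump before}, which closes the bootstrap. The main obstacle we anticipate is the precise $\delta$-bookkeeping in the higher-order interaction terms, such as $\int F_{1yy}\psi_y$, to make sure every power of $\delta$ cancels while keeping the $e^{-cA}$ gain.
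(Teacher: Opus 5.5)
Your overall architecture matches the paper's. It has an anti-derivative $L^2$ estimate, a $\Phi_y$-multiplier estimate for the missing $\phi$-dissipation, an energy estimate for $(\phi,\psi)$, and the $\phi_y/v$ and $-\psi_{yy}$ multipliers. Interaction errors of size $e^{-c\delta^2|\tau|/\varepsilon}$ are integrated over $[-t_0,-A\varepsilon/\delta^2]$ to produce $e^{-cA}$, and Gagliardo--Nirenberg closes \eqref{priori assump before}. Your sizes for $\|F_i\|_{L^1}$ and $\|F_i\|$ are consistent with the paper's.

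\textbf{The zeroth-order step fails as written.} With the multipliers $-p'(\bar v)\Phi$ and $\Psi$ you have $(|p'(\bar v)|)_\tau=-p''(\bar v)\bar u_y=p''(\bar v)|\bar u_y|>0$, so the identity reads
\[
\frac{d}{d\tau}\int_{\bbr}\Big(\frac{|p'(\bar v)|}{2}\Phi^2+\frac12\Psi^2\Big)dy+\varepsilon\int_{\bbr}\frac{\Psi_y^2}{\bar v}\,dy=\int_{\bbr}\frac{p''(\bar v)}{2}|\bar u_y|\Phi^2\,dy+\int_{\bbr}p''(\bar v)\bar v_y\Phi\Psi\,dy+\cdots.
\]
The weight produces a growth term, not dissipation, and the flux is not exact. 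Nothing at this level controls $\int|\bar u_y|\Phi^2$, since $\Phi$ has no lower-order dissipation. For instance, $\int|\bar u_y|\Phi^2\le C\delta\|\Phi\|\|\Phi_y\|$ integrated over a time interval of length $t_0\sim\delta^{-1}$ gives a coefficient of order $(\delta/\varepsilon)^{1/2}\gg1$.

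The multipliers that work, and that the paper uses, are $\Phi$ for the first equation of \eqref{anti-per line} and $-\Psi/p'(\bar v)$ for the second. The cross terms then combine into the exact flux $-(\Phi\Psi)_y$. The weight yields $-\frac{p''(\bar v)}{2|p'(\bar v)|^2}\bar v_\tau\Psi^2=\frac{p''(\bar v)}{2|p'(\bar v)|^2}|\bar u_y|\Psi^2\ge0$. This is the compressive good term needed to absorb, e.g., $C\varepsilon\int|\bar v_y|^2\Psi^2\le C\delta^2\int|\bar u_y|\Psi^2$ and part of $Q_1$.

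\textbf{Bookkeeping and the role of $A$.} Since $\phi=\Phi_y$ (not $\Psi_y$), the $Q_2$ contribution $C\|\Psi\|_{L^\infty}\|\phi\|^2$ cannot be absorbed by $\varepsilon\|\Psi_y\|^2$. It becomes $Ce^{-c\sqrt A}\varepsilon\int\|\Phi_y\|^2$, which requires the $\Phi_y$-multiplier estimate. The same slip appears, harmlessly, in your bound for $\|\Psi\|_{L^\infty}$, which should use $\|\psi\|$ rather than $\|\phi\|$.

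Moreover, $Q_1$ produces $Ce^{-c\sqrt A}\varepsilon^3\int\|\psi_y\|^2$ from the next level. Those estimates in turn feed back $\varepsilon^{-1}\|\Psi\|^2+\int\|\Psi_y\|^2$ and $\delta^2\varepsilon^{-1}\int\|\Phi_y\|^2$. So the levels cannot be closed one after another, as your ``hierarchy'' suggests. The paper sums the three lower-order estimates and absorbs using $\delta$ small and $A$ large.

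This also corrects your explanation of the cutoff. Integrating up to $\tau=0$ still gives errors of the same orders $\delta^{-1}\varepsilon^3$, $\delta\varepsilon$, $\delta\varepsilon^{-1}$, with no extra $\delta^{-2}$, but only with $O(1)$ constants. One therefore recovers only $\|\Psi\|_{L^\infty}\le C\varepsilon$. Absorbing $C\|\Psi\|_{L^\infty}\int\|\Phi_y\|^2$ requires $\|\Psi\|_{L^\infty}\le\nu\varepsilon$ with $\nu$ small, and the factor $e^{-cA}$ is precisely what supplies this smallness.
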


\noindent\textbf{Proof of Proposition \ref{priori estimate before}:} 
The detailed derivations of the necessary lemmas are deferred to Section 5. Here, we outline how to deduce Proposition \ref{priori estimate before} from these lemmas.

First, combining the estimates from Lemmas \ref{lemma anti} and \ref{lemma anti1} in Section 5, we have
\begin{align}
	\begin{aligned} \label{estimate anti-0}
		&\| (\Phi, \Psi)(\tau) \|^2 + \int_{-t_0}^{\tau}\int_{\bbr} |\bar{u}_y|\Psi^2 \,dy \,d\tau + \varepsilon \int_{-t_0}^{\tau} \| \Psi_y \|^2 \,d\tau \\
		&\leq C e^{-cA}\delta^{-1}\varepsilon^3 + C\left(\delta^2 + e^{-c\sqrt{A}}\right)\left( \| \Psi(\tau) \|^2 + \varepsilon \int_{-t_0}^{\tau} \| \Psi_y \|^2 \,d\tau \right) + C e^{-c\sqrt{A}}\varepsilon^3 \int_{-t_0}^{\tau} \|\psi_y\|^2 \,d\tau.
	\end{aligned}
\end{align}
By choosing a suitably small constant $\delta$ and a suitably large constant $A$, we can absorb the terms on the right-hand side of \eqref{estimate anti-0} to obtain:
\begin{align}
	\begin{aligned} \label{estimate anti-1}
		&\| (\Phi, \Psi)(\tau) \|^2 + \int_{-t_0}^{\tau}\int_{\bbr} |\bar{u}_y|\Psi^2 \,dy \,d\tau + \varepsilon \int_{-t_0}^{\tau} \| \Psi_y \|^2 \,d\tau \\
		&\leq C e^{-cA}\delta^{-1}\varepsilon^3 + C e^{-c\sqrt{A}}\varepsilon^3 \int_{-t_0}^{\tau} \|\psi_y\|^2 \,d\tau,
	\end{aligned}
\end{align}
as well as
\begin{equation} \label{estimate anti1-1}
	\varepsilon \|\Phi_y(\tau)\|^2 + \int_{-t_0}^{\tau} \|\Phi_y\|^2 \,d\tau
	\leq C e^{-cA} \delta^{-1} \varepsilon^2 + C e^{-c\sqrt{A}} \delta\varepsilon^2 \int_{-t_0}^{\tau} \|\psi_y\|^2 \,d\tau.
\end{equation}
Subsequently, coupling the bounds \eqref{estimate anti-1} and \eqref{estimate anti1-1} with \eqref{estimate 0before} from Lemma \ref{lemma 0before}, and applying the smallness of $\delta$ alongside the largeness of $A$, we deduce:
\begin{equation} \label{estimate anti-2}
	\| (\Phi, \Psi)(\tau) \|^2 + \int_{-t_0}^{\tau}\int_{\bbr} |\bar{u}_y|\Psi^2 \,dy \,d\tau + \varepsilon \int_{-t_0}^{\tau} \| \Psi_y \|^2 \,d\tau
	\leq  C e^{-cA}\delta^{-1}\varepsilon^3,
\end{equation}
\begin{equation} \label{estimate anti1-2}
	\varepsilon \|\Phi_y(\tau)\|^2 + \int_{-t_0}^{\tau} \|\Phi_y\|^2 \,d\tau
	\leq C e^{-cA} \delta^{-1} \varepsilon^2,
\end{equation}
and
\begin{equation} \label{estimate0before-1}
	\|(\phi, \psi)(\tau)\|^2 + \varepsilon \int_{-t_0}^{\tau} \|\psi_y\|^2 \,d\tau \leq C e^{-cA}\delta\varepsilon.
\end{equation}
With these bounds established, we readily obtain the derivative estimates:
\begin{equation} \label{estimate phi1before-1}
	\| \phi_y(\tau) \|^2 + \varepsilon^{-1}\int_{-t_0}^{\tau} \|\phi_y\|^2 \,d\tau 
	\leq C e^{-cA}\delta\varepsilon^{-1},	  
\end{equation}
\begin{equation} \label{estimate psi1before-1}
	\| \psi_y(\tau) \|^2 + \varepsilon \int_{-t_0}^{\tau}\|\psi_{yy}\|^2 \,d\tau \leq C e^{-cA}\delta\varepsilon^{-1}.
\end{equation}
This completes the proof of the \textit{a priori} estimates stated in Proposition \ref{priori estimate before}. \qed

\begin{remark}
	It is crucial to note that, the uniform estimates with respect to the viscosity in Proposition \ref{priori estimate before} can not be closed throughout the entire interval $[-t_0, 0]$ before the interacting time, but can be closed on the subinterval $\left[-t_0, -A\frac{\varepsilon}{\delta^2}\right]$ with $A>0$ being a suitably large constant. In other words, we need to introduce a carefully constructed approximate collision time, namely $- A\frac{\varepsilon}{\delta^2}$, to achieve the closure. This is fundamentally different from the interaction of shock waves between different characteristic families in \cite{HWWY2015}, where the {\it a priori} assumptions can be closed directly on the whole interval $[-t_0, 0]$ without resorting to an approximation at the collision time $\tau=0$.
\end{remark}

\subsection{Wave interaction estimates before collision}
We now establish the wave interaction estimates before the collision, which will be utilized to bound the error terms in Section 5.

\begin{lemma}\label{lem:shock-interact}
	There exist positive constants $\bar\delta_0$ and $C$ such that for any $\delta_l, \delta_r \in (0, \bar\delta_0)$ and all $\tau \in [-t_0, 0]$, the following estimates hold for $i=l, r$:
	\begin{align*}
		|v^s_{iy}| |\bar{v} - v^s_i| &\leq C \frac{\delta_i\delta_l\delta_r}{\varepsilon} \left(e^{\frac{C \delta_l^2}{\varepsilon} \tau} + e^{\frac{C \delta_r^2}{\varepsilon} \tau} \right), \qquad \forall y \in \mathbb{R}, \\
		\int_{\bbr} |\bar{v} - v^s_l| |\bar{v} - v^s_r| \,dy &\leq C \delta_r\varepsilon e^{\frac{C \delta_l^2}{\varepsilon} \tau} + C \delta_l\varepsilon e^{\frac{C \delta_r^2}{\varepsilon} \tau}, \\
		\int_{\bbr} |v^s_{iy}| |\bar{v} - v^s_i| \,dy &\leq C \delta_l \delta_r \left( e^{\frac{C \delta_l^2}{\varepsilon} \tau} + e^{\frac{C \delta_r^2}{\varepsilon} \tau} \right), \\
		\int_{\bbr} |v^s_{iy}|^2 |\bar{v} - v^s_i|^2 \,dy &\leq C \frac{\delta_i\delta_l^2 \delta_r^2}{\varepsilon} \left( e^{\frac{C \delta_l^2}{\varepsilon} \tau} + e^{\frac{C \delta_r^2}{\varepsilon} \tau} \right), \\
		\int_{\bbr} |v^s_{ly}| |v^s_{ry}| \,dy &\leq C \frac{\delta_l^2\delta_r}{\varepsilon}e^{\frac{C \delta_l^2}{\varepsilon} \tau} + C \frac{\delta_l\delta_r^2}{\varepsilon} e^{\frac{C \delta_r^2}{\varepsilon} \tau}, \\
		\int_{\bbr} |v^s_{ly} v^s_{ry}|^2 \,dy &\leq C \frac{\delta_l^4\delta_r^3}{\varepsilon^3}e^{\frac{C \delta_l^2}{\varepsilon} \tau} + C \frac{\delta_l^3\delta_r^4}{\varepsilon^3} e^{\frac{C \delta_r^2}{\varepsilon} \tau}.
	\end{align*}
\end{lemma}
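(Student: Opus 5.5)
The plan is to split the real line at the midpoint between the two shock centers, $\bar y(\tau):=\frac{s_l+s_r}{2}\tau$, and use only the exponential bounds of Lemma \ref{lemma-shock}. For $\tau\le 0$ the left shock is centered at $s_l\tau$ and the right one at $s_r\tau$. Since $s_l-s_r\sim\delta$, their separation is
\[
D(\tau):=(s_r-s_l)\tau=(s_l-s_r)|\tau|\ge c\,\delta|\tau| .
\]
On $\{y\le\bar y\}$ we have $|y-s_r\tau|\ge D/2$, and on $\{y\ge\bar y\}$ we have $|y-s_l\tau|\ge D/2$.

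The basic observation is that $\bar v-v^s_l=v^s_r-v_*$ and $\bar v-v^s_r=v^s_l-v_*$. By Lemma \ref{lemma-shock}:
\begin{itemize}
\item $|v^s_r-v_*|\le C\delta_r e^{-C\delta_r|y-s_r\tau|/\varepsilon}$ for $y<s_r\tau$, while $|v^s_r-v_*|\le C\delta_r$ everywhere;
\item $|v^s_l-v_*|\le C\delta_l e^{-C\delta_l|y-s_l\tau|/\varepsilon}$ for $y>s_l\tau$, while $|v^s_l-v_*|\le C\delta_l$ everywhere.
\end{itemize}
The key conversion is that, on the region where one factor is evaluated at distance at least $D/2$ from its own center,
\[
e^{-C\frac{\delta_i}{\varepsilon}\frac{D}{2}}\le e^{-C\frac{\delta_i\delta}{\varepsilon}|\tau|}=e^{C\frac{\delta_i^2}{\varepsilon}\tau}
\]
after adjusting constants, using $\delta_l\sim\delta_r\sim\delta$ and $\tau\le0$. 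This produces exactly the factors $e^{C\delta_l^2\tau/\varepsilon}$ and $e^{C\delta_r^2\tau/\varepsilon}$ in the statement.

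\textbf{Pointwise bound.} Take $i=l$. On $y\le\bar y$, bound $|\bar v-v^s_l|=|v^s_r-v_*|\le C\delta_r e^{C\delta_r^2\tau/\varepsilon}$ and $|v^s_{ly}|\le C\delta_l^2/\varepsilon$. On $y\ge\bar y$, bound $|v^s_{ly}|\le C\frac{\delta_l^2}{\varepsilon}e^{C\delta_l^2\tau/\varepsilon}$ and $|\bar v-v^s_l|\le C\delta_r$. Together these give $C\frac{\delta_l\cdot\delta_l\delta_r}{\varepsilon}(\cdots)$, which is the stated form. The case $i=r$ is symmetric.

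\textbf{Integral bounds.} In each integral I again split at $\bar y$. On each half, the factor that is far from its center is bounded in $L^\infty$ by its decayed value, which carries the exponential factor. The remaining factor is bounded in $L^1$ or $L^2$ by integrating its own exponential profile, with a residual exponential from the far factor as needed. The relevant scalings are
\[
\int|v^s_l-v_*|\,\mathbf 1_{\{y>s_l\tau\}}\,dy\lesssim\varepsilon,\qquad \|v^s_{iy}\|_{L^1}\lesssim\delta_i,\qquad \|v^s_{iy}\|_{L^2}^2\lesssim\delta_i^3/\varepsilon .
\]
The integral of $|\bar v-v^s_l|\,|\bar v-v^s_r|$ needs care, since neither factor is integrable over the full line: $v^s_l-v_*$ does not vanish as $y\to-\infty$. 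On $y\le\bar y$ I would use $|v^s_r-v_*|\le C\delta_r e^{-C\delta_r|y-s_r\tau|/\varepsilon}$ and $|v^s_l-v_*|\le C\delta_l$. The integral over the half-line $y\le\bar y$ of this exponential is then $\varepsilon\delta_r^{-1}e^{-C\delta_r D/(2\varepsilon)}$, giving $C\delta_l\varepsilon\, e^{C\delta_r^2\tau/\varepsilon}$. This matches the statement. The other half is symmetric.

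For the product $\int|v^s_{ly}||v^s_{ry}|$ and its square, I would use the same split with $L^\infty\times L^1$: on $y\ge\bar y$, take $\sup|v^s_{ly}|\cdot\|v^s_{ry}\|_{L^1}$, giving $\frac{\delta_l^2}{\varepsilon}e^{(\cdot)}\delta_r$. For the square, use $\sup|v^s_{ly}|^2\cdot\|v^s_{ry}\|_{L^2}^2$, giving $\frac{\delta_l^4}{\varepsilon^2}\frac{\delta_r^3}{\varepsilon}$. Both match the stated bounds.

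\textbf{Expected obstacle.} The hardest part is the bookkeeping of which shock lies on which side. We need $|y-s_r\tau|\ge D/2$ on the left half and $|y-s_l\tau|\ge D/2$ on the right half, and the one-sided decay estimates must be invoked on the correct side of each center. Since $\tau\le0$ gives $s_l\tau\le\bar y\le s_r\tau$, this is consistent: the left half lies to the left of the right shock's center, and the right half lies to the right of the left shock's center.

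The other point to check is that constants in the exponents may be degraded freely. This is legitimate because the statement uses a generic $C$ in the exponent as well. Near $\tau=0$ the bounds degenerate to the trivial ones, which is still consistent with the statement since the exponential factors equal $1$ there.
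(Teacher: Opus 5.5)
Your proposal is correct and is essentially the paper's proof. Both split $\mathbb{R}$ at a point between the two shock centers (the paper uses $s_*\tau$, you use the midpoint), apply the one-sided exponential tails of Lemma \ref{lemma-shock} on the correct side of each center, and turn the separation $(s_l-s_r)|\tau|\gtrsim\delta|\tau|$ into the factors $e^{C\delta_i^2\tau/\varepsilon}$.

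One step needs stating precisely. On the half-line where $|\bar v-v^s_i|$ tends to a nonzero constant (for example $y\ge\bar y$ when $i=l$, where $|\bar v-v^s_l|=|v^s_r-v_*|\to\delta_r$), that factor is not integrable. There you must bound it by $C\delta_r$ and integrate only the tail of $v^s_{ly}$ (or $|v^s_{ly}|^2$) beyond $\bar y$, not use the full $L^1$/$L^2$ norm; this is what your ``residual exponential'' remark amounts to. The paper avoids the issue in the third estimate by bounding $|v^s_{ly}|^{1/2}|\bar v-v^s_l|$ pointwise and using $\int|v^s_{ly}|^{1/2}\,dy\le C\sqrt{\varepsilon}$.
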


\begin{proof}
	We only consider the case $i=l$, as the case $i=r$ can be shown analogously. It follows from Lemma \ref{lemma-shock} that
	\begin{equation}\label{vly}
		|v^s_{ly}(y - s_l \tau)| \leq C \frac{\delta_l^2}{\varepsilon} e^{- \frac{C \delta_l}{\varepsilon} |y - s_l \tau|}, \qquad \forall y \in \mathbb{R}, ~\tau \in [-t_0, 0],
	\end{equation}
	\begin{equation}\label{vr}
		|\bar{v} - v^s_l| = |v^s_r - v_*| \leq \begin{cases}
			C \delta_r e^{- \frac{C \delta_r}{\varepsilon} |y - s_r \tau|}, & \text{for } y \leq s_r \tau, \\[1mm]
			C \delta_r, & \text{for } y \geq s_r \tau,
		\end{cases}
	\end{equation}
	and
	\begin{equation}\label{vl}
		|\bar{v} - v^s_r| = |v^s_l - v_*| \leq \begin{cases}
			C \delta_l e^{- \frac{C \delta_l}{\varepsilon} |y - s_l \tau|}, & \text{for } y \leq s_l \tau, \\[1mm]
			C \delta_l, & \text{for } y \geq s_l \tau.
		\end{cases}
	\end{equation}
	Combining \eqref{vly}, \eqref{vr}, and \eqref{vl} yields
	\begin{equation} \label{vlvr}
		|\bar{v} - v^s_l| |\bar{v} - v^s_r| \leq \begin{cases}
			C \delta_l\delta_r e^{- \frac{C \delta_r}{\varepsilon} |y - s_r \tau|} , & \text{for } y \leq s_*\tau \leq s_r \tau, \\[1mm]
			C \delta_l\delta_r e^{- \frac{C \delta_l}{\varepsilon} |y - s_l \tau|}, & \text{for } y \geq s_*\tau \geq s_l \tau,
		\end{cases}
	\end{equation}
	\begin{equation} \label{vlyvr}
		|v^s_{ly}| |\bar{v} - v^s_l| \leq \begin{cases}
			C \frac{\delta_l^2\delta_r}{\varepsilon} e^{- \frac{C \delta_r}{\varepsilon} |y - s_r \tau|} , & \text{for } y \leq s_*\tau \leq s_r \tau, \\[1mm]
			C \frac{\delta_l^2\delta_r}{\varepsilon} e^{- \frac{C \delta_l}{\varepsilon} |y - s_l \tau|}, & \text{for } y \geq s_*\tau \geq s_l \tau,
		\end{cases}
	\end{equation}
	and
	\begin{equation} \label{vlyvr1}
		|v^s_{ly}|^{\frac{1}{2}} |\bar{v} - v^s_l| \leq \begin{cases}
			C \frac{\delta_l\delta_r}{\sqrt{\varepsilon}} e^{- \frac{C \delta_r}{\varepsilon} |y - s_r \tau|}, & \text{for } y \leq s_*\tau \leq s_r \tau, \\[1mm]
			C \frac{\delta_l\delta_r}{\sqrt{\varepsilon}} e^{- \frac{C \delta_l}{\varepsilon} |y - s_l \tau|}, & \text{for } y \geq s_*\tau \geq s_l \tau.
		\end{cases}
	\end{equation}
	Noting that for $\tau \leq 0$,
	\begin{equation} \label{shock-sep}
		\begin{aligned} 
			y - s_r \tau \leq (s_* - s_r) \tau \leq \frac{1}{2}(s_* - s_r) \tau \leq 0, \qquad & \text{for } y \leq s_*\tau, \\	 		
			y - s_l \tau \geq (s_* - s_l) \tau \geq \frac{1}{2}(s_* - s_l) \tau \geq 0, \qquad & \text{for } y \geq s_*\tau,
		\end{aligned}	 		
	\end{equation}
	substituting \eqref{shock-sep} into \eqref{vlvr} and \eqref{vlyvr} provides
	\begin{equation} \label{vlvr-1}
		|\bar{v} - v^s_l| |\bar{v} - v^s_r| \leq \begin{cases}
			C \delta_l\delta_r e^{- \frac{C \delta_r}{\varepsilon} |y - s_r \tau|} e^{- \frac{C \delta_r}{\varepsilon} (s_r - s_*) \tau} , & \text{for } y \leq s_*\tau \leq s_r \tau, \\[1mm]
			C \delta_l\delta_r e^{- \frac{C \delta_l}{\varepsilon} |y - s_l \tau|} e^{- \frac{C \delta_l}{\varepsilon} (s_* - s_l) \tau}, & \text{for } y \geq s_*\tau \geq s_l \tau,
		\end{cases}
	\end{equation}
	\begin{equation} \label{vlyvr-1}
		|v^s_{ly}| |\bar{v} - v^s_l| \leq \begin{cases}
			C \frac{\delta_l^2\delta_r}{\varepsilon} e^{- \frac{C \delta_r}{\varepsilon} |y - s_r \tau|} e^{- \frac{C \delta_r}{\varepsilon} (s_r - s_*) \tau} , & \text{for } y \leq s_*\tau \leq s_r \tau, \\[1mm]
			C \frac{\delta_l^2\delta_r}{\varepsilon} e^{- \frac{C \delta_l}{\varepsilon} |y - s_l \tau|} e^{- \frac{C \delta_l}{\varepsilon} (s_* - s_l) \tau}, & \text{for } y \geq s_*\tau \geq s_l \tau.
		\end{cases}
	\end{equation}
	Hence, \eqref{vlvr-1} and \eqref{vlyvr-1} directly yield the first two desired estimates:
	\begin{align*}
		|v^s_{ly}| |\bar{v} - v^s_l| &\leq C \frac{\delta_l^2\delta_r}{\varepsilon} \left(e^{\frac{C \delta_l^2}{\varepsilon} \tau} + e^{\frac{C \delta_r^2}{\varepsilon} \tau} \right), \\
		\int_{\bbr} |\bar{v} - v^s_l| |\bar{v} - v^s_r| \,dy &\leq C \delta_r\varepsilon e^{\frac{C \delta_l^2}{\varepsilon} \tau} + C \delta_l\varepsilon e^{\frac{C \delta_r^2}{\varepsilon} \tau}.
	\end{align*}
	
	Similarly, applying \eqref{shock-sep} to \eqref{vlyvr1} gives
	$$|v^s_{ly}|^\frac{1}{2} |\bar{v} - v^s_l| \leq C \frac{\delta_l\delta_r}{\sqrt{\varepsilon}} \left(e^{\frac{C \delta_l^2}{\varepsilon} \tau} + e^{\frac{C \delta_r^2}{\varepsilon} \tau} \right).$$
	Since Lemma \ref{lemma-shock} ensures $\int_{\bbr} |v^s_{ly}|^{\frac{1}{2}} \,dy \leq C \sqrt{\varepsilon}$, it follows that
	$$
	\int_{\bbr} |v^s_{ly}| |\bar{v} - v^s_l| \,dy \leq C \delta_l \delta_r \left( e^{\frac{C \delta_l^2}{\varepsilon} \tau} + e^{\frac{C \delta_r^2}{\varepsilon} \tau} \right),
	$$
	which establishes the third estimate. 
	
	Finally, by Lemma \ref{lemma-shock} and \eqref{shock-sep}, we have
	\begin{equation*}
		|v^s_{ly}| |v^s_{ry}| \leq \begin{cases}
			C \frac{\delta_l^2\delta_r^2}{\varepsilon^2} e^{- \frac{C \delta_l}{\varepsilon} |y - s_l \tau|} e^{ \frac{C \delta_r^2}{\varepsilon} \tau} , & \text{for } y \leq s_*\tau \leq s_r \tau, \\[1mm]
			C \frac{\delta_l^2\delta_r^2}{\varepsilon^2} e^{- \frac{C \delta_r}{\varepsilon} |y - s_r \tau|} e^{ \frac{C \delta_l^2}{\varepsilon} \tau}, & \text{for } y \geq s_*\tau \geq s_l \tau.
		\end{cases}
	\end{equation*}
	Integrating this inequality over $\bbr$ yields the fifth estimate:
	$$
	\int_{\bbr} |v^s_{ly}| |v^s_{ry}| \,dy \leq C \frac{\delta_l^2\delta_r}{\varepsilon}e^{\frac{C \delta_l^2}{\varepsilon} \tau} + C \frac{\delta_l\delta_r^2}{\varepsilon} e^{\frac{C \delta_r^2}{\varepsilon} \tau}.
	$$
	The remaining estimates (involving squared terms) can be deduced in an identical manner.
\end{proof}

After the approximate collision time $-A\frac{\varepsilon}{\delta^2}$, from \eqref{NS new} and \eqref{approxi wave after equ}, we set the perturbation of $(v, u)(\tau, y)$ around the approximate wave profile 
$(\tilde{v}, \tilde{u})(\tau, y)$ by
\begin{equation*}
	\tilde{\phi}(\tau, y) := (v - \tilde{v})(\tau, y), \quad \tilde{\psi}(\tau, y) := (u - \tilde{u})(\tau, y), 
\end{equation*}
which satisfies the following system:
\begin{equation} \label{per after}
	\begin{cases}
		\displaystyle \tilde{\phi}_\tau - \tilde{\psi}_y - \dot{X}(\tau)(v^s)_y^{-X} = 0,   \qquad\qquad \qquad \qquad\qquad y \in \bbr, ~\tau \geq -A\frac{\varepsilon}{\delta^2}, \\
		\displaystyle \tilde{\psi}_\tau + (p(v) -p(\tilde{v}))_y - \dot{X}(\tau)(u^s)_y^{-X} = \varepsilon\left(\frac{u_y}{v} - \frac{\tilde{u}_y}{\tilde{v}}\right)_y - F_{3y} - F_{4y},
	\end{cases}
\end{equation}
supplemented with the initial data
\begin{equation} \label{per afterI}
	(\tilde{\phi}, \tilde{\psi})\left(-A\frac{\varepsilon}{\delta^2}, y\right) = (\tilde{\phi}_0, \tilde{\psi}_0)(y).
\end{equation}

Unlike the previous part where the initial data at $\tau = - t_0$ is well-chosen such that the perturbation $(\phi, \psi)(\tau, y)\big|_{\tau = -t_0} = (0, 0)$, here the initial data $(v, u)\left(-A\frac{\varepsilon}{\delta^2}, y\right)$ for the system \eqref{NS new} is determined by the solution obtained before the collision time and can not be chosen any more. 

However, we still need the smallness of the perturbation around $(\tilde{v}, \tilde{u})$ at $\tau =-A\frac{\varepsilon}{\delta^2}$. Because we already have
\begin{equation} \label{hit before}
	\left\|(v - \bar{v}, u - \bar{u})\left(-A\frac{\varepsilon}{\delta^2}\right)\right\|^2 \leq C (\delta_l + \delta_r)\varepsilon, ~ \left\| ((v - \bar{v})_y, (u - \bar{u})_y)\left(-A\frac{\varepsilon}{\delta^2}\right) \right\|^2 \leq C(\delta_l + \delta_r)\varepsilon^{-1},
\end{equation}
in order to prove the smallness of $(v - \tilde{v}, u - \tilde{u})\left(-A\frac{\varepsilon}{\delta^2}, y\right)$, it suffices to estimate the difference between the approximate wave profiles before and after the approximate collision time $-A\frac{\varepsilon}{\delta^2}$, i.e.,
\begin{align*}
	&(\tilde{v} - \bar{v})\left(-A\frac{\varepsilon}{\delta^2}, y\right) \\
	&= \left(v^r\left(-A\frac{\varepsilon}{\delta^2}, y\right) + v^s\left(y + s_2A\frac{\varepsilon}{\delta^2}\right) - v^*\right) - \left(v^s_l\left(y + s_lA\frac{\varepsilon}{\delta^2}\right) + v^s_r\left(y + s_rA\frac{\varepsilon}{\delta^2}\right) - v_*\right) \\
	&=\begin{cases} \left(v^r\left(-A\frac{\varepsilon}{\delta^2}, y\right) - v_-\right) + \left(v^s\left(y + s_2A\frac{\varepsilon}{\delta^2}\right) - v^*\right)& \\
		\quad - \left(v^s_l\left(y + s_lA\frac{\varepsilon}{\delta^2}\right) - v_-\right) - \left(v^s_r\left(y + s_rA\frac{\varepsilon}{\delta^2}\right) - v_*\right),& \text{for } y + s_2A\frac{\varepsilon}{\delta^2} \leq 0, \\
		\left(v^r\left(-A\frac{\varepsilon}{\delta^2}, y\right) - v^*\right) + \left(v^s\left(y + s_2A\frac{\varepsilon}{\delta^2}\right) - v_+\right)& \\
		\quad - \left(v^s_l\left(y + s_lA\frac{\varepsilon}{\delta^2}\right) - v_*\right) - \left(v^s_r\left(y + s_rA\frac{\varepsilon}{\delta^2}\right) - v_+\right),& \text{for } y + s_2A\frac{\varepsilon}{\delta^2} \geq 0,\end{cases}
\end{align*}
where we notice that $X\left(-A\frac{\varepsilon}{\delta^2}\right) = 0$.

\begin{lemma}
	It holds that
	\begin{equation} \label{hit}
		\begin{aligned} 
			\left\|(\tilde{v} - \bar{v}, \tilde{u} - \bar{u})\left(-A\frac{\varepsilon}{\delta^2}\right)\right\|^2 &\leq C (\delta_l + \delta_r)\varepsilon, \\
			\left\| \left((\tilde{v} - \bar{v})_y, (\tilde{u} - \bar{u})_y\right)\left(-A\frac{\varepsilon}{\delta^2}\right) \right\|^2 &\leq C(\delta_l + \delta_r)^3\varepsilon^{-1}.
		\end{aligned}
	\end{equation}
\end{lemma}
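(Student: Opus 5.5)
Throughout, write $T:=-A\frac{\varepsilon}{\delta^2}$ and use $X(T)=0$. The plan is to bound each of the four pieces in the piecewise formula for $(\tilde v-\bar v)(T,y)$ displayed before the lemma separately, pointwise and then in $L^2$. The $u$-components are handled identically: for the shocks, $u^s_{iy}=-s_iv^s_{iy}$ and $|u^s_i-u_\pm|\sim|v^s_i-v_\pm|$ by Lemma \ref{lemma-shock}; for the rarefaction, $u^r$ is determined by $v^r$ through the Riemann invariant $Z_1$. So it suffices to treat the $v$-components.

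\emph{Geometry at time $T$.} The three shock centers at $\tau=T$ are $s_lT$, $s_2T$ and $s_rT$. Since $s_r<s_2<s_l$ and all differences are $O(\delta)$, we have $s_lT<s_2T<s_rT$ with mutual distances $|s_i-s_j||T|\le C\delta\cdot A\varepsilon/\delta^2=CA\varepsilon/\delta$. This is comparable to the common shock width $\varepsilon/\delta$, up to the fixed factor $A$; since $A$ is fixed before $\delta,\varepsilon$, the constant $C$ may depend on it. The rarefaction at $\tau=T$ is just the initial Burgers profile $w^r_0$, recentered at $y=s_2T$ and transformed by \eqref{rare smooth}. It therefore has amplitude $\delta_1=O(\delta^3)$ and width $\kappa=\varepsilon$, so that $|v^r-v_\mp|\le C\delta_1e^{-2|y-s_2T|/\varepsilon}$ on the two sides of $s_2T$ and $|v^r_y|\le C\delta_1\varepsilon^{-1}e^{-2|y-s_2T|/\varepsilon}$, by Lemma \ref{lemma-rare}(3),(4) with $\tau=T$.

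\emph{Pointwise bounds.} Fix the region $y\le s_2T$. By Lemma \ref{lemma-shock}:
\[
|v^s-v^*|\le C\delta e^{-C\delta|y-s_2T|/\varepsilon},\qquad |v^s_r-v_*|\le C\delta e^{-C\delta|y-s_rT|/\varepsilon}\le C\delta e^{-C\delta|y-s_2T|/\varepsilon},
\]
the latter because $y\le s_2T<s_rT$. For $|v^s_l-v_-|$, the exponential bound $C\delta e^{-C\delta|y-s_lT|/\varepsilon}$ applies on $y\le s_lT$. On the gap $s_lT\le y\le s_2T$ there is only the trivial bound $C\delta$, but that gap has length at most $CA\varepsilon/\delta$. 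The region $y\ge s_2T$ is symmetric: there $v^s_r-v_+$ is the term without decay on the gap $[s_2T,s_rT]$. Each of these squared bounds integrates to at most $C\delta^2(\varepsilon/\delta+A\varepsilon/\delta)\le C\delta\varepsilon$, and the rarefaction term contributes at most $C\delta_1^2\varepsilon\le C\delta\varepsilon$. This gives the first estimate in \eqref{hit}.

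\emph{Derivatives.} We do not need the piecewise decomposition, since $(\tilde v-\bar v)_y=v^r_y+v^s_y-v^s_{ly}-v^s_{ry}$. Each shock term satisfies $|v^s_{iy}|\le C\frac{\delta^2}{\varepsilon}e^{-C\delta|y-s_iT|/\varepsilon}$, so its squared $L^2$ norm is at most $C\frac{\delta^4}{\varepsilon^2}\cdot\frac{\varepsilon}{\delta}=C\delta^3\varepsilon^{-1}$. The rarefaction term satisfies $\|v^r_y\|^2\le C\delta_1^2\varepsilon^{-1}\le C\delta^6\varepsilon^{-1}$. Together these give the second estimate, with $(\delta_l+\delta_r)^3$ in place of $\delta^3$ by $\delta_l\sim\delta_r$.

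\emph{Main obstacle.} The main point to watch is the gap regions between the shock centers, where a shock term is $O(\delta)$ with no decay. The estimate survives only because the approximate collision time was chosen so that the gap length $\sim A\varepsilon/\delta$ matches the shock width. This is also why the $L^2$ bound is of order $\delta\varepsilon$ rather than smaller, and it requires tracking the ordering $s_lT<s_2T<s_rT$ carefully.
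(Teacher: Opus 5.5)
Your proposal is correct and follows the paper's proof essentially step by step. The rarefaction and outgoing-shock pieces are bounded by their exponential tails about $y=s_2T$, the incoming shocks are handled by splitting at $y=s_2T$ and using the trivial $O(\delta^2)$ pointwise bound on the gap of length $|s_i-s_2|A\varepsilon/\delta^2$, and the derivative bound is the same direct computation of $\|v^r_y\|^2$ and $\|v^s_{iy}\|^2$ with $\delta_1^2\le C\delta^3$.

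Two cosmetic points. First, the right end state of the rarefaction is $v^*$, not $v_+$. Second, the $O(\delta\varepsilon)$ order already comes from the individual shock layers, whose widths $\varepsilon/\delta_l$, $\varepsilon/\delta_r$, $\varepsilon/\delta_2$ differ so that they do not cancel. The gaps therefore only fail to worsen that order rather than being its source.
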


\begin{proof}
	By Lemma \ref{lemma-rare}, we have
	\begin{align*}
		\left|v^r\left(-A\frac{\varepsilon}{\delta^2}, y\right) - v_-\right| &\leq C \delta_1 e^{-\frac{2}{\kappa}\left|y + s_2A\frac{\varepsilon}{\delta^2}\right|}, && \text{for } y + s_2A\frac{\varepsilon}{\delta^2} \leq 0,\\[1mm]
		\left|v^r\left(-A\frac{\varepsilon}{\delta^2}, y\right) - v^*\right| &\leq C \delta_1 e^{-\frac{2}{\kappa}\left|y + s_2A\frac{\varepsilon}{\delta^2}\right|}, && \text{for } y + s_2A\frac{\varepsilon}{\delta^2} \geq 0.
	\end{align*}
	Therefore,
	\begin{align*}
		&\int_{-\infty}^{-s_2A\frac{\varepsilon}{\delta^2}} \left|v^r\left(-A\frac{\varepsilon}{\delta^2}, y\right) - v_-\right|^2 \,dy + \int_{-s_2A\frac{\varepsilon}{\delta^2}}^{+\infty} \left|v^r\left(-A\frac{\varepsilon}{\delta^2}, y\right) - v^*\right|^2 \,dy \\
		&\leq C \delta_1^2 \int_{\bbr} e^{-\frac{C}{\kappa}\left|y + s_2A\frac{\varepsilon}{\delta^2}\right|} \,dy \leq C \delta_1^2 \kappa = C \delta_1^2 \varepsilon.
	\end{align*}	
	It follows from Lemma \ref{lemma-shock} that
	\begin{align*}
		\left|v^s\left(y + s_2A\frac{\varepsilon}{\delta^2}\right) - v^*\right| &\leq C \delta_2 e^{-\frac{C\delta_2}{\varepsilon}\left|y + s_2A\frac{\varepsilon}{\delta^2}\right|}, && \text{for } y + s_2A\frac{\varepsilon}{\delta^2} \leq 0, \\[1mm]
		\left|v^s\left(y + s_2A\frac{\varepsilon}{\delta^2}\right) - v_+\right| &\leq C \delta_2 e^{-\frac{C\delta_2}{\varepsilon}\left|y + s_2A\frac{\varepsilon}{\delta^2}\right|}, && \text{for } y + s_2A\frac{\varepsilon}{\delta^2} \geq 0,
	\end{align*}
	which implies
	\begin{align*}
		&\int_{-\infty}^{-s_2A\frac{\varepsilon}{\delta^2}} \left|v^s\left(y + s_2A\frac{\varepsilon}{\delta^2}\right) - v^*\right|^2 \,dy + \int_{- s_2A\frac{\varepsilon}{\delta^2}}^{+\infty} \left|v^s\left(y + s_2A\frac{\varepsilon}{\delta^2}\right) - v_+\right|^2 \,dy \\ 
		&\leq C \delta_2^2 \int_{\bbr} e^{-\frac{C\delta_2}{\varepsilon}\left|y + s_2A\frac{\varepsilon}{\delta^2}\right|} \,dy \leq C \delta_2 \varepsilon.
	\end{align*}
	
	Similarly, for the incoming shocks evaluated at $\tau = -A\frac{\varepsilon}{\delta^2}$, we can obtain
	\begin{align*}
		\left|v^s_l\left(y + s_lA\frac{\varepsilon}{\delta^2}\right) - v_-\right| &\leq C \delta_l e^{-\frac{C\delta_l}{\varepsilon}\left|y + s_lA\frac{\varepsilon}{\delta^2}\right|}, && \text{for } y + s_lA\frac{\varepsilon}{\delta^2} \leq 0, \\[1mm]
		\left|v^s_l\left(y + s_lA\frac{\varepsilon}{\delta^2}\right) - v_*\right| &\leq C \delta_l e^{-\frac{C\delta_l}{\varepsilon}\left|y + s_lA\frac{\varepsilon}{\delta^2}\right|}, && \text{for } y + s_lA\frac{\varepsilon}{\delta^2} \geq 0.
	\end{align*}
	Note that $s_l > s_2$, which implies $-s_l < -s_2$. Splitting the integrals properly yields
	\begin{align*}
		&\int_{-\infty}^{- s_2A\frac{\varepsilon}{\delta^2}} \left|v^s_l\left(y + s_lA\frac{\varepsilon}{\delta^2}\right) - v_-\right|^2 \,dy + \int_{- s_2A\frac{\varepsilon}{\delta^2}}^{+\infty} \left|v^s_l\left(y + s_lA\frac{\varepsilon}{\delta^2}\right) - v_*\right|^2 \,dy \\
		&= \int_{-\infty}^{- s_lA\frac{\varepsilon}{\delta^2}} \left|v^s_l\left(y + s_lA\frac{\varepsilon}{\delta^2}\right) - v_-\right|^2 \,dy + \int_{- s_lA\frac{\varepsilon}{\delta^2}}^{+\infty} \left|v^s_l\left(y + s_lA\frac{\varepsilon}{\delta^2}\right) - v_*\right|^2 \,dy \\
		&\quad + \int_{-s_lA\frac{\varepsilon}{\delta^2}}^{- s_2A\frac{\varepsilon}{\delta^2}} \left|v^s_l\left(y + s_lA\frac{\varepsilon}{\delta^2}\right) - v_-\right|^2 \,dy - \int_{-s_lA\frac{\varepsilon}{\delta^2}}^{- s_2A\frac{\varepsilon}{\delta^2}} \left|v^s_l\left(y + s_lA\frac{\varepsilon}{\delta^2}\right) - v_*\right|^2 \,dy \\
		&\leq C \delta_l^2 \int_{\bbr} e^{-\frac{C\delta_l}{\varepsilon}\left|y + s_lA\frac{\varepsilon}{\delta^2}\right|} \,dy + C\delta_l^2|s_l - s_2|A \frac{\varepsilon}{\delta^2} \\
		&\leq C (\delta_l + \delta_r) \varepsilon.
	\end{align*}
	By an analogous procedure for the right shock, we have
	\begin{equation*}
		\int_{-\infty}^{- s_2A\frac{\varepsilon}{\delta^2}} \left|v^s_r\left(y + s_rA\frac{\varepsilon}{\delta^2}\right) - v_*\right|^2 \,dy + \int_{- s_2A\frac{\varepsilon}{\delta^2}}^{+\infty} \left|v^s_r\left(y + s_rA\frac{\varepsilon}{\delta^2}\right) - v_+\right|^2 \,dy \leq C (\delta_l + \delta_r) \varepsilon.
	\end{equation*}
	
	Finally, a direct computation for the derivative estimates yields
	\begin{equation*}
		\left\| \left((\tilde{v} - \bar{v})_y, (\tilde{u} - \bar{u})_y\right)\left(- A\frac{\varepsilon}{\delta^2}\right) \right\|^2 \leq C \delta_1^2\varepsilon^{-1} + C(\delta_l + \delta_r)^3\varepsilon^{-1}.
	\end{equation*}
\end{proof}

Therefore, by \eqref{hit before} and \eqref{hit}, we have the estimates of the initial data at the approximate collision time $-A\frac{\varepsilon}{\delta^2}$, i.e.
\begin{equation}
	\begin{aligned} \label{hit after}
		&\| (\tilde{\phi}_0, \tilde{\psi}_0) \|^2 \leq C (\delta_l + \delta_r)\varepsilon, \\
		&\| (\tilde{\phi}_{0y}, \tilde{\psi}_{0y}) \|^2 \leq C(\delta_l + \delta_r)\varepsilon^{-1}.
	\end{aligned}
\end{equation}

After the approximate collision time $-A\frac{\varepsilon}{\delta^2}$, we seek the solution to \eqref{per after}-\eqref{per afterI} in the functional space defined as follows:
\begin{equation*}
	\tilde{\Pi}(I) = \left\lbrace (\tilde{\phi},\tilde{\psi}) \in C(I, H^1), \tilde{\psi}_y \in L^2(I, H^1) \right\rbrace,
\end{equation*}
where $I \subset \bbr$ is any interval.

\subsection{Proof of global existence and uniform estimates after collision}
After the approximate collision time $-A\frac{\varepsilon}{\delta^2}$, we can obtain the following global existence and uniform estimates.

\begin{theorem} \label{global estimate after}
	{\rm (Global existence and uniform estimates after collision).}
	There exist positive constants $\tilde{\delta}_0$ and $\tilde{\varepsilon}_0$ which are independent of $\varepsilon$, such that if the wave strength $\delta_l \sim \delta_r \sim \delta_2 \sim \delta \leq \tilde{\delta}_0$, and the viscosity coefficient $\varepsilon \leq \tilde{\varepsilon}_0$, then the perturbation problem \eqref{per after}-\eqref{per afterI} admits a unique global solution $(\tilde{\phi}, \tilde{\psi})(\tau, y) \in \tilde{\Pi}\left( [-A\frac{\varepsilon}{\delta^2}, +\infty)\right)$ satisfying
	\begin{enumerate}
		\item There exists a positive constant $C$ such that
		\begin{align}
			\begin{aligned} \label{global estimate 0after}
				&\|(\tilde{\phi},\tilde{\psi})(\tau)\|^2 + \delta_2 \int_{- A\frac{\varepsilon}{\delta^2}}^{\tau} |\dot{X}(\tau)|^2 d\tau + \int_{- A\frac{\varepsilon}{\delta^2}}^{\tau}(G_1(\tau) + G^R(\tau) + G^S(\tau) + D(\tau)) d\tau \\
				&\leq C \|(\tilde{\phi}_0,\tilde{\psi}_0)\|^2 + C \delta_1^{\frac{1}{2}}\varepsilon,
			\end{aligned}
		\end{align}
		where
		\begin{align}
			\begin{aligned} \label{global estimate Gafter}
				&G_1 = \int_{\bbr} a^{-X}_y (\tilde{\psi} + s_2\tilde{\phi})^2 dy, \\			
				&G^R = \int_{\bbr} u^r_y \tilde{\phi}^2 dy, \\
				&G^S = \int_{\bbr} (v^s)^{-X}_y \tilde{\psi}^2 dy, \\
				&D = \varepsilon\int_{\bbr} |\tilde{\psi}_y|^2 dy.
			\end{aligned}
		\end{align}
		\item There exists a constant $C >0$, such that 
		\begin{align}
			\begin{aligned} \label{global estimate 1after}
				\| (\tilde{\phi}_y, \tilde{\psi}_y)(\tau) \|^2 + \int_{- A\frac{\varepsilon}{\delta^2}}^{\tau} (\varepsilon^{-1}D_1 + D_2) d\tau  \leq C (\varepsilon^{-2}\| (\tilde{\phi}_0, \tilde{\psi}_0) \|^2 + \| (\tilde{\phi}_{0y}, \tilde{\psi}_{0y}) \|^2) + C\delta_1^\frac{1}{2}\varepsilon^{-1},
			\end{aligned}
		\end{align}
		where $D_1 = \|\tilde{\phi}_y(\tau)\|^2$, and $D_2 = \varepsilon\|\tilde{\psi}_{yy}(\tau)\|^2$.
		\item Moreover, it holds that
		\begin{equation} \label{L infty after}
			\|(\tilde{\phi},\tilde{\psi})(\tau)\|_{L^\infty} \leq C \|(\tilde{\phi},\tilde{\psi})(\tau)\|^{\frac{1}{2}} \|(\tilde{\phi}_y,\tilde{\psi}_y)(\tau)\|^{\frac{1}{2}} \leq C (\delta_l + \delta_r)^{\frac{1}{2}}.
		\end{equation}
	\end{enumerate}
	Here, the positive constant $C$ is independent of $\varepsilon, \delta_l, \delta_r$ and $\tau$, but depends on $v_\pm, u_\pm$ and $v_*, u_*$.
\end{theorem}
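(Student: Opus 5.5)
The plan is the standard continuation argument: combine local existence in $\tilde{\Pi}$ with uniform a priori estimates under a smallness assumption. For $\tau\in[-A\frac{\varepsilon}{\delta^2},\bar\tau]$ I would assume
\[
\|(\tilde\phi,\tilde\psi)(\tau)\|_{L^\infty}\le \chi ,\qquad \varepsilon\|(\tilde\phi_y,\tilde\psi_y)(\tau)\|\le \chi ,
\]
with $\chi$ small but independent of $\varepsilon$. Local existence for \eqref{per after}--\eqref{per afterI} together with the shift ODE \eqref{X} follows as in \cite{KVW}; the right-hand side of \eqref{X} is Lipschitz in $(\tilde\phi,\tilde\psi)$ in $L^2$. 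The initial size is given by \eqref{hit after}. Since every estimate is invariant under $(\tau,y)\mapsto(\tau/\varepsilon,y/\varepsilon)$, I would work in these scaled variables, where $\varepsilon=1$ and the data have size $\|\cdot\|^2\lesssim\delta\varepsilon^{-1}\cdot\varepsilon^2$. The claimed bounds are then exactly the scaled versions of the time-asymptotic estimates for the composite wave.

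For the zeroth-order estimate \eqref{global estimate 0after} I would use the weighted relative entropy
\[
\int a^{-X}\Bigl(\tfrac{|u-\tilde u|^2}{2}+Q(v|\tilde v)\Bigr)\,dy .
\]
The weight $a$ in \eqref{weight} increases across the shock, with $a_y\sim \frac{\lambda}{\delta_2}(v^s)_y$ and $\lambda\sim\delta_2$. Differentiating in time and using \eqref{approxi wave after equ}, I would collect the following pieces:
\begin{itemize}
\item the good terms $-G_1$, $-G^R$ (from convexity of $p$ and $u^r_y>0$), $-G^S$ and $-D$;
\item the shift terms $\dot X\,Y(\tau)$;
\item the bad terms localized at the shock, the interaction errors $F_3$ and $F_4$, and the error from $\kappa$.
\end{itemize}
By the definition \eqref{X}, the linear part of $Y$ is $-\frac{\delta_2}{M}|\dot X|^2$, which yields the term $\delta_2\int|\dot X|^2$. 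The leading bad term at the shock is controlled by the sharp weighted Poincaré inequality of Kang--Vasseur, applied in the variable $(v^s)^{-X}$. Because $M=\frac{5\alpha^*}{4(s^*)^2}$, this bad term is absorbed by $\delta_2|\dot X|^2+G_1+D$.

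The interaction terms need a separation argument. The rarefaction travels with speed $\le\Lambda_1(v^*)<0$ and the shock with speed $\approx s_2>0$, and Lemma \ref{lemma-rare}(3) places the rarefaction's tail inside an $O(\kappa)$ layer for $\tau\ge -A\varepsilon/\delta^2$. Using this, I would bound $\|F_4\|_{L^1\cap L^2}$ by $C\delta_1\delta_2\, e^{-c\delta_2(\tau+A\varepsilon/\delta^2)/\varepsilon}$ plus terms carrying $e^{-c|\tau|/\kappa}$. The viscous error $\varepsilon(\ldots)u^r_{yy}$ I would control through Lemma \ref{lemma-rare}(2) via $\int\|u^r_{yy}\|^{4/3}_{L^1}$-type bounds, giving a total $C\delta_1^{1/2}\varepsilon$. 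Grönwall-free absorption then gives \eqref{global estimate 0after}. The smallness needed to close is $\chi,\delta\ll1$.

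For \eqref{global estimate 1after} I would first obtain $D_1$: multiply the momentum equation by $\tilde\phi_y$, writing the viscous term as $\varepsilon(\ln v-\ln\tilde v)_{y\tau}$ plus remainders. Next I would get $D_2$ by testing with $-\tilde\psi_{yy}$. In scaled variables both are routine, and they produce the factor $\varepsilon^{-2}$ multiplying the $L^2$ bound. The shift contributions $\dot X(v^s)_{yy}$ are bounded using $|\dot X|\le C\|\tilde\psi\|_{L^\infty}\delta_2$-type bounds and the estimate for $\int\delta_2|\dot X|^2$. Finally, \eqref{L infty after} follows by Gagliardo--Nirenberg, since the product $(\delta\varepsilon)^{1/2}(\delta\varepsilon^{-1})^{1/2}=\delta$ has square root $\delta^{1/2}$. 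This bound is strictly better than the assumption whenever $\delta$ is small, which closes the continuation.

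I expect the main obstacle to be the interaction error at the starting time $\tau=-A\varepsilon/\delta^2$, where the rarefaction and the shock are only $O(A\varepsilon/\delta^2)$ apart. The ODE for $X$ and $a^{-X}$ must also not let the shock drift into the rarefaction. I would first try to prove $|X(\tau)|\le \frac{s_2}{2}(\tau+A\varepsilon/\delta^2)+C\varepsilon/\delta$ from the $L^2_\tau$ bound on $\dot X$ together with the a priori $L^\infty$ bound. This bound keeps the exponential separation factors in $F_4$ integrable in time with an $\varepsilon$-scaled constant.
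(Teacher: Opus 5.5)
Your outline follows the paper's route: a continuation argument, the $a$-weighted relative entropy with the shift \eqref{X}, Lemma \ref{poincare} with $M=\frac{5\alpha^*}{4(s^*)^2}$ for the leading shock terms, then the $\tilde\phi_y$ and $\tilde\psi_y$ estimates and Gagliardo--Nirenberg. Your observation that, with $\kappa=\varepsilon$, the map $(\tau,y)\mapsto(\tau/\varepsilon,y/\varepsilon)$ turns \eqref{per after}--\eqref{per afterI} into the $\varepsilon=1$ composite-wave problem of \cite{KVW} started at time $0$ is correct. It explains every power of $\varepsilon$ in \eqref{global estimate 0after}--\eqref{global estimate 1after}.

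\textbf{The rescaling is done incorrectly.} You dropped the Jacobian: for $f(y)=g(y/\varepsilon)$ one has $\|f\|^2=\varepsilon\|g\|^2$ and $\|f_y\|^2=\varepsilon^{-1}\|g'\|^2$. So by \eqref{hit after} the rescaled data satisfy $\|g_0\|_{H^1}^2\lesssim\delta$, not $\delta\varepsilon$. The scale-invariant bootstrap is therefore $\varepsilon^{-1/2}\|(\tilde\phi,\tilde\psi)\|+\varepsilon^{1/2}\|(\tilde\phi_y,\tilde\psi_y)\|\le\chi$, which is the paper's \eqref{priori assump after} with $\chi^2=\delta^{2/3}$.

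Your hypothesis $\varepsilon\|(\tilde\phi_y,\tilde\psi_y)\|\le\chi$ is weaker by a factor $\varepsilon^{-1/2}$, and as stated it does not close the $\tilde\psi_y$ estimate. The cubic viscous term satisfies $J_3=\varepsilon\int_{\bbr}\frac{\tilde\phi_y}{v^2}\tilde\psi_y\tilde\psi_{yy}\,dy\le\frac{1}{20}D_2+C\|\tilde\phi_y\|^4D$. After time integration your hypothesis only bounds this by $C\chi^4\varepsilon^{-4}\int D\,d\tau\sim\chi^4\delta\varepsilon^{-3}$, whereas the target is $\delta\varepsilon^{-1}$. There are two fixes. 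You can use the correctly scaled hypothesis. Alternatively, close the $\tilde\phi_y$ estimate first, which needs only $L^\infty$-smallness and the zeroth-order bound, and insert $\|\tilde\phi_y\|^2\lesssim\delta\varepsilon^{-1}$ into $J_3$.

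\textbf{The choice of $\lambda$ also needs correcting.} With $\lambda\sim\delta_2$ one has $a^{-X}_y\sim(v^s)^{-X}_y$, so $G_1$ is only comparable to $\int(v^s)^{-X}_y(\tilde\psi+s_2\tilde\phi)^2dy$. But $\mathbf{B}_5$, $\mathbf{B}_6$ and $\delta_2^{-1}|Y_4|^2$ are controlled by that same integral with $O(1)$ constants (plus a small multiple of $G^S$). They are absorbed only if $\lambda/\delta_2$ is large; the paper takes $\delta_2\ll\lambda\le C\sqrt{\delta_2}$.

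Your $L^\infty$-only bootstrap adds a further constraint. The terms $\delta_2^{-1}|Y_5|^2$ and $\mathbf{B}_2+\mathbf{B}_3$ then produce $C\chi^2\lambda^2\delta_2^{-2}G^S$ and $C\chi\lambda\delta_2^{-1}G^S$. So you must take $\lambda=K\delta_2$ with a large fixed $K$ and $\chi\ll 1/K$. With $\lambda\sim\sqrt{\delta_2}$ you would need $\chi\ll\delta^{1/2}$, which is incompatible with the closure bound $\|(\tilde\phi,\tilde\psi)\|_{L^\infty}\le C\delta^{1/2}$. The paper avoids this by using the $L^2$-smallness $\|\tilde\psi\|^2\le\delta^{2/3}\varepsilon$ in these terms.

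\textbf{A smaller geometric point.} At $\tau=-A\varepsilon/\delta^2$ the approximate rarefaction and the shock are centred at the same point $y=-s_2A\varepsilon/\delta^2$, not $O(A\varepsilon/\delta^2)$ apart. The interaction bounds of Lemma \ref{rare-shock interact} decay in $\tau+A\varepsilon/\delta^2$, not in $|\tau|$. The separation they need, $|X(\tau)|\le\frac{s_2}{4}(\tau+A\varepsilon/\delta^2)$, follows at once from $|\dot X|\le C\|\tilde\psi\|_{L^\infty}\le C\chi$ (with no factor $\delta_2$) and $X(-A\varepsilon/\delta^2)=0$. No $C\varepsilon/\delta$ offset is needed.
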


The proof of local existence and uniqueness for the system \eqref{per after}-\eqref{per afterI} is similar to the proof in \cite{KVW}, hence we omit it for brevity.
Therefore, to prove Theorem \ref{global estimate after}, it suffices to close the following {\it a priori} assumptions:
\begin{equation} \label{priori assump after}
	\|(\tilde{\phi}, \tilde{\psi})(\tau)\|^2 \leq \delta^{\frac{2}{3}} \varepsilon, \quad \|(\tilde{\phi}_y, \tilde{\psi}_y)(\tau)\|^2 \leq \delta^{\frac{2}{3}} \varepsilon^{-1},
\end{equation}
for $\tau \in [- A\frac{\varepsilon}{\delta^2}, \tilde{\tau}]$ with $\tilde{\tau} \in [-A\frac{\varepsilon}{\delta^2}, +\infty)$, where $[- A\frac{\varepsilon}{\delta^2}, \tilde{\tau}]$ is the time interval in which the solution is assumed to exist. 

\begin{proposition}[Uniform {\it a priori} estimates after collision] \label{priori estimate after}
	Suppose that the Cauchy problem \eqref{per after}-\eqref{per afterI} has a solution $(\tilde{\phi}, \tilde{\psi})(\tau, y) \in \tilde{\Pi}([- A\frac{\varepsilon}{\delta^2}, \tilde{\tau}])$ for some $\tilde{\tau} \in [- A\frac{\varepsilon}{\delta^2}, +\infty)$ and satisfies the {\it a priori} assumption \eqref{priori assump after}.
	Then there exist positive constants $\tilde{\delta}_0$ and $\tilde{\varepsilon}_0$ which are independent of $\varepsilon$, such that if $\delta_l \sim \delta_r \sim \delta_2 \sim \delta \leq \tilde{\delta}_0, ~ \varepsilon \leq \tilde{\varepsilon}_0$,
	then the inequalities \eqref{global estimate 0after}-\eqref{L infty after} hold.
\end{proposition}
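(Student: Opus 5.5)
We follow the weighted relative entropy and $a$-contraction framework of Kang--Vasseur--Wang \cite{KVW}, rescaled to the viscosity $\varepsilon$. Define the weight
\[
a(\tau,y)=1+\frac{\lambda}{\delta_2}\bigl(p(v_+)-p(v^s(y-s_2\tau))\bigr),\qquad \lambda\sim\sqrt{\delta_2},
\]
which is the weight referred to in \eqref{X}. Then $a_y=-\frac{\lambda}{\delta_2}p'(v^s)v^s_y>0$ and $|a-1|\le C\sqrt{\delta_2}$.

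\textbf{Step 1 (zeroth-order estimate \eqref{global estimate 0after}).} We compute $\frac{d}{d\tau}\int a^{-X}\eta(U|\tilde U)\,dy$, where $\eta(U|\tilde U)=\frac{|u-\tilde u|^2}{2}+Q(v|\tilde v)$ and $Q$ is the relative internal energy associated with $p$. Using \eqref{per after} and \eqref{approxi wave after equ}, this derivative equals
\[
\dot X\,Y(U)+\mathcal B(U)-\mathcal G(U)-\text{(error terms from }F_3,F_4\text{ and the rarefaction part)}.
\]
The good terms $\mathcal G$ are of three kinds:
\begin{enumerate}
\item $G_1$, generated by $a_y$ acting on the hyperbolic flux;
\item $G^R$, coming from $u^r_y>0$ through the convexity of $Q$;
\item $D$, the viscous dissipation.
\end{enumerate}
We write the shift contribution as $\dot X\,Y=-\frac{\delta_2}{M}|\dot X|^2+\dot X\,(\text{remainder})$, which \eqref{X} permits. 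We then split $-\frac{\delta_2}{2M}|\dot X|^2+\mathcal B_1-G_1-\frac{3}{4}D$ as in \cite{KVW}. Under the $\varepsilon$-scaling $\xi=\frac{\delta_2}{\varepsilon}(y-s_2\tau-X)$, the sharp weighted Poincar\'e inequality on $[0,1]$ of Kang--Vasseur shows that the leading bad shock term is controlled once $M$ is chosen as stated; this yields $G^S$. The remaining bad terms are cubic in the perturbation or involve $|a-1|$. We absorb them using the a priori assumption \eqref{priori assump after}, which gives $\|\tilde\phi,\tilde\psi\|_{L^\infty}\le C\delta^{1/3}$, together with smallness of $\delta$ and $\lambda$.

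\textbf{Step 2 (interaction and rarefaction errors; the main obstacle).} The errors to be bounded are:
\begin{itemize}
\item the terms from $F_3$ and $F_4$, which involve the product $|v^s_y|\,|v^r-v^*|$ and $|v^r_y|\,|v^s-v^*|$;
\item the viscous term $\varepsilon(u^r_y/v^r)_y$ of the rarefaction;
\item interaction terms with $a_y$.
\end{itemize}
Here the $\kappa=\varepsilon$ smoothing of the rarefaction is essential. For $\tau$ near $-A\varepsilon/\delta^2$, Lemma \ref{lemma-rare}(2) only gives $\|v^r_{yy}\|\sim\delta_1\varepsilon^{-3/2}$, so we must exploit the fact that $\delta_1=O(\delta^3)$ is third order.

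We bound $\int|\varepsilon\,(u^r_y/v^r)_y\,\tilde\phi|\,dy$ by $\varepsilon\|u^r_{yy}\|_{L^1}^{1/2}\|u^r_{yy}\|_{L^\infty}^{1/2}$-type interpolations via $\|\cdot\|\,\|\tilde\phi\|$. In time, integrate $\min(\delta_1\kappa^{-3/2},\ \kappa^{-1/2}(\tau+A\varepsilon/\delta^2)^{-1})$ after Young's inequality; this should produce the $\delta_1^{1/2}\varepsilon$ term. For the shock--rarefaction interaction we use the separation of the two waves, $s_2>0>\Lambda_1$. The decay estimates in Lemma \ref{lemma-rare}(3)--(4) and Lemma \ref{lemma-shock} then give a pointwise bound of the form
\[
\frac{\delta_1\delta_2}{\varepsilon}\,e^{-c\delta_2(\tau+A\varepsilon/\delta^2)/\varepsilon},
\]
whose time integral is $O(\delta_1\varepsilon^{0})$ times $\sqrt{\cdot}$ of the energy, and hence is absorbable. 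Controlling the shift $X$ within $|X|\le C\varepsilon/\delta$ or so, so that these decay estimates stay valid, also follows from $|\dot X|\le C\delta_2^{-1/2}\varepsilon^{-1/2}\|\tilde\psi\|_{L^\infty}\|v^s_y\|_{L^1}^{1/2}$. We expect this step to be the delicate one.

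\textbf{Step 3 (higher-order estimates and closure).} We derive the equation for $\tilde\psi_y$ and perform the standard $\tilde\phi_y$ estimate via the effective velocity $\tilde\psi-\varepsilon(\ln v-\ln\tilde v)_y$, multiplying by $\tilde\phi_y$. This recovers $\varepsilon^{-1}D_1$, and the $H^1$ energy of $\tilde\psi$ then gives $D_2$. The $\dot X$ terms are bounded by $|\dot X|^2\delta_2$ together with $\|v^s_{yy}\|\le C\delta^3\varepsilon^{-3/2}$. Multiplying by the natural weights yields \eqref{global estimate 1after}, with the factor $\varepsilon^{-2}$ in front of the initial $L^2$ norm coming from these scalings. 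Gagliardo--Nirenberg then gives \eqref{L infty after} using \eqref{hit after}. The right-hand sides are $C\delta\varepsilon+C\delta^{3/2}\varepsilon\ll\delta^{2/3}\varepsilon$ and $C\delta\varepsilon^{-1}\ll\delta^{2/3}\varepsilon^{-1}$ for small $\delta$, which strictly improves \eqref{priori assump after} and closes the bootstrap.
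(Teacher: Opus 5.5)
Your overall architecture is the paper's: a weighted relative entropy estimate with the shift, the Poincar\'e inequality after the change of variable to $[0,1]$, then the $\tilde\phi_y/v$ and $-\tilde\psi_{yy}$ estimates. However, the step you flag as delicate does not work as described.

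The rarefaction viscous error $\varepsilon(u^r_y/\tilde v)_y$ inside $F_{3y}$ sits in the momentum equation, so in the entropy identity it is paired with $a^{-X}\tilde\psi$, not $\tilde\phi$. More importantly, the $L^2$--$L^2$ pairing you describe cannot be integrated in time. The quantity you propose to integrate, $\min\bigl(\delta_1\kappa^{-3/2},\kappa^{-1/2}(\tau+A\frac{\varepsilon}{\delta^2})^{-1}\bigr)$, has integral over $[-A\frac{\varepsilon}{\delta^2},T]$ of size $\kappa^{-1/2}\bigl(1+\log(\delta_1(T+A\frac{\varepsilon}{\delta^2})/\kappa)\bigr)$, which is unbounded in $T$. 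The $(\tau+A\frac{\varepsilon}{\delta^2})^{-1}$ rate is actually attained by the $\tanh$-smoothed fan. Integrating by parts onto $\tilde\psi_y$ does not help, since $\varepsilon\|u^r_y\|^2\sim\varepsilon\delta_1(\tau+A\frac{\varepsilon}{\delta^2})^{-1}$ is equally non-integrable. So you get no bound uniform in $\tau$, which the global argument needs. The large-time part also carries no power of $\delta_1$, so $C\delta_1^{1/2}\varepsilon$ cannot come out.

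The missing device is to spend part of the dissipation to obtain a super-linear power of the $L^1$ error:
\[
\int_{\bbr}|F_{3y}+F_{4y}|\,|\tilde\psi|\,dy\le C\|\tilde\psi\|^{\frac12}\|\tilde\psi_y\|^{\frac12}\|F_{3y}+F_{4y}\|_{L^1}\le\frac{1}{40}\mathbf{D}+C\varepsilon^{-\frac13}\|\tilde\psi\|^{\frac23}\|F_{3y}+F_{4y}\|_{L^1}^{\frac43}.
\]
Since $\varepsilon(\|u^r_{yy}\|_{L^1}+\|v^r_y\|^2)\le C\varepsilon\min\bigl(\delta_1\kappa^{-1},(\tau+A\frac{\varepsilon}{\delta^2})^{-1}\bigr)$, the exponent $\frac43$ makes the time integral finite, of order $\varepsilon^{4/3}\delta_1^{1/3}\kappa^{-1/3}$. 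Young's inequality against $\sup_\tau\|\tilde\psi\|^2$ then leaves $C\varepsilon^{-1/2}(\varepsilon^{4/3}\delta_1^{1/3}\kappa^{-1/3})^{3/2}=C\delta_1^{1/2}\varepsilon^{3/2}\kappa^{-1/2}=C\delta_1^{1/2}\varepsilon$ for $\kappa=\varepsilon$. This is exactly where the $\delta_1^{1/2}\varepsilon$ in \eqref{global estimate 0after} comes from. The exponentially decaying shock--rarefaction interaction parts of $F_{3y},F_{4y}$ (Lemma \ref{rare-shock interact}) go through the same inequality.

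Three smaller points also need repair.

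\textbf{Shift control.} Your bound $|\dot X|\le C\delta_2^{-1/2}\varepsilon^{-1/2}\|\tilde\psi\|_{L^\infty}\|v^s_y\|_{L^1}^{1/2}=C\varepsilon^{-1/2}\|\tilde\psi\|_{L^\infty}$ is not small. Also, $|X|\le C\varepsilon/\delta$ holds only on $[-A\frac{\varepsilon}{\delta^2},0]$. What the interaction estimates need for all $\tau$ is $|\dot X|\le\frac{s_2}{4}$, hence $|X(\tau)|\le\frac{s_2}{4}(\tau+A\frac{\varepsilon}{\delta^2})$. This follows from $|\dot X|\le\frac{C}{\delta_2}\|(v^s)^{-X}_y\|\,\|\tilde\psi\|\le C\delta_2^{1/2}\varepsilon^{-1/2}\|\tilde\psi\|$ together with the $L^2$ part of \eqref{priori assump after}.

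\textbf{Cubic and weighted terms.} With $\lambda\sim\sqrt{\delta_2}$, you cannot absorb $\int a^{-X}_y|\tilde\psi|^3\,dy$ or $\frac{1}{\delta_2}\bigl(\int a^{-X}_y\tilde\psi^2\,dy\bigr)^2$ (the latter from Young on $\dot X$ times the $a_y$-part of $Y$) using only $\|\tilde\psi\|_{L^\infty}\le C\delta^{1/3}$. That leaves factors like $\frac{\lambda}{\delta_2}\delta^{1/3}\sim\delta^{-1/6}$ in front of $G^S$. Instead, use $\|\tilde\psi\|_{L^\infty}^2\le C\|\tilde\psi\|\,\|\tilde\psi_y\|$ and $\|\tilde\psi\|^2\le\delta^{2/3}\varepsilon$. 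Then the factor $\varepsilon$ either offsets the $\varepsilon^{-1}$ in $\|a^{-X}_y\|_{L^\infty}$ or is paired with $D$.

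\textbf{Weight.} Use the weight \eqref{weight} rather than your $p$-based one, since $X$ and $G_1$ in the statement are defined through it. The argument itself is insensitive to this choice.
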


The proof of Proposition \ref{priori estimate after} will be given in Section 6.

\subsection{Wave interaction estimates after collision} 
We now give the wave interaction estimates after the approximate collision time $-A\frac{\varepsilon}{\delta^2}$, which is useful for the estimates of the error terms in Section 6.

\begin{lemma} \label{rare-shock interact}
	Let $X(\tau)$ be the shift function defined in \eqref{X}. Under the same hypotheses as in Proposition \ref{priori estimate after}, the following estimates hold for any $\tau \geq -A\frac{\varepsilon}{\delta^2}$:
	\begin{align*}
		\|(v^s)^{-X}_y (v^r-v^*)\|_{L^1} &\leq C \delta_1 \delta_2 e^{-\frac{C\delta_2}{\varepsilon}\left(\tau + A\frac{\varepsilon}{\delta^2}\right)} + C \delta_1 \delta_2^2 \varepsilon^{-1} \kappa e^{-\frac{C}{\kappa}\left(\tau + A\frac{\varepsilon}{\delta^2}\right)}, \\
		\|(v^s)^{-X}_{y} v^r_{y}\|_{L^1} &\leq C \delta_1 \delta_2 \kappa^{-1} e^{-\frac{C\delta_2}{\varepsilon}\left(\tau + A\frac{\varepsilon}{\delta^2}\right)} + C \delta_1 \delta_2^2 \varepsilon^{-1} e^{-\frac{C}{\kappa}\left(\tau + A\frac{\varepsilon}{\delta^2}\right)}, \\
		\|v^r_{y} ((v^s)^{-X}-v^*)\|_{L^1} &\leq C \delta_1 \varepsilon \kappa^{-1} e^{-\frac{C\delta_2}{\varepsilon}\left(\tau + A\frac{\varepsilon}{\delta^2}\right)} + C \delta_1 \delta_2 e^{-\frac{C}{\kappa}\left(\tau + A\frac{\varepsilon}{\delta^2}\right)}, \\
		\|(v^s)^{-X}_{y} (v^r-v^*)\|^2 &\leq C \delta_1^2 \delta_2^3 \varepsilon^{-1} e^{-\frac{C\delta_2}{\varepsilon}\left(\tau + A\frac{\varepsilon}{\delta^2}\right)} + C \delta_1^2 \delta_2^4 \varepsilon^{-2} \kappa e^{-\frac{C}{\kappa}\left(\tau + A\frac{\varepsilon}{\delta^2}\right)}, \\
		\|(v^s)^{-X}_{y} v^r_{y}\|^2 &\leq C \delta_1^2 \delta_2^3 \kappa^{-2} \varepsilon^{-1} e^{-\frac{C\delta_2}{\varepsilon}\left(\tau + A\frac{\varepsilon}{\delta^2}\right)} + C \delta_1^2 \delta_2^4 \varepsilon^{-2} \kappa^{-1} e^{-\frac{C}{\kappa}\left(\tau + A\frac{\varepsilon}{\delta^2}\right)}, \\
		\|v^r_{y} ((v^s)^{-X}-v^*)\|^2 &\leq C \delta_1^2 \delta_2^2 \kappa^{-1} e^{-\frac{C\delta_2}{\varepsilon}\left(\tau + A\frac{\varepsilon}{\delta^2}\right)} + C \delta_1^2 \delta_2^2 \kappa^{-1} e^{-\frac{C}{\kappa}\left(\tau + A\frac{\varepsilon}{\delta^2}\right)}.
	\end{align*}	
\end{lemma}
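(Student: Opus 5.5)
The proof follows the pattern of Lemma \ref{lem:shock-interact}: we exploit the spatial separation between the 1-rarefaction, which moves left with speed $\Lambda_1<0$, and the shifted 2-shock, which moves right with speed $s_2>0$. Write $T:=\tau + A\frac{\varepsilon}{\delta^2}\ge 0$ and $\tilde y := y + s_2 A\frac{\varepsilon}{\delta^2}$.

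The first ingredient is control of the shift. From \eqref{X}, the a priori assumption \eqref{priori assump after}, and $\|(v^s)_y\|_{L^1}\le C\delta_2$, we get $|\dot X|\le \frac{C}{\delta_2}\|\tilde\psi\|_{L^\infty}\delta_2\le C\delta^{1/3}$. Hence $|X(\tau)|\le \frac{s_2}{2}T$ for $\delta$ small. The shock center $y_s(\tau)=s_2\tau+X(\tau)$ therefore satisfies $y_s + s_2A\frac{\varepsilon}{\delta^2}\ge \frac{s_2}{2}T$. The rarefaction region lies in $\tilde y\le \Lambda_1(v^*)T<0$, up to exponential tails of width $\kappa$.

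Next split $\mathbb R$ at the line $\tilde y = \lambda T$, with $\lambda := \frac{s_2}{4}$ (or any fixed speed strictly between $\Lambda_1(v^*)$ and $s_2/2$).

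On the region $\tilde y\ge \lambda T$, Lemma \ref{lemma-rare}(3) gives
\[
|v^r-v^*|\le C\delta_1 e^{-\frac{2}{\kappa}(|\tilde y|+|\Lambda_1(v^*)|T)}, \qquad |v^r_y|\le C\delta_1\kappa^{-1}e^{-\frac{2}{\kappa}(\dots)},
\]
so these factors carry $e^{-CT/\kappa}$. The shock factors are bounded by $\|(v^s)_y\|_{L^\infty}\le C\delta_2^2/\varepsilon$, or by $C\delta_2$. Integrating the rarefaction exponential in $\tilde y$ costs a factor $\kappa$. This yields the second terms on each right-hand side, for example $C\delta_1\delta_2^2\varepsilon^{-1}\kappa e^{-CT/\kappa}$ in the first estimate. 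For the third estimate one instead uses $|(v^s)^{-X}-v^*|\le C\delta_2$ together with $\|v^r_y\|_{L^1}$ restricted to the tail, giving $C\delta_1\delta_2 e^{-CT/\kappa}$.

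On the region $\tilde y\le \lambda T$, we have $y-y_s\le -\frac{s_2}{4}T$. Lemma \ref{lemma-shock} then gives
\[
|(v^s)^{-X}_y|\le C\frac{\delta_2^2}{\varepsilon}e^{-\frac{C\delta_2}{\varepsilon}|y-y_s|}\le C\frac{\delta_2^2}{\varepsilon}e^{-\frac{C\delta_2}{\varepsilon}T}e^{-\frac{C\delta_2}{2\varepsilon}|y-y_s|},
\]
and similarly $|(v^s)^{-X}-v^*|\le C\delta_2 e^{-C\delta_2T/\varepsilon}e^{-C\delta_2|y-y_s|/(2\varepsilon)}$. The rarefaction factors are bounded by $|v^r-v^*|\le C\delta_1$ and $|v^r_y|\le C\delta_1\kappa^{-1}$ (Lemma \ref{lemma-rare}(2) with $p=\infty$). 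Integrating the remaining shock exponential costs $\varepsilon/\delta_2$. This produces the first terms, for example $C\delta_1\delta_2 e^{-C\delta_2T/\varepsilon}$ and $C\delta_1\delta_2\kappa^{-1}e^{-C\delta_2 T/\varepsilon}$. For the third estimate, use instead $\|v^r_y\|_{L^\infty}\le C\delta_1\kappa^{-1}$ against the shock tail of $L^1$ size $\varepsilon$, giving $C\delta_1\varepsilon\kappa^{-1}e^{-C\delta_2T/\varepsilon}$.

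The $L^2$ estimates follow identically by squaring the pointwise bounds before integrating. For instance, on the shock-far region, $(\delta_2^2/\varepsilon)^2\delta_1^2\cdot \varepsilon/\delta_2 = \delta_1^2\delta_2^3\varepsilon^{-1}$. On the rarefaction-tail region, $(\delta_2^2/\varepsilon)^2\delta_1^2\kappa$ gives the second term of the fourth estimate. The last estimate uses $\delta_1^2\kappa^{-2}\cdot\delta_2^2\cdot\kappa$ on both pieces, which is why both of its terms carry the same prefactor.

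The main obstacle is the shift control, namely guaranteeing $|X(\tau)|\le \frac{s_2}{2}T$ uniformly so that the shock never enters the rarefaction fan. This is where the a priori assumption and the smallness of $\delta$ are used; the rest is bookkeeping of exponentials. Note also that the regime of small $T$ is covered automatically, since then the exponentials are $O(1)$ and the bounds reduce to the crude products.
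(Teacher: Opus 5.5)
Your proof is correct and is essentially the paper's argument: bound $|\dot X|$ by a small constant using the a priori assumption, so that $|X(\tau)|$ is a small fraction of $s_2\bigl(\tau+A\frac{\varepsilon}{\delta^2}\bigr)$; split $\mathbb{R}$ along a ray lying strictly between the fan and the shifted shock; then use the shock's exponential tail on the left piece and Lemma \ref{lemma-rare}(3) on the right piece. The paper instead splits at $y-s_2\tau=-\frac{s_2}{2}\bigl(\tau+A\frac{\varepsilon}{\delta^2}\bigr)$ and bounds $|\dot X|$ by $L^2$--$L^2$ H\"older rather than your $L^\infty$--$L^1$, which are cosmetic differences; the one step to state more carefully is the shock-far piece of the last estimate, where the factor $\kappa$ must come from $\|v^r_y\|^2\le C\delta_1^2\kappa^{-1}$ and not from integrating the shock tail, since the latter only gives the weaker $\delta_1^2\delta_2\varepsilon\kappa^{-2}$.
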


\begin{proof}
	First, combining \eqref{X} with the \textit{a priori} assumption \eqref{priori assump after}, we have
	\begin{equation*}
		|\dot{X}(\tau)| \leq \frac{C}{\delta_2} \int_{\bbr} \big|(v^s)^{-X}_y\tilde{\psi}\big| \,dy \leq \frac{C}{\delta_2} \|(v^s)^{-X}_y\| \|\tilde{\psi}\| \leq C \delta_2^{1/2}\varepsilon^{-1/2} \|\tilde{\psi}\| \leq C \delta_2^{1/2}
	\end{equation*}
	for $- A\frac{\varepsilon}{\delta^2} \leq \tau \leq \tilde{\tau}$. Since $X\left(- A\frac{\varepsilon}{\delta^2}\right)=0$, taking $\delta_2$ suitably small yields
	\begin{equation*}
		|X(\tau)| \leq C \delta_2^{1/2} \left(\tau + A\frac{\varepsilon}{\delta^2}\right) \leq \frac{s_2}{4}\left(\tau + A\frac{\varepsilon}{\delta^2}\right), \qquad \text{for } - A\frac{\varepsilon}{\delta^2} \leq \tau \leq \tilde{\tau}.
	\end{equation*}
	
	To obtain the desired estimates, we divide the spatial domain into two regions. On the one hand, for $y - s_2\tau < -\frac{s_2}{2}\left(\tau + A\frac{\varepsilon}{\delta^2}\right)$, it follows that
	\begin{equation*}
		y - s_2\tau - X(\tau) < -\frac{s_2}{2}\left(\tau + A\frac{\varepsilon}{\delta^2}\right) + \frac{s_2}{4}\left(\tau + A\frac{\varepsilon}{\delta^2}\right) = - \frac{s_2}{4}\left(\tau + A\frac{\varepsilon}{\delta^2}\right),
	\end{equation*}
	which implies $| y - s_2\tau - X(\tau) | > \frac{s_2}{4}\left(\tau + A\frac{\varepsilon}{\delta^2}\right)$. Thus, Lemma \ref{lemma-shock} guarantees that
	\begin{align*}
		\big| v^s(y - s_2\tau - X(\tau)) -v^* \big| &\leq C\delta_2 e^{-\frac{C\delta_2}{\varepsilon} |y - s_2\tau - X(\tau)|} \\
		&\leq C\delta_2 e^{-\frac{C\delta_2}{2\varepsilon} |y - s_2\tau - X(\tau)|} e^{-\frac{C\delta_2s_2}{8\varepsilon} \left(\tau + A\frac{\varepsilon}{\delta^2}\right)},
	\end{align*}
	and similarly,
	\begin{align*}
		\big|v^s_y(y - s_2\tau - X(\tau))\big| &\leq C\frac{\delta_2^2}{\varepsilon} e^{-\frac{C\delta_2}{\varepsilon} |y - s_2\tau - X(\tau)|} \\
		&\leq C\frac{\delta_2^2}{\varepsilon} e^{-\frac{C\delta_2}{2\varepsilon} |y - s_2\tau - X(\tau)|} e^{-\frac{C\delta_2s_2}{8\varepsilon} \left(\tau + A\frac{\varepsilon}{\delta^2}\right)}.
	\end{align*}
	
	On the other hand, for $y - s_2\tau \geq -\frac{s_2}{2}\left(\tau + A\frac{\varepsilon}{\delta^2}\right)$, we observe that
	\begin{equation*}
		y - s_2\tau - X(\tau) \geq -\frac{s_2}{2}\left(\tau + A\frac{\varepsilon}{\delta^2}\right) + \frac{s_2}{4}\left(\tau + A\frac{\varepsilon}{\delta^2}\right) = - \frac{s_2}{4}\left(\tau + A\frac{\varepsilon}{\delta^2}\right).
	\end{equation*}
	In this region, Lemma \ref{lemma-rare} implies
	\begin{align*}
		|v^r(\tau, y) - v^*| &\leq C \delta_1 e^{-\frac{2}{\kappa}\left(\left|y + s_2A\frac{\varepsilon}{\delta^2}\right| + |\Lambda_1(v^*)| \left(\tau + A\frac{\varepsilon}{\delta^2}\right)\right)}, \\
		|v^r_y(\tau, y)| &\leq C \delta_1 \kappa^{-1} e^{-\frac{2}{\kappa}\left(\left|y + s_2A\frac{\varepsilon}{\delta^2}\right| + |\Lambda_1(v^*)| \left(\tau + A\frac{\varepsilon}{\delta^2}\right)\right)},
	\end{align*}
	where we note that $|\Lambda_1(v^*)| >0$ is $O(1)$-constant, since $\frac{v_+}{2} \leq v^* \leq v_+$.
	
	Furthermore, recall the uniform bounds provided by Lemmas \ref{lemma-shock} and \ref{lemma-rare}:
	\begin{align*}
		& |v^r(\tau, y) - v^*| \leq C\delta_1, \qquad |v^r_y(\tau, y)| \leq C\delta_1\kappa^{-1}, \qquad \|v^r_y(\tau, y)\|_{L^1} \leq C\delta_1,\\
		& |v^s(y - s_2\tau - X(\tau)) - v^* | \leq C\delta_2, \qquad |v^s_y(y - s_2\tau - X(\tau))| \leq C\delta_2^2\varepsilon^{-1},
	\end{align*}
	Coupling these uniform bounds with the region-specific exponential decays derived above, we deduce that
	\begin{align*}
		\big|(v^s)^{-X}_y\big| \big|v^r - v^* \big| \leq \begin{cases}
			\displaystyle C\delta_1\delta_2^2\varepsilon^{-1} e^{-\frac{C\delta_2}{2\varepsilon} |y - s_2\tau - X(\tau)|} e^{-\frac{C\delta_2s_2}{8\varepsilon} \left(\tau + A\frac{\varepsilon}{\delta^2}\right)}, &\text{for } y - s_2\tau < -\frac{s_2}{2}\left(\tau + A\frac{\varepsilon}{\delta^2}\right), \\[3mm]
			\displaystyle C\delta_1\delta_2^2\varepsilon^{-1} e^{-\frac{2}{\kappa}\left(\left|y + s_2A\frac{\varepsilon}{\delta^2}\right| + |\Lambda_1(v^*)| \left(\tau + A\frac{\varepsilon}{\delta^2}\right)\right)}, &\text{for } y - s_2\tau \geq -\frac{s_2}{2}\left(\tau + A\frac{\varepsilon}{\delta^2}\right).
		\end{cases}
	\end{align*}
	Consequently, integrating over $\bbr$ and utilizing the smallness of $\delta_2$, we obtain the first estimate in the Lemma:
	\begin{align*}
		\int_\bbr \big|(v^s)^{-X}_y\big| \big|v^r - v^* \big| \,dy &\leq C\delta_1\delta_2^2\varepsilon^{-1} e^{-\frac{C\delta_2s_2}{8\varepsilon} \left(\tau + A\frac{\varepsilon}{\delta^2}\right)} \int_\bbr e^{-\frac{C\delta_2}{2\varepsilon} |y - s_2\tau - X(\tau)|} \,dy  \\
		&\quad + C\delta_1\delta_2^2\varepsilon^{-1} \int_\bbr e^{-\frac{2}{\kappa}\left(\left|y + s_2A\frac{\varepsilon}{\delta^2}\right| + |\Lambda_1(v^*)| \left(\tau + A\frac{\varepsilon}{\delta^2}\right)\right)} \,dy \\
		&\leq C \delta_1 \delta_2 e^{-\frac{C\delta_2}{\varepsilon}\left(\tau + A\frac{\varepsilon}{\delta^2}\right)} + C \delta_1 \delta_2^2 \varepsilon^{-1} \kappa e^{-\frac{C}{\kappa}\left(\tau + A\frac{\varepsilon}{\delta^2}\right)}.
	\end{align*}	
	The remaining estimates involve analogous domain-splitting arguments and algebraic manipulations; thus, we omit the details for brevity.
\end{proof}

\subsection{Construction of weight function}
In the process of estimating relative entropy in Section 6, we need to introduce the following weight function $a(y-s_2\tau)$:
\begin{equation}\label{weight}
	a(y-s_2\tau) = 1+ \frac{\lambda}{\delta_2}(v^s(y-s_2\tau) - v^*),
\end{equation}
where
$$\delta_2 \ll \lambda \leq C\sqrt{\delta_2}.$$
Note that
\[ 1<a(y-s_2\tau)<1+\lambda,
\]
and \[ a_y(y-s_2\tau) = \frac{\lambda}{\delta_2}v^s_y(y-s_2\tau) > 0.
\]

\bigskip

\section{Proof of the Main Theorem}

In this section, we prove the main Theorem \ref{theorem1} of this paper based on the global existence and uniform estimates established in Theorem \ref{global estimate before} and Theorem \ref{global estimate after}, both before and after the collision time. In the proof, we need to decompose the full time interval into three distinct subintervals. On the two outer intervals $[ 0, t_0 - A\frac{\varepsilon}{\delta^2}]$ and $[t_0, +\infty)$, the constructed approximate wave pattern is well matched to the exact entropy solution. By contrast, such a matching property no longer holds on the intermediate transition interval $ [ t_0 - A\frac{\varepsilon}{\delta^2}, t_0]$. Nevertheless, this transition interval has a width of order $O(\varepsilon)$, and through a delicate analysis, we show that the error between the approximate composite wave (consisting of viscous shock and smooth rarefaction waves) and the exact entropy solution \eqref{shock-shock-1} tends to zero with the desired convergence rate as $\varepsilon\to 0$. The structure of the entropy solutions for each part can be clearly seen in the following Figure \ref{fig:rare-shock}.


\begin{figure}[h]
	\begin{tikzpicture}[scale = 1.3]
		\draw[->] (-2,0) -- (2,0) node[right] {$y$};
		\draw[->] (0,-2.5) -- (0,1.8) node[above] {$\tau$};
		
		\coordinate (O) at (0,0);
		\coordinate (X0T0) at (-0.8,-1.2);
		
		\draw[thick] (O) -- +(1,1.5) node[right] {$S_2$};
		\draw[thick] (O) -- +(-1,1) node[above left] {$F_1$};
		\draw[thick] (O) -- +(-0.8,1.2);
		\draw[thick] (O) -- +(-1.2,0.8);
		\draw[thick,color=blue] (X0T0) -- +(0.8,1.2);
		\draw[thick,color=blue] (X0T0) -- +(-1,1);
		\draw[thick,color=blue] (X0T0) -- +(-0.8,1.2);
		\draw[thick,color=blue] (X0T0) -- +(-1.2,0.8);
		
		\node[above] at (0,1) {$(v^*,u^*)$};		
		\node[below right] at (0,0) {$0$};		
		\node[above] at (-1.2,0) {$(v_-,u_-)$};
		\node[above] at (1,0) {$(v_+,u_+)$};
		\node[below right] at (0,-1.2) {$-A\frac{\varepsilon}{\delta^2}$};
		\node[below] at (-1.1,-1.2) {$(-s_2A\frac{\varepsilon}{\delta^2}, -A\frac{\varepsilon}{\delta^2})$};
		\node[above right] at (0,-2.5) {$-t_0$};
			
		\draw[dashed] (-2,-1.2) -- (2,-1.2);
		\draw[dashed] (-2,-2.5) -- (2,-2.5);	
	\end{tikzpicture}
	\centering
	\caption{Approximation wave interactions}
	\label{fig:rare-shock}
\end{figure}

Before the collision time $\tau =0$, according to the approximate collision time $-A\frac{\varepsilon}{\delta^2}$, we partition the time interval $[-t_0, 0]$ into two sub-intervals, $[-t_0, -A\frac{\varepsilon}{\delta^2}]$ and $[-A\frac{\varepsilon}{\delta^2}, 0]$.

For the first sub-interval $\tau \in \left[-t_0, -A\frac{\varepsilon}{\delta^2}\right]$, by Theorem \ref{global estimate before} and Lemma \ref{lemma-shock}, we deduce
\begin{align}
	\begin{aligned} \label{proof thm-1}
		\| v(\tau, y) - V(\tau, y) \|^2 
		&= \left\| v(\tau, y) - \left( v^S_l(y - s_l \tau) + v^S_r(y - s_r \tau) - v_* \right) \right\|^2 \\
		&\leq \| v(\tau, y) - \bar{v} \|^2 + \| v^s_l(y - s_l \tau) - v^S_l(y - s_l \tau) \|^2 \\
		&\quad + \| v^s_r(y - s_r \tau) - v^S_r(y - s_r \tau) \|^2 \\
		&\leq \| \phi \|^2 + C \delta_l^2 \int_{\bbr} e^{-\frac{C\delta_l}{\varepsilon}|y|} \,dy + C \delta_r^2 \int_{\bbr} e^{-\frac{C\delta_r}{\varepsilon}|y|} \,dy \\
		&\leq C (\delta_l + \delta_r)\varepsilon.
	\end{aligned}	
\end{align}

On the second sub-interval $\tau \in \left[-A\frac{\varepsilon}{\delta^2}, 0\right]$, it follows from Theorem \ref{global estimate after} that
\begin{align}
	\begin{aligned} \label{proof thm-2}
		\| v(\tau, y) - V(\tau, y) \|^2 &\leq \| v(\tau, y) - \tilde{v}(\tau, y) \|^2 + \| \tilde{v}(\tau, y) - V(\tau, y) \|^2 \\
		&\leq C\delta\varepsilon + \| \tilde{v}(\tau, y) - V(\tau, y) \|^2.
	\end{aligned}	
\end{align}
Therefore, to bound $\| v(\tau, y) - V(\tau, y) \|^2$ in \eqref{proof thm-2}, it suffices to estimate $\| \tilde{v}(\tau, y) - V(\tau, y) \|^2$. Observe that
\begin{align}
	\begin{aligned} \label{proof thm-3}
		&\tilde{v}(\tau, y) - V(\tau, y) \\
		&= \left( v^r(\tau, y) + v^s(y - s_2 \tau - X(\tau)) - v^* \right) - \left( v^S_l(y - s_l \tau) + v^S_r(y - s_r \tau) - v_* \right) \\
		&= \begin{cases} 
			\begin{aligned}
				&(v^r(\tau, y) - v_-) + (v^s(y - s_2 \tau - X(\tau)) - v^*) \\
				&- (v^S_l(y - s_l \tau) - v_-) - (v^S_r(y - s_r \tau) - v_*), 
			\end{aligned} & \text{for } y \leq s_2\tau, \\[5mm]
			\begin{aligned}
				&(v^r(\tau, y) - v^*) + (v^s(y - s_2 \tau - X(\tau)) - v_+) \\
				&- (v^S_l(y - s_l \tau) - v_*) - (v^S_r(y - s_r \tau) - v_+), 
			\end{aligned} & \text{for } y \geq s_2\tau,
		\end{cases}
	\end{aligned}	
\end{align}
we proceed to estimate the terms in \eqref{proof thm-3} individually. 

First, for the terms involving $v^r(\tau, y)$, we have
\begin{align*}
	&\int_{-\infty}^{s_2\tau} (v^r(\tau, y) - v_-)^2 \,dy + \int_{s_2\tau}^{+\infty}(v^r(\tau, y) - v^*)^2 \,dy \\
	&= \int_{-\infty}^{\Lambda_1(v_-)\left(\tau + A\frac{\varepsilon}{\delta^2}\right) -s_2A\frac{\varepsilon}{\delta^2}} (v^r(\tau, y) - v_-)^2 \,dy \\
	&\quad + \int_{\Lambda_1(v_-)\left(\tau + A\frac{\varepsilon}{\delta^2}\right) -s_2A\frac{\varepsilon}{\delta^2}}^{s_2\tau} (v^r(\tau, y) - v_-)^2 \,dy \\
	&\quad + \int_{\Lambda_1(v^*)\left(\tau + A\frac{\varepsilon}{\delta^2}\right) -s_2A\frac{\varepsilon}{\delta^2}}^{+\infty}(v^r(\tau, y) - v^*)^2 \,dy \\
	&\quad + \int_{s_2\tau}^{\Lambda_1(v^*)\left(\tau + A\frac{\varepsilon}{\delta^2}\right) -s_2A\frac{\varepsilon}{\delta^2}}(v^r(\tau, y) - v^*)^2 \,dy \\
	&\leq C\delta_1^2\varepsilon + C \delta_1^2A\frac{\varepsilon}{\delta^2} \\
	&\leq C\delta^4 \varepsilon.
\end{align*}
Furthermore, utilizing the bound
\begin{align*}
	|\dot{X}(\tau)| \leq \frac{C}{\delta_2} \int_{\bbr} \big|(v^s)^{-X}_y\tilde{\psi}\big| \,dy \leq \frac{C}{\delta_2} \|(v^s)^{-X}_y\| \|\tilde{\psi}\| \leq C \delta_2^\frac{1}{2}\varepsilon^{-\frac{1}{2}} \|\tilde{\psi}\| \leq C \delta,
\end{align*}
along with the initial condition $X\left(-A\frac{\varepsilon}{\delta^2}\right)=0$, we deduce
$$ |X(\tau)| \leq |\dot{X}(\tau)| A\frac{\varepsilon}{\delta^2} \leq C \delta A\frac{\varepsilon}{\delta^2} \leq C \frac{\varepsilon}{\delta}, \qquad \text{for } \tau \in \left[-A\frac{\varepsilon}{\delta^2}, 0\right]. $$
Therefore, for the terms involving $v^s(y - s_2 \tau - X(\tau))$, we obtain
\begin{align*}
	&\int_{-\infty}^{s_2\tau} (v^s(y - s_2 \tau - X(\tau)) - v^*)^2 \,dy + \int_{s_2\tau}^{+\infty}(v^s(y - s_2 \tau - X(\tau)) - v_+)^2 \,dy \\
	&= \int_{-\infty}^{s_2\tau + X(\tau)} (v^s(y - s_2 \tau - X(\tau)) - v^*)^2 \,dy \\
	&\quad + \int_{s_2\tau + X(\tau)}^{s_2\tau} (v^s(y - s_2 \tau - X(\tau)) - v^*)^2 \,dy \\
	&\quad + \int_{s_2\tau + X(\tau)}^{+\infty}(v^s(y - s_2 \tau - X(\tau)) - v_+)^2 \,dy \\
	&\quad + \int_{s_2\tau}^{s_2\tau + X(\tau)}(v^s(y - s_2 \tau - X(\tau)) - v_+)^2 \,dy \\
	&\leq C\delta_2\varepsilon + C \delta_2^2 |X(\tau)| \\
	&\leq C\delta \varepsilon.
\end{align*}	
The remaining terms can be estimated similarly. Combining these bounds yields
\begin{equation} \label{proof thm-4}
	\| \tilde{v}(\tau, y) - V(\tau, y) \|^2 \leq C\delta \varepsilon, \qquad \text{for } \tau \in \left[-A\frac{\varepsilon}{\delta^2}, 0\right].
\end{equation}
Substituting \eqref{proof thm-4} into \eqref{proof thm-2} thus implies
\begin{equation} \label{proof thm-5}
	\| v(\tau, y) - V(\tau, y) \|^2 \leq C\delta \varepsilon, \qquad \text{for } \tau \in \left[-A\frac{\varepsilon}{\delta^2}, 0\right].
\end{equation}

For the post-collision regime $\tau \in [0, +\infty)$, we have
\begin{align}
	\begin{aligned} \label{proof thm-6}
		\| v(\tau, y) - V(\tau, y) \|^2 
		&\leq \| v(\tau, y) - \tilde{v}(\tau, y) \|^2 + \| \tilde{v}(\tau, y) - V(\tau, y) \|^2 \\
		&\leq \|\tilde{\phi}\|^2 + \| \tilde{v}(\tau, y) - V(\tau, y) \|^2 \\
		&\leq C\delta\varepsilon + \| v^s(y - s_2 \tau - X(\tau)) - v^S(y - s_2 \tau) \|^2 \\
		&\quad + \left\| v^r(\tau, y) - v^R\left(\frac{y}{\tau}\right) \right\|^2.
	\end{aligned}
\end{align}
We estimate the terms on the right-hand side of \eqref{proof thm-6} individually. First, Lemma \ref{lemma-shock} gives
\begin{align}
	\begin{aligned} \label{proof thm-7}
		&\| v^s(y - s_2 \tau - X(\tau)) - v^S(y - s_2 \tau) \|^2 \\
		&\leq \| v^s(y - s_2 \tau - X(\tau)) - v^S(y - s_2 \tau -X(\tau)) \|^2  \\
		&\quad + \| v^S(y - s_2 \tau - X(\tau)) - v^S(y - s_2 \tau) \|^2 \\
		&\leq C \delta_2 \varepsilon + C \delta_2^2 |X(\tau)| \\
		&\leq C \delta \varepsilon + C \delta^2\sqrt{\tau}\varepsilon^{\frac{1}{2}},
	\end{aligned}	
\end{align}
where, to bound $|X(\tau)|$, we utilized the estimate from the weighted relative entropy bound:
$$ \delta_2 \int_{0}^{\tau}|\dot{X}(\tau)|^2 \,d\tau \leq  C \delta\varepsilon, $$
which ensures that
\begin{align*}
	|X(\tau)| &\leq |X(0)| + \left| \int_{0}^{\tau} \dot{X}(\tau) \,d\tau \right| \\
	&\leq |X(0)| + \left( \int_{0}^{\tau} 1 \,d\tau \right)^\frac{1}{2} \left( \int_{0}^{\tau}|\dot{X}(\tau)|^2 \,d\tau \right)^\frac{1}{2} \\
	&\leq C \varepsilon\delta^{-1} + C \sqrt{\tau}\varepsilon^\frac{1}{2}.
\end{align*}
Additionally, Lemma \ref{lemma-rare} implies
\begin{align}
	\begin{aligned} \label{proof thm-8}
		\left\| v^r(\tau, y) - v^R\left( \frac{y}{\tau} \right) \right\|^2 
		&\leq C\left\| v^r(0, y) - v^R(0, y) \right\|^2 \\
		&\leq C\left\| v^r(0, y) - v^R\left( \frac{y+s_2A\frac{\varepsilon}{\delta^2}}{A\frac{\varepsilon}{\delta^2}}\right) \right\|^2 \\
		&\quad + C\left\| v^R\left(  \frac{y+s_2A\frac{\varepsilon}{\delta^2}}{A\frac{\varepsilon}{\delta^2}}\right)  - v^R(0, y) \right\|^2 \\
		&\leq C \delta_1^2\varepsilon + C\delta_1^2A\frac{\varepsilon}{\delta^2} 
		\leq C \delta^4\varepsilon.
	\end{aligned}
\end{align}
Combining \eqref{proof thm-7} and \eqref{proof thm-8} thus yields
\begin{equation} \label{proof thm-9}
	\| v(\tau, y) - V(\tau, y) \|^2 \leq C \delta \varepsilon + C \delta^2 \sqrt{\tau}\varepsilon^\frac{1}{2}, \qquad \text{for } \tau \in [0, +\infty).
\end{equation}

Finally, piecing together \eqref{proof thm-1}, \eqref{proof thm-5}, and \eqref{proof thm-9} establishes the $L^2$ estimate required for the main Theorem \ref{theorem1}. The corresponding $L^p$ ($p>2$) estimate follows readily by incorporating the bounds \eqref{L infty before} and \eqref{L infty after}. As the details are entirely analogous, we omit them. The proof of the main Theorem \ref{theorem1} is now complete.

\bigskip

%
%

\section{Proof of Uniform {\it a priori} Estimates in Proposition \ref{priori estimate before}}
\setcounter{equation}{0}

This section is devoted to the proof of the uniform {\it a priori} estimates before the approximate collision time $\left[-t_0, -A\frac{\varepsilon}{\delta^2}\right]$ in Proposition \ref{priori estimate before}. The argument is based on the anti-derivative method, combined with the relative entropy method.

First note that, from the {\it a priori} assumption in \eqref{priori assump before}, it is straightforward to see  $\frac{v_+}{4} \leq v \leq 4 v_+$ if $\delta$ is suitably small. Before proving Proposition \ref{priori estimate before}, we present some useful explicit expressions on the relative quantities associated to the pressure $p(v) = v^{-\gamma}$, and the potential energy $Q(v) := \frac{v^{1-\gamma}}{\gamma - 1}$ (i.e., $Q'(v) = -p(v)$) based on Taylor expansions, which are significant for the {\it a priori} estimates, especially for relative entropy estimates. The relative pressure $p(v|\bar{v})$ and relative potential energy $Q(v|\bar{v})$ are defined as
$$p(v|\bar{v}) := p(v) -p(\bar{v}) - p'(\bar{v})(v - \bar{v}), \quad Q(v|\bar{v}) := Q(v) -Q(\bar{v}) - Q'(\bar{v})(v - \bar{v}).$$

We now give the proof of Proposition \ref{priori estimate before} for the uniform $H^2$ {\it a priori} estimates for the anti-derivative variables. 
To begin with, Subsection 5.1 is devoted to proving the lower-order estimates, including specifically  the $L^2$ estimates of the anti-derivative variables stated in Lemma \ref{lemma anti}, their derivative estimates in Lemma \ref{lemma anti1}, and the relative entropy estimates of the original variables in Lemma \ref{lemma 0before}. Subsequently, the first-order derivative estimates for the original variables are established in Lemmas \ref{lemma phi 1before} and \ref{lemma psi 1before} within Subsection 5.2. A crucial point is that, to close the {\it a priori} estimates, the solution must be sought on the time interval $\left[-t_0, -A\frac{\varepsilon}{\delta^2}\right]$ with a suitably large constant $A$,  rather than directly on the entire interval $[-t_0, 0]$. In other words, it is necessary to introduce a carefully constructed approximate collision time $\tau = -A\frac{\varepsilon}{\delta^2}$ for the collision time $\tau =0$.

\subsection{Lower order estimates}

We start with the $L^2$-estimates for the anti-derivative variables.

\begin{lemma} \label{lemma anti}
	Under the hypotheses of Proposition \ref{priori estimate before}, it holds that for $\tau \in \left[-t_0, -A\frac{\varepsilon}{\delta^2}\right]$,
	\begin{align}
		\begin{aligned} \label{estimate anti}
			&\| (\Phi, \Psi)(\tau) \|^2 + \int_{-t_0}^{\tau}\int_{\bbr} |\bar{u}_y|\Psi^2 dyd\tau + \varepsilon \int_{-t_0}^{\tau} \| \Psi_y \|^2 d\tau \\
			&\leq  Ce^{-cA}\delta^{-1}\varepsilon^3 + C\left(\delta^2 + e^{-c\sqrt{A}}\right)\varepsilon \int_{-t_0}^{\tau}\|\Phi_y\|^2 d\tau + C e^{-c\sqrt{A}}\varepsilon^3 \int_{-t_0}^{\tau} \|\psi_y\|^2 d\tau.
		\end{aligned}
	\end{align}
\end{lemma}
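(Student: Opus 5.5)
We will run the standard anti-derivative $L^2$ energy estimate on the linearized system \eqref{anti-per line}. Multiply the first equation by $-p'(\bar v)\Phi$ and the second by $\Psi$, add, and integrate over $\bbr$. Since $\bar v_\tau=\bar u_y$, this yields
\begin{align*}
&\frac{d}{d\tau}\int_{\bbr}\Big(\frac{-p'(\bar v)}{2}\Phi^2+\frac12\Psi^2\Big)dy
+\int_{\bbr}\frac{p''(\bar v)}{2}(-\bar u_y)\,\Phi^2\,dy \\
&\quad -\int_{\bbr}p''(\bar v)\bar v_y\,\Phi\Psi\,dy+\varepsilon\int_{\bbr}\frac{\Psi_y^2}{\bar v}dy
=\int_{\bbr}\varepsilon\frac{\bar v_y}{\bar v^2}\Psi\Psi_y\,dy+\int_{\bbr}(Q_1+Q_2-F_1-F_2)\Psi\,dy .
\end{align*}
Because both shocks are compressive 2-shocks with $\bar u_y<0$, the term $\int p''(\bar v)|\bar u_y|\Phi^2$ has a good sign. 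However, the cross term $p''\bar v_y\Phi\Psi$ is not directly controlled. We will therefore follow the classical Goodman/Matsumura–Nishihara device: replace $\Phi$ by $\Psi$ using the ODE structure. Using $u^s_{iy}=-s_iv^s_{iy}$ together with the relation $\Psi\approx s_i\Phi$ near each shock (from the equation $\Phi_\tau=\Psi_y$ and the traveling-wave structure), we will compute with the weighted combination $\frac{1}{2}\Psi^2-\frac{p'(\bar v)}{2}\Phi^2+\cdots$. Alternatively, we will multiply by $\Psi$ and a suitable multiple of $\Phi$ so that the quadratic form in $(\Phi,\Psi)$ weighted by $|\bar u_y|$ becomes positive, producing the dissipation $\int|\bar u_y|\Psi^2$ stated in the lemma. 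This positivity is where Lax entropy and the smallness of $\delta$ enter. The weight $p''(\bar v)\bar v_y$ is $O(\delta)$-close to the coefficient required for each shock separately, and the discrepancy across shocks is bounded by interaction terms.

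Next we treat the error terms. The viscous commutator $\varepsilon\bar v_y\Psi\Psi_y/\bar v^2$ is bounded by $\frac{\varepsilon}{4}\|\Psi_y\|^2+C\varepsilon\int|\bar v_y|^2\Psi^2$. Since $|\bar v_y|\le C\delta^2/\varepsilon$, the second piece is at most $C\delta^2\int|\bar u_y|\Psi^2$ and can be absorbed. For $Q_2=O(\phi^2)=O(\Phi_y^2)$ we use the a priori assumption \eqref{priori assump before} to get $\int|Q_2\Psi|\le\|\Psi\|_{L^\infty}\|\Phi_y\|^2\le e^{-c\sqrt A}\varepsilon\|\Phi_y\|^2$. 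For $Q_1$ we write $\varepsilon(\frac1v-\frac1{\bar v})(\Psi_{yy}+\bar u_y)$ with $\Psi_{yy}=\psi_y$. Then $\varepsilon\int|\phi\psi_y\Psi|\le\varepsilon e^{-c\sqrt A}\varepsilon\|\phi\|\|\psi_y\|$, which by Young's inequality gives the terms $e^{-c\sqrt A}\varepsilon\|\Phi_y\|^2+e^{-c\sqrt A}\varepsilon^3\|\psi_y\|^2$. The remaining piece $\varepsilon\int|\bar u_y\phi\Psi|$ is handled by Cauchy–Schwarz, giving $\delta^2\varepsilon\|\Phi_y\|^2$ plus absorbable dissipation.

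The main obstacle is the source terms $F_1,F_2$. These are interaction terms: by Lemma \ref{lem:shock-interact}, $|F_2|\le C|v^s_l-v_*||v^s_r-v_*|$ and $|F_1|\le C\varepsilon(|v^s_{ly}||\bar v-v^s_l|+|v^s_{ry}||\bar v-v^s_r|)$, and each is bounded by $e^{C\delta^2\tau/\varepsilon}$ times a power of $\delta$. We estimate $\int|F\Psi|\le\|\Psi\|_{L^\infty}\|F\|_{L^1}\le e^{-c\sqrt A}\varepsilon\cdot C\delta\varepsilon e^{C\delta^2\tau/\varepsilon}$. Integrating in time over $[-t_0,\tau]$ with $\tau\le -A\varepsilon/\delta^2$ gives
\[
C\delta\varepsilon^2\cdot\frac{\varepsilon}{\delta^2}e^{-cA}=Ce^{-cA}\delta^{-1}\varepsilon^3 .
\]
This is exactly the first term on the right of \eqref{estimate anti}, and it is precisely why the estimate must stop at the approximate collision time: at $\tau=0$ the factor $e^{-cA}$ is lost. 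Finally, integrating the energy identity in time from zero data \eqref{anti-perI}, and using the equivalence of $-p'(\bar v)\Phi^2+\Psi^2$ with $|(\Phi,\Psi)|^2$, we arrive at \eqref{estimate anti}.
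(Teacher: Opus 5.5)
Your proposal has a genuine gap at the first step, which is the step that produces the lemma's dissipation term. With your multipliers $-p'(\bar v)\Phi$ and $\Psi$, the weight on $\Phi^2$ is $|p'(\bar v)|$. Its time derivative is $-p''(\bar v)\bar v_\tau=-p''(\bar v)\bar u_y>0$. So the identity actually contains $+\int\frac{p''(\bar v)}{2}\bar u_y\Phi^2\,dy=-\int\frac{p''(\bar v)}{2}|\bar u_y|\Phi^2\,dy$ on the left. This term has the bad sign, not the good one you claim: as a compressive 2-shock passes a fixed point, $\bar v$ drops and $|p'(\bar v)|$ grows.

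The cross term makes things worse. Near shock $i$ one has $\bar v_y\approx|\bar u_y|/s_i$, so the $|\bar u_y|$-weighted quadratic form is $-p''(\bar v)|\bar u_y|\big(\tfrac12\Phi^2+\tfrac{1}{s_i}\Phi\Psi\big)$. This form is indefinite and contains no $\Psi^2$ at all. Moreover, "$\Psi\approx s_i\Phi$" is not a property of the perturbation that can be invoked pointwise. The "suitable multiple" you allude to is never specified. As a result, the dissipation $\int|\bar u_y|\Psi^2$ is never produced. You later use it to absorb both the viscous commutator $C\varepsilon\int|\bar v_y|^2\Psi^2$ and the piece $\varepsilon\int|\bar u_y\phi\Psi|$ of $Q_1$, so those steps do not close either.

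The missing idea is the standard weight on the momentum equation. Multiply the first equation of \eqref{anti-per line} by $\Phi$ and the second by $-\Psi/p'(\bar v)$.
\begin{itemize}
\item The weight on $\Phi$ is constant, so the cross terms combine into the exact derivative $-\Phi\Psi_y-\Phi_y\Psi=-(\Phi\Psi)_y$, with no $\bar v_y\Phi\Psi$ remainder.
\item The term $\partial_\tau\big(-\tfrac{1}{2p'(\bar v)}\Psi^2\big)$ generates $\frac{p''(\bar v)}{2|p'(\bar v)|^2}|\bar u_y|\Psi^2\ge 0$, which is exactly the dissipation in the statement.
\item The viscous term gives $\frac{\varepsilon}{\bar v|p'(\bar v)|}\Psi_y^2$ plus the commutator $\big(\frac{\varepsilon}{\bar v p'(\bar v)}\big)_y\Psi\Psi_y=O(\varepsilon|\bar v_y||\Psi||\Psi_y|)$.
\end{itemize}

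Once this identity is in place, the rest of your plan is fine. The commutator, $Q_1$ and $Q_2$ can be handled as you describe. Your direct bound $\int_{-t_0}^{\tau}\|\Psi\|_{L^\infty}\|F_i\|_{L^1}\,d\tau$ using the a priori assumption \eqref{priori assump before} is also legitimate. It is a slightly simpler alternative to the paper's Gagliardo--Nirenberg/Young treatment of $F_1,F_2$, and it yields the same $Ce^{-cA}\delta^{-1}\varepsilon^3$ because $\delta_l\sim\delta_r\sim\delta$ and $\tau\le -A\varepsilon/\delta^2$.
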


\begin{proof}
	Multiplying \eqref{anti-per line}$_1$ by $\Phi$, \eqref{anti-per line}$_2$ by $-\frac{\Psi}{p'(\bar{v})}$, it holds that
	$${ \left( \frac{1} {2} \Phi^{2} \right) }_{\tau} - \Phi\Psi_y = 0 ,$$
	and
	\begin{align*}
		&\left( -\frac{1}{ 2p^{\prime} (\bar{v}) } \Psi^2 \right)_{\tau} - \frac{p^{\prime\prime} (\bar{v})}{2 |p^{\prime} (\bar{v})|^2} \bar{v}_{\tau} \Psi^2 - \Phi_y \Psi + \frac{\varepsilon\Psi_y^2}{\bar{v} | p^{\prime} (\bar{v}) |} \\
	    &= - \left( \frac{\varepsilon\Psi_y \Psi}{\bar{v} p^{\prime} (\bar{v})} \right)_y + \left( \frac{\varepsilon}{\bar{v} p^{\prime} (\bar{v})} \right)_y \Psi_y \Psi - \frac{\Psi}{p^{\prime}(\bar{v})} ( Q_1 + Q_2 - F_1 - F_2 ).
	\end{align*}
	Adding the above two equations, we get
	\begin{align*}
		&\left(\frac{1} {2} \Phi^{2} -\frac{1}{ 2p^{\prime} (\bar{v}) } \Psi^2 \right)_{\tau} - \frac{p^{\prime\prime} (\bar{v})}{2 |p^{\prime} (\bar{v})|^2} \bar{v}_{\tau} \Psi^2 + \frac{\varepsilon\Psi_y^2}{\bar{v} | p^{\prime} (\bar{v}) |} \\
		&= \left( \Phi\Psi - \frac{\varepsilon\Psi_y \Psi}{\bar{v} p^{\prime} (\bar{v})} \right)_y + \left( \frac{\varepsilon}{\bar{v} p^{\prime} (\bar{v})} \right)_y \Psi_y \Psi - \frac{\Psi}{p^{\prime}(\bar{v})} ( Q_1 + Q_2 - F_1 - F_2 ).
	\end{align*}
	Then taking the integration over  $[-t_0, \tau] \times \bbr$ for $\tau \in \left[-t_0, -A\frac{\varepsilon}{\delta^2}\right]$, we have
	\begin{align}
		\begin{aligned} \label{need estimate anti}
			&\| (\Phi, \Psi)(\tau) \|^2 + \int_{-t_0}^{\tau}\int_{\bbr}  |\bar{u}_y| \Psi^2 dyd\tau + \varepsilon\int_{-t_0}^{\tau} \|\Psi_y\|^2 d\tau \\
			&= C\left| \int_{-t_0}^{\tau}\int_{\bbr} \left( \frac{\varepsilon}{\bar{v} p^{\prime} (\bar{v})} \right)_y \Psi_y \Psi dyd\tau \right| + C \left| \int_{-t_0}^{\tau}\int_{\bbr} \frac{\Psi}{p^{\prime}(\bar{v})} ( Q_1 + Q_2 - F_1 - F_2 ) dyd\tau \right|.
		\end{aligned}	    
    \end{align}
   
Now we estimate the terms on the right-hand side of \eqref{need estimate anti}.
First of all, by Young's inequality, H\"{o}lder's inequality and Lemma \ref{lemma-shock}, one has
\begin{align*}
	&C\left| \int_{-t_0}^{\tau}\int_{\bbr} \left( \frac{\varepsilon}{\bar{v} p^{\prime} (\bar{v})} \right)_y \Psi_y \Psi dyd\tau \right| \\
	&\leq C \varepsilon \int_{-t_0}^{\tau}\int_{\bbr} |\bar{v}_y \Psi\Psi_y| dyd\tau \\
	&\leq \frac{1}{8} \varepsilon\int_{-t_0}^{\tau} \|\Psi_y\|^2 d\tau + C\varepsilon \int_{-t_0}^{\tau}\int_{\bbr} |\bar{v}_y|^2 \Psi^2 dyd\tau \\
	&\leq \frac{1}{8} \varepsilon\int_{-t_0}^{\tau} \|\Psi_y\|^2 d\tau + C\varepsilon\|\bar{v}_y\|_{L^\infty} \int_{-t_0}^{\tau}\int_{\bbr} |\bar{u}_y|  \Psi^2 dyd\tau \\
	&\leq \frac{1}{8} \varepsilon\int_{-t_0}^{\tau} \|\Psi_y\|^2 d\tau + C\left(\delta_l^2 + \delta_r^2\right) \int_{-t_0}^{\tau}\int_{\bbr} |\bar{u}_y|  \Psi^2 dyd\tau.
\end{align*}
By Cauchy's inequality, Young's inequality, H\"{o}lder's inequality, Lemma \ref{lemma-shock} and {\it a priori} assumption \eqref{priori assump before}, we have
\begin{align*}
	&C \left| \int_{-t_0}^{\tau}\int_{\bbr} \frac{\Psi}{p^{\prime}(\bar{v})} Q_1 dyd\tau \right|\\
	&= C \varepsilon \left| \int_{-t_0}^{\tau}\int_{\bbr} \frac{\Psi}{p^{\prime}(\bar{v})} \left(\frac{1}{v} - \frac{1}{\bar{v}}\right)\left( \Psi_{yy} + \bar{u}_y \right) dyd\tau \right| \\
	&\leq C \varepsilon\int_{-t_0}^{\tau}\int_{\bbr} |\Psi\Phi_y\Psi_{yy}| dyd\tau + C \varepsilon\int_{-t_0}^{\tau}\int_{\bbr} |\bar{u}_y \Psi\Phi_y| dyd\tau \\
	&\leq C\|\Psi\|_{L^\infty} \int_{-t_0}^{\tau} \|\Phi_y\|^2 d\tau + C \varepsilon^2 \|\Psi\|_{L^\infty} \int_{-t_0}^{\tau} \|\Psi_{yy}\|^2 d\tau + \frac{1}{8}\int_{-t_0}^{\tau} \int_{\bbr} |\bar{u}_y| \Psi^2 dyd\tau \\
	&\quad + C\varepsilon^2 \|\bar{u}_y\|_{L^\infty} \int_{-t_0}^{\tau}\|\Phi_y\|^2 d\tau \\
	&\leq Ce^{-c\sqrt{A}}\varepsilon \int_{-t_0}^{\tau} \|\Phi_y\|^2 d\tau + Ce^{-c\sqrt{A}} \varepsilon^3 \int_{-t_0}^{\tau} \|\psi_y\|^2 d\tau + \frac{1}{8}\int_{-t_0}^{\tau} \int_{\bbr} |\bar{u}_y| \Psi^2 dyd\tau \\
	&\quad + C\left(\delta_l^2 + \delta_r^2\right)\varepsilon \int_{-t_0}^{\tau}\|\Phi_y\|^2 d\tau.
\end{align*}
It follows from H\"{o}lder's inequality and {\it a priori} assumption \eqref{priori assump before} that
\begin{align*}
	&C\left| \int_{-t_0}^{\tau}\int_{\bbr} \frac{\Psi}{p^{\prime}(\bar{v})} Q_2 dyd\tau \right|
	= C \left| \int_{-t_0}^{\tau}\int_{\bbr} \frac{\Psi}{p^{\prime}(\bar{v})} (p(v) -p(\bar{v}) - p'(\bar{v})\phi) dyd\tau \right| \\
	&\leq C \int_{-t_0}^{\tau}\int_{\bbr} |\Psi\Phi_y^2| dyd\tau
	\leq C \|\Psi\|_{L^\infty} \int_{-t_0}^{\tau}\|\Phi_y\|^2 d\tau \leq C e^{-c\sqrt{A}}\varepsilon \int_{-t_0}^{\tau}\|\Phi_y\|^2 d\tau.
\end{align*}
By Young's inequality, H\"{o}lder's inequality, Sobolev's inequality and Lemma \ref{lem:shock-interact}, we obtain
\begin{align}
	\begin{aligned} \label{estimate error before}
		&C \left| \int_{-t_0}^{\tau}\int_{\bbr} \frac{\Psi}{p^{\prime}(\bar{v})} F_1 dyd\tau \right| \\
		&\leq C \int_{-t_0}^{\tau}\|\Psi\|_{L^\infty}\|F_1\|_{L^1} d\tau \\
		&\leq C \int_{-t_0}^{\tau} \|\Psi\|^{\frac{1}{2}}\|\Psi_y\|^{\frac{1}{2}}\|F_1\|_{L^1} d\tau \\
		&\leq \frac{1}{8} \varepsilon \int_{-t_0}^{\tau}\|\Psi_y\|^2 d\tau + C \varepsilon^{-\frac{1}{3}} \int_{-t_0}^{\tau} \|\Psi\|^{\frac{2}{3}} \|F_1\|_{L^1}^{\frac{4}{3}} d\tau \\
		&\leq \frac{1}{8} \varepsilon \int_{-t_0}^{\tau}\|\Psi_y\|^2 d\tau + C \varepsilon^{-\frac{1}{3}} \sup_{-t_0 \leq \tau \leq -A\frac{\varepsilon}{\delta^2}} \|\Psi\|^{\frac{2}{3}} \int_{-t_0}^{\tau} \|F_1\|_{L^1}^{\frac{4}{3}} d\tau \\
		&\leq \frac{1}{8} \varepsilon \int_{-t_0}^{\tau}\|\Psi_y\|^2 d\tau + \frac{1}{8}\sup_{-t_0 \leq \tau \leq -A\frac{\varepsilon}{\delta^2}} \|\Psi\|^2 + C \varepsilon^{-\frac{1}{2}} \left( \int_{-t_0}^{\tau} \|F_1\|_{L^1}^{\frac{4}{3}} d\tau \right)^{\frac{3}{2}} \\
		&\leq \frac{1}{8} \varepsilon \int_{-t_0}^{\tau}\|\Psi_y\|^2 d\tau + \frac{1}{8}\sup_{-t_0 \leq \tau \leq -A\frac{\varepsilon}{\delta^2}} \|\Psi\|^2 + C \varepsilon^3 \delta_l^2\delta_r^2\left( \frac{1}{\delta_l^3} e^{ \frac{c \delta_l^2\tau}{\varepsilon}} + \frac{1}{\delta_r^3} e^{ \frac{c \delta_r^2\tau}{\varepsilon}} \right) \\
		&\leq \frac{1}{8} \varepsilon \int_{-t_0}^{\tau}\|\Psi_y\|^2 d\tau + \frac{1}{8}\sup_{-t_0 \leq \tau \leq -A\frac{\varepsilon}{\delta^2}} \|\Psi\|^2 + C e^{-cA} \delta\varepsilon^3,
	\end{aligned}
\end{align}
and
\begin{align*}
	&C \left| \int_{-t_0}^{\tau}\int_{\bbr} \frac{\Psi}{p^{\prime}(\bar{v})} F_2 dyd\tau \right| \\
	&\leq \frac{1}{8} \varepsilon \int_{-t_0}^{\tau}\|\Psi_y\|^2 d\tau + \frac{1}{8}\sup_{-t_0 \leq \tau \leq -A\frac{\varepsilon}{\delta^2}} \|\Psi\|^2 + C \varepsilon^{-\frac{1}{2}} \left( \int_{-t_0}^{\tau} \|F_2\|_{L^1}^{\frac{4}{3}} d\tau \right)^{\frac{3}{2}} \\
	&\leq \frac{1}{8} \varepsilon \int_{-t_0}^{\tau}\|\Psi_y\|^2 d\tau + \frac{1}{8}\sup_{-t_0 \leq \tau \leq -A\frac{\varepsilon}{\delta^2}} \|\Psi\|^2 + C  \varepsilon^3 \left( \frac{\delta_r^2}{\delta_l^3} e^{ \frac{c \delta_l^2\tau}{\varepsilon}} + \frac{\delta_l^2}{\delta_r^3} e^{ \frac{c \delta_r^2\tau}{\varepsilon}} \right) \\
	&\leq \frac{1}{8} \varepsilon \int_{-t_0}^{\tau}\|\Psi_y\|^2 d\tau + \frac{1}{8}\sup_{-t_0 \leq \tau \leq -A\frac{\varepsilon}{\delta^2}} \|\Psi\|^2 + C e^{-cA}\delta^{-1}\varepsilon^3.
\end{align*}
Note that by Lemma \ref{lemma-shock} and Lemma \ref{lem:shock-interact}, we have
\begin{align*}
	&\|F_1\|_{L^1} \leq C \delta_l\delta_r\varepsilon \left( e^{ \frac{c \delta_l^2\tau}{\varepsilon}} +  e^{ \frac{c \delta_r^2\tau}{\varepsilon}} \right), \quad -t_0\leq\tau\leq 0, \\
	&\|F_2\|_{L^1} \leq C \delta_r\varepsilon e^{ \frac{c \delta_l^2\tau}{\varepsilon}} + C\delta_l\varepsilon e^{ \frac{c \delta_r^2\tau}{\varepsilon}}, \quad -t_0\leq\tau\leq 0.
\end{align*}

Combining the above estimates and using the smallness of $\delta_l \sim \delta_r \sim \delta$, we get the estimate \eqref{estimate anti}.
\end{proof}

We now address the derivative estimate for the anti-derivative variables.

\begin{lemma} \label{lemma anti1}
	Under the hypotheses of Proposition \ref{priori estimate before}, it holds that for $\tau \in \left[-t_0, -A\frac{\varepsilon}{\delta^2}\right]$,
	\begin{align}
		\begin{aligned} \label{estimate anti1}
			&\varepsilon \|\Phi_y(\tau)\|^2 + \int_{-t_0}^{\tau} \|\Phi_y\|^2 d\tau \\
			&\leq C e^{-cA} \delta \varepsilon^2 + C\varepsilon^{-1}\|\Psi(\tau)\|^2 + C\int_{-t_0}^{\tau}\|\Psi_y(\tau)\|^2 d\tau + C e^{-c\sqrt{A}} \delta\varepsilon^2 \int_{-t_0}^{\tau} \|\psi_y\|^2 d\tau.
		\end{aligned}
	\end{align}
\end{lemma}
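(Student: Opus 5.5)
We will prove Lemma \ref{lemma anti1} by the standard derivative estimate for the anti-derivative system \eqref{anti-per line}: we pair the momentum equation with $\Phi_y$, so that the viscous term produces $\tfrac{\varepsilon}{2\bar v}(\Phi_y^2)_\tau$ and the pressure term produces the good dissipation $|p'(\bar v)|\Phi_y^2$.

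First we rewrite the viscous term with $\Psi_{yy}=\Phi_{y\tau}$, which follows from \eqref{anti-per line}$_1$. Multiplying \eqref{anti-per line}$_2$ by $\Phi_y$ then gives
\begin{align*}
\Psi_\tau\Phi_y+|p'(\bar v)|\Phi_y^2 ...
\end{align*}
More precisely, writing $\Psi_\tau\Phi_y=(\Psi\Phi_y)_\tau-\Psi\Phi_{y\tau}=(\Psi\Phi_y)_\tau-(\Psi\Psi_y)_y+\Psi_y^2$, we obtain the identity
\begin{align*}
\Big(\frac{\varepsilon}{2\bar v}\Phi_y^2-\Psi\Phi_y\Big)_\tau+|p'(\bar v)|\Phi_y^2
=\Psi_y^2-\frac{\varepsilon\bar v_\tau}{2\bar v^2}\Phi_y^2-(\Psi\Psi_y)_y-(Q_1+Q_2-F_1-F_2)\Phi_y .
\end{align*}
We integrate this over $[-t_0,\tau]\times\bbr$ and use $(\Phi,\Psi)(-t_0)=0$. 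The boundary cross term is handled by $|\int\Psi\Phi_y|\le \tfrac{\varepsilon}{4\bar v}\|\Phi_y\|^2+C\varepsilon^{-1}\|\Psi\|^2$, and this is exactly the source of the $C\varepsilon^{-1}\|\Psi(\tau)\|^2$ term in \eqref{estimate anti1}. The term $\Psi_y^2$ produces $C\int\|\Psi_y\|^2$.

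Next we bound the remaining terms. For the term with $\bar v_\tau$: since $|\bar v_\tau|\le C\|\bar v_y\|_{L^\infty}\le C\delta^2/\varepsilon$, it contributes at most $C\delta^2\int\|\Phi_y\|^2$, which is absorbed into the left side. For $Q_2$, we have $|Q_2|\le C\Phi_y^2$, so its contribution is bounded by $C\|\phi\|_{L^\infty}\int\|\Phi_y\|^2\le Ce^{-c\sqrt A}\delta^{1/2}\int\|\Phi_y\|^2$ by \eqref{priori assump before}, and this is also absorbed. For $Q_1$, we write $|Q_1|\le C\varepsilon|\Phi_y|(|\psi_y|+|\bar u_y|)$ and split it in two:
\begin{itemize}
\item the part $\varepsilon\Phi_y^2|\psi_y|$ is bounded by $\varepsilon\|\phi\|_{L^\infty}\|\Phi_y\|\|\psi_y\|\le \tfrac18\|\Phi_y\|^2+C\varepsilon^2 e^{-2c\sqrt A}\delta\|\psi_y\|^2$, which gives the last term of \eqref{estimate anti1};
\item the part $\varepsilon|\bar u_y|\Phi_y^2\le C\delta^2\Phi_y^2$ is absorbed.
\end{itemize}

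The main obstacle is the error terms $F_1$ and $F_2$, which must be controlled in $L^2_{\tau,y}$ without losing powers of $\delta$. We will use Young's inequality in the form $\int F_i\Phi_y\le \tfrac18\|\Phi_y\|^2+C\|F_i\|^2$. Lemma \ref{lem:shock-interact} then bounds the error norms:
\begin{itemize}
\item for $F_2$, using $|F_2|\le C|\bar v-v^s_l||\bar v-v^s_r|$ and Lemma \ref{lemma-shock}, we get $\|F_2\|^2\le C\delta^3\varepsilon\big(e^{c\delta_l^2\tau/\varepsilon}+e^{c\delta_r^2\tau/\varepsilon}\big)$;
\item for $F_1$, which is of size $\varepsilon(|v^s_{ly}||\bar v-v^s_l|+\dots)$, the fourth estimate of Lemma \ref{lem:shock-interact} gives $\|F_1\|^2\le C\delta^5\varepsilon\, e^{c\delta^2\tau/\varepsilon}$.
\end{itemize}
Integrating in time up to $\tau\le -A\varepsilon/\delta^2$ gives $\int_{-t_0}^{\tau}e^{c\delta^2 s/\varepsilon}ds\le C\varepsilon\delta^{-2}e^{-cA}$. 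Hence the total error contribution is $Ce^{-cA}\delta\varepsilon^2$, which matches the first term of \eqref{estimate anti1}.

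To close, we collect all the terms and choose $\delta$ small and $A$ large, which yields \eqref{estimate anti1}.
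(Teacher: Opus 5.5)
Your proposal is correct and follows the paper's proof essentially step by step. It uses the same identity obtained by pairing the linearized momentum equation with $\Phi_y$ via $\Psi_{yy}=\Phi_{y\tau}$, the same Young splitting of the boundary term $\int\Psi\Phi_y$, and the same bounds on $Q_1,Q_2,F_1,F_2$ through $\|\phi\|_{L^\infty}$, $\varepsilon\|\bar u_y\|_{L^\infty}\le C\delta^2$, and the $L^2$ interaction estimates integrated up to $\tau\le -A\varepsilon/\delta^2$. (Your sign on $(Q_1+Q_2-F_1-F_2)\Phi_y$ is the correct one; the paper's displayed identity has it flipped, which is harmless since only absolute values are used.)
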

\begin{proof}
	From \eqref{anti-per line}, we have
	\begin{equation} \label{Phiy}
		\frac{\varepsilon}{\bar{v}}\Phi_{y\tau} - p'(\bar{v})\Phi_y - \Psi_{\tau} = -Q_1 - Q_2 + F_1 + F_2.
	\end{equation}
	Multiplying \eqref{Phiy} by $\Phi_y$ implies
	\begin{equation} \label{Phiy2}
		\left(\frac{\varepsilon}{2\bar{v}}\Phi_y^2 - \Psi\Phi_y\right)_{\tau} + |p'(\bar{v})|\Phi_{y}^{2} = \left(\frac{\varepsilon}{2\bar{v}}\right)_{\tau}\Phi_{y}^{2} + \Psi_{y}^{2} + (Q_1 + Q_2 - F_1 - F_2)\Phi_{y}-(\Phi_{\tau}\Psi)_{y}.
	\end{equation}
	Integrating \eqref{Phiy2} over $[-t_0,\tau]\times\bbr$ for $\tau \in \left[-t_0, -A\frac{\varepsilon}{\delta^2}\right]$ and using the Young's inequality, Lemma \ref{lemma-shock} and the smallness of $\delta_l \sim \delta_r \sim \delta$, we obtain
	\begin{align*}
		&\varepsilon \|\Phi_y(\tau)\|^2 + \int_{-t_0}^{\tau} \|\Phi_y\|^2 d\tau \\
		&\leq C\varepsilon^{-1}\|\Psi(\tau)\|^2 + C\int_{-t_0}^{\tau}\|\Psi_y(\tau)\|^2 d\tau + C \left| \int_{-t_0}^{\tau}\int_{\bbr} (Q_1 + Q_2 - F_1 - F_2)\Phi_{y} dyd\tau \right|.
	\end{align*}
Now we estimate the rest terms as follows. By H\"{o}lder's inequality, Young's inequality, Lemma \ref{lemma-shock} and {\it a priori} assumption \eqref{priori assump before}, we have
\begin{align*}
	&C \left| \int_{-t_0}^{\tau}\int_{\bbr} Q_1\Phi_y dyd\tau \right|
	= C \varepsilon \left| \int_{-t_0}^{\tau}\int_{\bbr} \left(\frac{1}{v} - \frac{1}{\bar{v}}\right)\left( \Psi_{yy} + \bar{u}_y \right) \Phi_y dyd\tau \right| \\
	&\leq C \varepsilon\int_{-t_0}^{\tau}\int_{\bbr} |\Phi_y^2\Psi_{yy}| dyd\tau + C \varepsilon\int_{-t_0}^{\tau}\int_{\bbr} |\bar{u}_y \Phi_y^2| dyd\tau \\
	&\leq C\varepsilon \|\Phi_y\|_{L^\infty} \int_{-t_0}^{\tau} \|\Phi_y\| \|\Psi_{yy}\| d\tau + C\left(\delta_l^2 + \delta_r^2\right) \int_{-t_0}^{\tau}\|\Phi_y\|^2 d\tau \\
	&\leq \frac{1}{8} \int_{-t_0}^{\tau} \|\Phi_y\|^2 d\tau + C \varepsilon^2 \|\phi\|_{L^\infty}^2 \int_{-t_0}^{\tau} \|\Psi_{yy}\|^2 d\tau + C\left(\delta_l^2 + \delta_r^2\right) \int_{-t_0}^{\tau}\|\Phi_y\|^2 d\tau \\
	&\leq \frac{1}{4} \int_{-t_0}^{\tau} \|\Phi_y\|^2 d\tau + C e^{-c\sqrt{A}} \delta\varepsilon^2 \int_{-t_0}^{\tau} \|\psi_y\|^2 d\tau.
\end{align*}
It follows from H\"{o}lder's inequality and {\it a priori} assumption \eqref{priori assump before} that
\begin{align*}
	&C\left| \int_{-t_0}^{\tau}\int_{\bbr} Q_2 \Phi_y dyd\tau \right|
	= C \left| \int_{-t_0}^{\tau}\int_{\bbr} (p(v) -p(\bar{v}) - p'(\bar{v})\phi) \Phi_y dyd\tau \right| \\
	&\leq C \int_{-t_0}^{\tau}\int_{\bbr} |\Phi_y^3| dyd\tau
	\leq C \|\phi\|_{L^\infty} \int_{-t_0}^{\tau}\|\Phi_y\|^2 d\tau \leq C e^{-c\sqrt{A}} \delta^{\frac{1}{2}} \int_{-t_0}^{\tau}\|\Phi_y\|^2 d\tau.
\end{align*}
By Young's inequality, Lemma \ref{lemma-shock} and Lemma \ref{lem:shock-interact}, we get
\begin{align*}
	C \left| \int_{-t_0}^{\tau}\int_{\bbr} F_1 \Phi_y dyd\tau \right| 
	&\leq \frac{1}{8} \int_{-t_0}^{\tau} \|\Phi_y\|^2 d\tau + C \int_{-t_0}^{\tau}\int_{\bbr} |F_1|^2 dyd\tau \\
	&\leq \frac{1}{8} \int_{-t_0}^{\tau} \|\Phi_y\|^2 d\tau + C (\delta_l + \delta_r) \delta_r^2 \varepsilon^2 e^{\frac{c \delta_l^2}{\varepsilon} \tau} + C (\delta_l + \delta_r) \delta_l^2 \varepsilon^2 e^{\frac{c \delta_r^2}{\varepsilon} \tau} \\
	&\leq \frac{1}{8} \int_{-t_0}^{\tau} \|\Phi_y\|^2 d\tau + C e^{-cA} \delta^3 \varepsilon^2,
\end{align*}
and
\begin{align*}
	C \left| \int_{-t_0}^{\tau}\int_{\bbr} F_2 \Phi_y dyd\tau \right| 
	&\leq \frac{1}{8} \int_{-t_0}^{\tau} \|\Phi_y\|^2 d\tau + C \int_{-t_0}^{\tau}\int_{\bbr} |F_2|^2 dyd\tau \\
	&\leq \frac{1}{8} \int_{-t_0}^{\tau} \|\Phi_y\|^2 d\tau + C \frac{\delta_r^2}{\delta_l} \varepsilon^2 e^{\frac{c \delta_l^2}{\varepsilon} \tau} + C \frac{\delta_l^2}{\delta_r} \varepsilon^2 e^{\frac{c \delta_r^2}{\varepsilon} \tau} \\
	&\leq \frac{1}{8} \int_{-t_0}^{\tau} \|\Phi_y\|^2 d\tau + C e^{-cA} \delta \varepsilon^2,
\end{align*}
where we have used the fact that
\begin{align*}
	&\int_{\bbr} |F_1|^2 dy \leq C (\delta_l + \delta_r)\delta_l^2 \delta_r^2 \varepsilon \left( e^{\frac{c \delta_l^2}{\varepsilon} \tau} + e^{\frac{c \delta_r^2}{\varepsilon} \tau} \right), \quad -t_0\leq\tau\leq 0, \\
	&\int_{\bbr} |F_2|^2 dy \leq C \delta_l\delta_r^2\varepsilon e^{ \frac{c \delta_l^2\tau}{\varepsilon}} + C\delta_l^2\delta_r\varepsilon e^{ \frac{c \delta_r^2\tau}{\varepsilon}}, \quad -t_0\leq\tau\leq 0.
\end{align*}
Combining the above estimates and using the smallness of $\delta_l \sim \delta_r \sim \delta$, we get the estimate \eqref{estimate anti1}.
\end{proof}

We now turn to the $L^2$-relative entropy estimates for the original variables.
\begin{lemma} \label{lemma 0before}
	Under the hypotheses of Proposition \ref{priori estimate before}, it holds that for $\tau \in \left[-t_0, -A\frac{\varepsilon}{\delta^2}\right]$,
	\begin{equation} \label{estimate 0before}
		\|(\phi, \psi)(\tau)\|^2 + \varepsilon \int_{-t_0}^{\tau} \|\psi_y\|^2 d\tau \leq C e^{-cA}\delta\varepsilon + C \delta^2\varepsilon^{-1} \int_{-t_0}^{\tau} \|\Phi_y\|^2 d\tau.
	\end{equation}
\end{lemma}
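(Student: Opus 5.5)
We will derive \eqref{estimate 0before} from a relative entropy estimate for \eqref{per before}. We multiply \eqref{per before}$_1$ by $-(p(v)-p(\bar v))$ and \eqref{per before}$_2$ by $\psi$, and add. Writing the potential part through $Q(v|\bar v)$, this gives the identity
\begin{align*}
&\Big(\tfrac12\psi^2+Q(v|\bar v)\Big)_\tau+\varepsilon\frac{\psi_y^2}{v} \\
&= -\bar v_\tau\, p(v|\bar v) -\varepsilon\Big(\frac1v-\frac1{\bar v}\Big)\bar u_y\psi_y + (F_1+F_2)\psi_y+(\cdots)_y .
\end{align*}
Here we used $\bar v_\tau=\bar u_y$, the identity $\frac{u_y}{v}-\frac{\bar u_y}{\bar v}=\frac{\psi_y}{v}+\bar u_y(\frac1v-\frac1{\bar v})$, and an integration by parts in $y$ for the $F$-terms. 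By the a priori assumption \eqref{priori assump before}, $v$ stays in a fixed compact subset of $(0,\infty)$, so $Q(v|\bar v)\sim\phi^2$ and $|p(v|\bar v)|\le C\phi^2$. We then integrate over $[-t_0,\tau]\times\bbr$, using \eqref{per beforeI} for the zero initial data.

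The remaining work is to bound the three source terms on the right.

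\textbf{Viscous cross term.} Since $|\frac1v-\frac1{\bar v}|\le C|\phi|$ and $\phi=\Phi_y$, Young's inequality gives
$$\varepsilon\int|\bar u_y||\phi||\psi_y|\le \tfrac18\varepsilon\|\psi_y\|^2+C\varepsilon\|\bar u_y\|_{L^\infty}^2\|\Phi_y\|^2 .$$
By Lemma \ref{lemma-shock}, $\|\bar u_y\|_{L^\infty}\le C\delta^2/\varepsilon$, so the last term is $C\delta^4\varepsilon^{-1}\|\Phi_y\|^2\le C\delta^2\varepsilon^{-1}\|\Phi_y\|^2$.

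\textbf{Error terms.} By Young's inequality, $\int|F_1+F_2||\psi_y|\le\frac18\varepsilon\|\psi_y\|^2+C\varepsilon^{-1}\|F_1+F_2\|^2$. We integrate the $L^2$ bounds for $F_1,F_2$ recorded at the end of the proof of Lemma \ref{lemma anti1} in time. Since $\tau\le -A\varepsilon/\delta^2$, the time integral $\int_{-t_0}^{\tau}e^{c\delta_i^2 s/\varepsilon}ds\le C\varepsilon\delta^{-2}e^{-cA}$, and this yields a contribution $C e^{-cA}\delta\varepsilon$. This is exactly the stated constant term, and it is the reason the estimate is only closed up to the approximate collision time.

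\textbf{Compression term $\bar v_\tau p(v|\bar v)$.} This is the step I expect to be the main obstacle. We have $\bar v_\tau=-s_lv^s_{ly}-s_rv^s_{ry}<0$, so the sign is fixed, but $p(v|\bar v)=\frac12p''\phi^2>0$. The term $-\bar v_\tau p(v|\bar v)$ is therefore a positive, bad source; this is the usual failure of plain $L^2$ entropy estimates around compressive shocks, which is why the anti-derivative estimates are needed. I will not try to exploit any sign. Instead I bound
$$|\bar v_\tau||\phi|^2\le C\|\bar v_y\|_{L^\infty}\phi^2\le C\delta^2\varepsilon^{-1}\Phi_y^2 .$$
Its space-time integral is then $C\delta^2\varepsilon^{-1}\int_{-t_0}^{\tau}\|\Phi_y\|^2$, which is precisely the second term on the right of \eqref{estimate 0before}. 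That term is later absorbed via \eqref{estimate anti1-1}, since the prefactor $\delta^2\varepsilon^{-1}$ times the $\varepsilon^2$-size of $\int\|\Phi_y\|^2$ gives $\delta\varepsilon$ order, consistent with the proof of Proposition \ref{priori estimate before}.

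Absorbing the $\frac18\varepsilon\|\psi_y\|^2$ terms into the dissipation $\varepsilon\int\frac{\psi_y^2}{v}$ then gives \eqref{estimate 0before}.
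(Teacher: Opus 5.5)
Your proposal is correct and follows the paper's proof: the same multipliers $-(p(v)-p(\bar v))$ and $\psi$, the bound $C\delta^2\varepsilon^{-1}\int\|\Phi_y\|^2$ for the compression term $\bar v_\tau p(v|\bar v)$, and the bound $C\delta^4\varepsilon^{-1}\int\|\Phi_y\|^2$ for the viscous cross term (the paper's $Q_3$). The only deviation is in the interaction errors, where you integrate by parts and pair the $L^2$ bounds on $F_1,F_2$ against the dissipation, whereas the paper keeps $F_{iy}\psi$ and uses $\|\psi\|_{L^\infty}\|F_{iy}\|_{L^1}$ with Gagliardo--Nirenberg and a $\sup_\tau\|\psi\|^2$ absorption; both give $Ce^{-cA}\delta\varepsilon$, and your version is slightly simpler.
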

\begin{proof}
	Multiplying \eqref{per before}$_1$ by $-(p(v) - p(\bar{v}))$, it holds that
	\begin{equation} \label{phi2 before}
		(Q(v|\bar{v}))_\tau + \bar{v}_{\tau} p(v|\bar{v}) + \psi_y (p(v) -p(\bar{v})) =0.
	\end{equation}	
	Then multiplying \eqref{per before}$_2$ by $\psi$ leads to
	\begin{equation} \label{psi2 before}
		\left( \frac{\psi^2}{2} \right)_\tau + \psi(p(v)-p(\bar{v}))_y + \frac{\varepsilon}{v} |\psi_y|^2
		= \varepsilon \left( \frac{\psi\psi_y}{v} \right)_y + Q_{3y}\psi - F_{1y}\psi - F_{2y}\psi,
	\end{equation}
	where $Q_3 = \varepsilon \bar{u}_y\left( \frac{1}{v} - \frac{1}{\bar{v}}\right) $.
	Now we add \eqref{phi2 before}-\eqref{phi2 before} together, then take the integration over $[-t_0, \tau]\times\bbr$ for $\tau \in \left[-t_0, -A\frac{\varepsilon}{\delta^2}\right]$ to obtain
	\begin{align}
		\begin{aligned} \label{need estimate 0before}
			&\|(\phi, \psi)(\tau)\|^2 + \varepsilon \int_{-t_0}^{\tau} \|\psi_y\|^2 d\tau \\
			&\leq C \left| \int_{-t_0}^{\tau}\int_{\bbr} \bar{v}_{\tau} p(v|\bar{v}) dyd\tau \right| + C \left| \int_{-t_0}^{\tau}\int_{\bbr} (Q_{3y} - F_{1y} - F_{2y})\psi dyd\tau \right|.
		\end{aligned}
	\end{align}
	Wherein, by Taylor's formula, it holds that $Q(v|\bar{v}) \sim \phi^2, p(v|\bar{v}) \sim \phi^2$. Now we estimate the terms on the right-hand side of \eqref{need estimate 0before}.
	First of all, by H\"{o}lder's inequality and Lemma \ref{lemma-shock}, one has
	\begin{align*}
		C \left| \int_{-t_0}^{\tau}\int_{\bbr} \bar{v}_{\tau} p(v|\bar{v}) dyd\tau \right|
		\leq C \int_{-t_0}^{\tau}\int_{\bbr} \bar{u}_y \phi^2 dyd\tau
		\leq C \delta^2\varepsilon^{-1} \int_{-t_0}^{\tau} \|\Phi_y\|^2 d\tau.
	\end{align*}
	By Young's inequality, H\"{o}lder's inequality and Lemma \ref{lemma-shock}, we get
	\begin{align*}
		&C \left| \int_{-t_0}^{\tau}\int_{\bbr} Q_{3y}\psi dyd\tau \right|
		=C \left| \int_{-t_0}^{\tau}\int_{\bbr} Q_3\psi_y dyd\tau \right| \\
		&\leq \frac{\varepsilon}{8} \int_{-t_0}^{\tau} \|\psi_y\|^2 d\tau + C \varepsilon\int_{-t_0}^{\tau}\int_{\bbr} |\bar{u}_y \phi|^2 dyd\tau \\
		&\leq \frac{\varepsilon}{8} \int_{-t_0}^{\tau} \|\psi_y\|^2 d\tau + C \delta^4\varepsilon^{-1} \int_{-t_0}^{\tau} \|\Phi_y\|^2 d\tau.
	\end{align*}
	Similar to \eqref{estimate error before}, we can get
	\begin{align*}
		&C \left| \int_{-t_0}^{\tau}\int_{\bbr} F_{1y}\psi dyd\tau \right| \\
		&\leq \frac{1}{8} \varepsilon \int_{-t_0}^{\tau}\|\psi_y\|^2 d\tau + \frac{1}{8}\sup_{-t_0 \leq \tau \leq -A\frac{\varepsilon}{\delta^2}} \|\psi\|^2 + C \varepsilon^{-\frac{1}{2}} \left( \int_{-t_0}^{\tau} \|F_{1y}\|_{L^1}^{\frac{4}{3}} d\tau \right)^{\frac{3}{2}} \\
		&\leq \frac{1}{8} \varepsilon \int_{-t_0}^{\tau}\|\psi_y\|^2 d\tau + \frac{1}{8}\sup_{-t_0 \leq \tau \leq -A\frac{\varepsilon}{\delta^2}} \|\psi\|^2 + C (\delta_l + \delta_r)^2 \varepsilon \left( \frac{\delta_r^2}{\delta_l} e^{ \frac{c \delta_l^2\tau}{\varepsilon}} + \frac{\delta_l^2}{\delta_r} e^{ \frac{c \delta_r^2\tau}{\varepsilon}} \right) \\
		&\leq \frac{1}{8} \varepsilon \int_{-t_0}^{\tau}\|\psi_y\|^2 d\tau + \frac{1}{8}\sup_{-t_0 \leq \tau \leq -A\frac{\varepsilon}{\delta^2}} \|\psi\|^2 + C e^{-cA}\delta^3\varepsilon,
	\end{align*}
	and
	\begin{align*}
		&C \left| \int_{-t_0}^{\tau}\int_{\bbr} F_{2y}\psi dyd\tau \right| \\
		&\leq \frac{1}{8} \varepsilon \int_{-t_0}^{\tau}\|\psi_y\|^2 d\tau + \frac{1}{8}\sup_{-t_0 \leq \tau \leq -A\frac{\varepsilon}{\delta^2}} \|\psi\|^2 + C \varepsilon^{-\frac{1}{2}} \left( \int_{-t_0}^{\tau} \|F_{2y}\|_{L^1}^{\frac{4}{3}} d\tau \right)^{\frac{3}{2}} \\
		&\leq \frac{1}{8} \varepsilon \int_{-t_0}^{\tau}\|\psi_y\|^2 d\tau + \frac{1}{8}\sup_{-t_0 \leq \tau \leq -A\frac{\varepsilon}{\delta^2}} \|\psi\|^2 + C \varepsilon \left( \frac{\delta_r^2}{\delta_l} e^{ \frac{c \delta_l^2\tau}{\varepsilon}} + \frac{\delta_l^2}{\delta_r} e^{ \frac{c \delta_r^2\tau}{\varepsilon}} \right) \\
		&\leq \frac{1}{8} \varepsilon \int_{-t_0}^{\tau}\|\psi_y\|^2 d\tau + \frac{1}{8}\sup_{-t_0 \leq \tau \leq -A\frac{\varepsilon}{\delta^2}} \|\psi\|^2 + C e^{-cA}\delta\varepsilon,
	\end{align*}
	where we have used
	\begin{align*}
		&\|F_{1y}\|_{L^1} \leq C \delta_l\delta_r(\delta_l + \delta_r) \left( e^{ \frac{c \delta_l^2\tau}{\varepsilon}} +  e^{ \frac{c \delta_r^2\tau}{\varepsilon}} \right), \quad -t_0\leq\tau\leq 0,\\
		&\|F_{2y}\|_{L^1} \leq C \delta_l\delta_r \left( e^{ \frac{c \delta_l^2\tau}{\varepsilon}} +  e^{ \frac{c \delta_r^2\tau}{\varepsilon}} \right), \quad -t_0\leq\tau\leq 0.
	\end{align*}
	
	The combination of the above estimates finishes the proof of Lemma \ref{lemma 0before}.	
\end{proof}

\subsection{First-order derivative estimates}

In this subsection, we derive the first-order derivative estimates for the original variables $(\phi, \psi)$. We commence with the density variable.
\begin{lemma} \label{lemma phi 1before}
Under the assumption of Proposition \ref{priori estimate before}, it holds that for $\tau \in \left[-t_0, -A\frac{\varepsilon}{\delta^2}\right]$,
  \begin{equation} \label{estimate phi 1before}
  	\begin{aligned}
  		&\| \phi_y(\tau) \|^2 + \varepsilon^{-1}\int_{-t_0}^{\tau} \|\phi_y(\tau)\|^2 d\tau \\
  		&\leq Ce^{-cA}\delta^3\varepsilon^{-1} + C\varepsilon^{-2} \|\psi\|^2 + C\varepsilon^{-1} \int_{-t_0}^{\tau}\|\psi_y\|^2 d\tau + C\delta^4\varepsilon^{-3} \int_{-t_0}^{\tau}\left( \|\Psi_y\|^2 + \|\Phi_y\|^2 \right) d\tau.
  	\end{aligned}
  \end{equation}
\end{lemma}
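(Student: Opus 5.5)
We will prove this with the standard effective-velocity (``Kanel'') trick for the Navier--Stokes perturbation. Note that $\phi_\tau=\psi_y$ and $\left(\frac{u_y}{v}\right) = (\ln v)_\tau$. Hence the viscous term in \eqref{per before}$_2$ can be rewritten as
\[
\varepsilon\Big(\frac{u_y}{v}-\frac{\bar u_y}{\bar v}\Big)_y = \varepsilon\Big(\frac{\phi_y}{v}\Big)_\tau + \varepsilon\Big(\frac{\bar v_y}{v}-\frac{\bar v_y}{\bar v}\Big)_\tau - \varepsilon\Big(\frac{\bar u_y}{\bar v}\Big)_y + \varepsilon\Big(\frac{\bar v_\tau}{v}\Big)_y .
\]
More simply, we rewrite $\varepsilon(\frac{u_y}{v})_y-\varepsilon(\frac{\bar u_y}{\bar v})_y = \varepsilon(\frac{\phi_y}{v})_\tau + \varepsilon\big((\frac{1}{v}-\frac{1}{\bar v})\bar v_y\big)_\tau$, using $\bar v_\tau=\bar u_y$. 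This gives
\[
\Big(\varepsilon\frac{\phi_y}{v}-\psi\Big)_\tau - p'(v)\phi_y = (p'(v)-p'(\bar v))\bar v_y - \varepsilon\Big(\Big(\frac1v-\frac1{\bar v}\Big)\bar v_y\Big)_\tau + F_{1y}+F_{2y}.
\]
We multiply this identity by $\varepsilon\frac{\phi_y}{v}$ and integrate over $[-t_0,\tau]\times\bbr$.

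The time-derivative term produces $\frac12\|\varepsilon\phi_y/v\|^2(\tau)$ minus a cross term. We handle the cross term via
\[
\Big(\varepsilon\frac{\phi_y}{v}\Big)_\tau\psi\text{-type contributions:}\quad -\int\!\!\int \psi_\tau\,\varepsilon\frac{\phi_y}{v} = -\Big[\int\psi\,\varepsilon\frac{\phi_y}{v}\Big]_{-t_0}^{\tau} + \int\!\!\int \psi\Big(\varepsilon\frac{\phi_y}{v}\Big)_\tau .
\]
Since $\phi_{y\tau}=\psi_{yy}$, integrating by parts in $y$ turns the last term into $-\varepsilon\int\!\!\int\psi_y^2/v$ plus lower-order pieces involving $v_\tau\phi_y\psi$. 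The boundary term is bounded by $\frac{\varepsilon^2}{8}\|\phi_y\|^2+C\|\psi\|^2$. This accounts for the contributions $C\varepsilon^{-2}\|\psi\|^2$ and $C\varepsilon^{-1}\int\|\psi_y\|^2$ after we divide by $\varepsilon^2$.

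The good term $\varepsilon|p'(v)|\phi_y^2/v$ gives $\varepsilon\int\|\phi_y\|^2$, which becomes $\varepsilon^{-1}\int\|\phi_y\|^2$ after dividing by $\varepsilon^2$. The initial data vanish by \eqref{per beforeI}.

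We treat the remaining terms as follows:
\begin{itemize}
\item The term $(p'(v)-p'(\bar v))\bar v_y\,\varepsilon\phi_y$ is bounded by $\eta\varepsilon\int\!\!\int\phi_y^2+C\varepsilon\int\!\!\int\bar v_y^2\phi^2$. Using $\|\bar v_y\|_{L^\infty}\le C\delta^2/\varepsilon$ and $\phi=\Phi_y$, this is $\le C\delta^4\varepsilon^{-1}\int\|\Phi_y\|^2$, i.e.\ $C\delta^4\varepsilon^{-3}\int\|\Phi_y\|^2$ after division.
\item The term $\varepsilon((\frac1v-\frac1{\bar v})\bar v_y)_\tau$ is expanded into $\phi_\tau\bar v_y=\psi_y\bar v_y$ and $\phi\bar v_{y\tau}$. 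Using $|\bar v_{yy}|\le C\frac{\delta}{\varepsilon}|\bar v_y|$ and $\bar v_{y\tau}\sim\bar u_{yy}$, these pieces give $C\delta^4\varepsilon^{-1}\int\|\psi_y\|^2$, which is absorbed into $\varepsilon^{-1}\int\|\psi_y\|^2$ for small $\delta$, and $C\delta^4\varepsilon^{-3}\int(\|\Phi_y\|^2+\|\Psi_y\|^2)$. Here $\psi=\Psi_y$ enters where needed.
\item The $v_\tau\phi_y\psi$ and $\phi_y^2\psi_y$-type cubic terms are controlled by the a priori assumption \eqref{priori assump before}, i.e.\ the smallness of $\|\phi\|_{L^\infty}$, together with absorption.
\end{itemize}

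The source terms are the main obstacle. We bound $\varepsilon\int\!\!\int (F_{1y}+F_{2y})\phi_y$ by $\eta\varepsilon\int\!\!\int\phi_y^2 + C\varepsilon\int\|F_{1y}+F_{2y}\|^2\,d\tau$. Lemma~\ref{lem:shock-interact} gives $\|F_{2y}\|^2\lesssim \frac{\delta^5}{\varepsilon}(e^{c\delta_l^2\tau/\varepsilon}+e^{c\delta_r^2\tau/\varepsilon})$, and $F_{1y}$ is smaller. Integrating in time up to $-A\varepsilon/\delta^2$ yields $C\delta^3\varepsilon\,e^{-cA}$, i.e.\ $Ce^{-cA}\delta^3\varepsilon^{-1}$ after dividing by $\varepsilon^2$.

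This time integration is precisely where the cutoff at the approximate collision time is essential: integrating the exponentials only up to $-A\varepsilon/\delta^2$ is what produces the factor $e^{-cA}$. We must also check that each interaction term carries two shock factors, so that the power $\delta^5/\varepsilon$ is actually attained. Collecting all terms, absorbing the $\eta$-terms, and dividing by $\varepsilon^2$ gives \eqref{estimate phi 1before}.
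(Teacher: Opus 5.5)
Your proposal is correct and follows essentially the paper's argument. Both test the momentum perturbation equation against (a multiple of) $\phi_y/v$ via $\varepsilon(u_y/v)_y=\varepsilon(v_y/v)_\tau$, integrate the $\psi_\tau$ cross term by parts in time, and integrate the shock-interaction bounds only up to $-A\varepsilon/\delta^2$ to get $e^{-cA}\delta^3\varepsilon^{-1}$. Your use of $\varepsilon(\bar u_y/\bar v)_y=\varepsilon(\bar v_y/\bar v)_\tau$ to keep $F_{1y}$ as a separate source is only a cosmetic regrouping of the paper's interaction terms.

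Two bookkeeping corrections:
\begin{itemize}
\item The only cubic terms are $\pm\varepsilon\int\!\!\int \psi\psi_y\phi_y/v^2$, coming from integrating $\psi\psi_{yy}/v$ by parts and from $v_\tau=\psi_y+\bar u_y$. They cancel exactly, so no appeal to the a priori assumption is needed there. Smallness of $\|\phi\|_{L^\infty}$ alone would not handle them anyway, since they involve $\phi_y$ rather than $\phi$.
\item The $\delta^4\varepsilon^{-3}\int\|\Psi_y\|^2$ contribution comes from the leftover quadratic pieces $\varepsilon\bar v_y\psi\psi_y/v^2$ and $\varepsilon\bar u_y\psi\phi_y/v^2$ of that cross term. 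It does not come from the expansion of $\varepsilon\big((\frac1v-\frac1{\bar v})\bar v_y\big)_\tau$.
\end{itemize}
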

\begin{proof}
	Multiplying the equation $\eqref{per before}_2$ by $\frac{\phi_y}{v}$ and integrating the resulting equation over $\bbr$ lead to
	\begin{align*}
	\begin{aligned}
	&\frac{\varepsilon}{2} \frac{d}{d\tau} \int_\bbr \left(\frac{\phi_y}{v}\right)^2 dy \\
	&= \int_\bbr \frac{\phi_y}{v}(p(v) -p(\bar{v}))_y dy + \int_\bbr \frac{\phi_y}{v}\psi_\tau dy - \varepsilon \int_\bbr \frac{\phi_y}{v} \left( \left(\frac{\bar{v}_y}{v}\right)_\tau - \left(\frac{u^s_{ly}}{v^s_l} + \frac{u^s_{ry}}{v^s_r}\right)_y \right) dy \\
	&\quad + \int_\bbr F_{2y}\frac{\phi_y}{v} dy,
	\end{aligned}
	\end{align*}
where we have used the fact that
$$\varepsilon\left(\frac{u_y}{v}\right)_y = \varepsilon\left(\frac{v_\tau}{v}\right)_y = \varepsilon(\ln v)_{\tau y} = \varepsilon\left(\frac{v_y}{v}\right)_\tau = \varepsilon\left(\frac{\phi_y}{v}\right)_\tau + \varepsilon\left(\frac{\bar{v}_y}{v}\right)_\tau.$$
Now we estimate the right-hand side terms respectively.   
First, by Young's inequality, H\"{o}lder's inequality, Taylor's formula, and Lemma \ref{lemma-shock}, it holds that
  \begin{align*}
  	&\int_\bbr \frac{\phi_y}{v}(p(v) -p(\bar{v}))_y dy 
  	= \int_\bbr \frac{p'(v)}{v} (\phi_y)^2 dy + \int_\bbr \frac{\phi_y}{v}(p'(v) -p'(\bar{v}))\bar{v}_y dy \\
  	&\leq -C_1 \|\phi_y\|^2 + C\int_\bbr \bar{v}_y|\phi\phi_y| dy 
  	\leq -\frac{7}{8}C_1 \|\phi_y\|^2 + C\int_\bbr |\bar{v}_y\phi|^2 dy \\
  	&\leq -\frac{7}{8}C_1 \|\phi_y\|^2 + C\delta^4\varepsilon^{-2}\|\Phi_y\|^2.
  \end{align*}
  By \eqref{per before}, Young's inequality and Lemma \ref{lemma-shock}, one has
  \begin{align*}
  	\int_\bbr \frac{\phi_y}{v}\psi_\tau dy 
  	&= \frac{d}{d\tau}\int_\bbr \frac{\phi_y}{v}\psi dy - \int_\bbr \frac{\phi_{y\tau}}{v} \psi dy + \int_\bbr \frac{\phi_yv_\tau}{v^2} \psi dy \\
  	&= \frac{d}{d\tau}\int_\bbr \frac{\phi_y}{v}\psi dy - \int_\bbr \frac{\psi_{yy}}{v} \psi dy + \int_\bbr \frac{\phi_y}{v^2}\psi_y \psi dy + \int_\bbr \frac{\phi_y}{v^2}\bar{u}_y \psi dy \\
  	&= \frac{d}{d\tau}\int_\bbr \frac{\phi_y}{v}\psi dy + \int_\bbr \frac{\psi_{y}^2}{v} dy - \int_\bbr \frac{\bar{v}_y}{v^2}\psi_y \psi dy + \int_\bbr \frac{\phi_y}{v^2}\bar{u}_y \psi dy \\
  	&\leq \frac{d}{d\tau}\int_\bbr \frac{\phi_y}{v}\psi dy + C \|\psi_y\|^2 + \frac{1}{8}C_1 \|\phi_y\|^2 + C \|\bar{v}_y\psi\|^2 \\
  	&\leq \frac{d}{d\tau}\int_\bbr \frac{\phi_y}{v}\psi dy + C \|\psi_y\|^2 + \frac{1}{8}C_1 \|\phi_y\|^2 + C\delta^4\varepsilon^{-2} \|\Psi_y\|^2.
  \end{align*}
By Young's inequality, H\"{o}lder's inequality, Lemma \ref{lemma-shock} and Lemma \ref{lem:shock-interact}, one has
\begin{align*}
	&- \varepsilon \int_\bbr \frac{\phi_y}{v} \left( \left(\frac{\bar{v}_y}{v}\right)_\tau - \left(\frac{u^s_{ly}}{v^s_l} + \frac{u^s_{ry}}{v^s_r}\right)_y \right) dy \\
	&= - \varepsilon\int_\bbr \frac{\phi_y}{v}\frac{\bar{v}_{y\tau}}{v} dy + \varepsilon\int_\bbr \frac{\phi_y}{v}\frac{\bar{v}_yv_\tau}{v^2} dy + \varepsilon\int_\bbr \frac{\phi_y}{v}\frac{u^s_{lyy}}{v^s_l} dy -\varepsilon\int_\bbr \frac{\phi_y}{v}\frac{u^s_{ly}v^s_{ly}}{(v^s_l)^2} dy \\
	&\quad + \varepsilon\int_\bbr \frac{\phi_y}{v}\frac{u^s_{ryy}}{v^s_r} dy -\varepsilon\int_\bbr \frac{\phi_y}{v}\frac{u^s_{ry}v^s_{ry}}{(v^s_r)^2} dy \\
	&= \varepsilon\int_\bbr \frac{\phi_y}{v}u^s_{lyy}\left(\frac{1}{v^s_l} - \frac{1}{v}\right) dy + \varepsilon\int_\bbr \frac{\phi_y}{v}u^s_{ryy}\left(\frac{1}{v^s_r} - \frac{1}{v}\right) dy  + \varepsilon\int_\bbr \frac{\bar{v}_y}{v^3}\phi_y\psi_y dy \\
	&\quad + \varepsilon \int_\bbr \frac{\phi_y}{v^3}(v^s_{ly}u^s_{ry} + u^s_{ly}v^s_{ry})dy + \varepsilon\int_\bbr \frac{\phi_y}{v}v^s_{ly}u^s_{ly}\left(\frac{1}{v^2} - \frac{1}{(v^s_l)^2}\right) dy \\
	&\quad + \varepsilon\int_\bbr \frac{\phi_y}{v}v^s_{ry}u^s_{ry}\left(\frac{1}{v^2} - \frac{1}{(v^s_r)^2}\right) dy \\
	&\leq \frac{1}{8}C_1 \|\phi_y\|^2 + C\varepsilon^2\|(u^s_{lyy} + v^s_{ly}u^s_{ly})\phi\|^2 + C\varepsilon^2\|(u^s_{ryy} + v^s_{ry}u^s_{ry})\phi\|^2 + C\varepsilon^2\|\bar{v}_y\|_{L^\infty}^2\|\psi_y\|^2 \\
	&\quad + C\varepsilon^2\|(u^s_{lyy} + v^s_{ly}u^s_{ly})(v^s_r - v_*)\|^2 + C\varepsilon^2\|(u^s_{ryy} + v^s_{ry}u^s_{ry})(v^s_l - v_*)\|^2 + C \varepsilon^2\|v^s_{ly}v^s_{ry}\|^2  \\
	&\leq \frac{1}{8}C_1 \|\phi_y\|^2 + C\delta^6\varepsilon^{-2} \|\Phi_y\|^2 + C\delta^4 \|\psi_y\|^2 + C \frac{(\delta_l^3 + \delta_r^3)\delta_l^2\delta_r^2}{\varepsilon} \left( e^{ \frac{c \delta_l^2\tau}{\varepsilon}} + e^{ \frac{c \delta_r^2\tau}{\varepsilon}} \right) \\
	&\quad + C\frac{\delta_l^4\delta_r^3}{\varepsilon} e^{\frac{c \delta_l^2\tau}{\varepsilon}} + C\frac{\delta_l^3\delta_r^4}{\varepsilon} e^{ \frac{c \delta_r^2\tau}{\varepsilon}}.
\end{align*}
Similarly, it holds that
\begin{align*}
	\int_\bbr F_{2y}\frac{\phi_y}{v} dy
	\leq \frac{1}{8}C_1 \|\phi_y\|^2 + C \|F_{2y}\|^2 
	\leq \frac{1}{8}C_1 \|\phi_y\|^2 + C \frac{\delta_l^2\delta_r^2(\delta_l + \delta_r)}{\varepsilon} \left( e^{ \frac{c \delta_l^2\tau}{\varepsilon}} + e^{ \frac{c \delta_r^2\tau}{\varepsilon}} \right).
\end{align*}

Combining the above estimates, integrating over $[-t_0, \tau]$ for $\tau \in \left[-t_0, -A\frac{\varepsilon}{\delta^2}\right]$, by Young's inequality, we can obtain the estimate \eqref{estimate phi 1before}.
\end{proof}

Now we give the first-order derivative estimate of the velocity.
\begin{lemma} \label{lemma psi 1before}
	Under the assumption of Proposition \ref{priori estimate before}, it holds that for $\tau \in \left[-t_0, -A\frac{\varepsilon}{\delta^2}\right]$,
	\begin{equation} \label{estimate psi 1before}
		\begin{aligned}
			&\| \psi_y(\tau) \|^2 + \varepsilon \int_{-t_0}^{\tau}\|\psi_{yy}\|^2 d\tau \leq Ce^{-cA}\delta^3\varepsilon^{-1} + C\varepsilon^{-1} \int_{-t_0}^{\tau}\|\phi_y\|^2 d\tau \\
			&\quad + C\Big(\delta^4\varepsilon^{-1} + \varepsilon \sup_{-t_0 \leq \tau \leq -A\frac{\varepsilon}{\delta^2}}\|\phi_y\|^4\Big) \int_{-t_0}^{\tau} \|\psi_y\|^2 d\tau + C\delta^4\varepsilon^{-3} \int_{-t_0}^{\tau} \|\Phi_y\|^2 d\tau.
		\end{aligned}		
	\end{equation}
\end{lemma}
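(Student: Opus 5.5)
We will obtain \eqref{estimate psi 1before} by a standard $H^1$ energy estimate on the momentum equation. We multiply \eqref{per before}$_2$ by $-\psi_{yy}$ and integrate over $\bbr$. The time-derivative term gives $\frac12\frac{d}{d\tau}\|\psi_y\|^2$.

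For the viscous term we expand
\[
\varepsilon\Big(\frac{u_y}{v}-\frac{\bar u_y}{\bar v}\Big)_y=\varepsilon\Big(\frac{\psi_y}{v}\Big)_y+\varepsilon\Big(\bar u_y\big(\tfrac1v-\tfrac1{\bar v}\big)\Big)_y .
\]
The main part is
\[
\varepsilon\Big(\frac{\psi_y}{v}\Big)_y=\frac{\varepsilon}{v}\psi_{yy}-\frac{\varepsilon v_y}{v^2}\psi_y .
\]
Its first piece produces the good dissipation $\varepsilon\int \psi_{yy}^2/v\ge c\,\varepsilon\|\psi_{yy}\|^2$.

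The remaining terms are handled as follows.

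\emph{Pressure term.} It equals $\int (p(v)-p(\bar v))_y\psi_{yy}$, with
\[
(p(v)-p(\bar v))_y=p'(v)\phi_y+(p'(v)-p'(\bar v))\bar v_y .
\]
By Young's inequality it is bounded by $\frac{\varepsilon}{8}\|\psi_{yy}\|^2+C\varepsilon^{-1}\|\phi_y\|^2+C\varepsilon^{-1}\|\bar v_y\phi\|^2$. Since $|\bar v_y|\le C\delta^2/\varepsilon$ and $\phi=\Phi_y$, the last piece is at most $C\delta^4\varepsilon^{-3}\|\Phi_y\|^2$.

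\emph{Commutator term.} The term $\varepsilon\int \frac{v_y}{v^2}\psi_y\psi_{yy}$ is split according to $v_y=\bar v_y+\phi_y$.
\begin{itemize}
\item The $\bar v_y$ part is bounded by $\frac{\varepsilon}{8}\|\psi_{yy}\|^2+C\varepsilon\|\bar v_y\|_{L^\infty}^2\|\psi_y\|^2\le \frac{\varepsilon}{8}\|\psi_{yy}\|^2+ C\delta^4\varepsilon^{-1}\|\psi_y\|^2$.
\item For the $\phi_y$ part we use Gagliardo--Nirenberg, $\|\psi_y\|_{L^\infty}\le C\|\psi_y\|^{1/2}\|\psi_{yy}\|^{1/2}$. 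This gives
\[
\varepsilon\|\phi_y\|\,\|\psi_y\|^{1/2}\|\psi_{yy}\|^{3/2}\le \tfrac{\varepsilon}{8}\|\psi_{yy}\|^2+C\varepsilon\|\phi_y\|^4\|\psi_y\|^2 .
\]
\end{itemize}
This is exactly the origin of the $\varepsilon\sup\|\phi_y\|^4$ factor in the statement.

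\emph{Lower-order viscous term.} The term $\varepsilon(\bar u_y(1/v-1/\bar v))_y\psi_{yy}$ contains $\bar u_{yy}\phi$, $\bar u_y\bar v_y\phi$ and $\bar u_y\phi_y$. By Lemma \ref{lemma-shock}, after Young's inequality these contribute $C\varepsilon\|\bar u_{yy}\phi\|^2\le C\delta^6\varepsilon^{-3}\|\Phi_y\|^2$, and similarly $C\varepsilon\|\bar u_y\|_{L^\infty}^2\|\phi_y\|^2\le C\delta^4\varepsilon^{-1}\|\phi_y\|^2$. Both are absorbed in the claimed right-hand side.

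\emph{Error terms.} The terms $F_{1y}$ and $F_{2y}$ are bounded by $\frac{\varepsilon}{8}\|\psi_{yy}\|^2+C\varepsilon^{-1}\|(F_{1y},F_{2y})\|^2$. Here we use the pointwise interaction estimates of Lemma \ref{lem:shock-interact}, as in the proof of Lemma \ref{lemma phi 1before}. They give
\[
\|F_{2y}\|^2\le C\delta^5\varepsilon^{-1}\big(e^{c\delta_l^2\tau/\varepsilon}+e^{c\delta_r^2\tau/\varepsilon}\big),
\]
and a bound for $\|F_{1y}\|^2$ that is even smaller. Integrating in time against $\varepsilon^{-1}$, the factor $\int_{-t_0}^{-A\varepsilon/\delta^2} e^{c\delta^2\tau/\varepsilon}\,d\tau\le C\varepsilon\delta^{-2}e^{-cA}$ yields $Ce^{-cA}\delta^3\varepsilon^{-1}$.

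\emph{Conclusion.} We integrate over $[-t_0,\tau]$ and use $\psi_y(-t_0)=0$, which follows from \eqref{per beforeI}. After absorbing the $\frac{\varepsilon}{8}\|\psi_{yy}\|^2$ pieces, this gives \eqref{estimate psi 1before}.

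The main obstacle is the time integration of the interaction errors: the exponential decay $e^{c\delta^2\tau/\varepsilon}$ must be exploited only up to the approximate collision time $-A\varepsilon/\delta^2$ to produce the $e^{-cA}$ smallness. A secondary delicate point is the nonlinear commutator term $\phi_y\psi_y\psi_{yy}$; this is why we keep the explicit $\sup\|\phi_y\|^4$ factor, to be controlled later by \eqref{estimate phi1before-1}.
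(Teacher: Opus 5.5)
Your proposal is correct and follows the paper's proof essentially line by line. You use the same multiplier $-\psi_{yy}$ and split the viscous term in the same way: dissipation, the $\phi_y$- and $\bar v_y$-commutators (with the Gagliardo--Nirenberg step producing the $\varepsilon\sup\|\phi_y\|^4$ factor), and the $\bar u_y(1/v-1/\bar v)$ term; you then treat $F_{1y},F_{2y}$ by Young's inequality and Lemma~\ref{lem:shock-interact}, whose $e^{c\delta^2\tau/\varepsilon}$ decay integrated up to $\tau=-A\varepsilon/\delta^2$ gives $Ce^{-cA}\delta^3\varepsilon^{-1}$.
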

\begin{proof}
	Multiplying the equation $\eqref{per before}_2$ by $-\psi_{yy}$ and integrating the resulting equation over $\bbr$ lead to
	\begin{align*}
		\begin{aligned}
			&\frac{1}{2} \frac{d}{d\tau} \int_\bbr \psi_y^2 dy + \varepsilon\int_\bbr \frac{\psi_{yy}^2}{v} dy \\
			&= \int_\bbr (p(v) -p(\bar{v}))_y\psi_{yy} dy + \varepsilon\int_\bbr \frac{\phi_y}{v^2}\psi_y\psi_{yy} dy + \varepsilon\int_\bbr \frac{\bar{v}_y}{v^2}\psi_y\psi_{yy} dy - \int_\bbr (Q_{3y} - F_{1y} - F_{2y}) \psi_{yy} dy,
		\end{aligned}
	\end{align*}
	where we have used the fact that
	$$\varepsilon\left(\frac{u_y}{v} - \frac{\bar{u}_y}{\bar{v}}\right)_y (-\psi_{yy}) = -\varepsilon\frac{\psi_{yy}^2}{v} + \varepsilon\frac{\phi_y}{v^2}\psi_y\psi_{yy} + \varepsilon \frac{\bar{v}_y}{v^2}\psi_y\psi_{yy} - \varepsilon \left(\bar{u}_y\left(\frac{1}{v} - \frac{1}{\bar{v}}\right)\right)_y \psi_{yy}.$$
We now estimate the right-hand side terms respectively. 
By Young's inequality and Lemma \ref{lemma-shock}, we can infer the following estimates
\begin{align*}
	&\int_\bbr (p(v) -p(\bar{v}))_y\psi_{yy} dy \leq \frac{1}{8}\varepsilon \left\|\frac{\psi_{yy}}{\sqrt{v}}\right\|^2 + C \varepsilon^{-1}\|\phi_y\|^2 + C\varepsilon^{-1} \int_\bbr |\bar{v}_y\phi|^2 dy \\
	&\leq \frac{1}{8}\varepsilon \left\|\frac{\psi_{yy}}{\sqrt{v}}\right\|^2 + C \varepsilon^{-1}\|\phi_y\|^2 + C\delta^4\varepsilon^{-3} \|\Phi_y\|^2.
\end{align*}
By H\"{o}lder's inequality, Sobolev's inequality and Young's inequality, one has
\begin{align*}
	\varepsilon\int_\bbr \frac{\phi_y}{v^2}\psi_y\psi_{yy} dy \leq C \varepsilon \|\psi_{yy}\| \|\psi_y\|_{L^\infty}\|\phi_y\| 
	\leq C \varepsilon \|\psi_{yy}\|^{\frac{3}{2}} \|\psi_y\|^{\frac{1}{2}} \|\phi_y\|
	\leq \frac{1}{8}\varepsilon \left\|\frac{\psi_{yy}}{\sqrt{v}}\right\|^2 + C\varepsilon \|\phi_y\|^4 \|\psi_y\|^2.
\end{align*}
By Young's inequality, H\"{o}lder's inequality and Lemma \ref{lemma-shock}, one get
\begin{align*}
	\varepsilon \int_\bbr \frac{\bar{v}_y}{v^2}\psi_y\psi_{yy} dy
	\leq \frac{1}{8}\varepsilon \left\|\frac{\psi_{yy}}{\sqrt{v}}\right\|^2 + \varepsilon \int_\bbr |\bar{v}_y \psi_y|^2 dy \leq \frac{1}{8}\varepsilon \left\|\frac{\psi_{yy}}{\sqrt{v}}\right\|^2 + C\delta^4\varepsilon^{-1} \|\psi_y\|^2,
\end{align*}
and
\begin{align*}
	&- \int_{\bbr} Q_{3y}\psi_{yy} dy = - \varepsilon \int_\bbr \left(\bar{u}_y\left(\frac{1}{v} - \frac{1}{\bar{v}}\right)\right)_y \psi_{yy} dy \\
	&\leq \frac{1}{8}\varepsilon \left\|\frac{\psi_{yy}}{\sqrt{v}}\right\|^2 + C \varepsilon \int_\bbr \bar{u}_{yy}^2\phi^2 dy + C \varepsilon \int_\bbr \bar{u}_y^2\phi_y^2 dy + C \varepsilon \int_\bbr |\bar{u}_y\bar{v}_y\phi|^2 dy \\
	&\leq \frac{1}{8}\varepsilon \left\|\frac{\psi_{yy}}{\sqrt{v}}\right\|^2 + C \delta^6\varepsilon^{-3} \|\Phi_y\|^2 + C \delta^4\varepsilon^{-1} \|\phi_y\|^2.
\end{align*}
By Young's inequality, Lemma \ref{lemma-shock} and Lemma \ref{lem:shock-interact}, we obtain
\begin{align*}
	\int_\bbr F_{1y}\psi_{yy} dy
	\leq \frac{1}{8}\varepsilon \left\|\frac{\psi_{yy}}{\sqrt{v}}\right\|^2 + C \varepsilon^{-1} \|F_{1y}\|^2
	\leq \frac{1}{8}\varepsilon \left\|\frac{\psi_{yy}}{\sqrt{v}}\right\|^2 + C \frac{(\delta_l^3 + \delta_r^3)\delta_l^2\delta_r^2}{\varepsilon^2} \left( e^{ \frac{c \delta_l^2\tau}{\varepsilon}} + e^{ \frac{c \delta_r^2\tau}{\varepsilon}} \right),
\end{align*}
and
\begin{align*}
	\int_\bbr F_{2y}\psi_{yy} dy 
	\leq \frac{1}{8}\varepsilon \left\|\frac{\psi_{yy}}{\sqrt{v}}\right\|^2 + C \varepsilon^{-1} \|F_{2y}\|^2
	\leq \frac{1}{8}\varepsilon \left\|\frac{\psi_{yy}}{\sqrt{v}}\right\|^2 + C \frac{\delta_l^2\delta_r^2(\delta_l + \delta_r)}{\varepsilon^2} \left(e^{ \frac{c \delta_l^2\tau}{\varepsilon}} + e^{ \frac{c \delta_r^2\tau}{\varepsilon}}\right).
\end{align*}

Combining the above estimates, integrating over $[-t_0, \tau]$ for $\tau \in \left[-t_0, -A\frac{\varepsilon}{\delta^2}\right]$, and using the smallness of $\delta_l \sim \delta_r \sim \delta$, we can obtain the estimate \eqref{estimate psi 1before}.
\end{proof}

\bigskip

%
%

\section{Proof of Uniform {\it a priori} Estimates in Proposition \ref{priori estimate after}}
\setcounter{equation}{0}

This section is devoted to the proof of the uniform {\it a priori} estimates after the approximate collision time $\left[-A\frac{\varepsilon}{\delta^2}, +\infty\right)$ in Proposition \ref{priori estimate after}. The argument is based on the relative entropy framework and the $a$-contraction method with time-dependent shifts.

Firstly, from the {\it a priori} assumption in \eqref{priori assump after}, it is straightforward to see that $\frac{v_+}{4} \leq v \leq 4 v_+$ if $\delta$ is suitably small. Similarly, before proving Proposition \ref{priori estimate after}, we present some useful explicit expressions on the relative quantities associated to the pressure $p(v) = v^{-\gamma}$, and the potential energy $Q(v) := \frac{v^{1-\gamma}}{\gamma - 1}$ (i.e., $Q'(v) = -p(v)$) based on Taylor expansions, which are significant for the {\it a priori} estimates, especially for relative entropy estimates. The relative pressure $p(v|\tilde{v})$ and relative potential energy $Q(v|\tilde{v})$ can be defined as  
$$p(v|\tilde{v}) := p(v) -p(\tilde{v}) - p'(\tilde{v})(v - \tilde{v}), \quad Q(v|\tilde{v}) := Q(v) -Q(\tilde{v}) - Q'(\tilde{v})(v - \tilde{v}).$$

To lay the groundwork for the {\it a priori} estimates, we first present several useful lemmas.

\begin{lemma} \label{p}
	For shock wave strength $\delta_2=v_+-v^*$ suitably small, 
	we have
	$$\left|\frac{p(v^s)- p(v_+)}{v^s - v_+} - \frac{p(v^s)- p(v^*)}{v^s - v^*} - \frac{\delta_2p''(v^*)}{2}\right| \leq C\delta_2^2.$$
\end{lemma}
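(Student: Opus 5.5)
Since $v^s$ is monotone with $v^*<v^s<v_+$ and $\delta_2=v_+-v^*$ is small, the plan is to Taylor expand both divided differences around the fixed base point $v^*$. Write $w:=v^s-v^*\in(0,\delta_2)$. Then $v^s-v_+=w-\delta_2$, and all the points $v^*,v^s,v_+$ lie in an interval of length $\delta_2$ on which $p$ is smooth with bounded derivatives. That boundedness holds because $v^*$ stays in a compact subset of $(0,\infty)$, for instance $\frac{v_+}{2}\le v^*\le v_+$ as noted earlier.

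Each divided difference can be written exactly as an averaged derivative:
\[
\frac{p(v^s)-p(v_+)}{v^s-v_+}=\int_0^1 p'\big(v_++\theta(v^s-v_+)\big)\,d\theta,\qquad
\frac{p(v^s)-p(v^*)}{v^s-v^*}=\int_0^1 p'\big(v^*+\theta w\big)\,d\theta.
\]
I expand $p'$ about $v^*$ to second order, using $p'(v^*+h)=p'(v^*)+p''(v^*)h+O(h^2)$ with $|h|\le\delta_2$.

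For the first integral, the argument is $v^*+\big(\delta_2+\theta(w-\delta_2)\big)$. Averaging over $\theta$ gives $p'(v^*)+p''(v^*)\frac{\delta_2+w}{2}+O(\delta_2^2)$. For the second integral, the argument is $v^*+\theta w$, and averaging gives $p'(v^*)+p''(v^*)\frac{w}{2}+O(\delta_2^2)$. Subtracting, the $p'(v^*)$ terms and the $w$-dependent terms cancel, leaving
\[
\frac{p(v^s)-p(v_+)}{v^s-v_+}-\frac{p(v^s)-p(v^*)}{v^s-v^*}=\frac{\delta_2}{2}p''(v^*)+O(\delta_2^2).
\]
The constant in the remainder is $\sup|p'''|$ over $[v^*,v_+]$. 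It is uniform in $y$, since it depends only on $v_\pm$, which gives the claim.

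The only point needing care is that the bound must be uniform in the position $y-s_2\tau$, i.e. uniform in $w\in(0,\delta_2)$, including the endpoint limits where one divided difference degenerates to a derivative. The integral representations handle this automatically, because they are continuous up to $w=0$ and $w=\delta_2$. So I expect no real obstacle beyond bookkeeping of the $O(\delta_2^2)$ remainders.
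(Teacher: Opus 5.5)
Your argument is correct and is essentially the paper's Taylor-expansion proof, with the remainder in both controlled by $\sup_{[v^*,v_+]}|p'''|$. The only difference is bookkeeping. You write both divided differences as averages of $p'$ and expand about the single point $v^*$, so the $w$-dependent terms cancel at once. The paper instead expands $p(v^s)$ separately about $v_+$ and about $v^*$, and then reconciles the two base points via $|p'(v_+)-p'(v^*)-\delta_2p''(v^*)|\le C\delta_2^2$ and $|p''(v_+)-p''(v^*)|\,|v^s-v_+|\le C\delta_2^2$.
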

\begin{proof}
	For $p(v^s)$, using Taylor expansions at $v_+$ and $v^*$
	$$p(v^s) = p(v_+) + p'(v_+)(v^s - v_+) + \frac{p''(v_+)}{2!}(v^s - v_+)^2 + \frac{p'''(\zeta_+)}{3!}(v^s - v_+)^3,$$
	$$p(v^s) = p(v^*) + p'(v^*)(v^s - v^*) + \frac{p''(v^*)}{2!}(v^s - v^*)^2 + \frac{p'''(\zeta_m)}{3!}(v^s - v^*)^3,$$
	one has
	$$\left|\frac{p(v^s) - p(v_+)}{(v^s - v_+)} - p'(v_+) - \frac{p''(v_+)}{2}(v^s - v_+)\right| \leq C (v^s - v_+)^2 \leq C\delta_2^2,$$
	$$\left|\frac{p(v^s) - p(v^*)}{(v^s - v^*)} - p'(v^*) - \frac{p''(v^*)}{2}(v^s - v^*)\right| \leq C (v^s - v^*)^2 \leq C\delta_2^2,$$
	where $\zeta_+$ and $\zeta_m$ are between $v$ and $\bar{v}$.
	Therefore, we have
	\begin{align*}
		&\left|\frac{p(v^s) - p(v_+)}{(v^s - v_+)} - \frac{p(v^s) - p(v^*)}{(v^s - v^*)} - \frac{\delta_2p''(v^*)}{2}\right| \\
		&\leq \left|\frac{p(v^s) - p(v_+)}{(v^s - v_+)} - p'(v_+) - \frac{p''(v_+)}{2}(v^s - v_+)\right| + \left|\frac{p(v^s) - p(v^*)}{(v^s - v^*)} - p'(v^*) - \frac{p''(v^*)}{2}(v^s - v^*)\right| \\
		&\quad + \left|p'(v_+) - p'(v^*) - \delta_2p''(v^*)\right| + \left|\frac{p''(v_+)}{2}(v^s - v_+) - \frac{p''(v^*)}{2}(v^s - v^*) + \frac{\delta_2p''(v^*)}{2}\right| \leq C\delta_2^2,
	\end{align*}
	where we have used
	$$\left|p'(v_+) - p'(v^*) - \delta_2p''(v^*)\right| \leq C\delta_2^2,$$
	and
	\begin{align*}
		&\left|\frac{p''(v_+)}{2}(v^s - v_+) - \frac{p''(v^*)}{2}(v^s - v^*) + \frac{\delta_2p''(v^*)}{2}\right| \\
		&= \left|\frac{p''(v_+)}{2}(v^s - v_+) - \frac{p''(v^*)}{2}(v^s - v_+)\right| \leq C\delta_2^2.
	\end{align*}
	This completes the proof.
\end{proof}

The $a$-contraction method with time-dependent shifts depends on the following Poincar\'e-type inequality (see Lemma 2.9 in \cite{KV1} or Lemma 1.1 in \cite{KVW}).
\begin{lemma}\label{poincare}
	For any $f:[0, 1] \to \bbr$ satisfying $\int_0^1 \eta(1 - \eta)|f'|^2 d\eta < \infty$, 
	\begin{equation*}
		\int_0^1 \Big|f-\int_0^1 f d\eta \Big|^2 d\eta \leq \frac{1}{2}\int_0^1  \eta(1 - \eta)|f'|^2 d\eta.
	\end{equation*}	
\end{lemma}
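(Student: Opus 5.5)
The plan is a spectral argument. Change variables $x=2\eta-1\in[-1,1]$. Then $\eta(1-\eta)=\frac{1}{4}(1-x^2)$ and $\frac{d}{d\eta}=2\frac{d}{dx}$, so the inequality becomes
\[
\int_{-1}^{1}\Big|g-\tfrac12\int_{-1}^1 g\,dx\Big|^2dx\le \frac{1}{2}\int_{-1}^{1}(1-x^2)|g'|^2dx ,
\]
where $g(x)=f(\eta)$; the Jacobians on the two sides cancel. This is the Legendre setting. The operator $Lg=-\big((1-x^2)g'\big)'$ has the Legendre polynomials $P_n$ as eigenfunctions, with eigenvalues $n(n+1)$. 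The family $\{P_n\}$ is a complete orthogonal basis of $L^2(-1,1)$, and $P_0$ is constant. The first nonzero eigenvalue is therefore $2$, and this should give exactly the constant $\frac12$.

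\textbf{Step 1: expansion of $g$.} Expand $g=\sum_{n\ge0}c_nP_n$ in $L^2(-1,1)$, with $c_n=\int gP_n/\|P_n\|^2$. The mean of $g$ is the $c_0$ term, so by Parseval
\[
\Big\|g-\tfrac12\int g\Big\|^2=\sum_{n\ge1}c_n^2\|P_n\|^2 .
\]

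\textbf{Step 2: lower bound for the weighted Dirichlet form.} Integrating by parts twice and using that $1-x^2$ kills the boundary terms gives
\[
\int_{-1}^1(1-x^2)P_n'P_m'\,dx=n(n+1)\|P_n\|^2\delta_{nm}.
\]
Hence $\{P_n'\}_{n\ge1}$ is orthogonal in $L^2((1-x^2)dx)$. A second integration by parts gives
\[
\int_{-1}^1(1-x^2)g'P_n'\,dx=n(n+1)\int_{-1}^1 gP_n\,dx .
\]
Bessel's inequality in the weighted space then yields
\[
\int_{-1}^1(1-x^2)|g'|^2\,dx\ \ge\ \sum_{n\ge1}\frac{\big(n(n+1)c_n\|P_n\|^2\big)^2}{n(n+1)\|P_n\|^2}=\sum_{n\ge1}n(n+1)c_n^2\|P_n\|^2\ \ge\ 2\sum_{n\ge1}c_n^2\|P_n\|^2 .
\]
Combining this with Step 1 gives the claim. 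Note that only Bessel's inequality is needed here, not completeness of $\{P_n'\}$.

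\textbf{Step 3: justification of the integration by parts (main obstacle).} The only delicate point is the second identity in Step 2 for a general $g$ with finite weighted energy, since $g$ need not be bounded at $x=\pm1$. I would handle it by integrating over $[-1+h,1-h]$. The boundary term is $(1-x^2)P_n'g$, so it suffices to show that $(1-x^2)g(x)\to0$ along a sequence $h\to0$.

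\begin{itemize}
\item Cauchy--Schwarz gives $|g(x)-g(0)|\le \big(\int(1-s^2)|g'|^2\big)^{1/2}\big(\int_0^x\frac{ds}{1-s^2}\big)^{1/2}$.
\item Hence $|g(x)|\lesssim 1+\sqrt{|\log(1-|x|)|}$.
\item Therefore $(1-x^2)g(x)\to0$ at both endpoints.
\end{itemize}

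The same bound shows $g\in L^2$, so the left-hand side of the inequality is finite. Alternatively, one can prove the inequality for smooth $g$ and pass to the limit by density in the weighted $H^1$ space. I expect this boundary control to be the only nontrivial technical step; the rest is standard Legendre theory.
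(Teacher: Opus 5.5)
Your proof is correct and complete. The substitution $x=2\eta-1$ scales both sides by the same factor $\frac12$, so the constant is unchanged. The first nonzero eigenvalue $2$ of the Legendre operator then gives exactly $\frac12$. Your endpoint bound $|g(x)|\lesssim 1+\sqrt{|\log(1-|x|)|}$ does two jobs at once: it gives $g\in L^2(-1,1)$, so Parseval applies, and it kills the boundary term $(1-x^2)P_n'g$ in the second integration by parts. Bessel's inequality in $L^2((1-x^2)dx)$ finishes the argument. Note that the paper gives no proof of this lemma at all; it simply quotes it from \cite{KV1} (Lemma 2.9) and \cite{KVW} (Lemma 1.1). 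So there is no internal argument to compare with.

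For comparison, there is a shorter elementary route that needs neither completeness of the Legendre system nor any endpoint analysis. For $0<z<y<1$, Cauchy--Schwarz gives $|f(y)-f(z)|^2\le (y-z)\int_z^y|f'(s)|^2\,ds$. Integrate this over the triangle, exchange the order of integration (Tonelli), and use $\int_0^s\int_s^1(y-z)\,dy\,dz=\frac{s(1-s)}{2}$. This yields
\[
2\int_0^1\Big|f-\int_0^1 f\,d\eta\Big|^2d\eta=\int_0^1\!\!\int_0^1|f(y)-f(z)|^2\,dy\,dz\le\int_0^1 s(1-s)|f'(s)|^2\,ds .
\]
The same computation shows the double integral is finite, hence $f\in L^2$, so the first identity is justified.

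Both arguments show the constant is sharp, with equality for linear $f$. In your proof this is the mode $P_1$; in the elementary one it is the equality case of Cauchy--Schwarz. What your spectral approach buys in addition is the structural picture: $\frac12=1/\lambda_1$ with $\lambda_1=2$. It also gives improved constants on higher modes, for example $\frac16$ once the linear component is also removed. The elementary argument does not give these.
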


We now give the proof of Proposition \ref{priori estimate after} for the uniform $H^1$ {\it a priori} estimates.
First, in subsection 6.1, we establish the lower-order estimates, which are derived from the relative entropy method and the $a$-contraction method with time-dependent shifts, as presented in Lemmas \ref{lemma 0after}-\ref{lemma leading}. In subsection 6.2, the first-order derivative estimates are proved in Lemmas \ref{lemma phi1 after} and \ref{lemma psi1 after}.

\subsection{Relative entropy estimates}

We start with the lower-order $L^2$-relative entropy estimates.

\begin{lemma} \label{lemma 0after}
	Under the hypotheses of Proposition \ref{priori estimate after}, it holds that for $\tau \in \left[- A\frac{\varepsilon}{\delta^2}, \tau_1\right]$,
	\begin{align}
		\begin{aligned} \label{estimate 0after}
			&\|(\tilde{\phi},\tilde{\psi})(\tau)\|^2 + \delta_2 \int_{- A\frac{\varepsilon}{\delta^2}}^{\tau} |\dot{X}(\tau)|^2 d\tau + \int_{- A\frac{\varepsilon}{\delta^2}}^{\tau}(G_1(\tau) + G^R(\tau) + G^S(\tau) + D(\tau)) d\tau \\
			&\leq C \|(\tilde{\phi}_0,\tilde{\psi}_0)\|^2 + C \Big(\delta_1^2\delta_2\varepsilon +  \delta_1^2\delta_2^4\varepsilon^{-2}\kappa^3 +  \delta_1\varepsilon^2\kappa^{-1} + \delta_1^2\delta_2^2\kappa +   \delta_1^2\delta_2\varepsilon^3\kappa^{-2}  +  \delta_1^{\frac{1}{2}}\varepsilon^\frac{3}{2}\kappa^{-\frac{1}{2}}\Big) \\
			&\leq C \|(\tilde{\phi}_0,\tilde{\psi}_0)\|^2 + C \delta_1^{\frac{1}{2}}\varepsilon,
		\end{aligned}
	\end{align}
	where
	\begin{align*}
		&G_1 = \int_{\bbr} a^{-X}_y (\tilde{\psi} + s_2\tilde{\phi})^2 dy, 		\\
		&G^R = \int_{\bbr} u^r_y \tilde{\phi}^2 dy, \\
		&G^S = \int_{\bbr} (v^s)^{-X}_y \tilde{\psi}^2 dy, \\
		&D = \varepsilon\int_{\bbr} |\tilde{\psi}_y|^2 dy.
	\end{align*}
	It is worth noting here that we take $\kappa=\varepsilon$.
\end{lemma}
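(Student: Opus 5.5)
We will run the weighted relative entropy method of Kang--Vasseur--Wang with the weight $a^{-X}$ from \eqref{weight} and the shift $X$ from \eqref{X}. Set $\eta(U|\tilde U)=\frac12(u-\tilde u)^2+Q(v|\tilde v)$ and compute $\frac{d}{d\tau}\int_{\bbr}a^{-X}\eta(U|\tilde U)\,dy$ using \eqref{per after} and \eqref{approxi wave after equ}. This yields the standard decomposition
\[
\frac{d}{d\tau}\int a^{-X}\eta\,dy=\dot X(\tau)\,\mathbf Y(\tau)+\mathcal J^{bad}(\tau)-\mathcal J^{good}(\tau),
\]
where $\mathbf Y$ collects the terms multiplied by $\dot X$. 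The most dangerous part of $\mathbf Y$ is precisely the quantity defining $\dot X$ in \eqref{X}, so $\dot X\mathbf Y\le -\frac{\delta_2}{M}|\dot X|^2+\dots$. The good terms are: $G_1$, coming from $-\sigma a_y\eta$ plus the hyperbolic flux; $G^R$, coming from $u^r_y\,p(v|\tilde v)$, which is positive since $u^r_y>0$ and $p(v|\tilde v)\sim\tilde\phi^2$; $G^S$, from the shock compression; and the diffusion $D$. Since $1<a<1+\lambda$, the weighted entropy is equivalent to $\|(\tilde\phi,\tilde\psi)\|^2$.

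The core of the argument is the shock part, handled by the $a$-contraction mechanism. Near the shock, pass to the variable $w=p(v)-p(\tilde v)$ (or $\tilde\psi+s_2\tilde\phi$, following \cite{KVW}). Then change variables to $\eta=(v^s-v^*)/\delta_2\in[0,1]$, which turns the shock-localized quadratic form into
\[
-\frac{\delta_2}{2M}|\dot X|^2+\text{(bad)}-\text{(good)}\lesssim -C\lambda\delta_2\ldots
\]
This form is closed by the Poincaré inequality of Lemma \ref{poincare}, using the expansion of Lemma \ref{p} with constant $\alpha^*$ and the choice $M=\frac{5\alpha^*}{4(s^*)^2}$. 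The smallness $\delta_2\ll\lambda\le C\sqrt{\delta_2}$ ensures that the weight term $G_1$ dominates the cubic and truncation errors. The a priori assumption \eqref{priori assump after}, together with $\varepsilon^{1/2}\|\cdot\|_{L^\infty}$ bounds via Gagliardo--Nirenberg, controls the remaining cubic terms such as $\int|\tilde\phi|^3|(v^s)_y|$ and $\varepsilon\int \tilde\phi_y\tilde\psi_y$-type terms, after first absorbing the corresponding parts into $D$ and $G^S$.

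The error terms come from $F_3$, $F_4$, the rarefaction viscosity $\varepsilon(u^r_y/v^r)_y$, and the shock--rarefaction interaction. We bound them in $L^1$ or $L^2$ by Lemma \ref{rare-shock interact} and Lemma \ref{lemma-rare}. An interaction error $\int|F_4||\tilde\psi_y|$ is split by Young's inequality into $\frac18D$ plus $\varepsilon^{-1}\|F_4\|^2$, and each such term is integrated in time against the exponentials in Lemma \ref{rare-shock interact}. Integrating $e^{-C\delta_2(\tau+A\varepsilon/\delta^2)/\varepsilon}$ produces a factor $\varepsilon/\delta_2$, and $e^{-C(\tau+\cdot)/\kappa}$ produces a factor $\kappa$. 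These give the terms $\delta_1^2\delta_2\varepsilon$, $\delta_1^2\delta_2^4\varepsilon^{-2}\kappa^3$, $\delta_1^2\delta_2^2\kappa$ and $\delta_1^2\delta_2\varepsilon^3\kappa^{-2}$.

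The pure rarefaction viscous error is treated as in \cite{KVW}. Write $\varepsilon\int(u^r_y/v^r)_y\tilde\psi=-\varepsilon\int \frac{u^r_y}{v^r}\tilde\psi_y$ and estimate it by $\varepsilon\|u^r_y\|\,\|\tilde\psi_y\|$, or via $\|u^r_{yy}\|_{L^1}\|\tilde\psi\|_{L^\infty}$ together with the decay $\|(u^r_y,u^r_{yy})\|_{L^p}\lesssim\min(\cdot)$ from Lemma \ref{lemma-rare}. Interpolating between the two regimes over $\tau+A\varepsilon/\delta^2\lessgtr\kappa$ yields $\delta_1\varepsilon^2\kappa^{-1}$ and $\delta_1^{1/2}\varepsilon^{3/2}\kappa^{-1/2}$. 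With $\kappa=\varepsilon$ all these contributions are at most $C\delta_1^{1/2}\varepsilon$.

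The main obstacle is the shock-localized quadratic form. Its sign closure via Lemma \ref{poincare} must be arranged uniformly in $\varepsilon$, while the coupling terms $u^r_y$, $(v^s)_y$ appearing inside $\mathbf Y$ and in the relative flux do not destroy the coercive constant. To keep that constant intact, I will first isolate the leading-order part of these coupling terms.
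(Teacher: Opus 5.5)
There is a genuine gap in the central shock step. Your overall architecture matches the paper: weighted relative entropy with $a^{-X}$, the shift \eqref{X}, a Poincar\'e closure with $M=\frac{5\alpha^*}{4(s^*)^2}$, and interaction bounds for the errors. But the shock step is set up on the wrong variable and, as you describe it, does not close.

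The system here is kept in the original variables $(v,u)$. So the only dissipation in the relative entropy identity is $\mathbf D=\varepsilon\int_{\bbr}\frac{a^{-X}}{v}|\tilde\psi_y|^2dy$. The shift \eqref{X} gives $\dot X=2Ms^*\int_0^1\omega\,d\eta+O(\lambda+\delta_2)\int_0^1|\omega|\,d\eta$ with $\omega=\tilde\psi(\tau,\cdot+X(\tau))\circ\eta^{-1}$. Thus $|\dot X|^2$ controls the mean of $\tilde\psi$. Through the viscous shock ODE and Lemma \ref{p}, $\mathbf D$ controls $\alpha^*\delta_2(1-o(1))\int_0^1\eta(1-\eta)|\omega_\eta|^2d\eta$. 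Lemma \ref{poincare} can therefore only be run on $\tilde\psi$.

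For $w=p(v)-p(\tilde v)$ or $w=\tilde\psi+s_2\tilde\phi$, the weighted Dirichlet term involves $\tilde\phi_y$, and this $L^2$ identity has no dissipation for $\tilde\phi_y$ at all. The choice $p(v)-p(\tilde v)$ in \cite{KVW} belongs to the effective-velocity formulation, where the parabolic term acts on $v$. It does not transfer to the present $(v,u)$ setting with the shift \eqref{X} and the quantities $G_1,G^S,D$ of the statement.

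Relatedly, $G^S$ is not produced by shock compression. Since $(u^s)_y=-s_2(v^s)_y<0$, the term $-\int a^{-X}(u^s)^{-X}_y\,p(v|\tilde v)\,dy=s_2\int a^{-X}(v^s)^{-X}_y\,p(v|\tilde v)\,dy\approx\frac{s_2}{2}\int a^{-X}p''(\tilde v)(v^s)^{-X}_y\tilde\phi^2dy$ is the principal \emph{bad} term. It is quadratic in $\tilde\phi$, not in the Poincar\'e variable.

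The missing idea is to substitute $s_2\tilde\phi=(\tilde\psi+s_2\tilde\phi)-\tilde\psi$. This yields $\mathbf B_4=\frac{1}{2s_2}\int a^{-X}p''(\tilde v)(v^s)^{-X}_y\tilde\psi^2dy\le\alpha^*\delta_2(1+o(1))\int_0^1\omega^2d\eta$. The remaining pieces $\mathbf B_5,\mathbf B_6$ contain $\tilde\psi+s_2\tilde\phi$ and are absorbed by $G_1$ (and $G^S$), because $(v^s)^{-X}_y=\frac{\delta_2}{\lambda}a^{-X}_y$ and $\delta_2\ll\lambda$. Regrouping the flux terms also gives the near cancellation $\mathbf B_1-\mathbf G_2=\int\frac{|p'(\tilde v)|-s_2^2}{2s_2}a^{-X}_y\tilde\psi^2dy=O((\delta_1+\delta_2)\lambda)\int_0^1\omega^2d\eta$.

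With these, the shock-localized form becomes $-M(s^*)^2\delta_2(\int_0^1\omega\,d\eta)^2+\frac98\alpha^*\delta_2\int_0^1\omega^2d\eta-\frac58\alpha^*\delta_2\int_0^1\eta(1-\eta)|\omega_\eta|^2d\eta$. Lemma \ref{poincare} and $M=\frac{5\alpha^*}{4(s^*)^2}$ reduce it to $-\frac18\alpha^*\delta_2\int_0^1\omega^2d\eta=-\frac{\alpha^*}{8}G^S$, with no factor $\lambda$. This is where $G^S$ comes from, and it is then needed to absorb most of the remaining bad terms.

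Once this step is repaired, the rest of your plan is compatible with the paper. That includes the flux regrouping into $G_1$, the cubic terms via \eqref{priori assump after}, and the $F_3$, $F_4$ and rarefaction error bounds, with the latter using $\|\tilde\psi\|_{L^\infty}$ against $L^1$ norms for large times.
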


Set $a^{-X} :=a(y - s_2\tau -X(\tau))$. By virtue of the perturbation equation \eqref{per after}, we obtain the following lemma.
\begin{lemma} \label{lemma 0Pafter}
	Let $a$ be the weighted function defined by \eqref{weight}, it holds
	\begin{align}
		\begin{aligned} \label{estimate 0Pafter}
			&\frac{d}{d\tau}\int_{\bbr} a^{-X}\left(Q(v|\tilde{v}) + \frac{1}{2}\tilde{\psi}^2\right)dy + \mathbf{G}_1(\tau) + \mathbf{G}_2(\tau) + \mathbf{G}^R(\tau) + \mathbf{D}(\tau) \\
			&= \dot{X}(\tau)Y(\tau) + \sum_{i=1}^{11}\mathbf{B}_i(\tau),
		\end{aligned}
	\end{align}
	where
	\begin{align}
		\begin{aligned} \label{Y}
			Y(\tau) = \int_{\bbr}\left(- a^{-X}(v^s)_y^{-X}p'(\tilde{v})\tilde{\phi} + a^{-X} (u^s)_y^{-X}\tilde{\psi} - a^{-X}_y\left(Q(v|\tilde{v}) + \frac{1}{2}\tilde{\psi}^2\right)\right)dy,
		\end{aligned}
	\end{align}
	\begin{align*}
		&\mathbf{G}_1 = - \frac{1}{2s_2}\int_{\bbr}a^{-X}_y p'(\tilde{v})(\tilde{\psi} + s_2\tilde{\phi})^2 dy, \\
		&\mathbf{G}_2 = s_2 \int_{\bbr} a^{-X}_y \frac{\tilde{\psi}^2}{2}dy, \\
		&\mathbf{G}^R = \int_{\bbr} a^{-X} u^r_y p(v|\tilde{v})dy, \\
		&\mathbf{D} = \varepsilon\int_{\bbr}\frac{a^{-X}}{v} |\tilde{\psi}_y|^2dy,	
	\end{align*}
	and the bad terms are given by
	\begin{align*}
		&\mathbf{B}_1 = - \frac{1}{2s_2}\int_{\bbr}a^{-X}_y p'(\tilde{v})\tilde{\psi}^2 dy, \\
		&\mathbf{B}_2 = \int_{\bbr}a^{-X}_y p(v|\tilde{v})\tilde{\psi} dy, \\
		&\mathbf{B}_3 = - s_2 \int_{\bbr}a^{-X}_y \left( Q(v|\tilde{v}) + \frac{p'(\tilde{v})}{2}\tilde{\phi}^2 \right) dy, \\
		&\mathbf{B}_4 = \frac{1}{2s_2}\int_{\bbr}a^{-X} p''(\tilde{v})(v^s)^{-X}_y\tilde{\psi}^2dy, \\
		&\mathbf{B}_5 = -\frac{1}{s_2}\int_{\bbr}a^{-X} p''(\tilde{v})(v^s)^{-X}_y\tilde{\psi}(\tilde{\psi} + s_2\tilde{\phi})dy, \\
		&\mathbf{B}_6 = \frac{1}{2s_2}\int_{\bbr}a^{-X} p''(\tilde{v})(v^s)^{-X}_y(\tilde{\psi} + s_2\tilde{\phi})^2dy, \\
		&\mathbf{B}_7 = s_2 \int_{\bbr} a^{-X} (v^s)^{-X}_y \left( p(v|\tilde{v}) - \frac{p''(\tilde{v})}{2}\tilde{\phi}^2 \right) dy, \\
		&\mathbf{B}_8 = - \varepsilon\int_{\bbr}a^{-X}_y \tilde{\psi} \frac{\tilde{\psi}_y}{v}dy, \\
		&\mathbf{B}_9 = - \varepsilon\int_{\bbr}a^{-X}_y \tilde{u}_y\tilde{\psi} \left(\frac{1}{v} - \frac{1}{\tilde{v}}\right)dy, \\
		&\mathbf{B}_{10} = - \varepsilon\int_{\bbr} a^{-X} \tilde{u}_y\tilde{\psi}_y \left(\frac{1}{v} - \frac{1}{\tilde{v}}\right)dy, \\
		&\mathbf{B}_{11} = - \int_{\bbr} (F_{3y} + F_{4y})a^{-X}\tilde{\psi} dy.
	\end{align*}
\end{lemma}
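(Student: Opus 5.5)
We derive \eqref{estimate 0Pafter} from the weighted relative entropy identity. We take the time derivative of $\int_{\bbr} a^{-X}\big(Q(v|\tilde v)+\frac12\tilde\psi^2\big)\,dy$ and use \eqref{per after}, together with the equations \eqref{approxi wave after equ} for $(\tilde v,\tilde u)$, to compute
\[
\partial_\tau Q(v|\tilde v) = -(p(v)-p(\tilde v))\tilde\phi_\tau - p(v|\tilde v)\tilde v_\tau .
\]
Here $\tilde v_\tau = u^r_y + (u^s)^{-X}_y - \dot X (v^s)^{-X}_y$ and $\tilde\phi_\tau = \tilde\psi_y + \dot X (v^s)^{-X}_y$. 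Since $a^{-X}=a(y-s_2\tau-X(\tau))$, we also have $\partial_\tau a^{-X} = -(s_2+\dot X)a^{-X}_y$. Collecting every term proportional to $\dot X$ produces exactly $\dot X\,Y(\tau)$:
\begin{itemize}
\item $-a^{-X}_y(Q+\frac12\tilde\psi^2)$ comes from differentiating the weight;
\item $-a^{-X}(v^s)^{-X}_y p'(\tilde v)\tilde\phi$ comes from $-(p(v)-p(\tilde v))\dot X(v^s)^{-X}_y$ combined with $+p(v|\tilde v)\dot X(v^s)^{-X}_y$;
\item $a^{-X}(u^s)^{-X}_y\tilde\psi$ comes from the momentum equation.
\end{itemize}

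For the $\dot X$-free part, we multiply the momentum equation by $a^{-X}\tilde\psi$ and integrate by parts in $y$. The flux term $\tilde\psi(p(v)-p(\tilde v))_y$ combines with $-(p(v)-p(\tilde v))\tilde\psi_y$ to give $a^{-X}_y(p(v)-p(\tilde v))\tilde\psi$. The weight-derivative term gives $-s_2 a^{-X}_y(Q(v|\tilde v)+\frac12\tilde\psi^2)$. The remaining piece is $-a^{-X}p(v|\tilde v)\big(u^r_y+(u^s)^{-X}_y\big)$: its $u^r_y$ part is $-\mathbf G^R$, and we write its shock part as $s_2 a^{-X}(v^s)^{-X}_y p(v|\tilde v)$, using $(u^s)_y=-s_2 v^s_y$.

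The viscous term is expanded as
\[
\varepsilon\Big(\frac{u_y}{v}-\frac{\tilde u_y}{\tilde v}\Big)=\varepsilon\frac{\tilde\psi_y}{v}+\varepsilon\tilde u_y\Big(\frac1v-\frac1{\tilde v}\Big).
\]
Integrating by parts against $a^{-X}\tilde\psi$ yields $-\mathbf D$, $\mathbf B_8$, $\mathbf B_9$ and $\mathbf B_{10}$. The error fluxes give $\mathbf B_{11}$ directly.

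The main step is to regroup the hyperbolic terms into the good terms $\mathbf G_1,\mathbf G_2$ plus $\mathbf B_1$–$\mathbf B_7$. First, in $a^{-X}_y(p(v)-p(\tilde v))\tilde\psi$ we write $p(v)-p(\tilde v)=p'(\tilde v)\tilde\phi+p(v|\tilde v)$, which produces $\mathbf B_2$. Second, we write $-s_2a^{-X}_yQ = -s_2a^{-X}_y\big(Q+\frac{p'}{2}\tilde\phi^2\big)+\frac{s_2}{2}a^{-X}_yp'(\tilde v)\tilde\phi^2$, which produces $\mathbf B_3$. Third, we complete the square:
\[
a^{-X}_yp'\tilde\phi\tilde\psi+\frac{s_2}{2}a^{-X}_yp'\tilde\phi^2=\frac{1}{2s_2}a^{-X}_yp'\big(\tilde\psi+s_2\tilde\phi\big)^2-\frac1{2s_2}a^{-X}_yp'\tilde\psi^2 .
\]
Moving to the left-hand side, this gives $\mathbf G_1$ and $\mathbf B_1$. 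The term $-\frac{s_2}{2}a^{-X}_y\tilde\psi^2$ becomes $\mathbf G_2$.

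Finally, for the shock term $s_2a^{-X}(v^s)^{-X}_yp(v|\tilde v)$, we split $p(v|\tilde v)=\frac{p''(\tilde v)}2\tilde\phi^2+\big(p(v|\tilde v)-\frac{p''}{2}\tilde\phi^2\big)$, which gives $\mathbf B_7$. We then expand
\[
s_2^2\tilde\phi^2=(\tilde\psi+s_2\tilde\phi)^2-2\tilde\psi(\tilde\psi+s_2\tilde\phi)+\tilde\psi^2 ,
\]
which yields precisely $\mathbf B_6+\mathbf B_5+\mathbf B_4$. The only delicate point is sign bookkeeping, in particular $p'<0$ and $a_y>0$. I expect that tracking the $\dot X$ contributions exactly into $Y$ is the likeliest place for errors, so I would check it term by term against \eqref{shock smooth shift}.
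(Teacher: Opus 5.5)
Your proposal is correct and is essentially the paper's own proof. Computing $\partial_\tau$ of the weighted relative entropy is the same as the paper's step of multiplying the two equations in \eqref{per after} by $-a^{-X}(p(v)-p(\tilde v))$ and $a^{-X}\tilde\psi$. The rest also checks out with the correct signs:
\begin{itemize}
\item your collection of the $\dot X$ terms into $Y$;
\item the viscous integration by parts producing $-\mathbf D+\mathbf B_8+\mathbf B_9+\mathbf B_{10}$;
\item the completion of the square producing $-\mathbf G_1+\mathbf B_1+\mathbf B_2+\mathbf B_3$;
\item the identity $s_2^2\tilde\phi^2=(\tilde\psi+s_2\tilde\phi)^2-2\tilde\psi(\tilde\psi+s_2\tilde\phi)+\tilde\psi^2$ producing $\mathbf B_4+\dots+\mathbf B_7$.
\end{itemize}
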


\begin{proof}
	Multiplying \eqref{per after}$_1$ by $-a^{-X}(p(v) - p(\tilde{v}))$, it holds that
	\begin{align}
		\begin{aligned} \label{phi2 after}
			&(a^{-X}Q(v|\tilde{v}))_\tau + s_2 a^{-X}_y Q(v|\tilde{v}) + \dot{X}(\tau)a^{-X}_y Q(v|\tilde{v}) + \dot{X}(\tau)a^{-X} (v^s)_y^{-X}p'(\tilde{v})\tilde{\phi} \\
			&+ a^{-X} \tilde{u}_y p(v|\tilde{v}) + (a^{-X} (p(v) -p(\tilde{v}))\tilde{\psi})_y - a^{-X}_y (p(v) -p(\tilde{v}))\tilde{\psi} - a^{-X} (p(v) -p(\tilde{v}))_y\tilde{\psi} \\
			& =0.
		\end{aligned}
	\end{align}	
	Then multiplying \eqref{per after}$_2$ by $a^{-X}\tilde{\psi}$ leads to
	\begin{align}
		\begin{aligned} \label{psi2 after}
			&\left(\frac{a^{-X}}{2}\tilde{\psi}^2\right)_\tau + s_2 a^{-X}_y \frac{\tilde{\psi}^2}{2} + \dot{X}(\tau)a^{-X}_y \frac{\tilde{\psi}^2}{2} + a^{-X}\tilde{\psi}(p(v)-p(\tilde{v}))_y - \dot{X}(\tau)a^{-X} (u^s)_y^{-X}\tilde{\psi} \\
			& = \varepsilon\left(a^{-X} \tilde{\psi} \left(\frac{u_y}{v} -\frac{\tilde{u}_y}{\tilde{v}}\right)\right)_y - \varepsilon a^{-X}_y \tilde{\psi} \frac{\tilde{\psi}_y}{v} - \varepsilon\frac{a^{-X}}{v} |\tilde{\psi}_y|^2 - \varepsilon a^{-X}_y \tilde{u}_y\tilde{\psi} \left(\frac{1}{v} - \frac{1}{\tilde{v}}\right) \\
			&\quad - \varepsilon a^{-X} \tilde{u}_y\tilde{\psi}_y \left(\frac{1}{v} - \frac{1}{\tilde{v}}\right) - (F_{3y} + F_{4y})a^{-X}\tilde{\psi}.
		\end{aligned}
	\end{align}	
	Now we add \eqref{phi2 after}-\eqref{psi2 after} together, then take the integration over  $\bbr$ to obtain
	\begin{align}
		\begin{aligned} \label{phi2psi2 after}
			&\frac{d}{d\tau} \int_{\bbr} a^{-X} \left(Q(v|\tilde{v}) + \frac{1}{2}\tilde{\psi}^2\right)dy + s_2 \int_{\bbr} a^{-X}_y Q(v|\tilde{v})dy + s_2 \int_{\bbr} a^{-X}_y \frac{\tilde{\psi}^2}{2}dy \\
			&\quad + \int_{\bbr} a^{-X} u^r_y p(v|\tilde{v})dy + \varepsilon \int_{\bbr}\frac{a^{-X}}{v} |\tilde{\psi}_y|^2dy\\
			& = \dot{X}(\tau)\int_{\bbr}\left(- a^{-X}(v^s)_y^{-X}p'(\tilde{v})\tilde{\phi} + a^{-X} (u^s)_y^{-X}\tilde{\psi} - a^{-X}_y\left(Q(v|\tilde{v}) + \frac{1}{2}\tilde{\psi}^2\right)\right)dy \\
			&\quad - \int_{\bbr} a^{-X} (u^s)^{-X}_y p(v|\tilde{v})dy + \int_{\bbr}a^{-X}_y (p(v) -p(\tilde{v}))\tilde{\psi} dy - \varepsilon \int_{\bbr}a^{-X}_y \tilde{\psi} \frac{\tilde{\psi}_y}{v}dy \\
			&\quad - \varepsilon\int_{\bbr}a^{-X}_y \tilde{u}_y\tilde{\psi} \left(\frac{1}{v} - \frac{1}{\tilde{v}}\right)dy - \varepsilon\int_{\bbr} a^{-X} \tilde{u}_y\tilde{\psi}_y \left(\frac{1}{v} - \frac{1}{\tilde{v}}\right)dy - \int_{\bbr} (F_{3y} + F_{4y})a^{-X}\tilde{\psi} dy \\
			& = \dot{X}(\tau)Y(\tau) - \int_{\bbr} a^{-X} (u^s)^{-X}_y p(v|\tilde{v})dy + \int_{\bbr}a^{-X}_y (p(v) -p(\tilde{v}))\tilde{\psi} dy + \sum_{i=8}^{11} \mathbf{B}_i.
		\end{aligned}
	\end{align}	
	By observation, the pressure terms can be reorganized as follows:
	\begin{align}
		\begin{aligned} \label{phi2psi2 after-1}
			&\int_{\bbr}a^{-X}_y (p(v) -p(\tilde{v}))\tilde{\psi} dy  - s_2 \int_{\bbr} a^{-X}_y Q(v|\tilde{v})dy \\
			&= \int_{\bbr}a^{-X}_y p'(\tilde{v})\tilde{\phi}\tilde{\psi} dy + \int_{\bbr}a^{-X}_y (p(v) -p(\tilde{v}) - p'(\tilde{v})\tilde{\phi})\tilde{\psi} dy + s_2 \int_{\bbr}a^{-X}_y \frac{p'(\tilde{v})}{2}\tilde{\phi}^2 dy \\
			&\quad - s_2 \int_{\bbr}a^{-X}_y \left( Q(v|\tilde{v}) + \frac{p'(\tilde{v})}{2}\tilde{\phi}^2 \right) dy \\
			&= \frac{1}{2s_2}\int_{\bbr}a^{-X}_y p'(\tilde{v})(\tilde{\psi} + s_2\tilde{\phi})^2 dy - \frac{1}{2s_2}\int_{\bbr}a^{-X}_y p'(\tilde{v})\tilde{\psi}^2 dy + \int_{\bbr}a^{-X}_y p(v|\tilde{v})\tilde{\psi} dy \\
			&\quad - s_2 \int_{\bbr}a^{-X}_y \left( Q(v|\tilde{v}) + \frac{p'(\tilde{v})}{2}\tilde{\phi}^2 \right) dy \\
			&= - \mathbf{G}_1 + \mathbf{B}_1 + \mathbf{B}_2 + \mathbf{B}_3,
		\end{aligned}
	\end{align}
	and
	\begin{align}
		\begin{aligned} \label{phi2psi2 after-2}
			&- \int_{\bbr} a^{-X} (u^s)^{-X}_y p(v|\tilde{v}) dy
			= s_2 \int_{\bbr} a^{-X} (v^s)^{-X}_y p(v|\tilde{v})dy \\
			&= s_2 \int_{\bbr} a^{-X} (v^s)^{-X}_y \frac{p''(\tilde{v})}{2}\tilde{\phi}^2 dy + s_2 \int_{\bbr} a^{-X} (v^s)^{-X}_y \left( p(v|\tilde{v}) - \frac{p''(\tilde{v})}{2}\tilde{\phi}^2 \right) dy \\
			&= \frac{1}{2s_2}\int_{\bbr}a^{-X} p''(\tilde{v})(v^s)^{-X}_y\tilde{\psi}^2dy - \frac{1}{s_2}\int_{\bbr}a^{-X} p''(\tilde{v})(v^s)^{-X}_y\tilde{\psi}(\tilde{\psi} + s_2\tilde{\phi})dy \\
			&\quad + \frac{1}{2s_2}\int_{\bbr}a^{-X} p''(\tilde{v})(v^s)^{-X}_y(\tilde{\psi} + s_2\tilde{\phi})^2dy + s_2 \int_{\bbr} a^{-X} (v^s)^{-X}_y \left( p(v|\tilde{v}) - \frac{p''(\tilde{v})}{2}\tilde{\phi}^2 \right) dy \\
			&= \mathbf{B}_4 + \mathbf{B}_5 + \mathbf{B}_6 + \mathbf{B}_7.
		\end{aligned}
	\end{align}
	The combination of \eqref{phi2psi2 after}, \eqref{phi2psi2 after-1} and \eqref{phi2psi2 after-2} finishes the proof of Lemma \ref{lemma 0Pafter}.	
\end{proof}

To establish the $a$-contraction property, we introduce the following decomposition for $Y(\tau)$ from Lemma \ref{lemma 0Pafter}:
$$Y(\tau) = \sum_{i=1}^{5} Y_i(\tau),$$
where
\begin{align*}
	&Y_1(\tau) = \int_{\bbr} a^{-X}(u^s)^{-X}_y\tilde{\psi} dy, \\
	&Y_2(\tau) = \int_{\bbr} \frac{a^{-X}}{s_2}(v^s)^{-X}_y p'((v^s)^{-X})\tilde{\psi} dy, \\
	&Y_3(\tau) = \int_{\bbr} \frac{a^{-X}}{s_2}(v^s)^{-X}_y(p'(\tilde{v}) - p'((v^s)^{-X}))\tilde{\psi} dy, \\
	&Y_4(\tau) = - \int_{\bbr} \frac{a^{-X}}{s_2}(v^s)^{-X}_y p'(\tilde{v})(\tilde{\psi} + s_2\tilde{\phi}) dy, \\
	&Y_5(\tau) = - \int_{\bbr} a^{-X}_y \left(Q(v|\tilde{v}) + \frac{1}{2}\tilde{\psi}^2\right)dy.
\end{align*}
Observing that
\begin{equation} \label{XY}
	\dot{X}(\tau) = -\frac{M}{\delta_2}(Y_1(\tau) + Y_2(\tau)),
\end{equation}
we can therefore deduce
\begin{equation*}
	\dot{X}(\tau)Y(\tau) = -\frac{\delta_2}{M}|\dot{X}(\tau)|^2 + \dot{X}(\tau)\sum_{i=3}^{5} Y_i(\tau).
\end{equation*}

\subsubsection{Leading order estimates}

\begin{lemma} \label{lemma leading}
	There exists $C>0$ such that
	\begin{align}
		\begin{aligned} \label{estimate leading}
			&-\frac{\delta_2}{2M}|\dot{X}(\tau)|^2 + \mathbf{B}_1 + \mathbf{B}_3 - \mathbf{G}_2 - \frac{3}{4} \mathbf{D} \\
			&\leq - C_1 \int_{\bbr} (v^s)^{-X}_y\tilde{\psi}^2 dy + C \int_{\bbr} a^{-X}_y|v^r - v^*|\tilde{\psi}^2 dy.
		\end{aligned}
	\end{align}	
\end{lemma}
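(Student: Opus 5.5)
This is the standard Kang–Vasseur–Wang leading-order argument. Change variables to the shock profile coordinate
\[
\eta := \frac{v^* - (v^s)^{-X}(y-s_2\tau)}{\delta_2} \in (0,1)
\]
(up to orientation, since $(v^s)^{-X}$ increases from $v^*$ to $v_+$). This gives $d\eta = -\frac{(v^s)^{-X}_y}{\delta_2}\,dy$ and $a^{-X}_y = \frac{\lambda}{\delta_2}(v^s)^{-X}_y$. All the terms $\mathbf{B}_1$, $\mathbf{B}_3$, $\mathbf{G}_2$ and $Y_1+Y_2$ carry the weight $(v^s)^{-X}_y$ or $a^{-X}_y$, so they become integrals in $\eta$ of $\tilde\psi^2$ or $\tilde\psi$. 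The exception is $\mathbf{B}_3$, which involves $\tilde\phi^2$ through $Q(v|\tilde v)+\frac{p'(\tilde v)}{2}\tilde\phi^2 = O(|\tilde\phi|^3)$. That term is cubic and is bounded by $\|\tilde\phi\|_{L^\infty}\int a^{-X}_y\tilde\phi^2$. We then convert $\tilde\phi$ into $\tilde\psi$ through $\tilde\phi = \frac{1}{s_2}\big((\tilde\psi+s_2\tilde\phi)-\tilde\psi\big)$ and use the a priori smallness $\|\tilde\phi\|_{L^\infty}\le C\delta^{1/3}$, which follows from \eqref{priori assump after}. The $(\tilde\psi+s_2\tilde\phi)^2$ piece is absorbed into the good term $\mathbf{G}_1$ elsewhere, or we simply note it is a small multiple of $\mathbf{G}_1$; if needed, $\frac{1}{2}\mathbf{G}_1$ can be put on the left.

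\textbf{Step 1: approximate the coefficients.} Replace $p'(\tilde v)$ by $p'((v^s)^{-X})$, and that in turn by $-(s^*)^2$ plus an $O(\delta_2)$ correction. The difference $p'(\tilde v)-p'((v^s)^{-X})$ is $O(|v^r-v^*|)$. This is exactly the source of the error term $C\int a^{-X}_y|v^r-v^*|\tilde\psi^2\,dy$ on the right side of \eqref{estimate leading}. Define $w:=\tilde\psi$ as a function of $\eta$. Then
\[
\mathbf{B}_1 - \mathbf{G}_2 = \int a^{-X}_y\Big(-\frac{p'(\tilde v)}{2s_2} - \frac{s_2}{2}\Big)\tilde\psi^2\,dy .
\]
Using $s_2^2 = -\frac{p(v_+)-p(v^*)}{\delta_2}$ and Lemma \ref{p}, the bracket equals $\frac{1}{2s_2}\big(-p'((v^s)^{-X}) - s_2^2\big)$. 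This is of order $\delta_2\,\alpha^*(1-2\eta)$-type, up to $O(\delta_2^2)$. It is therefore small relative to the size needed to compete; it is actually only $O(\lambda\delta_2)\int w^2\,d\eta$, and the dominant positivity must come from the diffusion.

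\textbf{Step 2: the diffusion.} Bound $\mathbf{D}$ from below via the shock ODE. From \eqref{shock smooth2}, $\varepsilon\frac{(v^s)_y}{v^s}\approx \frac{1}{s_2}\big(p(v^s)-p(v^*)+s_2^2(v^s-v^*)\big)\approx \frac{\delta_2^2\alpha^*}{s_2}\eta(1-\eta)$. Hence
\[
\varepsilon\int |\tilde\psi_y|^2\,dy \gtrsim \frac{\delta_2\alpha^*}{s^*}\int_0^1 \eta(1-\eta)|\partial_\eta w|^2\,d\eta ,
\]
with relative error $O(\delta_2)$. On the other side, $\dot X$ is expressed through $Y_1+Y_2$, which equal $-\frac{\delta_2}{s_2}\int_0^1 a\big(s_2^2 - (s^*)^2+O(\delta_2)\big)w\,d\eta$. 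Correcting this with Lemma \ref{p}, one finds $\frac{\delta_2}{2M}|\dot X|^2 \approx \frac{\delta_2 M}{2}\cdot(\text{const})\big(\int_0^1 w\,d\eta\big)^2$. Here $M=\frac{5\alpha^*}{4(s^*)^2}$ is chosen exactly so that the numerics close.

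\textbf{Step 3: combine with the Poincaré inequality.} Apply Lemma \ref{poincare} to $w$:
\[
\int w^2 \le \Big(\int w\Big)^2 + \tfrac12\int \eta(1-\eta)|w'|^2 .
\]
Then the bad $\int w^2$ contributions, of size $\lambda\delta_2\cdot O(1)$ and $O(\delta_2^2)$, are dominated by $\frac34\mathbf{D}$ and $\frac{\delta_2}{2M}|\dot X|^2$. Because $\lambda\le C\sqrt{\delta_2}$ and $\delta_2\ll\lambda$, what remains is a negative multiple of $\delta_2\int_0^1 w^2\,d\eta = \int (v^s)^{-X}_y\tilde\psi^2\,dy$, giving the $-C_1$ term.

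The main obstacle is this exact constant bookkeeping in Step 3. One must verify that, with $M=\frac{5\alpha^*}{4(s^*)^2}$, the quadratic form in $\big(\int w,\ \|w-\bar w\|\big)$ is negative definite after absorbing errors. I would first carry it out with all $O(\delta_2)$ and $O(\lambda)$ corrections dropped, and then verify robustness under those perturbations.
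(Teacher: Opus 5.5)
There is a genuine gap, and it starts with the statement itself. The $\mathbf{B}_3$ in the displayed inequality is a misprint for $\mathbf{B}_4$. The paper's proof (Steps 2 and 4) bounds $-\frac{\delta_2}{2M}|\dot X|^2+\mathbf{B}_1+\mathbf{B}_4-\mathbf{G}_2-\frac34\mathbf{D}$. That is also the combination to which the lemma is applied in the proof of Lemma \ref{lemma 0after}, where $\mathbf{B}_3$ is estimated separately together with $\mathbf{B}_2$.

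As printed, the inequality is false. If $\tilde\psi\equiv0$ at some time, then $\dot X=\mathbf{B}_1=\mathbf{G}_2=\mathbf{D}=0$ and the right-hand side vanishes, whereas
\[
\mathbf{B}_3=-s_2\int_{\bbr}a^{-X}_y\int_{\tilde v}^{v}(v-s)\big(p'(\tilde v)-p'(s)\big)\,ds\,dy>0
\]
whenever $\tilde\phi>0$, by convexity of $p$. That is why you had to bring in $\mathbf{G}_1$, which the statement does not contain. Your bound $\|\tilde\phi\|_{L^\infty}\int a^{-X}_y\tilde\phi^2\,dy$ also leaves $C\delta^{1/3}\frac{\lambda}{\delta_2}\int(v^s)^{-X}_y\tilde\psi^2\,dy$. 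This is small against $C_1\int(v^s)^{-X}_y\tilde\psi^2\,dy$ only under the extra restriction $\lambda\ll\delta^{2/3}$. The paper instead treats $\mathbf{B}_2+\mathbf{B}_3$ via $\|\tilde\psi\|_{L^\infty}^2\le\|\tilde\psi\|\|\tilde\psi_y\|$ and $\mathbf{D}$, which avoids that restriction.

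The real content of the lemma is a term your proposal never touches:
\[
\mathbf{B}_4=\frac{1}{2s_2}\int_{\bbr}a^{-X}p''(\tilde v)(v^s)^{-X}_y\tilde\psi^2\,dy\le\alpha^*\delta_2\big(1+C(\delta_1+\delta_2+\lambda)\big)\int_0^1 w^2\,d\eta .
\]
It is of the same order $\delta_2$ as the diffusion and the shift terms. In your Step 3 the only bad terms are $O(\lambda\delta_2)$ and $O(\delta_2^2)$ multiples of $\int_0^1w^2\,d\eta$, which any $M>0$ absorbs. The fact that the specific value $M=\frac{5\alpha^*}{4(s^*)^2}$ played no role should have signalled that the leading term was missing.

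Two computations in your Step 2 must also be fixed before that comparison can be made.

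First, $Y_1=-s_2\delta_2\int_0^1 a w\,d\eta$ and $Y_2=\frac{\delta_2}{s_2}\int_0^1 a\,p'(v^s)w\,d\eta$ are each $\approx -s^*\delta_2\int_0^1 w\,d\eta$, so they add rather than cancel. Since $p'(v^s)\approx-(s^*)^2$, the bracket is $s_2^2+|p'(v^s)|\approx2(s^*)^2$, not $s_2^2-(s^*)^2=O(\delta_2)$. With your sign, $\dot X$ would only be $O(\delta_2)\int_0^1|w|\,d\eta$ and the shift would control nothing. The correct bound is $|\dot X-2Ms^*\int_0^1w\,d\eta|\le C(\lambda+\delta_2)\int_0^1|w|\,d\eta$. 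Then $\frac{p^2}{2}-q^2\le(p-q)^2$ gives $-\frac{\delta_2}{2M}|\dot X|^2\le -M(s^*)^2\delta_2\big(\int_0^1w\,d\eta\big)^2+C(\lambda+\delta_2)^2\delta_2\int_0^1w^2\,d\eta$.

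Second, the diffusion constant must come out as exactly $\alpha^*$; your extra factor $1/s^*$ is spurious, since $\alpha^*$ already contains it. By the profile equation and Lemma \ref{p}, $\frac{\varepsilon}{\eta(1-\eta)v^s}\frac{d\eta}{dy}=\alpha^*\delta_2+O(\delta_2^2)$. Hence $\mathbf{D}\ge\alpha^*\delta_2\big(1-C(\delta_1+\delta_2+\delta^{1/3})\big)\int_0^1\eta(1-\eta)|w_\eta|^2\,d\eta$.

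Lemma \ref{poincare} then turns $\frac34\mathbf{D}$ into essentially $\frac32\alpha^*\delta_2\big(\int_0^1w^2\,d\eta-(\int_0^1w\,d\eta)^2\big)$. This beats $\mathbf{B}_4\approx\alpha^*\delta_2\int_0^1w^2\,d\eta$ on the fluctuating part precisely because $\frac32>1$, so the sharp constant matters. With the margins $\frac98$ and $\frac58$ one is left with $-\frac18\alpha^*\delta_2\int_0^1w^2\,d\eta+\big(\frac54\alpha^*-M(s^*)^2\big)\delta_2\big(\int_0^1w\,d\eta\big)^2$. The choice $M=\frac{5\alpha^*}{4(s^*)^2}$ is exactly what removes the mean.

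Your handling of $\mathbf{B}_1-\mathbf{G}_2$ is correct and matches the paper. It splits into an $O(\lambda\delta_2)$ term plus the $p'(v^s)-p'(\tilde v)$ piece, which generates $C\int_{\bbr}a^{-X}_y|v^r-v^*|\tilde\psi^2\,dy$.
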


\begin{proof}
	First, we rewrite the above terms in terms of the new variables $\eta$:
	\begin{equation} \label{variable eta}
		\eta := \frac{v^s( y - s_2 \tau ) - v^*}{\delta_2}.
	\end{equation}
	By the change of variable $y \in \bbr \mapsto \eta \in (0, 1)$, it is easy to see from \eqref{weight} that
	\begin{equation} \label{variable etaP}
		\frac{d\eta}{dy} = \frac{v^s_y}{\delta_2} >0, \quad a = 1 + \lambda \eta, \quad a_y(y-s_2\tau) = \lambda \frac{d\eta}{dy} > 0, \quad |a - 1| \leq \lambda.
	\end{equation}
	In terms of the new variables, we will apply the Poincar\'e-type inequality to each perturbation
	$$
	\omega := \tilde{\psi}(\tau, y + X(\tau)) \circ \eta^{-1}.
	$$
	Consider the $O(1)$-constant:
	$$ s^* = \sqrt{-p'(v^*)}, \quad \alpha^* = \frac{p''(v^*)}{2s^*}. $$
	It is easy to check that
	\begin{equation} \label{speed}
		|s_2 - s^*| \leq C\delta_2,
	\end{equation}
	which consequently implies
	\begin{equation} \label{speed1}
		\begin{aligned}
			&|(s^*)^2 - |p'(v^s)| | \leq C\delta_2, \qquad\quad \left|\frac{1}{(s^*)^2} - \frac{1}{|p'(v^s)|} \right|\leq C\delta_2, \\
			&|(s^*)^2 - |p'(\tilde{v})| | \leq C(\delta_1 + \delta_2), \quad \left|\frac{1}{(s^*)^2} - \frac{1}{|p'(\tilde{v})|} \right|\leq C(\delta_1 + \delta_2).
		\end{aligned}	
	\end{equation}
\textbf{Step 1: Estimate on} $\frac{\delta_2}{2M}|\dot{X}(\tau)|^2$.
	In light of \eqref{XY}, our approach to estimating term $|\dot{X}(\tau)|^2$ involves first estimating $Y_1, Y_2$.
	
	By the change of variable $y \to y + X(\tau)$ and apply the change of variables for $\eta, \omega$ to observe that, we have
	$$ Y_1(\tau) = \int_{\bbr} a^{-X}(u^s)_y^{-X}\tilde{\psi} dy = -s_2 \int_{\bbr} a^{-X}(v^s)_y^{-X}\tilde{\psi} dy = -s_2\delta_2 \int_{0}^{1} a \omega d\eta. $$
	Noting that \eqref{variable etaP} and \eqref{speed}, we have
	\begin{equation} \label{Y1}
		\left|Y_1(\tau) + s^*\delta_2 \int_{0}^{1} \omega d\eta\right| \leq C\delta_2(\lambda+\delta_2) \int_{0}^{1} |\omega| d\eta.
	\end{equation}
	For
	$$ Y_2(\tau) = \int_{\bbr} \frac{a^{-X}}{s_2}(v^s)_y^{-X}p'((v^s)^{-X})\tilde{\psi} dy = \delta_2 \int_{0}^1 \frac{a}{s_2}p'(v^s) \omega d\eta, $$
	by \eqref{variable etaP}, \eqref{speed} and \eqref{speed1}, it holds
	\begin{equation} \label{Y2}
		\left|Y_2(\tau) + s^*\delta_2 \int_{0}^{1} \omega d\eta\right| \leq C\delta_2(\lambda+\delta_2) \int_{0}^{1} |\omega| d\eta.
	\end{equation}
	Therefore, by \eqref{XY}, \eqref{Y1} and \eqref{Y2}, we have
	$$ \left|\dot{X}(\tau) - 2M s^* \int_{0}^{1} \omega d\eta\right| \leq \left|\sum_{i=1}^{2} \frac{M}{\delta_2}\left(Y_i(\tau) + s^*\delta_2 \int_{0}^{1} \omega d\eta\right)\right| \leq C(\lambda+\delta_2) \int_{0}^{1} |\omega| d\eta, $$
	which implies
	$$ \left(\left|2M s^* \int_{0}^{1} \omega d\eta\right| - |\dot{X}(\tau)|\right)^2 \leq C(\lambda+\delta_2)^2 \int_{0}^{1} \omega^2 d\eta. $$
	Noting that the algebraic inequality $\frac{p^2}{2} - q^2 \leq (p-q)^2$ for all $p,q\geq 0$, we have
	$$ 2M^2 (s^*)^2 \left(\int_{0}^{1} \omega d\eta\right)^2 - |\dot{X}(\tau)|^2 \leq C(\lambda+\delta_2)^2 \int_{0}^{1} \omega^2 d\eta. $$
	Therefore,
	\begin{equation} \label{X2}
		- \frac{\delta_2}{2M}|\dot{X}(\tau)|^2 \leq -M (s^*)^2\delta_2 \left(\int_{0}^{1} \omega d\eta\right)^2 + C(\lambda+\delta_2)^2 \delta_2\int_{0}^{1} \omega^2 d\eta.
	\end{equation}
%
\textbf{Step 2: Change of variable for} $\mathbf{B}_1, \mathbf{B}_4$ and $\mathbf{G}_2$.
	By \eqref{variable etaP}, \eqref{speed} and \eqref{speed1}, we have
	\begin{equation} \label{B1}
		\begin{aligned}
			\mathbf{B}_1 - \mathbf{G}_2 &= - \frac{1}{2s_2}\int_{\bbr}a^{-X}_y p'(\tilde{v})\tilde{\psi}^2 dy - \frac{s_2}{2}\int_{\bbr}a^{-X}_y \tilde{\psi}^2 dy \\
			&= \int_{\bbr} \left( - \frac{p'(\tilde{v})}{2s_2} - \frac{s_2}{2} \right) a^{-X}_y \tilde{\psi}^2 dy \\
			&\leq C(\delta_1 + \delta_2) \int_{\bbr} a^{-X}_y \tilde{\psi}^2 dy \\
			&\leq C(\delta_1 + \delta_2)\lambda \int_{\bbr} \omega^2 d\eta,
		\end{aligned}
	\end{equation}
	and
	\begin{equation} \label{B4}
		\begin{aligned}
			\mathbf{B}_4 &= \frac{1}{2s_2}\int_{\bbr}a^{-X} p''(\tilde{v})(v^s)^{-X}_y\tilde{\psi}^2dy \\
			&= \frac{\delta_2}{2s_2}\int_0^1 (1+\lambda\eta) p''(\tilde{v})(\tau, y+X(\tau)) \omega^2 d\eta \\
			&\leq \delta_2 (1+\lambda)\int_0^1 \left(\frac{p''(v^*)}{2s^*} - \frac{p''(v^*)}{2s^*} + \frac{p''(\tilde{v})(\tau, y+X(\tau))}{2s_2}\right) \omega^2 d\eta \\
			&\leq \alpha^*\delta_2(1 + C(\delta_1 + \delta_2 + \lambda))\int_0^1 \omega^2 d\eta.
		\end{aligned}
	\end{equation}	
 \textbf{Step 3: Change of variable for} $\mathbf{D}$.
	First, it follows from \eqref{variable etaP} that
	$$ \mathbf{D} = \varepsilon \int_{\bbr} \frac{a^{-X}}{v} |\tilde{\psi}_y|^2 dy \geq \varepsilon \int_{\bbr} \frac{1}{v} |\tilde{\psi}_y|^2 dy = \varepsilon \int_0^1 |\omega_\eta|^2 \frac{1}{v} \frac{d\eta}{dy} d\eta. $$
	By \eqref{shock smooth2}, we have
	$$ \varepsilon\left( \frac{u^s_y}{v^s}\right)_y = s_2^2 v^s_y + p(v^s)_y, $$
	then integrate it over $(-\infty, y)$ to obtain
	$$ \varepsilon\frac{u^s_y}{v^s} = s_2^2(v^s - v^*) + p(v^s) - p(v^*), $$
	which together with $s_2^2 = - \frac{p(v_+) - p(v^*)}{v_+ - v^*}$ yields
	\begin{align*}
		&\varepsilon\delta_2 \frac{1}{v^s} \frac{d\eta}{dy} 
		= \varepsilon\frac{v^s_y}{v^s} = -\varepsilon \frac{u^s_y}{s_2v^s} \\
		&= -\frac{1}{s_2}(s_2^2(v^s - v^*) + p(v^s) - p(v^*)) \\
		&= -\frac{1}{s_2(v_+ - v^*)}((p(v^*)- p(v_+))(v^s - v^*) + (v_+ - v^*)(p(v^s) - p(v^*))) \\
		&= -\frac{1}{s_2(v_+ - v^*)}((p(v^s)- p(v_+))(v^s - v^*) + (v^s - v^*)(p(v^*)- p(v^s)) \\
		&\quad + (v^s - v^*)(p(v^s)- p(v^*)) + (v_+ - v^s)(p(v^s) - p(v^*))) \\
		&= -\frac{1}{s_2(v_+ - v^*)}((p(v^s)- p(v_+))(v^s - v^*) + (v_+ - v^s)(p(v^s) - p(v^*))).
	\end{align*}
	Since $\eta := \frac{v^s - v^*}{\delta_2}$ and $1 - \eta := \frac{v_+ - v^s}{\delta_2}$,
	$$
	\varepsilon\frac{1}{\eta(1-\eta)} \frac{1}{v^s} \frac{d\eta}{dy} \\
	= \frac{1}{s_2}\left(\frac{p(v^s)- p(v_+)}{v^s - v_+} - \frac{p(v^s)- p(v^*)}{v^s - v^*}\right).
	$$
	By the inequality in Lemma \ref{p}
	$$\left|\frac{p(v^s)- p(v_+)}{v^s - v_+} - \frac{p(v^s)- p(v^*)}{v^s - v^*} - \frac{\delta_2p''(v^*)}{2}\right| \leq C\delta_2^2,$$
	we have
	$$\left|\varepsilon\frac{1}{\eta(1-\eta)} \frac{1}{v^s} \frac{d\eta}{dy} - \frac{\delta_2p''(v^*)}{2s^*}\right| \leq C\delta_2^2.$$
	Thus, combining this with the {\it a priori} assumption \eqref{priori assump after}, we obtain
	\begin{align}
		\begin{aligned}	\label{D}
			\mathbf{D} &\geq \varepsilon\int_0^1 \eta(1-\eta) |\omega_\eta|^2 \frac{v^s}{v} \frac{1}{\eta(1-\eta)} \frac{1}{v^s} \frac{d\eta}{dy} d\eta \\
			&\geq (1-C(\delta_1 + \delta_2 + \delta^\frac{1}{3}))(\alpha^*\delta_2 - C\delta_2^2) \int_0^1 \eta (1-\eta) |\omega_\eta|^2 d\eta \\
			&\geq \alpha^*\delta_2(1-C(\delta_1 + \delta_2 + \delta^\frac{1}{3})) \int_0^1 \eta (1-\eta) |\omega_\eta|^2 d\eta.	
		\end{aligned}
	\end{align}
%
\textbf{Step 4: Conclusion}.
	By \eqref{X2}, \eqref{B1}, \eqref{B4} and \eqref{D} with the smallness of $\lambda, \delta_1, \delta_2, \delta$, one has
	\begin{align*}	
		&-\frac{\delta_2}{2M}|\dot{X}(\tau)|^2 + \mathbf{B}_1 + \mathbf{B}_4 - \mathbf{G}_2 -\frac{3}{4}\mathbf{D} \\
		&\leq -M (s^*)^2\delta_2 \left(\int_{0}^{1} \omega d\eta\right)^2 + \alpha^*\delta_2(1 + C(\delta_1 + \delta_2 + \lambda))\int_0^1 \omega^2 d\eta \\
		&\quad -\frac{3}{4}\alpha^*\delta_2(1-C(\delta_1 + \delta_2 + \delta^\frac{1}{3})) \int_0^1 \eta (1-\eta) |\omega_\eta|^2 d\eta + \frac{\lambda}{2s_2}\int_0^1 (p'(v^s) - p'(\tilde{v}))\omega^2 d\eta \\
		&\leq \alpha^*\delta_2\left(\frac{9}{8}\int_0^1 \omega^2 d\eta - \frac{5}{8}\alpha^*\delta_2 \int_0^1 \eta (1-\eta) |\omega_\eta|^2 d\eta\right) -M (s^*)^2\delta_2 \left(\int_{0}^{1} \omega d\eta\right)^2  \\
		&\quad  + \frac{\lambda}{2s_2}\int_0^1 (p'(v^s) - p'(\tilde{v}))\omega^2 d\eta.
	\end{align*}
	Using the Poincar\'e type inequality in Lemma \ref{poincare},
	$$ \int_0^1 \eta(1-\eta) |\omega_\eta|^2 d\eta \geq 2\int_{0}^{1}\left( \omega - \int_{0}^{1}\omega d\eta \right)^2 d\eta = 2\int_{0}^{1}\omega^2 d\eta - 2\left( \int_{0}^{1}\omega d\eta \right)^2, $$
	then it holds that
	\begin{align*}
		&-\frac{\delta_2}{2M}|\dot{X}(\tau)|^2 + \mathbf{B}_1 + \mathbf{B}_4 - \mathbf{G}_2 -\frac{3}{4}\mathbf{D} \\
		&\leq -\frac{1}{8}\alpha^*\delta_2\int_0^1 \omega^2 d\eta + \left(\frac{5}{4}\alpha^* - M (s^*)^2\right)\delta_2 \left(\int_{0}^{1} \omega d\eta\right)^2 + \frac{\lambda}{2s_2}\int_0^1 (p'(v^s) - p'(\tilde{v})) \omega^2 d\eta.
	\end{align*}
	Taking $M=\frac{5\alpha^*}{4(s^*)^2}$, we have
	\begin{align*}
		&-\frac{\delta_2}{2M}|\dot{X}(\tau)|^2 + \mathbf{B}_1 + \mathbf{B}_4 - \mathbf{G}_2 -\frac{3}{4}\mathbf{D} \\
		&\leq -\frac{1}{8}\alpha^*\delta_2\int_0^1 \omega^2 d\eta + \frac{\lambda}{2s_2}\int_0^1 (p'(v^s) - p'(\tilde{v})) \omega^2 d\eta.
	\end{align*}
	Returning to the original variable yields the desired estimate \eqref{estimate leading}, so the proof of Lemma \ref{lemma leading} is completed.
\end{proof}

\noindent
{\bf Proof of Lemma \ref{lemma 0after}.}
Now let us proceed to prove Lemma \ref{lemma 0after}. First, it follows from \eqref{estimate 0Pafter} and \eqref{XY} that
\begin{align*}
	&\frac{d}{d\tau}\int_{\bbr} a^{-X}\left( Q(v|\tilde{v}) + \frac{1}{2}\tilde{\psi}^2 \right) dy + \mathbf{G}_1(\tau) + \mathbf{G}^R(\tau) + \frac{1}{4}\mathbf{D}(\tau) \\
	&\leq -\frac{\delta_2}{2M}|\dot{X}(\tau)|^2 + \mathbf{B}_1 + \mathbf{B}_4 - \mathbf{G}_2 -\frac{3}{4}\mathbf{D} \\
	&\quad -\frac{\delta_2}{2M}|\dot{X}(\tau)|^2 + \dot{X}(\tau)\sum_{i=3}^{5} Y_i(\tau) + \mathbf{B}_2(\tau) + \mathbf{B}_3(\tau) +  \sum_{i=5}^{11}\mathbf{B}_i(\tau).
\end{align*}
By \eqref{estimate leading} and Young's inequality, we have
\begin{align}
	\begin{aligned} \label{need estimate 0after}
		&\frac{d}{d\tau}\int_{\bbr} a^{-X}\left( Q(v|\tilde{v}) + \frac{1}{2}\tilde{\psi}^2 \right) dy + \frac{\delta_2}{4M}|\dot{X}(\tau)|^2 + \mathbf{G}_1(\tau) + \mathbf{G}^R(\tau) + \mathbf{G}^S(\tau) + \frac{1}{4}\mathbf{D}(\tau) \\
		&\leq C \int_{\bbr} a^{-X}_y|v^r - v^*|\tilde{\psi}^2 dy +  C\frac{1}{\delta_2}\sum_{i=3}^{5} Y_i^2(\tau) + \mathbf{B}_2(\tau) + \mathbf{B}_3(\tau) +  \sum_{i=5}^{11}\mathbf{B}_i(\tau),
	\end{aligned}
\end{align}
where $\mathbf{G}^S(\tau) = C_2 \int_{\bbr} (v^s)^{-X}_y\tilde{\psi}^2 dy$.

Now we estimate the terms on the right-hand side of \eqref{need estimate 0after}.
First of all, by \eqref{weight}, \eqref{priori assump after}, Lemma \ref{rare-shock interact}, H\"{o}lder's inequality, interpolation inequality and Young's inequality, one has
\begin{align*}
	&C \int_{\bbr} a^{-X}_y|v^r - v^*|\tilde{\psi}^2 dy \\
	&\leq C\frac{\lambda}{\delta_2} \int_{\bbr} (v^s)^{-X}_y|v^r - v^*|\tilde{\psi}^2 dy \\
	&\leq C \frac{\lambda}{\delta_2} \|\tilde{\psi}\|_{L^4}^2 \|(v^s)^{-X}_y(v^r - v^*)\| \\
	&\leq C \frac{\lambda}{\delta_2} \|\tilde{\psi}\|^{\frac{3}{2}} \|\tilde{\psi}_y\|^{\frac{1}{2}} \|(v^s)^{-X}_y(v^r - v^*)\| \\
	&\leq \frac{1}{40}\mathbf{D} + C \left( \frac{\lambda}{\delta_2} \right)^{\frac{4}{3}} \varepsilon^{-\frac{1}{3}} \|\tilde{\psi}\|^2 \|(v^s)^{-X}_y(v^r - v^*)\|^{\frac{4}{3}} \\
	&\leq \frac{1}{40}\mathbf{D} + C \left( \delta_1^\frac{4}{3} \delta_2^\frac{2}{3} \lambda^\frac{4}{3} \varepsilon^{-1} e^{-\frac{C\delta_2}{\varepsilon}(\tau + A\frac{\varepsilon}{\delta^2})} +  \delta_1^\frac{4}{3} \delta_2^\frac{4}{3} \lambda^\frac{4}{3} \varepsilon^{-\frac{5}{3}} \kappa^{\frac{2}{3}} e^{-\frac{C}{\kappa}(\tau + A\frac{\varepsilon}{\delta^2})} \right) \|\tilde{\psi}\|^2.
\end{align*}
%
\textbf{Step 1: Estimates on the terms $Y_i$}.
From H\"{o}lder's inequality and Lemma \ref{lemma-shock}, it holds that
\begin{align*}
	\frac{C}{\delta_2}|Y_3(\tau)|^2 & =\frac{C}{\delta_2}  \left|\int_{\bbr} \frac{a^{-X}}{s_2}(v^s)_y^{-X}(p'(\tilde{v}) - p'((v^s)^{-X}))\tilde{\psi} dy\right|^2 \\
	&\leq C\frac{\delta_1^2}{\delta_2}\|(v^s)_y^{-X}\|_{L^1} \mathbf{G}^S \leq C\delta_1^2 \mathbf{G}^S \leq \frac{1}{40}\mathbf{G}^S.
\end{align*}
By \eqref{weight}, H\"{o}lder's inequality and Lemma \ref{lemma-shock}, it holds that
\begin{align*}
	\frac{C}{\delta_2}|Y_4(\tau)|^2& \leq C \frac{C}{\delta_2} \left| \int_{\bbr} (v^s)_y^{-X}|\tilde{\psi} + s_2\tilde{\phi}| dy \right|^2 \\
	&\leq C\frac{\delta_2}{\lambda^2} \left|\int_{\bbr} a^{-X}_y|\tilde{\psi} + s_2\tilde{\phi}| dy \right|^2 \\
	&\leq C\frac{\delta_2}{\lambda^2} \|a^{-X}_y\|_{L^1} \mathbf{G}_1 \leq C \frac{\delta_2}{\lambda} \mathbf{G}_1 \leq \frac{1}{40}\mathbf{G}_1.
\end{align*}
By \eqref{weight}, Taylor's formula, H\"{o}lder's inequality, Lemma \ref{lemma-shock} and {\it a priori} assumption \eqref{priori assump after}, we have
\begin{align*}
	\frac{C}{\delta_2}|Y_5(\tau)|^2& = \frac{C}{\delta_2} \left|\int_{\bbr}a^{-X}_y\left(Q(v|\tilde{v}) + \frac{1}{2}\tilde{\psi}^2\right)dy\right|^2 \\
	&\leq \frac{C}{\delta_2} \left|\int_{\bbr}a^{-X}_y(\tilde{\phi}^2 + \tilde{\psi}^2)dy \right|^2 \\
	&\leq \frac{C}{\delta_2} \left| \int_{\bbr}a^{-X}_y((\tilde{\psi} + s_2\tilde{\phi})^2 + \tilde{\psi}^2)dy \right|^2 \\
	&\leq \frac{C}{\delta_2} \|a^{-X}_y\|_{L^\infty} (\|\tilde{\psi}\|^2 + \|\tilde{\phi}\|^2) \mathbf{G}_1 + C\frac{\lambda^2}{\delta_2^3} \|(v^s)_y^{-X}\|_{L^\infty} \|\tilde{\psi}\|^2 \mathbf{G}^S \\
	&\leq C \frac{\lambda}{\varepsilon} (\|\tilde{\psi}\|^2 + \|\tilde{\phi}\|^2) \mathbf{G}_1 + C\frac{\lambda^2}{\delta_2\varepsilon} \|\tilde{\psi}\|^2 \mathbf{G}^S \leq \frac{1}{40}\mathbf{G}_1 + \frac{1}{40}\mathbf{G}^S.
\end{align*}
%
\textbf{Step 2: Estimates on the terms $\mathbf{B}_i \ (i=2,3,5,\cdots11)$}.
By \eqref{weight}, Taylor's formula, Cauchy's inequality, H\"{o}lder's inequality, interpolation inequality, Lemma \ref{lemma-shock} and {\it a priori} assumption \eqref{priori assump after}, we have
\begin{align*}
	\mathbf{B}_2(\tau) + \mathbf{B}_3(\tau) &= \int_{\bbr}a^{-X}_y p(v|\tilde{v})\tilde{\psi} dy - s_2 \int_{\bbr}a^{-X}_y \left( Q(v|\tilde{v}) + \frac{p'(\tilde{v})}{2}\tilde{\phi}^2 \right) dy \\
	&\leq C \int_{\bbr}a^{-X}_y(|\tilde{\phi}|^3 + \tilde{\phi}^2|\tilde{\psi}|)dy \\
	&\leq C \int_{\bbr}a^{-X}_y(|\tilde{\phi}|(\tilde{\psi} + s_2\tilde{\phi})^2 + |\tilde{\phi}|\tilde{\psi}^2)dy \\
	&\leq C \int_{\bbr}a^{-X}_y(|\tilde{\phi}|(\tilde{\psi} + s_2\tilde{\phi})^2 + |\tilde{\psi}|^3 + |\tilde{\psi}+s_2\tilde{\phi}|\tilde{\psi}^2)dy \\
	&\leq C \|\tilde{\phi}\|_{L^\infty}\mathbf{G}_1 + C\frac{\lambda}{\delta_2}\|\tilde{\psi}\|_{L^\infty}^2 \int_{\bbr} (v^s)^{-X}_y |\tilde{\psi}| dy + C\|\tilde{\psi}\|_{L^\infty}^2\int_{\bbr} a^{-X}_y |\tilde{\psi}+s_2\tilde{\phi}|dy \\
	&\leq C \|\tilde{\phi}\|_{L^\infty} \mathbf{G}_1 + C\frac{\lambda}{\delta_2}\|\tilde{\psi}\|\|\tilde{\psi}_y\| \|(v^s)_y^{-X}\|_{L^1}^{\frac{1}{2}} \sqrt{\mathbf{G}^S} + C \|\tilde{\psi}\|\|\tilde{\psi}_y\|\|a^{-X}_y\|_{L^1}^{\frac{1}{2}} \sqrt{\mathbf{G}_1} \\
	&\leq C \|\tilde{\phi}\|_{L^\infty} \mathbf{G}_1 + C\frac{\lambda}{\sqrt{\delta_2}}\|\tilde{\psi}\| \varepsilon^{-\frac{1}{2}} \sqrt{\mathbf{D}} \sqrt{\mathbf{G}^S} + C \sqrt{\lambda}\|\tilde{\psi}\| \varepsilon^{-\frac{1}{2}} \sqrt{\mathbf{D}}\sqrt{\mathbf{G}_1} \\
	&\leq C \|\tilde{\phi}\|_{L^\infty} \mathbf{G}_1 +  C \frac{\lambda\varepsilon^{-\frac{1}{2}}}{\sqrt{\delta_2}}\|\tilde{\psi}\|  (\mathbf{D} + \mathbf{G}^S) + C \sqrt{\lambda\varepsilon^{-1}} \|\tilde{\psi}\| (\mathbf{D} + \mathbf{G}_1) \\
	&\leq \frac{1}{40} \mathbf{G}_1 + \frac{1}{40} \mathbf{D} + \frac{1}{40} \mathbf{G}^S.
\end{align*}
From Young's inequality and \eqref{weight}, it follows that
\begin{align*}	
	\mathbf{B}_5(\tau)&\leq C \int_{\bbr} (v^s)^{-X}_y |\tilde{\psi}(\tilde{\psi} + s_2\tilde{\phi})| dy \\
	&\leq \frac{1}{40} \mathbf{G}^S + C  \int_{\bbr} (v^s)^{-X}_y (\tilde{\psi} + s_2\tilde{\phi})^2 dy\\
	&\leq \frac{1}{40} \mathbf{G}^S + C\frac{\delta_2}{\lambda} \int_{\bbr} a^{-X}_y (\tilde{\psi}+s_2\tilde{\phi})^2 dy \\
	&\leq \frac{1}{40} \mathbf{G}^S + \frac{1}{40} \mathbf{G}_1,
\end{align*}
and
\begin{align*}
	\mathbf{B}_6(\tau) \leq C  \int_{\bbr} (v^s)^{-X}_y (\tilde{\psi} + s_2\tilde{\phi})^2 dy \leq C\frac{\delta_2}{\lambda} \int_{\bbr} a^{-X}_y (\tilde{\psi}+s_2\tilde{\phi})^2 dy \leq \frac{1}{40} \mathbf{G}_1.
\end{align*}
By \eqref{weight}, Cauchy's inequality, H\"{o}lder's inequality and {\it a priori} assumption \eqref{priori assump after}, one has
\begin{align*}
	\mathbf{B}_7(\tau) &\leq C \int_{\bbr} (v^s)^{-X}_y |\tilde{\phi}|^3 dy \\
	&\leq C \int_{\bbr} (v^s)^{-X}_y |\tilde{\phi}|(|\tilde{\psi}+s_2\tilde{\phi}|^2 + |\tilde{\psi}|^2) dy \\
	&\leq C\frac{\delta_2}{\lambda} \|\tilde{\phi}\|_{L^\infty} \mathbf{G}_1 + C \|\tilde{\phi}\|_{L^\infty} \mathbf{G}^S \\
	&\leq \frac{1}{40} \mathbf{G}_1 + \frac{1}{40} \mathbf{G}^S.
\end{align*}
From Young's inequality, \eqref{weight} and Lemma \ref{lemma-shock}, it follows that
\begin{align*}
	\mathbf{B}_8(\tau) &\leq C \varepsilon \int_{\bbr} a^{-X}_y |\tilde{\psi}\tilde{\psi}_y| dy \\
	&\leq \frac{1}{40} \mathbf{D} + C \varepsilon \frac{\lambda^2}{\delta_2^2} \int_{\bbr} ((v^s)^{-X}_y)^2 \tilde{\psi}^2 dy \\
	&\leq \frac{1}{40} \mathbf{D} + C \lambda^2 \int_{\bbr} (v^s)^{-X}_y \tilde{\psi}^2 dy \\
	&\leq \frac{1}{40} \mathbf{D} + \frac{1}{40} \mathbf{G}^S.
\end{align*}
By \eqref{weight}, Cauchy's inequality, H\"{o}lder's inequality, Lemma \ref{lemma-rare} and Lemma \ref{lemma-shock}, one has
\begin{align*}
	\mathbf{B}_{9}(\tau) &\leq C \varepsilon\int_{\bbr} a^{-X}_y (v^s)^{-X}_y |\tilde{\psi}\tilde{\phi}|dy + C \varepsilon\int_{\bbr} a^{-X}_y u^r_y |\tilde{\psi}\tilde{\phi}|dy \\
	&\leq C \varepsilon\int_{\bbr} a^{-X}_y (v^s)^{-X}_y \tilde{\psi}^2 dy + C \varepsilon\int_{\bbr} a^{-X}_y (v^s)^{-X}_y (\tilde{\psi} + s_2\tilde{\phi})^2 dy \\
	&\quad + C \varepsilon\int_{\bbr} (a^{-X}_y)^2 \tilde{\psi}^2 dy + C \varepsilon\int_{\bbr} (u^r_y)^2 \tilde{\phi}^2 dy \\
	&\leq C \varepsilon\|a^{-X}_y\|_{L^\infty}\mathbf{G}^S + C \varepsilon\|(v^s)_y^{-X}\|_{L^\infty}\mathbf{G}_1 + C \varepsilon\frac{\lambda^2}{\delta_2^2}\|(v^s)_y^{-X}\|_{L^\infty}\mathbf{G}^S + C \varepsilon\|u^r_y\|_{L^\infty}\mathbf{G}^R \\
	&\leq C \lambda\delta_2\mathbf{G}^S + C \delta_2^2 \mathbf{G}_1 + C\lambda^2 \mathbf{G}^S + C\varepsilon\kappa^{-1}\delta_1\mathbf{G}^R \\
	&\leq \frac{1}{40} \mathbf{G}^S + \frac{1}{40} \mathbf{G}_1 + \frac{1}{40} \mathbf{G}^R,
\end{align*}
and
\begin{align*}
	\mathbf{B}_{10}(\tau) &\leq C \varepsilon\int_{\bbr} (v^s)^{-X}_y |\tilde{\psi}_y\tilde{\phi}|dy + C \varepsilon\int_{\bbr} u^r_y |\tilde{\psi}_y\tilde{\phi}|dy \\
	&\leq \frac{1}{40} \mathbf{D} + C \varepsilon\int_{\bbr} ((v^s)^{-X}_y)^2 (\tilde{\psi} + s_2\tilde{\phi})^2 dy + C \varepsilon\int_{\bbr} ((v^s)^{-X}_y)^2 \tilde{\psi}^2 dy + C \varepsilon\int_{\bbr} (u^r_y)^2 \tilde{\phi}^2 dy \\
	&\leq \frac{1}{40} \mathbf{D} + C\varepsilon \frac{\delta_2}{\lambda} \|(v^s)_y^{-X}\|_{L^\infty}\mathbf{G}_1 + C \varepsilon \|(v^s)_y^{-X}\|_{L^\infty}\mathbf{G}^S + C \varepsilon \|u^r_y\|_{L^\infty}\mathbf{G}^R \\
	&\leq \frac{1}{40} \mathbf{D} + C \frac{\delta_2^3}{\lambda} \mathbf{G}_1 + C \delta_2^2\mathbf{G}^S + C \varepsilon\kappa^{-1}\delta_1 \mathbf{G}^R \\
	&\leq \frac{1}{40} \mathbf{D} + \frac{1}{40} \mathbf{G}_1 + \frac{1}{40} \mathbf{G}^S+ \frac{1}{40} \mathbf{G}^R.
\end{align*}
By H\"{o}lder's inequality, interpolation inequality and Young's inequality, it holds that
\begin{align*}
	\mathbf{B}_{11}(\tau) &\leq C \int_{\bbr} |(F_{3y} + F_{4y})\tilde{\psi}|dy \\
	&\leq C \|\tilde{\psi}\|_{L^\infty}(\|F_{3y}\|_{L^1} + \|F_{4y}\|_{L^1}) \\
	&\leq C \|\tilde{\psi}\|^{\frac{1}{2}}\|\tilde{\psi}_y\|^{\frac{1}{2}}(\|F_{3y}\|_{L^1} + \|F_{4y}\|_{L^1}) \\
	&\leq \frac{1}{40} \mathbf{D} + C \varepsilon^{-\frac{1}{3}}\|\tilde{\psi}\|^{\frac{2}{3}} \left( \|F_{3y}\|_{L^1}^{\frac{4}{3}} + \|F_{4y}\|_{L^1}^{\frac{4}{3}} \right).
\end{align*}
Note that by Lemma \ref{lemma-rare} and Lemma \ref{rare-shock interact}, we have
\begin{align*}
	&\|F_{3y}\|_{L^1} \leq C \varepsilon \left( \|(v^s)^{-X}_y v^r_y\|_{L^1} + \|v^r_y\|^2 + \|u^r_{yy}\|_{L^1}\right) \\
	&\leq C \delta_1 \delta_2 \varepsilon\kappa^{-1} e^{-\frac{C\delta_2}{\varepsilon}\left(\tau + A\frac{\varepsilon}{\delta^2}\right)} + C \delta_1 \delta_2^2 e^{-\frac{C}{\kappa}\left(\tau + A\frac{\varepsilon}{\delta^2}\right)} + C\varepsilon \left(\|v^r_y\|^2 + \|u^r_{yy}\|_{L^1}\right),
\end{align*}
\begin{align*}
	&\|F_{4y}\|_{L^1} \leq C \left( \|(v^s)^{-X}_y(v^r - v^*)\|_{L^1} + \|v^r_y((v^s)^{-X} - v^*)\|_{L^1} \right) \\
	&\leq C (\delta_1 \varepsilon \kappa^{-1} + \delta_1 \delta_2) e^{-\frac{C\delta_2}{\varepsilon}\left(\tau + A\frac{\varepsilon}{\delta^2}\right)} + C (\delta_1 \delta_2 + \delta_1 \delta_2^2 \varepsilon^{-1} \kappa) e^{-\frac{C}{\kappa}\left(\tau + A\frac{\varepsilon}{\delta^2}\right)},
\end{align*}
and
\begin{align*}
	&\int_{-A\frac{\varepsilon}{\delta^2}}^{+\infty} \left(\|v^r_y\|^2 + \|u^r_{yy}\|_{L^1}\right)^\frac{4}{3} d\tau \\
	&\leq C \int_{-A\frac{\varepsilon}{\delta^2}}^{\delta_1^{-1}\kappa - A\frac{\varepsilon}{\delta^2}} (\delta_1\kappa^{-1})^\frac{4}{3} d\tau + C \int_{\delta_1^{-1}\kappa -A\frac{\varepsilon}{\delta^2}}^{+\infty} \left(\tau + A\frac{\varepsilon}{\delta^2}\right)^{-\frac{4}{3}} d\tau \leq C\delta_1^\frac{1}{3} \kappa^{-\frac{1}{3}}.
\end{align*}
%
%
%
\textbf{Step 3: Conclusion}.
Combining the above estimates and integrating over $\left[- A\frac{\varepsilon}{\delta^2}, \tau\right]$, by Young's inequality, we have
\begin{align*}
	&\int_{\bbr} \bigg(Q(v|\tilde{v}) + \frac{1}{2}\tilde{\psi}^2\bigg)(\tau)dy + \delta_2 \int_{- A\frac{\varepsilon}{\delta^2}}^{\tau}|\dot{X}(\tau)|^2 d\tau + \int_{- A\frac{\varepsilon}{\delta^2}}^{\tau}\left(\mathbf{G}_1(\tau) + \mathbf{G}^R(\tau) + \mathbf{G}^S(\tau) + \mathbf{D}(\tau)\right)d\tau \\
	&\leq C\int_{\bbr} \bigg(Q(v|\tilde{v}) + \frac{1}{2}\tilde{\psi}^2\bigg)\big(- A\frac{\varepsilon}{\delta^2}\big)dy + C \varepsilon^{-\frac{1}{3}} \sup_{\tau\in [- A\frac{\varepsilon}{\delta^2}, \tau_1]}\|\tilde{\psi}\|^{\frac{2}{3}} \int_{- A\frac{\varepsilon}{\delta^2}}^{\tau}\left( \|F_{3y}\|_{L^1}^{\frac{4}{3}} + \|F_{4y}\|_{L^1}^{\frac{4}{3}} \right) d\tau \\
	&\quad + C \int_{- A\frac{\varepsilon}{\delta^2}}^{\tau} \left( \delta_1^\frac{4}{3} \delta_2^\frac{2}{3} \lambda^\frac{4}{3} \varepsilon^{-1} e^{-\frac{C\delta_2}{\varepsilon}\left(\tau + A\frac{\varepsilon}{\delta^2}\right)} +  \delta_1^\frac{4}{3} \delta_2^\frac{4}{3} \lambda^\frac{4}{3} \varepsilon^{-\frac{5}{3}} \kappa^{\frac{2}{3}} e^{-\frac{C}{\kappa}\left(\tau + A\frac{\varepsilon}{\delta^2}\right)} \right)d\tau \sup_{\tau\in [- A\frac{\varepsilon}{\delta^2}, \tau_1]}\|\tilde{\psi}\|^2 \\
	&\leq C\int_{\bbr} \bigg(Q(v|\tilde{v}) + \frac{1}{2}\tilde{\psi}^2\bigg)\big(- A\frac{\varepsilon}{\delta^2}\big)dy + \frac{1}{4}\sup_{[- A\frac{\varepsilon}{\delta^2}, \tau_1]}\|\tilde{\psi}\|^2 \\
	&\quad + C \varepsilon^{-\frac{1}{2}}  \bigg( \int_{- A\frac{\varepsilon}{\delta^2}}^{\tau}\big( \|F_{3y}\|_{L^1}^{\frac{4}{3}} + \|F_{4y}\|_{L^1}^{\frac{4}{3}} \big) d\tau \bigg)^{\frac{3}{2}} \\
	&\leq C\int_{\bbr} \bigg(Q(v|\tilde{v}) + \frac{1}{2}\tilde{\psi}^2\bigg)\big(- A\frac{\varepsilon}{\delta^2}\big)dy + \frac{1}{4}\sup_{[- A\frac{\varepsilon}{\delta^2}, \tau_1]}\|\tilde{\psi}\|^2 + C \delta_1^2\delta_2^{-\frac{3}{2}}\varepsilon^3\kappa^{-2} + C \delta_1^2\delta_2^2\varepsilon^{-\frac{1}{2}}\kappa^{\frac{3}{2}}  \\
	&\quad + C \delta_1^2\delta_2^{\frac{1}{2}}\varepsilon + C \delta_1^2\delta_2^4\varepsilon^{-\frac{5}{2}}\kappa^{\frac{7}{2}} + C \delta_1^{\frac{1}{2}}\varepsilon^\frac{3}{2}\kappa^{-\frac{1}{2}},	
\end{align*}
and this implies the desired estimate \eqref{estimate 0after} in Lemma \ref{lemma 0after} together with
$$Q(v|\tilde{v}) \sim \tilde{\phi}^2,\quad \mathbf{G}_1 \sim G_1,\quad \mathbf{G}^R \sim G^R, \quad \mathbf{G}^S \sim G^S,\quad \mathbf{D} \sim D.$$

\subsection{First-order derivative estimates}

In this subsection, we derive the first-order derivative estimates.
\begin{lemma} \label{lemma phi1 after}
	Under the assumption of Proposition \ref{priori estimate after}, it holds that for $- A\frac{\varepsilon}{\delta^2} \leq \tau \leq \tau_1$,
	\begin{equation} \label{estimate phi1 after}
		\| \tilde{\phi}_y(\tau) \|^2 + \varepsilon^{-1}\int_{- A\frac{\varepsilon}{\delta^2}}^{\tau} D_1 d\tau  \leq C (\varepsilon^{-2}\| (\tilde{\phi}_0, \tilde{\psi}_0) \|^2 + \| \tilde{\phi}_{0y} \|^2) + C\delta_1^\frac{1}{2}\varepsilon^{-1},
	\end{equation}
	where $D_1 = \|\tilde{\phi}_y(\tau)\|^2$.
\end{lemma}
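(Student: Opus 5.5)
We will mirror the proof of Lemma \ref{lemma phi 1before}, now around the composite wave $(\tilde v,\tilde u)$ with the shift. The starting point is the effective-velocity identity
$$\varepsilon\Big(\frac{u_y}{v}\Big)_y=\varepsilon(\ln v)_{y\tau}=\varepsilon\Big(\frac{\tilde\phi_y}{v}\Big)_\tau+\varepsilon\Big(\frac{\tilde v_y}{v}\Big)_\tau .$$
Substituting this into \eqref{per after}$_2$, we multiply by $\tilde\phi_y/v$ and integrate over $\bbr$. This gives
$$\frac{\varepsilon}{2}\frac{d}{d\tau}\Big\|\frac{\tilde\phi_y}{v}\Big\|^2=\int\frac{\tilde\phi_y}{v}\big(p(v)-p(\tilde v)\big)_y+\int\frac{\tilde\phi_y}{v}\tilde\psi_\tau-\dot X\int\frac{\tilde\phi_y}{v}(u^s)^{-X}_y-\varepsilon\int\frac{\tilde\phi_y}{v}\Big[\Big(\frac{\tilde v_y}{v}\Big)_\tau-\Big(\frac{\tilde u_y}{\tilde v}\Big)_y\Big]+\int(F_{3y}+F_{4y})\frac{\tilde\phi_y}{v}.$$

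The pressure term produces the good dissipation $-C_1\|\tilde\phi_y\|^2$. Its remainder $\int \tilde v_y|\tilde\phi\tilde\phi_y|$ is split as $|\tilde v_y|\le (v^s)^{-X}_y+v^r_y$. The shock part is bounded by $\delta_2^2\varepsilon^{-2}\int (v^s)^{-X}_y\tilde\phi^2$, and we write $\tilde\phi^2\lesssim(\tilde\psi+s_2\tilde\phi)^2+\tilde\psi^2$ so that it is controlled by $\varepsilon^{-1}\cdot\varepsilon^{-1}\delta_2(G_1\delta_2/\lambda+G^S)$. The rarefaction part gives $\|v^r_y\|_{L^\infty}\int u^r_y\tilde\phi^2\lesssim\delta_1\kappa^{-1}G^R$. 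After multiplying the energy identity by $\varepsilon^{-1}$ and integrating in time, all of these terms are bounded by $\varepsilon^{-2}$ times the right side of Lemma \ref{lemma 0after}.

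The term $\int\frac{\tilde\phi_y}{v}\tilde\psi_\tau$ is handled by writing it as $\frac{d}{d\tau}\int\frac{\tilde\phi_y}{v}\tilde\psi$ and then using \eqref{per after}$_1$ in the form $\tilde\phi_{y\tau}=\tilde\psi_{yy}+\dot X(v^s)^{-X}_{yy}$, followed by integration by parts. This yields $\int\tilde\psi_y^2/v$, which is $\varepsilon^{-1}D$, together with lower-order terms. The new shift terms $\dot X\int (v^s)^{-X}_y\tilde\phi_y$ and $\dot X\int (v^s)^{-X}_{yy}\tilde\psi/v$ are bounded by $\frac18C_1\|\tilde\phi_y\|^2+C|\dot X|^2\|(v^s)_y\|^2$ and $C|\dot X|\,\|(v^s)_{yy}\|\,\|\tilde\psi\|$. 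Since $\|(v^s)_y\|^2\lesssim\delta_2^3/\varepsilon$, integrating in time and using $\delta_2\int|\dot X|^2\,d\tau$ from Lemma \ref{lemma 0after} gives at most $\delta^2\varepsilon^{-1}$ times the zeroth-order bound. The boundary term $\int\frac{\tilde\phi_y}{v}\tilde\psi$ at time $\tau$ is absorbed via $\frac{\varepsilon}{4}\|\tilde\phi_y\|^2+C\varepsilon^{-1}\|\tilde\psi\|^2$; at the initial time it gives the initial-data terms. These boundary contributions are the source of the factor $\varepsilon^{-2}\|(\tilde\phi_0,\tilde\psi_0)\|^2$ in the final estimate.

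The viscous commutator term is expanded exactly as in Lemma \ref{lemma phi 1before}. This produces terms of the types $\varepsilon\,\tilde v_y\tilde\phi_y\tilde\psi_y$, $\varepsilon\,\tilde u_{yy}\tilde\phi$, and $\varepsilon\,\tilde v_y\tilde u_y\tilde\phi$, together with pure interaction errors $\varepsilon (v^s)^{-X}_y v^r_y$. In addition, $\varepsilon(\tilde v_y/v)_\tau$ now carries an extra $\dot X (v^s)^{-X}_{yy}$ contribution, which is handled like the shift terms above.

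The main obstacle is the pure rarefaction error $\varepsilon\, u^r_{yy}/\tilde v$ and the term $F_{3y},F_{4y}$ in $L^2$. The interaction parts are controlled using Lemma \ref{rare-shock interact} (squared norms) and are time-integrable with gain $\delta_1$. For the rarefaction part, $\varepsilon^2\int\|u^r_{yy}\|^2\,d\tau$ with $\kappa=\varepsilon$ must be estimated using Lemma \ref{lemma-rare}. We split the time integral at $\tau+A\varepsilon/\delta^2=\kappa\delta_1^{-1}$ as in Lemma \ref{lemma 0after}. On the early piece we use $\|u^r_{yy}\|^2\le\delta_1^2\kappa^{-3}$, which gives $\delta_1\kappa^{-2}\varepsilon^2$. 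On the late piece we use the decaying bound, which gives $\kappa^{-1}\int\tau^{-2}$. Combined with the $\varepsilon^{-1}$ weight, we expect at worst $C\delta_1^{1/2}\varepsilon^{-1}$, possibly after interpolating to keep a $\delta_1^{1/2}$ factor. Collecting all contributions, applying Gronwall-free absorption, and multiplying by $\varepsilon^{-1}$ then yields \eqref{estimate phi1 after}.
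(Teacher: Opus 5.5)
Your route is the paper's: you multiply \eqref{per after}$_2$ by $\tilde\phi_y/v$ using $\varepsilon(u_y/v)_y=\varepsilon(\tilde\phi_y/v)_\tau+\varepsilon(\tilde v_y/v)_\tau$, trade $\int\frac{\tilde\phi_y}{v}\tilde\psi_\tau$ for a time derivative plus $\varepsilon^{-1}D$, multiply by $\varepsilon^{-1}$, and close with Lemma \ref{lemma 0after}. The paper merges $-\varepsilon(\tilde u_y/\tilde v)_y-F_{3y}$ into $-\varepsilon\big((u^s)^{-X}_y/(v^s)^{-X}\big)_y$ instead of keeping $F_3$ separate, which is only bookkeeping.

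There is one step that fails as you state it. It is the shift term $\dot X\int\frac{(v^s)^{-X}_{yy}}{v}\tilde\psi\,dy$ that comes from $\tilde\phi_{y\tau}=\tilde\psi_{yy}+\dot X(v^s)^{-X}_{yy}$.

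\begin{itemize}
\item Your bound $C|\dot X|\,\|(v^s)_{yy}\|\,\|\tilde\psi\|$ throws away the spatial localization, and the result is not integrable on the unbounded interval $[-A\frac{\varepsilon}{\delta^2},\tau]$.
\item The only time-integrable quantities available are $\delta_2|\dot X|^2$, $G_1,G^R,G^S,D$ and $D_1$; none of them controls $\|\tilde\psi\|^2$.
\item Hence $\int|\dot X|\,\|\tilde\psi\|\,d\tau$ is bounded only by $(\tau+A\frac{\varepsilon}{\delta^2})^{1/2}$ times the zeroth-order bounds.
\item Any Young splitting leaves $\|(v^s)_{yy}\|^2\|\tilde\psi\|^2$, which does not decay in time.
\end{itemize}

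So the estimate would not be uniform in $\tau$. The fix is to keep the weight. Lemma \ref{lemma-shock} gives $|(v^s)^{-X}_{yy}|\le C\frac{\delta_2}{\varepsilon}(v^s)^{-X}_y$, hence
\[
|\dot X|\int|(v^s)^{-X}_{yy}|\,|\tilde\psi|\,dy\le C\frac{\delta_2}{\varepsilon}|\dot X|\,\|(v^s)^{-X}_y\|_{L^1}^{1/2}\sqrt{G^S}\le C\frac{\delta_2^2}{\varepsilon}|\dot X|^2+C\frac{\delta_2}{\varepsilon}G^S .
\]
Both terms are time-integrable by Lemma \ref{lemma 0after}, and this is what the paper does. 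Alternatively, integrate by parts onto $\tilde\psi/v$ and absorb into $\varepsilon^{-1}D$ and $D_1$.

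Two smaller remarks.

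\begin{itemize}
\item The rarefaction pieces of your ``lower-order terms'' in the $\tilde\psi_\tau$ term, such as $\int (u^r_y)^2\tilde\psi^2$, are not controlled by $G^R$, which only sees $\tilde\phi$. They need the time decay of $\|u^r_y\|_{L^\infty}$ or $\|u^r_y\|_{L^4}$ from Lemma \ref{lemma-rare}; the paper uses $\|u^r_y\|_{L^4}^{8/3}\|\tilde\psi\|^2$.
\item Your hedging on $u^r_{yy}$ is unnecessary. Your own splitting gives $\varepsilon\int\|u^r_{yy}\|^2\,d\tau\le C\delta_1\varepsilon^{-1}$ directly. The $\delta_1^{1/2}\varepsilon^{-1}$ in \eqref{estimate phi1 after} comes from $\varepsilon^{-2}$ times the $\delta_1^{1/2}\varepsilon$ of Lemma \ref{lemma 0after}, not from this term.
\end{itemize}
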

\begin{proof}
	Multiplying the equation $\eqref{per after}_2$ by $\frac{\tilde{\phi}_y}{v}$ and integrating the resulting equation over $\bbr$ lead to
	\begin{align*}
		&\frac{\varepsilon}{2} \frac{d}{d\tau} \int_\bbr \left(\frac{\tilde{\phi}_y}{v}\right)^2 dy \\
		&= \int_\bbr \frac{\tilde{\phi}_y}{v}(p(v) -p(\tilde{v}))_y dy + \int_\bbr \frac{\tilde{\phi}_y}{v}\tilde{\psi}_\tau dy - \dot{X}(\tau)\int_\bbr (u^s)_y^{-X}\frac{\tilde{\phi}_y}{v} dy - \varepsilon\int_\bbr \frac{\tilde{\phi}_y}{v}\left(\frac{\tilde{v}_y}{v}\right)_\tau dy \\
		&\quad + \varepsilon\int_\bbr \left(\frac{(u^s)^{-X}_y}{(v^s)^{-X}}\right)_y\frac{\tilde{\phi}_y}{v} dy + \int_\bbr F_{4y}\frac{\tilde{\phi}_y}{v} dy \\
		& := \sum_{i=1}^{6}I_i,
	\end{align*}
	where we have used the fact that
	$$\varepsilon\left(\frac{u_y}{v}\right)_y = \varepsilon\left(\frac{v_\tau}{v}\right)_y = \varepsilon(\ln v)_{\tau y} = \varepsilon\left(\frac{v_y}{v}\right)_\tau = \varepsilon\left(\frac{\tilde{\phi}_y}{v}\right)_\tau + \varepsilon\left(\frac{\tilde{v}_y}{v}\right)_\tau.$$
	
	We now estimate the terms $I_i (i=1, \cdots, 6)$ respectively.   
	First, by Young's inequality, H\"{o}lder's inequality, Lemma \ref{lemma-rare}, Lemma \ref{lemma-shock} and \eqref{weight}, it holds that
	\begin{align*}
		I_1 &= \int_\bbr \frac{\tilde{\phi}_y}{v}(p(v) -p(\tilde{v}))_y dy \\
		&= \int_\bbr \frac{p'(v)}{v} (\tilde{\phi}_y)^2 dy + \int_\bbr \frac{\tilde{\phi}_y}{v}(p'(v) -p'(\tilde{v}))\tilde{v}_y dy \\
		&\leq -C_2 \|\tilde{\phi}_y\|^2 + C\int_\bbr (v^s)^{-X}_y|\tilde{\phi}\tilde{\phi}_y| dy + C\int_\bbr u^r_y|\tilde{\phi}\tilde{\phi}_y| dy \\
		&\leq -C_2 D_1 + \frac{1}{8}C_2 D_1 + C\int_\bbr ((v^s)^{-X}_y)^2(\tilde{\psi} + s_2\tilde{\phi})^2 dy + C\int_\bbr ((v^s)^{-X}_y)^2\tilde{\psi}^2 dy + C\int_\bbr (u^r_y)^2\tilde{\phi}^2 dy \\
		&\leq -\frac{7}{8}C_2 D_1 + C\|(v^s)^{-X}_y\|_{L^\infty}\frac{\delta_2}{\lambda}\int_\bbr a^{-X}_y(\tilde{\psi} + s_2\tilde{\phi})^2 dy + C\|(v^s)^{-X}_y\|_{L^\infty}\int_\bbr (v^s)^{-X}_y\tilde{\psi}^2 dy \\
		&\quad + C\|u^r_y\|_{L^\infty}\int_\bbr u^r_y\tilde{\phi}^2 dy \\
		&\leq -\frac{7}{8}C_2 D_1 + C\frac{\delta_2^3}{\lambda}\varepsilon^{-1} G_1 + C\delta_2^2\varepsilon^{-1} G^S + C\delta_1 \kappa^{-1} G^R.
	\end{align*}
	By Young's inequality, H\"{o}lder's inequality, interpolation inequality, Lemma \ref{lemma-rare} and Lemma \ref{lemma-shock}, one has
	\begin{align*}
		I_2 &= \int_\bbr \frac{\tilde{\phi}_y}{v}\tilde{\psi}_\tau dy \\
		&= \frac{d}{d\tau}\int_\bbr \frac{\tilde{\phi}_y}{v}\tilde{\psi} dy - \int_\bbr \frac{\tilde{\phi}_{y\tau}}{v} \tilde{\psi} dy + \int_\bbr \frac{\tilde{\phi}_yv_\tau}{v^2} \tilde{\psi} dy \\
		&= \frac{d}{d\tau}\int_\bbr \frac{\tilde{\phi}_y}{v}\tilde{\psi} dy - \int_\bbr \frac{\tilde{\psi}_{yy}}{v} \tilde{\psi} dy - \int_\bbr \frac{\dot{X}(\tau)(v^s)^{-X}_{yy}}{v} \tilde{\psi} dy + \int_\bbr \frac{\tilde{\phi}_y}{v^2}\tilde{\psi}_y \tilde{\psi} dy + \int_\bbr \frac{\tilde{\phi}_y}{v^2}\tilde{u}_y \tilde{\psi} dy \\
		&= \frac{d}{d\tau}\int_\bbr \frac{\tilde{\phi}_y}{v}\tilde{\psi} dy + \int_\bbr \frac{\tilde{\psi}_{y}^2}{v} dy - \int_\bbr \frac{\dot{X}(\tau)(v^s)^{-X}_{yy}}{v} \tilde{\psi} dy - \int_\bbr \frac{\tilde{v}_y}{v^2}\tilde{\psi}_y \tilde{\psi} dy + \int_\bbr \frac{\tilde{\phi}_y}{v^2}\tilde{u}_y \tilde{\psi} dy \\
		&\leq \frac{d}{d\tau}\int_\bbr \frac{\tilde{\phi}_y}{v}\tilde{\psi} dy + C\varepsilon^{-1}D + \frac{1}{8}C_2 D_1 + C\frac{\delta_2}{\varepsilon}|\dot{X}(\tau)|\|(v^s)^{-X}_{y}\|_{L^1}^{\frac{1}{2}} \sqrt{G^S} + C \|(v^s)^{-X}_{y}\|_{L^\infty} G^S \\
		&\quad + C\|u^r_{y}\|_{L^4}^2 \|\tilde{\psi}\|_{L^4}^2 \\
		&\leq \frac{d}{d\tau}\int_\bbr \frac{\tilde{\phi}_y}{v}\tilde{\psi} dy + C\varepsilon^{-1}D + \frac{1}{8}C_2 D_1 + C \frac{\delta_2^2}{\varepsilon}|\dot{X}(\tau)|^2 + C \frac{\delta_2}{\varepsilon} G^S + C \|u^r_{y}\|_{L^4}^2 \|\tilde{\psi}\|^\frac{3}{2}\|\tilde{\psi}_y\|^\frac{1}{2} \\
		&\leq \frac{d}{d\tau}\int_\bbr \frac{\tilde{\phi}_y}{v}\tilde{\psi} dy + \frac{1}{8}C_2 D_1 + C\varepsilon^{-1}D + C \frac{\delta_2^2}{\varepsilon}|\dot{X}(\tau)|^2 + C \frac{\delta_2}{\varepsilon} G^S + C \|u^r_{y}\|_{L^4}^\frac{8}{3} \|\tilde{\psi}\|^2.
	\end{align*}
	It follows from Young's inequality and Lemma \ref{lemma-shock} that
	$$
	I_3 = -\dot{X}(\tau) \int_{\bbr} (u^s)^{-X}_y\frac{\tilde{\phi}_y}{v} dy
	\leq \frac{1}{8} C_2 D_1 + C|\dot{X}(\tau)|^2 \int_{\bbr} ((u^s)^{-X}_y)^2 dy
	\leq \frac{1}{8} C_2 D_1 + C\frac{\delta_2^3}{\varepsilon}|\dot{X}(\tau)|^2.
	$$
	By Young's inequality, Cauchy's inequality, Lemma \ref{lemma-rare}, Lemma \ref{lemma-shock} and Lemma \ref{rare-shock interact}, one has
	\begin{align*}
		I_4 + I_5 &= -\varepsilon \int_\bbr \frac{\tilde{\phi}_y}{v}\left(\frac{\tilde{v}_y}{v} \right)_\tau dy + \varepsilon\int_\bbr \frac{\tilde{\phi}_y}{v}\left(\frac{(u^s)^{-X}_y}{(v^s)^{-X}} \right)_y dy \\
		&= - \varepsilon\int_\bbr \frac{\tilde{\phi}_y}{v}\frac{\tilde{v}_{y\tau}}{v} dy + \varepsilon\int_\bbr \frac{\tilde{\phi}_y}{v}\frac{\tilde{v}_yv_\tau}{v^2} dy + \varepsilon\int_\bbr \frac{\tilde{\phi}_y}{v}\frac{(u^s)^{-X}_{yy}}{(v^s)^{-X}} dy - \varepsilon\int_\bbr \frac{\tilde{\phi}_y}{v}\frac{(u^s)^{-X}_y(v^s)^{-X}_y}{((v^s)^{-X})^2} dy \\
		&= \varepsilon\int_\bbr \frac{\tilde{\phi}_y}{v^2}\dot{X}(\tau)(v^s)^{-X}_{yy} dy  - \varepsilon \int_\bbr \frac{\tilde{\phi}_y}{v^2}u^r_{yy} dy + \varepsilon\int_\bbr \frac{\tilde{\phi}_y}{v}(u^s)^{-X}_{yy}\left(\frac{1}{(v^s)^{-X}} - \frac{1}{v}\right) dy \\
		&\quad + \varepsilon\int_\bbr \frac{\tilde{v}_y}{v^3}\tilde{\phi}_y\tilde{\psi}_y dy + \varepsilon\int_\bbr \frac{\tilde{\phi}_y}{v^3}(v^r_yu^r_y + v^r_y(u^s)^{-X}_y + (v^s)^{-X}_yu^r_y)dy \\
		&\quad + \varepsilon\int_\bbr \frac{\tilde{\phi}_y}{v}(u^s)^{-X}_y(v^s)^{-X}_y\left(\frac{1}{v^2} - \frac{1}{((v^s)^{-X})^2}\right) dy \\
		&\leq \frac{1}{8}C_2 D_1 + C\frac{\delta_2^5}{\varepsilon}|\dot{X}(\tau)|^2 + C\varepsilon^{-1}(\delta_2^4 + \delta_1^2\varepsilon^2\kappa^{-2})D + C \varepsilon^2 (\|u^r_{yy}\|^2 + \|u^r_y\|_{L^4}^4)  \\
		&\quad + C \varepsilon^2 \|(v^s)^{-X}_yu^r_y\|^2 + C \delta_2^2 \|(v^s)^{-X}_y(v^r - v^*)\|^2 + C\delta_2^2 \int_\bbr |(v^s)^{-X}_y|^2 \tilde{\phi}^2 dy \\
		&\leq \frac{1}{8}C_2 D_1 + C\frac{\delta_2^5}{\varepsilon}|\dot{X}(\tau)|^2 + C\varepsilon^{-1}(\delta_2^4 + \delta_1^2\varepsilon^2\kappa^{-2})D + C \varepsilon^2 (\|u^r_{yy}\|^2 + \|u^r_y\|_{L^4}^4)  \\
		&\quad + C \varepsilon^2 \|(v^s)^{-X}_yu^r_y\|^2 + C \delta_2^2 \|(v^s)^{-X}_y(v^r - v^*)\|^2  + C\frac{\delta_2^4}{\varepsilon} \int_\bbr (v^s)^{-X}_y((\tilde{\psi} + s_2\tilde{\phi})^2 + \tilde{\psi}^2) dy \\
		&\leq \frac{1}{8}C_2 D_1 + C\frac{\delta_2^5}{\varepsilon}|\dot{X}(\tau)|^2 + C\varepsilon^{-1}(\delta_2^4 + \delta_1^2\varepsilon^2\kappa^{-2})D + C \frac{\delta_2^4}{\varepsilon}G^S + \frac{\delta_2^5}{\lambda\varepsilon} G_1 \\
		&\quad + C (\|u^r_{yy}\|^2 + \|u^r_y\|_{L^4}^2) + C \delta_1^2 \delta_2^3 \kappa^{-2} \varepsilon e^{-\frac{C\delta_2}{\varepsilon}\left(\tau + A\frac{\varepsilon}{\delta^2}\right)} + C \delta_1^2 \delta_2^4 \kappa^{-1} e^{-\frac{C}{\kappa}\left(\tau + A\frac{\varepsilon}{\delta^2}\right)} \\
		&\quad + C \delta_1^2 \delta_2^5 \varepsilon^{-1} e^{-\frac{C\delta_2}{\varepsilon}\left(\tau + A\frac{\varepsilon}{\delta^2}\right)} + C \delta_1^2 \delta_2^6 \varepsilon^{-2} \kappa e^{-\frac{C}{\kappa}\left(\tau + A\frac{\varepsilon}{\delta^2}\right)}.
	\end{align*}
	Similarly, it holds that
	\begin{align*}
		I_6 &= \int_\bbr F_{4y} \frac{\tilde{\phi}_y}{v} dy
		\leq \frac{1}{8}C_2 D_1 + C (\|v^r_y((v^s)^{-X}-v^*)\|^2 + \|(v^s)^{-X}_y(v^r - v^*)\|^2) \\
		&\leq \frac{1}{8}C_2 D_1 +  C \delta_1^2 \delta_2^2 \kappa^{-1} e^{-\frac{C\delta_2}{\varepsilon}\left(\tau + A\frac{\varepsilon}{\delta^2}\right)} + C \delta_1^2 \delta_2^2 \kappa^{-1} e^{-\frac{C}{\kappa}\left(\tau + A\frac{\varepsilon}{\delta^2}\right)} + C \delta_1^2 \delta_2^3 \varepsilon^{-1} e^{-\frac{C\delta_2}{\varepsilon}\left(\tau + A\frac{\varepsilon}{\delta^2}\right)} \\
		&\quad + C \delta_1^2 \delta_2^4 \varepsilon^{-2} \kappa e^{-\frac{C}{\kappa}\left(\tau + A\frac{\varepsilon}{\delta^2}\right)}.
	\end{align*}
	
	Combining the above estimates, integrating over $\left[- A\frac{\varepsilon}{\delta^2}, \tau\right]$, and then combining with the $L^2$-estimate \eqref{estimate 0after}, we can obtain the estimate \eqref{estimate phi1 after}.
\end{proof}

Now we give the first-order derivative estimate of the velocity.
\begin{lemma} \label{lemma psi1 after}
	Under the assumption of Proposition \ref{priori estimate after}, it holds that for $- A\frac{\varepsilon}{\delta^2} \leq \tau \leq \tau_1$,
	\begin{equation} \label{estimate psi1 after}
		\| \tilde{\psi}_y(\tau) \|^2 + \int_{- A\frac{\varepsilon}{\delta^2}}^{\tau} D_2 d\tau  \leq C (\varepsilon^{-2}\| (\tilde{\phi}_0, \tilde{\psi}_0) \|^2 + \| (\tilde{\phi}_{0y}, \tilde{\psi}_{0y}) \|^2) + C\delta_1^\frac{1}{2}\varepsilon^{-1},
	\end{equation}
	where $D_2 = \varepsilon\|\tilde{\psi}_{yy}(\tau)\|^2$.
\end{lemma}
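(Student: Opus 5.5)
The plan mirrors Lemma \ref{lemma psi 1before}, now applied to the post-collision perturbation system \eqref{per after}. We multiply \eqref{per after}$_2$ by $-\tilde{\psi}_{yy}$ and integrate over $\bbr$. The viscous term splits as
$$\varepsilon\Big(\frac{u_y}{v}-\frac{\tilde u_y}{\tilde v}\Big)_y(-\tilde\psi_{yy}) = -\varepsilon\frac{\tilde\psi_{yy}^2}{v}+\varepsilon\frac{v_y}{v^2}\tilde\psi_y\tilde\psi_{yy}-\varepsilon\Big(\tilde u_y\Big(\frac1v-\frac1{\tilde v}\Big)\Big)_y\tilde\psi_{yy},$$
with $v_y=\tilde\phi_y+\tilde v_y$. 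This produces
$$\frac12\frac{d}{d\tau}\|\tilde\psi_y\|^2+\varepsilon\int_\bbr\frac{\tilde\psi_{yy}^2}{v}dy$$
on the left. On the right we collect the pressure term $\int(p(v)-p(\tilde v))_y\tilde\psi_{yy}$, the shift term $-\dot X\int (u^s)^{-X}_y\tilde\psi_{yy}$, the two cubic/quadratic viscous terms, the $Q_3$-type term, and $\int (F_{3y}+F_{4y})\tilde\psi_{yy}$. Each of these is absorbed into $\frac1{8}D_2$ by Young's inequality, with remainders as follows.

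\emph{Pressure term.} This gives $C\varepsilon^{-1}D_1$ plus $C\varepsilon^{-1}\int|\tilde v_y|^2\tilde\phi^2$. Exactly as in the estimate of $I_1$ in Lemma \ref{lemma phi1 after}, the latter is bounded by
$$C\varepsilon^{-1}\Big(\frac{\delta_2^3}{\lambda\varepsilon}G_1+\frac{\delta_2^2}{\varepsilon}G^S+\delta_1\kappa^{-1}G^R\Big).$$

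\emph{Shift term.} This is bounded by $C\varepsilon^{-1}|\dot X|^2\|(u^s)^{-X}_y\|^2\le C\delta_2^3\varepsilon^{-2}|\dot X|^2$.

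\emph{Cubic term.} The term $\varepsilon\int\tilde\phi_y\tilde\psi_y\tilde\psi_{yy}/v^2$ is handled by Gagliardo–Nirenberg, which gives $C\varepsilon\|\tilde\phi_y\|^4\|\tilde\psi_y\|^2$. Under \eqref{priori assump after} this is at most $C\delta^{4/3}\varepsilon^{-1}\|\tilde\psi_y\|^2=C\delta^{4/3}\varepsilon^{-2}D$.

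\emph{Term with $\tilde v_y\tilde\psi_y\tilde\psi_{yy}$.} This gives $C\varepsilon\|\tilde v_y\|_{L^\infty}^2\|\tilde\psi_y\|^2\le C(\delta_2^4\varepsilon^{-2}+\delta_1^2\kappa^{-2})D$. With $\kappa=\varepsilon$ this is $C\varepsilon^{-2}D$ times a small constant.

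\emph{$Q_3$-type term.} This produces $\varepsilon\int(\tilde u_{yy}^2\tilde\phi^2+\tilde u_y^2\tilde\phi_y^2+\tilde u_y^2\tilde v_y^2\tilde\phi^2)$. We bound it by $C(\delta_2^2+\delta_1\kappa^{-1}\varepsilon)\varepsilon^{-1}D_1$ plus weighted $G$-terms as above, using $|u^r_{yy}|\le Cu^r_y$ and $|(u^s)_{yy}|\le C\delta_2\varepsilon^{-1}|(u^s)_y|$.

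\emph{Error terms.} These contribute $C\varepsilon^{-1}(\|F_{3y}\|^2+\|F_{4y}\|^2)$. Here $\|F_{4y}\|^2$ is controlled by Lemma \ref{rare-shock interact}, giving exponentially decaying terms whose time integrals are $O(\delta_1^2\delta_2\,\varepsilon^{-1}\cdot\varepsilon)$. For $F_{3y}$, which contains $\varepsilon u^r_{yyy}$-type and $\varepsilon (u^r_y)^2$ pieces, we use Lemma \ref{lemma-rare} as in $I_4+I_5$.

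Integrating in $\tau$ over $[-A\frac{\varepsilon}{\delta^2},\tau]$, we multiply all dissipation quantities by the indicated powers of $\varepsilon^{-1}$ and insert \eqref{estimate 0after} (for $\int (G_1+G^R+G^S+D)$ and $\delta_2\int|\dot X|^2$) together with \eqref{estimate phi1 after} (for $\varepsilon^{-1}\int D_1$). Each of these contributes at most
$$C\big(\varepsilon^{-2}\|(\tilde\phi_0,\tilde\psi_0)\|^2+\|\tilde\phi_{0y}\|^2\big)+C\delta_1^{1/2}\varepsilon^{-1},$$
since every prefactor of $D$, $G$, $|\dot X|^2$ is at most $C\varepsilon^{-2}$ times a bounded quantity. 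Adding $\|\tilde\psi_{0y}\|^2$ from the initial time yields \eqref{estimate psi1 after}.

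\textbf{Main obstacle.} The hardest part is the rarefaction error coming from $F_3$. Its time integral must be shown to be $O(\delta_1^{1/2}\varepsilon^{-1})$ and not worse. The quantity in question is
$$\varepsilon\int\big(\|u^r_{yyy}\|^2+\|u^r_{yy}\|^2+\|u^r_y\|_{L^4}^4\big)\,d\tau\cdot\varepsilon^{-1}.$$
With $\kappa=\varepsilon$, I plan to split the time integral at $\tau+A\varepsilon/\delta^2=\delta_1^{-1}\kappa$, as in Step 2 of Lemma \ref{lemma 0after}. On the early part we use the $\kappa$-bounds of Lemma \ref{lemma-rare}, and on the late part the $(\tau+A\varepsilon/\delta^2)^{-1}$ decay. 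To avoid third derivatives, I would first integrate by parts once, moving $\partial_y$ off $F_3$ and onto $\tilde\psi_{yy}$ is not allowed, so instead the term $\varepsilon(u^r_{yy}/\tilde v)_y$ is paired directly with $\tilde\psi_{yy}$ and bounded via $\|u^r_{yyy}\|\lesssim\kappa^{-1}\|u^r_{yy}\|$-type estimates.
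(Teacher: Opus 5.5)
Your main line is the paper's proof. It uses the same multiplier $-\tilde\psi_{yy}$ and the same six terms (the paper's $J_1$--$J_6$) with the same bounds. It closes the same way, by inserting \eqref{estimate 0after} and \eqref{estimate phi1 after}. As you say, every coefficient in front of $G_1, G^R, G^S, D$ and $\delta_2|\dot X|^2$ is at most $C\varepsilon^{-2}$. The part to correct is your ``main obstacle'' paragraph, which rests on a misreading of $F_3$.

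\textbf{No third derivative appears.} We have
$F_3=-\varepsilon\frac{u^r_y}{\tilde v}+\varepsilon (u^s)^{-X}_y\big(\frac{1}{(v^s)^{-X}}-\frac{1}{\tilde v}\big)$,
which involves only $u^r_y$. So $F_{3y}$ contains at most $\varepsilon u^r_{yy}$, $\varepsilon u^r_y\tilde v_y$ and shock--rarefaction interaction terms. There is no $u^r_{yyy}$ and no integration-by-parts issue. Directly,
$\varepsilon^{-1}\|F_{3y}\|^2\le C\varepsilon(\|u^r_{yy}\|^2+\|u^r_y\|_{L^4}^4)+C\varepsilon\|(v^s)^{-X}_yv^r_y\|^2+C\varepsilon^{-1}\|(v^s)^{-X}_y(v^r-v^*)\|^2$.
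Your splitting at $\tau+A\frac{\varepsilon}{\delta^2}=\delta_1^{-1}\kappa$, together with Lemma \ref{lemma-rare}, gives
$\varepsilon\int\|u^r_{yy}\|^2d\tau\le C\delta_1\varepsilon\kappa^{-2}$ and $\varepsilon\int\|u^r_y\|_{L^4}^4d\tau\le C\delta_1^3\varepsilon\kappa^{-2}$.
With $\kappa=\varepsilon$ both are $O(\delta_1\varepsilon^{-1})$. The interaction pieces are handled by Lemma \ref{rare-shock interact}. This is exactly the paper's treatment of $J_6$.

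\textbf{Two slips in that paragraph would not close if taken literally.} First, your displayed quantity has lost a factor of $\varepsilon$. The correct weight is $\varepsilon^{-1}\cdot\varepsilon^2=\varepsilon$, not $\varepsilon\cdot\varepsilon^{-1}=1$. With weight $1$, even $\int\|u^r_{yy}\|^2d\tau\sim\delta_1\varepsilon^{-2}$ already exceeds the target $\delta_1^{1/2}\varepsilon^{-1}$. Second, suppose a genuine $\varepsilon u^r_{yyy}$ term were present. Your proposed bound $\|u^r_{yyy}\|\lesssim\kappa^{-1}\|u^r_{yy}\|$ would give $\varepsilon\kappa^{-2}\int\|u^r_{yy}\|^2d\tau\sim\delta_1\varepsilon^{-3}$, which does not close. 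That paragraph should simply be replaced by the direct second-derivative estimate above, after which your proof is complete.
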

\begin{proof}
	Multiplying the equation $\eqref{per after}_2$ by $-\tilde{\psi}_{yy}$ and integrating the resulting equation over $\bbr$ lead to
	\begin{align*}
		&\frac{1}{2} \frac{d}{d\tau} \int_\bbr \tilde{\psi}_y^2 dy + \varepsilon\int_\bbr \frac{\tilde{\psi}_{yy}^2}{v} dy \\
		&= \int_\bbr (p(v) - p(\tilde{v}))_y\tilde{\psi}_{yy} dy - \dot{X}(\tau)\int_\bbr (u^s)_y^{-X}\tilde{\psi}_{yy} dy + \varepsilon\int_\bbr \frac{\tilde{\phi}_y}{v^2}\tilde{\psi}_y\tilde{\psi}_{yy} dy \\
		&\quad + \varepsilon\int_\bbr \frac{\tilde{v}_y}{v^2}\tilde{\psi}_y\tilde{\psi}_{yy} dy - \varepsilon\int_\bbr \left(\tilde{u}_y\left(\frac{1}{v} - \frac{1}{\tilde{v}}\right)\right)_y \tilde{\psi}_{yy} dy + \int_\bbr (F_{3y} + F_{4y})\tilde{\psi}_{yy} dy \\
		& := \sum_{i=1}^{6}J_i,
	\end{align*}
	where we have used the fact that
	$$\varepsilon\left(\frac{u_y}{v} - \frac{\tilde{u}_y}{\tilde{v}}\right)_y (-\tilde{\psi}_{yy}) = -\varepsilon\frac{\tilde{\psi}_{yy}^2}{v} + \varepsilon\frac{\tilde{\phi}_y}{v^2}\tilde{\psi}_y\tilde{\psi}_{yy} + \varepsilon\frac{\tilde{v}_y}{v^2}\tilde{\psi}_y\tilde{\psi}_{yy} - \varepsilon\left(\tilde{u}_y\left(\frac{1}{v} - \frac{1}{\tilde{v}}\right)\right)_y \tilde{\psi}_{yy}.$$
	
	We now estimate the terms $J_i (i=1, \cdots, 6)$ respectively. 	 
	By Young's inequality, Lemma \ref{lemma-rare} and Lemma \ref{lemma-shock}, we can infer that
	\begin{align*}
		J_1 &= \int_\bbr (p(v) -p(\tilde{v}))_y\tilde{\psi}_{yy} dy \leq \frac{1}{20}\mathbf{D}_2 + C \varepsilon^{-1} \|\tilde{\phi}_y\|^2 + C \varepsilon^{-1} \int_\bbr |\tilde{v}_y\tilde{\phi}|^2 dy \\
		&\leq \frac{1}{20}\mathbf{D}_2 + C \varepsilon^{-1} D_1 + C\frac{\delta_2^3}{\lambda\varepsilon^2} G_1 + C\frac{\delta_1}{\varepsilon\kappa} G^R + C\frac{\delta_2^2}{\varepsilon^2} G^S,
	\end{align*}
	and
	\begin{align*}
		J_2 = - \dot{X}(\tau)\int_\bbr (u^s)_y^{-X}\tilde{\psi}_{yy} dy \leq \frac{1}{20}\mathbf{D}_2 + C \varepsilon^{-1} |\dot{X}(\tau)|^2 \int_\bbr ((u^s)^{-X}_y)^2 dy \leq \frac{1}{20}\mathbf{D}_2 + C\frac{\delta_2^3}{\varepsilon^2} |\dot{X}(\tau)|^2.
	\end{align*}
	By Young's inequality, interpolation inequality and {\it a priori} assumption \eqref{priori assump after}, one has
	\begin{align*}
		J_3 &= \varepsilon \int_\bbr \frac{\tilde{\phi}_y}{v^2}\tilde{\psi}_y\tilde{\psi}_{yy} dy \leq C \varepsilon \|\tilde{\psi}_y\|_{L^\infty} \|\tilde{\phi}_y\| \|\tilde{\psi}_{yy}\| \leq C \varepsilon \|\tilde{\psi}_y\|^\frac{1}{2} \|\tilde{\phi}_y\| \|\tilde{\psi}_{yy}\|^\frac{3}{2} \\
		&\leq \frac{1}{20}\mathbf{D}_2 + C \|\tilde{\phi}_y\|^4 D \leq \frac{1}{20}\mathbf{D}_2 + C \delta^\frac{4}{3}\varepsilon^{-2} D.
	\end{align*}
	By Young's inequality, Lemma \ref{lemma-rare} and Lemma \ref{lemma-shock},
	\begin{align*}
		J_4 = \varepsilon \int_\bbr \frac{\tilde{v}_y}{v^2}\tilde{\psi}_y\tilde{\psi}_{yy} dy \leq \frac{1}{20}\mathbf{D}_2 + C \varepsilon \|\tilde{v}_y\|_{L^\infty}^2 \|\tilde{\psi}_y\|^2 \leq \frac{1}{20}\mathbf{D}_2 + C\left(\frac{\delta_2^4}{\varepsilon^2} + \frac{\delta_1^2}{\kappa^2}\right) D,
	\end{align*}
	and
	\begin{align*}
		J_5 &= - \varepsilon \int_\bbr \left(\tilde{u}_y\left(\frac{1}{v} - \frac{1}{\tilde{v}}\right)\right)_y \tilde{\psi}_{yy} dy \\
		&\leq \frac{1}{20}\mathbf{D}_2 + C \varepsilon \int_\bbr \tilde{u}_{yy}^2\tilde{\phi}^2 dy + C \varepsilon \int_\bbr \tilde{u}_y^2\tilde{\phi}_y^2 dy + C \varepsilon \int_\bbr |\tilde{u}_y\tilde{v}_y\tilde{\phi}|^2 dy \\
		&\leq \frac{1}{20}\mathbf{D}_2 + C \left(\frac{\delta_2^4}{\varepsilon} + \frac{\delta_1^2\varepsilon}{\kappa^2}\right)D_1 + C\frac{\delta_2^4}{\varepsilon^2} G^S + C\frac{\delta_2^5}{\lambda\varepsilon^2} G_1 + C\frac{\delta_1\varepsilon}{\kappa^3} G^S.
	\end{align*}
	By Young's inequality, Lemma \ref{lemma-rare}, Lemma \ref{lemma-shock} and Lemma \ref{rare-shock interact}, we obtain
	\begin{align*}
		J_6 &= \int_\bbr (F_{3y} + F_{4y})\tilde{\psi}_{yy} dy 
		\leq \frac{1}{20}\mathbf{D}_2 + C \varepsilon^{-1} \int_\bbr (F_{3y}^2 + F_{4y}^2) dy \leq \frac{1}{20}\mathbf{D}_2 + C \varepsilon \|u^r_{yy}\|^2 + C \varepsilon \|u^r_y\|_{L^4}^4 \\
		&\quad + C \varepsilon \|u^r_y(v^s)^{-X}_y\|^2 + C\varepsilon^{-1} \|v^r_y((v^s)^{-X} - v^*)\|^2 + C\varepsilon^{-1}\|(v^s)^{-X}_y(v^r - v^*)\|^2 \\
		&\leq \frac{1}{20}\mathbf{D}_2 + C \varepsilon \|u^r_{yy}\|^2 + C \varepsilon \|u^r_y\|_{L^4}^4 + C \delta_1^2 \delta_2^3 \kappa^{-2}  e^{-\frac{C\delta_2}{\varepsilon}\left(\tau + A\frac{\varepsilon}{\delta^2}\right)} + C \delta_1^2 \delta_2^4 \varepsilon^{-1} \kappa^{-1} e^{-\frac{C}{\kappa}\left(\tau + A\frac{\varepsilon}{\delta^2}\right)}\\
		&\quad + C \delta_1^2 \delta_2^2 \varepsilon^{-1}\kappa^{-1} e^{-\frac{C\delta_2}{\varepsilon}\left(\tau + A\frac{\varepsilon}{\delta^2}\right)} + C \delta_1^2 \delta_2^2 \varepsilon^{-1} \kappa^{-1} e^{-\frac{C}{\kappa}\left(\tau + A\frac{\varepsilon}{\delta^2}\right)} + C \delta_1^2 \delta_2^3 \varepsilon^{-2} e^{-\frac{C\delta_2}{\varepsilon}\left(\tau + A\frac{\varepsilon}{\delta^2}\right)} \\
		&\quad + C \delta_1^2 \delta_2^4 \varepsilon^{-3} \kappa e^{-\frac{C}{\kappa}\left(\tau + A\frac{\varepsilon}{\delta^2}\right)},
	\end{align*}
	where $D_2 = \varepsilon\|\tilde{\psi}_{yy}\|^2 \sim \mathbf{D}_2 = \varepsilon \left\|\frac{\tilde{\psi}_{yy}}{\sqrt{v}}\right\|^2$.
	Combining the above estimates, integrating over $\left[- A\frac{\varepsilon}{\delta^2}, \tau\right]$, and then combining with the estimates \eqref{estimate 0after} and \eqref{estimate phi1 after}, we can obtain the estimate \eqref{estimate psi1 after}.
\end{proof}

 \bigskip

\section*{Acknowledgment}
L.-A. Li was supported by Beijing Natural Science Foundation (No. 1254045), the National Natural Science Foundation of China (No. 12501295) and the Fundamental Research Funds for the Central Universities (No. 2243100008). 
Yi Wang was partially supported by NSFC grants (Grant No. 12421001 and 12288201) and CAS Project for Young Scientists in Basic Research, Grant No. YSBR-031.

\bigskip



\begin{thebibliography}{00}

\bibitem{BB} Bianchini, S., Bressan, A.: \textit{Vanishing viscosity solutions of nonlinear hyperbolic systems.} Ann. of Math. {\bf 161} (2), 223-342 (2005).

\bibitem{Bressan2000} Bressan, A.: \textit{Hyperbolic systems of conservation laws. The one-dimensional Cauchy problem.} Oxford University Press, Oxford, 2000.

\bibitem{Bressan2026} Bressan, A., Caravenna, L., and Shen, W.: \textit{Local asymptotic patterns for viscous approximations of conservation laws.} https://arxiv.org/abs/2605.00189, (2026).

\bibitem{CEJ} Chen, G., Endres, E. and Jenssen, H.: \textit{Pairwise wave interactions in ideal polytropic gases.} Arch. Ration. Mech. Anal. {\bf 204}, 787-836 (2012).

\bibitem{CKV} Chen, G., Kang, M.-J. and Vasseur, A.: \textit{From Navier-Stokes to BV  solutions of the barotropic Euler equations.} https://arxiv.org/abs/2401.09305, 2024.

\bibitem{CP} Chen, G.-Q., Perepelitsa, M.: \textit{Vanishing viscosity limit of the Navier-Stokes equations to the Euler equations for compressible fluid flow.} Comm. Pure Appl. Math. {\bf 63}, 1469-1504 (2010).

\bibitem{GX} Goodman, J., Xin, Z.: \textit{Viscous limits for piecewise smooth solutions to systems of conservation laws.} Arch. Rational Mech. Anal. {\bf 121}, 235-265 (1992).

\bibitem{HL} Hoff, D., Liu, T.-P.: \textit{The inviscid limit for the Navier-Stokes equations of compressible, isentropic flow with shock data.} Indiana Univ. Math. J. {\bf 38}, 861-915 (1989).

\bibitem{Hopf1950} Hopf, E.: \textit{The partial differential equation {$u_t+uu_x=\mu u_{xx}$}.} Comm. Pure Appl. Math. {\bf 3}, 201-230 (1950).

\bibitem{HJW} Huang, F., Jiang, S. and Wang, Y.: \textit{Zero dissipation limit of full compressible Navier-Stokes equations with a Riemann initial data.} Commun. Inf. Syst. {\bf 13}, 211-246 (2013).

\bibitem{HLW} Huang, F., Li, M. and Wang, Y.: \textit{Zero dissipation limit to rarefaction wave with vacuum for one-dimensional compressible Navier-Stokes equations.} SIAM J. Math. Anal. {\bf 44}, 1742-1759 (2012).

\bibitem{HWWW} Huang, F., Wang, W., Wang, Y. and Wang, Y.: \textit{Uniqueness of composite wave of shock and rarefaction in the inviscid limit of Navier-Stokes equations.} SIAM J. Math. Anal. {\bf 56}, 3924-3967 (2024).

\bibitem{HWWY} Huang, F., Wang, Y., Wang, Y. and Yang, T.: \textit{The limit of the Boltzmann equation to the Euler equations for Riemann problems.} SIAM J. Math. Anal. {\bf 45}, 1741-1811 (2013).

\bibitem{HWWY2015} Huang, F., Wang, Y., Wang, Y. and Yang, T.: \textit{Vanishing viscosity of isentropic Navier-Stokes equations for interacting shocks.} Sci. China Math. {\bf 58}, 653-672 (2015).

\bibitem{HWY-1} Huang, F., Wang, Y. and Yang, T.: \textit{Fluid dynamic limit to the Riemann solutions of Euler equations: I. Superposition of rarefaction waves and contact discontinuity.} Kinet. Relat. Models {\bf 3}, 685-728 (2010).

\bibitem{HWY-2} Huang, F., Wang, Y. and Yang, T.: \textit{Vanishing viscosity limit of the compressible Navier-Stokes equations for solutions to a Riemann problem.} Arch. Rational Mech. Anal. {\bf 203}, 379-413 (2012).

\bibitem{JNS} Jiang, S., Ni, G. and Sun, W.: \textit{Vanishing viscosity limit to rarefaction waves for the Navier-Stokes equations of one-dimensional compressible heat-conducting fluids.} SIAM J. Math. Anal. {\bf 38}, 368-384 (2006).

\bibitem{KV} Kang, M.-J., Vasseur, A.: \textit{Uniqueness and stability of entropy shocks to the isentropic Euler system in a class of inviscid limits from a large family of Navier-Stokes systems.} Invent. Math. {\bf 224}, 55-146 (2021).

\bibitem{KV1} Kang, M.-J., Vasseur, A.: \textit{Contraction property for large perturbations of shocks of the barotropic Navier-Stokes system.} J. Eur. Math. Soc. {\bf 23}, 585-638 (2021).

\bibitem{KV2} Kang, M.-J., Vasseur, A.: \textit{Well-posedness of the Riemann problem with two shocks for the isentropic Euler system in a class of vanishing physical viscosity limits.} J. Diff. Equa. {\bf 338}, 128-226 (2022).

\bibitem{KVW} Kang, M.-J., Vasseur, A. and Wang, Y.: \textit{Time-asymptotic stability of composite waves of viscous shock and rarefaction for barotropic {N}avier-{S}tokes equations.} Adv. Math. {\bf 419}, Paper No. 108963, 66 pp. (2023).

\bibitem{KM} Kawashima, S., Matsumura, A.: \textit{Asymptotic stability of traveling wave solutions of systems	for one-dimensional gas motion.} Comm. Math. Phys. {\bf 101}, 97-127 (1985).

\bibitem{L-W-W} Li, M., Wang, T. and Wang, Y.: \textit{The limit to rarefaction wave with vacuum for 1D compressible fluids with temperature-dependent transport coefficients.} Anal. Appl. (Singap.) {\bf 13}, 555-589 (2015).

\bibitem{Liu} Liu, T.-P.: \textit{Shock waves.} Graduate Studies in Mathematics, {\bf 215}, Amer. Math. Soc., Providence, RI, 2021.

\bibitem{M} Ma, S.: \textit{Zero dissipation limit to strong contact discontinuity for the 1-D compressible Navier-Stokes equations.} J. Diff. Equa. {\bf 248}, 95-110 (2010).

\bibitem{SYZ} Shi, X., Yong, Y. and Zhang, Y.: \textit{Vanishing viscosity for non-isentropic gas dynamics with interacting shocks.} Acta Math. Sci. Ser. B (Engl. Ed.), {\bf 36}, 1699-1720 (2016).

\bibitem{Smoller1994} Smoller, J.: \textit{Shock waves and reaction-diffusion equations.} Grundlehren der mathematischen Wissenschaften, {\bf 258}, Springer-Verlag, New York, second edition, 1994.

\bibitem{VW} Vasseur, A., Wang, Y.: \textit{The inviscid limit to a contact discontinuity for the compressible {N}avier-{S}tokes-{F}ourier system using the	relative entropy method.} SIAM J. Math. Anal. {\bf 47}, 4350-4359 (2015).

\bibitem{W} Wang, H.: \textit{Viscous limits for piecewise smooth solutions of the p-system.} J. Math. Anal. Appl. {\bf 299}, 411-432 (2004).

\bibitem{W-2} Wang, Y.: \textit{Zero dissipation limit of the compressible heat-conducting Navier-Stokes equations in the presence of the shock.}  Acta Math. Sci. Ser. B. {\bf 28}, 727-748 (2008).

\bibitem{X-1} Xin, Z.: \textit{Zero dissipation limit to rarefaction waves for the one-dimensional Navier-Stokes equations of compressible isentropic gases.} Comm. Pure Appl. Math. {\bf 46}, 621-665 (1993).

\bibitem{XZ} Xin, Z., Zeng, H.: \textit{Convergence to rarefaction waves for the nonlinear Boltzmann equation and compressible Navier-Stokes equations.} J. Diff. Equa. {\bf 249}, 827-871 (2010).

\bibitem{Y} Yu, S.-H.: \textit{Zero-dissipation limit of solutions with shocks for systems of hyperbolic conservation laws.} Arch. Rational Mech. Anal. {\bf 146}, 275-370 (1999).

\bibitem{ZPT} Zhang, Y., Pan, R. and Tan, Z.: \textit{Zero dissipation limit to a Riemann solution consisting of two shock waves for the 1D compressible isentropic Navier-Stokes equations.} Sci. China Math. {\bf 56}, 2205-2232 (2013).

\bibitem{ZPWT} Zhang, Y., Pan, R., Wang, Y. and Tan, Z.: \textit{Zero dissipation limit with two interacting shocks of the 1{D} non-isentropic {N}avier-{S}tokes equations.} Indiana Univ. Math. J. {\bf 62}, 249-309 (2013).



\end{thebibliography}
\end{document}